\documentclass[a4paper, 11pt]{article}
\usepackage[latin1]{inputenc}    
\usepackage[T1]{fontenc}
\usepackage{lmodern}
\usepackage[a4paper]{geometry}
\usepackage{graphicx}
\usepackage{subfigure}
\usepackage{amsmath, amsfonts, amssymb, amsthm} 
\usepackage{ulem}
\usepackage{bbold}
\usepackage{framed}
\usepackage{color}
\usepackage{tikz}
\usepackage[english]{babel}
\usepackage[citecolor=blue,colorlinks=true]{hyperref}

\numberwithin{equation}{section} 

\setlength{\textheight}{24.2cm}
\setlength{\textwidth}{16.0cm}
\setlength{\oddsidemargin}{0.0cm}
\setlength{\evensidemargin}{0.0cm}
\setlength{\topmargin}{-1cm} 

\newtheorem{define}{Definition}[section]
\newtheorem{proposition}{Proposition}[section]
\newtheorem{theorem}{Theorem}
\newtheorem{corollary}{Corollary}[section]
\newtheorem{remark}{Remark}[section]
\newtheorem{lemma}{Lemma}[section]
\newtheorem{assumption}{Assumption}


\newcommand*\myref[1]{{\normalfont(\ref{#1})}}
\newcommand*\mycite[1]{{\normalfont \cite{#1}}}
\newcommand*\B[0]{{\cal B}}


\newcommand*\eps{\varepsilon}
\newcommand*\tld{\widetilde}
\newcommand*\E{\mathbb{E}}

\newcommand*\blangle{\Big \langle}
\newcommand*\brangle{\Big \rangle}

\newcommand*\llangle{\langle \langle}
\newcommand*\rrangle{\rangle \rangle}


\DeclareMathOperator{\sign}{sign}


\title{Diffusion-approximation for a kinetic spray-like system with random forcing}
\date{}
\author{ Arnaud Debussche, \; \; Angelo Rosello, 
{\vspace{1mm}} \\
\textit{
Univ Rennes, CNRS, IRMAR - UMR 6625, F-35000 Rennes, France
}
{\vspace{2mm}} \\
\; \; Julien Vovelle 
{\vspace{1mm}} \\
\textit{
UMPA, UMR 5669 CNRS, ENS de Lyon
}
}

\begin{document}

\maketitle

\begin{abstract}
We study a kinetic toy model for a spray of particles immersed in an ambient fluid, subject to some additional random forcing given by a mixing, space-dependent Markov process.
Using the perturbed test function method, we derive the hydrodynamic limit of the kinetic system. The law of the limiting density satisfies a stochastic conservation equation in Stratonovich form, whose drift and diffusion coefficients are completely determined by the law of the stationary process associated with the Markovian perturbation.
\medskip

\textbf{Keywords.}  kinetic equations, stochastic partial differential equations, diffusion-ap\-pro\-xi\-ma\-tion, hydrodynamic limit, perturbed
 test functions.
\end{abstract}

\tableofcontents

\begin{section}{Introduction}

\begin{subsection}{Overview of the kinetic model}

We consider a toy model for spray-like behavior, describing the motion of some particles immersed in an ambient fluid, given by the following conservation equation
\begin{align}
\partial_t f + \eps v \cdot \nabla_x f + \nabla_v \cdot \left( F(t,x,v) f \right) = 0.
\label{micro_eq}
\end{align}
The forcing term
\begin{align*}
F(t,x,v) = (u_t(x) - v) + m_t(x)
\end{align*}
expresses how the speed of a particle tends to align with the local speed $u(x)$ of the ambient fluid. It is perturbed by some random forcing naturally modeled by a space-dependent, centered Markov process $(m_t(x))_{t \ge 0}$. Assuming that the particles have very little impact on the behavior of the ambient fluid, $u \equiv u_t(x)$ evolves according to
\begin{align}
\partial_t u - \Delta u = \eps^2 \int_v (v-u) f dv .
\label{micro_eq_u}
\end{align}
We could easily add a nonlinear term of Burger's type $u \cdot \nabla_x u$ in equation \myref{micro_eq_u}. This would not yield any mathematical difficulty but would unnecessarily give heavier proofs.

In order for the system  \myref{micro_eq}-\myref{micro_eq_u} to be well-posed and admit a solution $(f_t(x,v),u_t(x))$, we need, in fact, to restrict to the one-dimensional case: 
$$
x \in \mathbb{T} = \mathbb{R}/\mathbb{Z}, \hspace{5mm} v \in \mathbb{R}.
$$
One can refer to section \ref{app} below (and in particular Remark \ref{well_posedness_remark}) for details regarding the well-posedness of this system. The rescaled variables
\begin{align*}
f^\eps_t(x,v) := f_{\eps^{-2}t}(x,v),
\hspace{10mm}
u^\eps_t(x) :=\eps^{-1} u_{\eps^{-2}t}(x),
\hspace{10mm}
m^\eps_t(x) := m_{\eps^{-2} t} (x),
\end{align*}
then satisfy
\begin{equation}
\left\{
\begin{array}{l}
\displaystyle{ 
\partial_t f^\eps + \eps^{-1} v   \partial_x f^\eps + \eps^{-2} \partial_v   \Big[ ( \eps u^\eps(x) - v + m^\eps(x)) f^\eps \Big] = 0 ,
} \\ 
\displaystyle{ \partial_t u^\eps - \partial^2_x u^\eps = \int (v- \eps u^\eps) f^\eps dv } .
\end{array}
\right.
\label{eq}
\end{equation}
Introducing the integrated quantity
\begin{align}
\rho^\eps(x) = \int f^\eps(x,v) dv,
\end{align}
we are interested in the behavior of  $(\rho^\eps, u^\eps)$ as $\eps$ goes to zero. In essence, we wish to describe how the random forcing $m$ added on a "mesoscopic level" translates on a "macroscopic level" in the scaling limit. More precisely, we shall determine the stochastic partial differential equation satisfied by the limiting law of $\rho^\eps$ as $\eps$ goes to $0$.

This work is part of a program on  stochastic diffusion-approximation in the context of kinetic equation and fluid limits (see \cite{ddv, dv1, dv2}). In the case considered here, it is to be noted that due to the absence of collision term in the kinetic equation of \myref{eq}, the limiting SPDE satisfied by $\rho^\eps$ will be of order one: it is in a fact a stochastic conservation equation when written in Stratonovich form. 

This latter stochastic equation is linear but our proof provides a solution which has very low regularity and uniqueness in this context 
 is not trivial. We need to use a duality argument and backward SPDEs to prove uniqueness.
 
The study of the scaling limit relies essentially on the perturbed test function method, first introduced in \cite{papa} and developed further in \cite{wave}.
In contrast with previous works on the matter, particular attention will be paid to the rigorous construction of the correctors involved in the perturbed test function method. This will be achieved by handling infinitesimal generators and their domains in an overall more cautious way, and by exploiting the mixing assumptions made on the driving Markov process to properly solve Poisson equations associated with these generators.

\end{subsection}

\begin{subsection}{Main results}

Throughout our study, we will be led to work with the following spaces:
\begin{align*}
& L^p_{x,v} := L^p(\mathbb{T} \times \mathbb{R}), \hspace{5mm} L^p_x := L^p(\mathbb{T}), 
\\
& W^{s,p}_x := W^{s,p}(\mathbb{T}), \hspace{8mm} H^\sigma_x := W^{2, \sigma}(\mathbb{T}).
\end{align*}
We will denote by ${\cal P}(E)$ the set of probability measures on a metric space $E$.
When it is well defined, we will denote
\begin{align*}
\langle f,g \rangle = \int f(x,v) g(x,v) dx dv \; \text{ or } \; \int f(x) g(x) dx.
\end{align*}
depending on the context. Finally, we shall sometimes use the handy notation $A(z) \lesssim B(z)$ to signify that there exists a constant $C > 0$ such that $A(z) \le C B(z)$ for all $z$.
\vspace{3mm}

Let us now state our diffusion-approximation result.

\begin{theorem} \label{chap1-thm1}
Let $(\Omega, {\cal F}, ({\cal F}_t)_{t \ge 0}, \mathbb{P})$ be a filtered probability space equipped with a Markov process $m = (m_t)_{t \ge 0}$ satisfying Assumptions \ref{ass1} through \ref{ass4}, described in section \ref{m_ass} below. Let $T > 0$.
Assume that the initial data $(f_0^\eps, u_0^\eps)_{\eps > 0}$ satisfy
\begin{align*}
& f_0^\eps \ge 0 \text{ and } \int \int f^\eps_0(x,v) dx dv = 1,
\\
&  \rho^\eps_0 \to \rho_0 \text{ in $L^1_x$}, 
\hspace{5mm}
 u^\eps_0 \to u_0 \text{ in $H^\alpha_x$ for some $\alpha \in (1/2,3/2]$}
\end{align*}
and, for all fixed $\eps > 0$,
\begin{align} \label{fixed}
\int_x \int_v (1 + |v|^4) |f_0^\eps|^2 dx dv + \int_x \int_v (1+|v|) | \nabla_{x,v} f^\eps_0 | dx dv < \infty, 
\hspace{5mm}
u_0^\eps  \in H^\eta_x \text{ for} \eta(\eps) > 3/2.
\end{align}
Assume the following uniform bound:
\begin{align}
 \sup_{\eps > 0} \int_x \int_v |v|^3 f_0^\eps dx dv < \infty.
\end{align}
Let $(f^\eps,u^\eps)$ be the unique solution of \myref{eq} with initial data $(f_0^\eps, u_0^\eps)$ in the sense given in Proposition \ref{ex_un} below. 
Then for all $\gamma > 1/2$ and $\beta < \alpha$, there exists a subsequence $(\rho^{\eps_k}, u^{\eps_k})_{k \ge 1}$ such that
\begin{align*}
(\rho^{\eps_k}, u^{\eps_k}) \to (\rho, u) \text{ in law in } C([0,T] ; H^{-\gamma}_x \times H^\beta_x)
\end{align*}
where $u$ satisfies the heat equation
$$
\partial_t u - \partial_x^2 u = 0, \hspace{5mm} \text{with } \; u(0) = u_0,
$$
and $\rho$ is a martingale solution (see \mycite{da_prato}, Chapter 8, for the definition) of 
\begin{align}
d \rho = \partial_x \Big[ (a-u) \rho \Big]dt + \partial_x \Big[ \rho Q^{1/2} \circ dW_t \Big], 
\hspace{5mm} \text{with } \; \rho(0) = \rho_0,
\label{edps}
\end{align}
satisfying
$ \sup_{t \in [0,T]} \| \rho_t \|_{ L^1_x}  \le 1$ a.s.
In \myref{edps}, $(W_t)_{t \ge 0}$ denotes a cylindrical Wiener process on $L^2_x$. The coefficient $a \equiv a(x)$  and the trace operator $Q$ on $L^2_x$ are completely determined by the law of the stationary process $(\tld m_t)_{t \ge 0}$ associated with the random driving term $m$ (see section \ref{m_ass} below for a definition of $\tld m$):
\begin{align}
 a(x) =  \frac{1}{2} \int_0^\infty \E \Big[ \partial_x \tld m_0(x) \tld m_t(x) -  \tld m_0(x) \partial_x \tld m_t(x) \Big] dt 
+ \int_0^\infty e^{-t} \E \Big[ \tld m_0(x) \partial_x \tld m_t(x) \Big] dt,
\label{exp_a}
\\
 Qf(x) = \int_y k(x,y)f(y)dy,
\; \text{ with } \; k(x,y) = \int_{\mathbb{R}} \E \Big[ \tld m_0(x) \tld m_t(y) \Big] dt.
\hspace{13mm}
\label{exp_Q}
\end{align}

\end{theorem}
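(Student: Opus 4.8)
The strategy is the classical diffusion-approximation scheme based on the perturbed test function method, combined with a tightness/identification argument. First I would reformulate the kinetic system \myref{eq} as a Markov evolution for the pair $(Z^\eps_t, m^\eps_t)$, where $Z^\eps_t = (f^\eps_t, u^\eps_t)$ and $m^\eps_t = \tld m_{\eps^{-2}t}$ runs on the fast time scale; the joint process is Markov with generator of the form $\eps^{-2} \mathcal{M} + \eps^{-1} \mathcal{L}_1 + \mathcal{L}_0$, where $\mathcal{M}$ is the generator of the stationary driving process $\tld m$, $\mathcal{L}_1$ encodes the free transport term $v \partial_x f^\eps$ together with the $\eps^{-1}$-part of the velocity relaxation, and $\mathcal{L}_0$ collects the remaining $O(1)$ terms (the coupling with $u^\eps$, the heat operator, and the $\partial_v(vf^\eps)$ relaxation). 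Observe that along the limit the velocity variable is driven to a Dirac-type concentration: the relaxation $\partial_v((\cdot - v)f^\eps)$ on the fast scale forces $f^\eps$ towards $\rho^\eps(x)\delta_{v = \text{(local drift)}}$, so the effective macroscopic unknown is $\rho^\eps = \int f^\eps dv$ and the first moment $\int v f^\eps dv$, which must be expanded in $\eps$.

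Next I would fix a smooth test function $\varphi$ and build a perturbed test function $\varphi^\eps = \varphi + \eps \varphi_1 + \eps^2 \varphi_2$ so that $(\eps^{-2}\mathcal{M} + \eps^{-1}\mathcal{L}_1 + \mathcal{L}_0)\varphi^\eps$ converges, as $\eps \to 0$, to $\mathcal{L}\varphi$ where $\mathcal{L}$ is the generator of the limiting diffusion associated with \myref{edps}. The corrector $\varphi_1$ solves the Poisson equation $\mathcal{M}\varphi_1 = -\mathcal{L}_1 \varphi$ and $\varphi_2$ solves $\mathcal{M}\varphi_2 = \overline{\mathcal{L}_1\varphi_1 + \mathcal{L}_0\varphi} - \mathcal{L}_1\varphi_1 - \mathcal{L}_0\varphi$, where the bar denotes averaging against the invariant law of $\tld m$. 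Solvability of these Poisson equations is exactly where the mixing Assumptions \ref{ass3}--\ref{ass4} enter: the resolvent of $\mathcal{M}$ at $0$ is represented by $\int_0^\infty \E[\cdot\,|\,\tld m_0]dt$, and the exponential-mixing (or integrable-correlation) hypothesis makes this integral converge in the appropriate weighted space. Carrying out the averaging explicitly, using that $\int v f^\eps dv$ to leading order is $\rho^\eps \times (\text{drift})$ plus a fluctuating piece proportional to $m$, produces precisely the drift coefficient $a(x)$ in \myref{exp_a} — the first integral coming from the symmetrized correlation of $\tld m$ and $\partial_x \tld m$ arising through the transport operator, the $\int_0^\infty e^{-t}(\cdots)$ term coming from the $e^{-t}$ relaxation kernel of the velocity dynamics — and the diffusion coefficient $Q$ in \myref{exp_Q}, whose kernel is the full (two-sided) time-integral of the spatial correlation of $\tld m$. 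The $e^{-t}$ weight in \myref{exp_a} is the signature of the relaxation semigroup $e^{-t\partial_v(v\,\cdot)}$ acting on the first velocity moment; getting this bookkeeping exactly right is delicate.

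Then I would establish tightness of the laws of $(\rho^\eps, u^\eps)$ in $C([0,T]; H^{-\gamma}_x \times H^\beta_x)$. For $u^\eps$ this is straightforward from parabolic regularity and the uniform $L^1$ mass bound on $f^\eps$ (the right-hand side $\int(v - \eps u^\eps)f^\eps$ is controlled using the moment bounds propagated from the assumptions on $f_0^\eps$); one shows $u^\eps \to u$ solving the plain heat equation since the coupling term is $O(\eps)$. For $\rho^\eps$, the perturbed test function $\varphi^\eps$ applied to $\langle f^\eps_t, \varphi^\eps\rangle$ yields, via Dynkin's formula, that $\langle \rho^\eps_t, \varphi\rangle$ is a semimartingale with a uniformly bounded drift and a martingale part with uniformly bounded quadratic variation, which gives tightness by Aldous' criterion together with the a priori bound $\|\rho^\eps_t\|_{L^1_x}\le 1$. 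Passing to a subsequential limit, the perturbed-test-function computation identifies any limit point as a martingale solution of the martingale problem associated with \myref{edps}, i.e.\ $\langle\rho_t,\varphi\rangle - \langle\rho_0,\varphi\rangle - \int_0^t \langle\rho_s, \partial_x((u_s - a)\varphi) + \tfrac12 \partial_x(\text{[lower order from }Q])\rangle ds$ is a martingale with the correct bracket; rewriting the Itô form in Stratonovich form recovers \myref{edps}. The main obstacle, as flagged in the introduction, is the \emph{rigorous} construction of the correctors: one must identify the correct domain for $\mathcal{M}$, verify that $\mathcal{L}_1\varphi$ and the centered source for $\varphi_2$ lie in the range of $\mathcal{M}$, and control all the remainder terms $\eps \mathcal{L}_1\varphi_2$, $\eps^2\mathcal{L}_0\varphi_1$, etc.\ uniformly in $\eps$ — this requires the moment and derivative bounds on $f^\eps$ of \myref{fixed} to be propagated in time, and careful estimates in the $m$-weighted spaces coming from the mixing assumptions. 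A secondary difficulty is uniqueness for the limiting SPDE \myref{edps}: since the constructed solution has very low regularity (only $L^1_x$ in space, $H^{-\gamma}_x$ path regularity), uniqueness is obtained by a duality argument against solutions of a backward SPDE, as announced in the introduction.
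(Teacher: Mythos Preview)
Your overall architecture---perturbed test functions, tightness via Aldous, identification of the limit through the martingale problem, It\^o-to-Stratonovich rewriting---matches the paper's strategy. However, your decomposition of the generator is wrong in a way that would prevent the corrector equations from closing.

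You write the generator as $\eps^{-2}\mathcal{M}+\eps^{-1}\mathcal{L}_1+\mathcal{L}_0$ with $\mathcal{M}$ equal to the generator of the driving process $m$ alone, and you place ``the $\partial_v(vf^\eps)$ relaxation'' in the $O(1)$ piece $\mathcal{L}_0$. But in equation \myref{eq} the term $\partial_v[(m^\eps-v)f^\eps]$ carries the prefactor $\eps^{-2}$, so the velocity relaxation is on the \emph{same} fast scale as the noise. The correct fast operator is not $\mathcal{M}$ but
\[
\mathcal{L}_\#\psi(f,u,n)=-\blangle \partial_v[(n-v)f],\,D_f\psi\brangle + M\psi,
\]
i.e.\ the generator of the coupled auxiliary process $(g_t,m_t)$ where $g_t$ solves $\partial_t g+\partial_v[(m_t-v)g]=0$. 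The Poisson equations for the correctors $\phi_1,\phi_2$ must be solved with respect to $\mathcal{L}_\#$, not $M$. Concretely, the first corrector has the form $\phi_1(f,u,n)=\blangle J(f)-M^{-1}I(n)\rho,\,\partial_x\xi\brangle\partial_1\Phi$; it depends on $J(f)$, and the relaxation part of $\mathcal{L}_\#$ acts nontrivially on $J(f)$ (producing $n\rho-J(f)$). If you set up $M\phi_1=-\mathcal{L}_1\phi$ instead, the uncancelled $\eps^{-2}$ relaxation contribution acting on $\phi_1$ would blow up the expansion. The paper handles this by studying the auxiliary semigroup $Q_t$ and its invariant measures $\mu_\rho$ (supported on $\rho\otimes\delta_{\tld w}$ with $\tld w=\int_{-\infty}^0 e^s\tld m_s\,ds$), and the $e^{-t}$ factors in \myref{exp_a} arise precisely from this combined relaxation-plus-noise equilibrium---you correctly identify their origin, but your Poisson-equation setup would not produce them.

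A minor point: uniqueness via the dual backward SPDE is the content of Theorem~\ref{chap1-thm2}, not Theorem~\ref{chap1-thm1}. The present theorem only asserts subsequential convergence to \emph{some} martingale solution, so the duality argument is not part of its proof.
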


Note that assumption \myref{fixed} is only necessary to ensure that the system \myref{eq} can indeed be uniquely solved for fixed $\eps > 0$, as will be stated later on, in Proposition \ref{ex_un}. 
The result of
Theorem \ref{chap1-thm1} holds when the underlying probability space $\Omega$ is equipped with the probability~$\mathbb{P}_\mu$ $-$ so that $\mathbb{P}_\mu[ m_0 \in A] = \mu(A)$ $-$
for any initial law $\mu \in {\cal P}(E)$.
For instance, one may consider the case $\mu = \delta_n$ where the process $(m_t)_{t \ge 0}$ starts at some fixed $n \in E$ ; or one may consider the case where $\mu = \nu$ is the invariant measure of $m$, so that $(m_t)_{t \ge 0} := (\tld m_t)_{t \ge 0}$ is a stationary process (see again section \ref{m_ass} below).

The weak existence result established in Theorem \ref{chap1-thm1} is completed by the following strong uniqueness result.

\begin{theorem} \label{chap1-thm2}
Let $\rho_0 \in L^1_x$ and $u_0 \in W^{4,\infty}_x$. 
Assume furthermore that the coefficient $a$ and the kernel $k$ defined in \myref{exp_a}, \myref{exp_Q} satisfy
\begin{align}
a \in W_x^{4,\infty}, \hspace{10mm} \sup_{y \in \mathbb{T}} \| k(\cdot, y) \|_{W_x^{10+\delta, \infty}} < \infty 
\; 
\text{ for some } \delta > 0.
\label{regularity}
\end{align}
Then a solution $\rho \in C([0,T] ; H^{-\gamma}_x)$ of \myref{edps} satisfying the bound
$$
\sup_{t \in [0,T]} \| \rho_t \|_{L^1_x} \le 1 \text{ a.s}
$$
is path-wise unique.
\end{theorem}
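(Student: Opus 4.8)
The plan is to prove path-wise uniqueness for \myref{edps} by a duality argument, since the equation is linear and first-order but the solution lives in $H^{-\gamma}_x$ with $\gamma > 1/2$, far too low a regularity for an energy estimate to close directly. Given two solutions $\rho^{(1)}, \rho^{(2)} \in C([0,T]; H^{-\gamma}_x)$ with the $L^1_x$ bound, their difference $\rho := \rho^{(1)} - \rho^{(2)}$ solves the same linear equation with $\rho(0) = 0$; to show $\rho \equiv 0$ it suffices to show $\langle \rho_t, \varphi \rangle = 0$ a.s.\ for every $t \in [0,T]$ and every smooth test function $\varphi$. The idea is to pair $\rho_t$ against the solution $\psi$ of a \emph{backward} SPDE chosen so that the It\^o differential of $\langle \rho_s, \psi_s \rangle$ vanishes, forcing $\langle \rho_t, \psi_t \rangle = \langle \rho_0, \psi_0 \rangle = 0$ while $\psi_t = \varphi$ at the terminal time.

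First I would write \myref{edps} in It\^o form. Converting the Stratonovich term $\partial_x[\rho Q^{1/2} \circ dW_t]$ to It\^o produces a correction $\tfrac12 \partial_x[ \partial_x (\rho \, k(x,\cdot)) \cdot ] $-type second-order operator; concretely, with $Q^{1/2} dW = \sum_j \sigma_j \, d\beta_j$ where $\{\sigma_j\}$ is built from the kernel $k$, the It\^o form reads $d\rho = \partial_x[(a-u)\rho]\,dt + \tfrac12 \sum_j \partial_x(\sigma_j \partial_x(\sigma_j \rho))\,dt + \sum_j \partial_x(\sigma_j \rho)\,d\beta_j$. The associated backward equation for the dual variable $\psi_s$ (adjoint of each operator, with reversed-time It\^o/Stratonovich calculus) is, in Stratonovich form, $-d\psi = (a-u)\partial_x \psi \, ds + \sigma_j \partial_x \psi \circ d\beta_j$ (sum over $j$), run backward from $\psi_T = \varphi$. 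This is a linear first-order backward SPDE whose characteristics are the stochastic flow generated by the (random, time-dependent) vector field $a - u + \sum_j \sigma_j \dot\beta_j$; under the regularity hypothesis \myref{regularity} — namely $a \in W^{4,\infty}_x$, $u_0 \in W^{4,\infty}_x$ (so $u_t$ stays smooth under the heat flow), and $\sup_y \| k(\cdot,y)\|_{W^{10+\delta,\infty}_x} < \infty$, which controls enough $x$-derivatives of the $\sigma_j$ and the summability in $j$ — this backward SPDE admits a unique solution $\psi \in C([0,T]; C^2_x)$, say, with moments, obtained by the stochastic method of characteristics (Kunita's theory of stochastic flows) or by a Galerkin/energy argument in a high Sobolev space $H^s_x$ with $s$ large.

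Then I would carry out the It\^o product rule on $\langle \rho_s, \psi_s \rangle$. Here a mollification step is needed because $\rho$ is only in $H^{-\gamma}_x$: regularize $\rho_s$ by $\rho_s^\delta := \rho_s * \chi_\delta$, apply the (now legitimate) It\^o formula to $\langle \rho_s^\delta, \psi_s \rangle$, and check that the commutator terms $[\partial_x(\sigma_j \cdot), \chi_\delta *]$ and $[\partial_x((a-u)\cdot), \chi_\delta*]$ applied to $\rho$ and tested against $\psi$ or its derivatives converge to zero as $\delta \to 0$ — this is the usual DiPerna--Lions/Friedrichs commutator lemma, and it is exactly here that the strong regularity \myref{regularity} on the coefficients is consumed, together with the a priori bound $\sup_t \|\rho_t\|_{L^1_x} \le 1$ which gives the needed equi-integrability. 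After passing to the limit, all drift and martingale contributions cancel by construction of $\psi$, leaving $\langle \rho_t, \psi_t\rangle = \langle \rho_0, \psi_0 \rangle = 0$; since $\psi_t|_{t=T} = \varphi$ is arbitrary smooth and $T$ is arbitrary in $[0,T]$, we conclude $\rho_t = 0$ in $H^{-\gamma}_x$ for all $t$, hence $\rho^{(1)} = \rho^{(2)}$.

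The main obstacle I anticipate is twofold: first, establishing well-posedness and sufficient spatial regularity (with stochastic moment bounds, and uniformly on $[0,T]$) for the backward SPDE satisfied by $\psi$ — this is where the seemingly large number of derivatives demanded in \myref{regularity} is needed, because converting Stratonovich to It\^o and back, plus the commutator estimates, each cost several derivatives of the $\sigma_j$'s, and one must also control the infinite sum over $j$ via the kernel $k$; and second, justifying the commutator argument rigorously at the low regularity of $\rho$, i.e.\ showing the mollified identity passes to the limit. Everything else — the algebra of the Stratonovich-to-It\^o conversion, the cancellation in the product rule — is routine once these two analytic inputs are in place.
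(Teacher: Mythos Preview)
Your overall strategy --- duality against a backward SPDE, mollification of $\rho$, and a commutator argument exploiting the $L^1$ bound and the regularity \myref{regularity} --- is exactly what the paper does. The difference lies in how the backward problem is set up and solved.

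You write the dual equation as a backward \emph{Stratonovich} transport equation $-d\psi = (a-u)\partial_x\psi\,ds + \sigma_j\partial_x\psi\circ d\beta_j$ with deterministic terminal datum $\varphi$, and propose to solve it by the stochastic method of characteristics (Kunita's flow theory). The paper instead poses the dual as a \emph{forward-adapted} BSPDE for a pair $(\psi,Z)$, with $\mathcal F_T$-measurable terminal data, and solves it by vanishing viscosity: it adds $\tfrac{\varepsilon}{2}\partial_x^2\psi$, invokes existing super-parabolic BSPDE theory, proves energy estimates in $H^m_x$ ($0\le m\le 4$) uniform in $\varepsilon$, and passes to the limit. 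In the duality step the paper obtains only $\E[\langle\rho_T,\psi_T\rangle]=0$ (the martingale part does \emph{not} cancel pathwise in this framework; it is an honest It\^o integral whose expectation vanishes), and then uses the freedom of choosing random $\mathcal F_T$-measurable $\psi_T$ to upgrade this to $\rho_T=0$ a.s.

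Your claim that ``all drift and martingale contributions cancel by construction of $\psi$'' is correct only if $\psi$ is the backward-adapted solution obtained from the inverse stochastic flow and you apply the It\^o--Kunita--Wentzell formula; in the forward-adapted $(\psi,Z)$ framework the martingale terms survive and one must take expectation. Both routes are legitimate: yours is more geometric and directly exploits the first-order nature of the equation, but leans on Kunita's flow machinery (which \myref{regularity} is indeed strong enough to support); the paper's route is more self-contained and PDE-flavoured, trading the flow theory for explicit $H^m$ energy estimates on the viscous BSPDE. Be aware of which framework you are in when you assert the cancellation.
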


As a consequence of Theorem \ref{chap1-thm2}, Yamada-Watanabe's Theorem guarantees that uniqueness in law ({\it i.e.} weak uniqueness in the probabilistic 
sense) holds for \myref{edps} and the whole sequence converges in Theorem \ref{chap1-thm1}.
The regularity properties \eqref{regularity} will be satisfied under some adequate assumptions on the driving process $(m_t)_{t \ge 0}$ (see the end of section~\ref{m_ass}).

In the limit where $\eps$ goes to $0$, the rather complex random process $(\rho^\eps_t)_{t \ge 0}$ driven by a general Markov process $m=(m_t)_{t \ge 0}$ can therefore be approximated by a much simpler diffusion $(\rho_t)_{t \ge 0}$ whose drift and covariance are explicitely determined by the law of the associated stationary process $\tld m = (\tld m_t)_{t \ge 0}$. 
Let us give a quick overview of the proof of Theorems \ref{chap1-thm1} and \ref{chap1-thm2}.
\vspace{3mm}

In section \ref{chap1-2} we start by introducing a rigorous framework for solving Poisson equations associated with generators of Markov processes, which will be of great use in section \ref{5} in particular. The tools and results presented here are all very classical, however the authors could not find a reference really suited for the specific setup adopted in the present work. We then describe the assumptions made on the driving process $m$ and their consequences in section \ref{m_ass}.
In section \ref{3}, we establish some estimates regarding the solutions of \myref{eq} and rigorously determine the generator of the process $(f^\eps,u^\eps,m^\eps)$.
The essential work is developed in section \ref{5} where we use the perturbed test function method to determine the limiting equation \myref{edps} and pave the way towards the convergence of the martingale problem. Section \ref{6} concludes the proof by proving the convergence announced in Theorem \ref{chap1-thm1}. Finally, the path-wise uniqueness of \myref{edps} stated in Theorem \ref{chap1-thm2} is looked upon in section \ref{7}.
\end{subsection}

\end{section}

\begin{section}{Generators and Poisson equations} \label{chap1-2}

\begin{subsection}{Generalities on generators} \label{markov}

On a filtered probability space $(\Omega, {\cal F}, ({\cal F}_t)_{t \ge 0})$, let us consider a càdlàg, time-homogeneous Markov process  $X=(X_t)_{t \ge 0}$  taking values in some separable metric space ${\cal X}  \equiv({\cal X},d_{\cal X})$. Throughout this paper, we will naturally use the handy notation $(X_t(x))_{t \ge 0}$ to denote $(X_t)_{t \ge 0}$ under the probability $\mathbb{P}_{\delta_x}$ (that is $X_0(x) = x$ a.s). 
All the processes we shall encounter in this work satisfy the following assumptions:
\begin{itemize}
\item $X$ is stochastically continuous:
\begin{align}
\forall x \in {\cal X}, \; \; \forall t \ge 0, \; \; \forall \varepsilon > 0, \; \; \mathbb{P} \Big[ d_{\cal X}(X_s(x),X_t(x)) > \varepsilon \Big] \xrightarrow[s \to t]{} 0.
\label{stoc_cont}
\end{align}
 \item $X$ is locally bounded (in space and time):
\begin{align}
\forall r > 0, \forall T > 0, \exists R > 0, \forall x \in B(0,r), \; 
\mathbb{P} \Big[ \forall t \in [0,T],  \; \; X_t(x) \in B(0,R)  \Big] = 1.
\label{loc_bound}
\end{align}
\end{itemize}

Let us denote by $(Q_t)_{t \ge 0}$ the semigroup associated to $(X_t)_{t \ge 0}$. The local boundedness assumption \myref{loc_bound} allows to define it over locally bounded test functions: let us introduce the spaces
 \begin{align}
& \B^{loc}({\cal X}) = \{ \psi : {\cal X} \to \mathbb{R} \text{ measurable}, \;  \psi \text{ maps bounded sets onto bounded sets } \} \\
& C_b^{loc}({\cal X}) = \{ \psi : {\cal X} \to \mathbb{R} \text{ continuous}, \;  \psi \text{ maps bounded sets onto bounded sets } \} 
\end{align}
and define the semigroup $Q_t : \B^{loc}({\cal X}) \to \B^{loc}({\cal X})$ as
\begin{align*}
\forall \psi \in \B^{loc}({\cal X}), \; \; Q_t \psi(x) = \E[\psi(X_t(x))] .
 \end{align*} 
Note that, using assumption \myref{stoc_cont}, the map $t \mapsto Q_t \psi(x)$ is continuous whenever $\psi \in C_b^{loc}({\cal X})$. 
The Markov property
\begin{align*}
\forall \psi \in \B^{loc}({\cal X}), \; \; \; \E [ \psi(X_{t+s}) | {\cal F}_s ] = Q_t \psi(X_s) \; \text{ a.s}
\end{align*}
yields in particular the semigroup property $Q_{t+s} = Q_t Q_s$ on $\B^{loc}({\cal X})$.

 \begin{define}[Infinitesimal generator] 
The infinitesimal generator of $X$ is the operator
$$A : D(A) \subset \B^{loc}({\cal X}) \to C_b^{loc}({\cal X})$$ 
defined as follows: for all $\psi \in \B^{loc}({\cal X})$ and $\theta \in C_b^{loc}({\cal X})$,
\begin{align*}
\psi \in D(A) \text{ and } A \psi = \theta \; \; \text{ if and only if } \; \;  \frac{Q_t \psi - \psi}{t} \xrightarrow[ t \to \, 0^+]{b.p.c} \theta \in C_b^{loc}({\cal X})
\end{align*}
where $b.p.c$ denotes the (locally) bounded pointwise convergence:
\begin{align*}
\theta_t  \xrightarrow[]{b.p.c} \theta  \Longleftrightarrow \left\{
\begin{array}{l}
\forall x \in {\cal X}, \; \; \theta_t(x) \to \theta(x) \\
\forall B \subset {\cal X} \text{ bounded}, \; \; \displaystyle{ 
\sup_{t > 0}  \sup_{x \in B} | \theta_t (x) |  < \infty
} .
\end{array}
\right.
\end{align*}
\end{define}

We then have the following classical result.

\begin{proposition}[Kolmogorov's equation] \label{chap1-kolmo}
For all $\psi \in D(A)$, for all $t \ge 0$,
\begin{align*}
\frac{Q_{t+s} \psi - Q_t \psi}{s}  \xrightarrow[ s \to 0]{b.p.c}  Q_t A \psi .
\end{align*}
In particular, for all $x \in {\cal X}$,
\begin{align*}
\frac{d}{dt}Q_t \psi(x) = Q_t A \psi(x).
\end{align*}
\end{proposition}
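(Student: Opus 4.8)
The plan is to establish Kolmogorov's equation by exploiting the semigroup property together with the definition of the generator in terms of locally bounded pointwise convergence. The key algebraic observation is that for $\psi \in D(A)$ one can write, using $Q_{t+s} = Q_t Q_s = Q_s Q_t$,
\begin{align*}
\frac{Q_{t+s}\psi - Q_t \psi}{s} = Q_t \left( \frac{Q_s \psi - \psi}{s} \right),
\end{align*}
so the problem reduces to passing the $b.p.c$ limit $\frac{Q_s \psi - \psi}{s} \xrightarrow[s\to 0^+]{b.p.c} A\psi$ through the fixed operator $Q_t$. For the right-sided limit $s \to 0^+$ this is the natural route; for the two-sided limit one must additionally use the alternative factorization $\frac{Q_{t+s}\psi - Q_t\psi}{s} = Q_{t+s}\left(\frac{\psi - Q_{-s}\psi}{-s}\right)$ for $s<0$ (with $t+s \geq 0$) and argue that $Q_{t+s}$ applied to something converging $b.p.c$ still converges, combined with continuity of $s \mapsto Q_{t+s}\phi(x)$ for $\phi \in C_b^{loc}$ coming from \myref{stoc_cont}.

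First I would record the general lemma that $Q_t$ is continuous with respect to $b.p.c$ convergence: if $\theta_s \xrightarrow{b.p.c} \theta$ then $Q_t \theta_s \xrightarrow{b.p.c} Q_t \theta$. Pointwise convergence $Q_t\theta_s(x) = \E[\theta_s(X_t(x))] \to \E[\theta(X_t(x))] = Q_t\theta(x)$ follows from dominated convergence, using the local boundedness \myref{loc_bound} of $X$ to produce, for each fixed $x$, a bounded set $B(0,R)$ containing the whole trajectory $(X_r(x))_{r \in [0,t]}$ almost surely, on which the uniform bound $\sup_s \sup_{y \in B} |\theta_s(y)| < \infty$ supplies the dominating constant. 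The local uniform bound part of $b.p.c$ for $Q_t \theta_s$ follows similarly: for $x$ ranging over a bounded set $B'$, assumption \myref{loc_bound} gives a single $R$ with $X_t(x) \in B(0,R)$ a.s.\ for all $x \in B'$, whence $|Q_t\theta_s(x)| \leq \sup_s\sup_{y \in B(0,R)}|\theta_s(y)|$.

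Then I would assemble the proof. For the forward difference ($s > 0$): apply the lemma with $\theta_s = \frac{Q_s\psi - \psi}{s}$ and $\theta = A\psi$, giving $\frac{Q_{t+s}\psi - Q_t\psi}{s} = Q_t \theta_s \xrightarrow{b.p.c} Q_t A\psi$. For the backward difference ($s < 0$, $t + s \geq 0$): write $\frac{Q_{t+s}\psi - Q_t\psi}{s} = Q_{t+s}\left(\frac{\psi - Q_{|s|}\psi}{|s|}\right) = Q_{t+s}\theta_{|s|} + (Q_{t+s} - Q_t)\theta_{|s|}$ wait — more cleanly, $\frac{Q_{t+s}\psi - Q_t\psi}{s} = Q_{t+s}\left(\frac{Q_{|s|}\psi - \psi}{|s|}\right)$, and then split as $Q_{t}\theta_{|s|} + (Q_{t+s} - Q_t)\theta_{|s|}$; the first term converges $b.p.c$ to $Q_t A\psi$ by the lemma, while the second is controlled by writing $\theta_{|s|} = \frac{Q_{|s|}\psi - \psi}{|s|}$, noting $\theta_{|s|}$ is eventually uniformly bounded on bounded sets and converges pointwise to $A\psi \in C_b^{loc}$, so that $(Q_{t+s}-Q_t)A\psi(x) \to 0$ by stochastic continuity \myref{stoc_cont} and dominated convergence, plus $(Q_{t+s}-Q_t)(\theta_{|s|}-A\psi)$ is small since $\theta_{|s|} - A\psi \to 0$ in $b.p.c$ and the semigroup is a contraction for the supremum norm on the relevant bounded set. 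Finally, since $Q_t A\psi \in C_b^{loc}({\cal X})$ (because $A\psi \in C_b^{loc}$ and $X$ is stochastically continuous and locally bounded, so $Q_t$ maps $C_b^{loc}$ into itself), the $b.p.c$ convergence in particular gives the scalar statement $\frac{d}{dt}Q_t\psi(x) = Q_t A\psi(x)$; one should also check the derivative is two-sided, which is exactly what the two-sided difference quotient limit provides.

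The main obstacle I anticipate is the backward-difference direction: commuting the limit past $Q_{t+s}$ when $s$ itself is varying is more delicate than the forward case, because the operator and the argument both move. The clean fix is the split above, isolating a fixed operator $Q_t$ (handled by the $b.p.c$-continuity lemma) from a "small perturbation of the operator" term $(Q_{t+s}-Q_t)$ applied to a family that is uniformly bounded on bounded sets and converges to the continuous function $A\psi$; the latter is then killed by stochastic continuity \myref{stoc_cont}. A secondary technical point to be careful about is ensuring all the dominating bounds are genuinely \emph{local} (valid on each bounded set with constants depending only on that set), which is precisely why assumptions \myref{stoc_cont} and \myref{loc_bound} were imposed, and why working in $\B^{loc}$ and $C_b^{loc}$ rather than with global sup-norms is essential here.
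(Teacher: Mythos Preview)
Your forward-difference argument is exactly the paper's: factor through $Q_t$ and pass the $b.p.c$ limit inside the expectation by dominated convergence, using local boundedness of the trajectory. Your $b.p.c$-continuity lemma for a fixed $Q_t$ is the right abstraction of that step.

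For the backward difference, however, your decomposition is more elaborate than the paper's and your justification for the final cross-term is flawed. The paper simply writes
\[
\frac{Q_t\psi - Q_{t-s}\psi}{s} \;=\; Q_{t-s}\Big[\tfrac{Q_s\psi-\psi}{s}-A\psi\Big] \;+\; Q_{t-s}A\psi,
\]
and handles the second term via the continuity of $u\mapsto Q_uA\psi(x)$ (available precisely because $A\psi\in C_b^{loc}$). Your route instead first splits off $Q_t\theta_{|s|}$ and is then left with $(Q_{t+s}-Q_t)(\theta_{|s|}-A\psi)$, which you claim is small ``since $\theta_{|s|}-A\psi\to 0$ in $b.p.c$ and the semigroup is a contraction for the supremum norm on the relevant bounded set.'' That inference is wrong: $b.p.c$ convergence provides only \emph{pointwise} convergence together with a uniform bound on each bounded set, not convergence of the supremum over a bounded set. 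Hence the estimate $|(Q_{t+s}-Q_t)(\theta_{|s|}-A\psi)(x)|\le 2\sup_{y\in B}|\theta_{|s|}(y)-A\psi(y)|$ is correct but does not tend to zero under your hypotheses. If you unwind your extra split, the residual term is exactly the paper's first term $Q_{t-s}[\theta_s-A\psi]$ plus a harmless piece already covered by your lemma; so you may as well adopt the paper's single split and invoke the continuity of $u\mapsto Q_uA\psi(x)$ directly, which is the point of the remark ``keeping in mind that $A\psi\in C_b^{loc}$'' in the paper.
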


\begin{remark}
Provided that $Q_t A \psi \in C_b^{loc}({\cal X})$, this states that $Q_t \psi \in D(A)$ and $A Q_t \psi = Q_t A \psi$.
\end{remark}

\begin{proof}[Proof]
For $s > 0$, we may write
\begin{align*}
\frac{Q_{t+s} \psi - Q_t \psi}{s}(x) = Q_t \Big[ \frac{Q_{s} \psi - \psi}{s} \Big](x)
= \E \Big[ \frac{Q_{s} \psi (X_t(x)) - \psi(X_t(x))}{s} \Big].
\end{align*}
Making use of \myref{stoc_cont} and \myref{loc_bound}, we may use dominated convergence as $s$ goes to $0$ to conclude for the right derivative. As for the left derivative, we simply note that
\begin{align*}
\frac{Q_t \psi - Q_{t-s} \psi}{s} = Q_{t-s} \Big[ \frac{Q_s \psi - \psi}{s} - A\psi \Big] + Q_{t-s} A \psi
\end{align*}
and conclude in the same way,  keeping in mind that $A\psi~\in~C_b^{loc}({\cal X})$ so that  $u \mapsto Q_u A\psi(x)$ is continuous. 
. 
\end{proof}

The following property will be of great interest, and could in fact be an alternative way of defining the generator $A$.

\begin{proposition}[Martingale problem]
For all $\psi \in D(A)$, for all $x \in {\cal X}$,
\begin{align*}
 M_\psi(t) = \psi(X_t(x)) - \int_0^t A \psi (X_s(x)) ds, \; \; t \ge 0
\end{align*}
defines an $({\cal F}_t)_{t \ge 0}$-martingale.
\end{proposition}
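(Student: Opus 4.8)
The plan is to deduce the martingale property directly from Proposition \ref{chap1-kolmo} (Kolmogorov's equation) together with the Markov property of $X$. Fix $\psi \in D(A)$ and $x \in {\cal X}$. The first observation is that $M_\psi$ is adapted and integrable: since $\psi \in \B^{loc}({\cal X})$ and $X$ is locally bounded by \myref{loc_bound}, the trajectory $(X_s(x))_{s \in [0,t]}$ almost surely stays in a bounded set, so $\psi(X_s(x))$ and $A\psi(X_s(x))$ are almost surely bounded on $[0,t]$; hence the integral $\int_0^t A\psi(X_s(x))\,ds$ is well defined and $M_\psi(t)$ is bounded, in particular in $L^1$.

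The core step is to verify that $\E[M_\psi(t+h) - M_\psi(t) \mid {\cal F}_t] = 0$ for $h \ge 0$. Using the Markov property $\E[\,\cdot \mid {\cal F}_t]$ reduces everything to the semigroup evaluated at $X_t(x)$: one writes
\begin{align*}
\E\big[\psi(X_{t+h}(x)) \mid {\cal F}_t\big] &= Q_h\psi(X_t(x)), \\
\E\Big[\int_t^{t+h} A\psi(X_s(x))\,ds \,\Big|\, {\cal F}_t\Big] &= \int_0^{h} Q_r (A\psi)(X_t(x))\,dr,
\end{align*}
where the second identity uses the Markov property at time $t$, a shift of the integration variable, and Fubini (justified by the local boundedness, exactly as in the proof of Proposition \ref{chap1-kolmo}). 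Subtracting, the conditional increment equals
\begin{align*}
Q_h\psi(X_t(x)) - \psi(X_t(x)) - \int_0^h Q_r(A\psi)(X_t(x))\,dr.
\end{align*}
By Proposition \ref{chap1-kolmo}, for every $y \in {\cal X}$ the map $r \mapsto Q_r\psi(y)$ is differentiable with derivative $Q_r(A\psi)(y)$, and it is continuous at $r = 0$ with value $\psi(y)$; the fundamental theorem of calculus then gives $Q_h\psi(y) - \psi(y) = \int_0^h Q_r(A\psi)(y)\,dr$ for all $y$, in particular for $y = X_t(x)$. Hence the conditional increment vanishes, which is the martingale identity.

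The main obstacle, such as it is, is bookkeeping rather than depth: one must make sure Fubini's theorem applies when swapping $\E[\cdot \mid {\cal F}_t]$ with $\int_t^{t+h}$, and that the fundamental theorem of calculus is legitimately invoked for $r \mapsto Q_r\psi(y)$ knowing only that this map is everywhere differentiable with a continuous derivative (which is guaranteed since $A\psi \in C_b^{loc}({\cal X})$ and $r \mapsto Q_r(A\psi)(y)$ is continuous by stochastic continuity \myref{stoc_cont}). Both points are handled exactly by the local boundedness assumption \myref{loc_bound} and the arguments already used to prove Proposition \ref{chap1-kolmo}, so I would simply refer back to them rather than repeat the dominated-convergence details.
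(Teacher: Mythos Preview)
Your proof is correct and follows essentially the same approach as the paper: both use the Markov property to reduce the conditional increment to $Q_h\psi(y) - \psi(y) - \int_0^h Q_r(A\psi)(y)\,dr$ at $y = X_t(x)$, and then invoke Kolmogorov's equation (Proposition \ref{chap1-kolmo}) together with the fundamental theorem of calculus to conclude this vanishes. The paper phrases things by testing against bounded ${\cal F}_s$-measurable $h$ rather than computing the conditional expectation directly, but the computation is the same.
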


\begin{proof}[Proof]
Let us write $X_t := X_t(x)$ for simplicity.
We need to check that, for all $h$ bounded, ${\cal F}_s$-measurable,
\begin{align*}
\E \Big[ \Big( \psi(X_t) - \psi(X_s) \Big) h \Big] = \E \Big[ \Big( \int_s^t A \psi(X_u) du \Big) h \Big]  .
\end{align*}
Using the Markov property (twice) and Proposition \ref{chap1-kolmo}, the right hand side can be written as
\begin{align*}
 \E \Big[ \E \Big[ \int_s^t A \psi(X_u) du | {\cal F}_s \Big] h \Big] & =
 \E  \Big[ \Big( \int_s^t \Big[ \E A \psi(X_u) | {\cal F}_s \Big] du \Big)  h \Big] 
  = \E  \Big[ \Big( \int_s^t Q_{u-s} A \psi (X_s) du \Big)  h \Big] \\
 & = \E \Big[ \Big( \int_0^{t-s} \frac{d}{du} Q_u \psi(X_s) du \Big) h   \Big] 
  = \E \Big[ \Big(  Q_{t-s} \psi(X_s) du - \psi(X_s) \Big) h   \Big]  \\
 & = \E \Big[ \Big( \psi(X_t) - \psi(X_s) \Big) h \Big] .
\end{align*}
Again, we have used the fact $u \mapsto Q_u A\psi(x)$ is continous since $A\psi$ is continuous.
\end{proof}

Let us now give a natural context in which one can interchange generator and integral.
\begin{proposition}[Generator under the integral sign] \label{gen_int} 
Let $(t,x) \in \mathbb{R}^+ \times {\cal X} \mapsto \psi_t(x) \in \mathbb{R}$ be a measurable map, such that $\psi_t \in D(A)$ for all $t \ge 0$.
Assume that, for all $B \subset {\cal X}$ bounded,
\begin{align}
& \int_0^\infty \sup_{x \in B} | \psi_t(x) | dt < \infty 
\label{int1}\\
& \int_0^\infty \sup_{x \in B} | A \psi_t(x) | dt < \infty 
\label{int2}
\end{align}
Then $\displaystyle{\psi = \int_0^\infty \psi_t dt \in D(A)}$ and $ \displaystyle{ A \psi = \int_0^\infty A \psi_t dt }$.
\end{proposition}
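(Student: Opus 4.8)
The plan is to reduce the statement to Fubini's theorem for Bochner-type integrals combined with the bounded pointwise convergence (b.p.c.) characterization of the generator. First I would fix an arbitrary bounded set $B \subset {\cal X}$ and observe that, by \myref{int1} and \myref{loc_bound}, the function $\psi = \int_0^\infty \psi_t \, dt$ is well defined pointwise and lies in $\B^{loc}({\cal X})$: for $x \in B$ we have $|\psi(x)| \le \int_0^\infty \sup_{y \in B} |\psi_t(y)| \, dt < \infty$. Similarly $\int_0^\infty A\psi_t \, dt$ defines an element of $\B^{loc}({\cal X})$; one also checks it lies in $C_b^{loc}({\cal X})$ by dominated convergence, using the continuity of each $A\psi_t$ and the integrable bound \myref{int2}.

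Next I would compute, for $s > 0$ and $x \in {\cal X}$,
\begin{align*}
\frac{Q_s \psi - \psi}{s}(x) = \frac{1}{s}\E\Big[ \int_0^\infty \psi_t(X_s(x)) \, dt \Big] - \frac{1}{s}\int_0^\infty \psi_t(x) \, dt = \int_0^\infty \frac{Q_s \psi_t - \psi_t}{s}(x) \, dt,
\end{align*}
where the interchange of $\E$ (resp. $Q_s$) with $\int_0^\infty dt$ is justified by Fubini: for fixed $s$, the process $(X_u(x))_{u \le s}$ stays in a fixed bounded set $B$ a.s. by \myref{loc_bound}, hence $|\psi_t(X_s(x))| \le \sup_{y \in B}|\psi_t(y)|$, which is integrable in $t$ by \myref{int1}. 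Now I want to pass to the limit $s \to 0^+$ inside the $t$-integral. For each fixed $t$, since $\psi_t \in D(A)$, the integrand $\frac{Q_s \psi_t - \psi_t}{s}(x)$ converges as $s \to 0^+$ to $A\psi_t(x)$, pointwise in $x$. To apply dominated convergence in $t$ (uniformly for $x$ in a bounded set, to get b.p.c.), I need an $s$-uniform, $t$-integrable bound on $\sup_{x \in B} \big|\frac{Q_s \psi_t - \psi_t}{s}(x)\big|$.

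The main obstacle is precisely obtaining this uniform-in-$s$ domination. The naive bound $\frac{1}{s}\int_0^s \sup_{x \in B'} |A\psi_t(Q_r\text{-image})| \, dr$ obtained by writing $\frac{Q_s\psi_t - \psi_t}{s} = \frac{1}{s}\int_0^s Q_r A\psi_t \, dr$ (via Proposition \ref{chap1-kolmo}) works, but requires controlling $Q_r A\psi_t$ uniformly in $r \in [0,s]$ and in $t$; using \myref{loc_bound} to find a bounded set $B'$ containing the trajectories issued from $B$ up to time $1$, one gets $\sup_{x \in B}|Q_r A\psi_t(x)| \le \sup_{y \in B'}|A\psi_t(y)|$ for $r \le 1$, and the right-hand side is integrable in $t$ by \myref{int2} applied to $B'$. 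Thus $\sup_{x \in B}\big|\frac{Q_s\psi_t - \psi_t}{s}(x)\big| \le \sup_{y \in B'}|A\psi_t(y)| =: g(t)$ with $\int_0^\infty g(t)\,dt < \infty$, uniformly for $s \in (0,1]$. Dominated convergence then gives
\begin{align*}
\frac{Q_s \psi - \psi}{s}(x) = \int_0^\infty \frac{Q_s \psi_t - \psi_t}{s}(x) \, dt \xrightarrow[s \to 0^+]{} \int_0^\infty A\psi_t(x) \, dt,
\end{align*}
with the convergence being b.p.c. (pointwise in $x$, and locally bounded thanks to the uniform dominating function $g$). Since the limit is in $C_b^{loc}({\cal X})$ as checked above, this is exactly the statement that $\psi \in D(A)$ with $A\psi = \int_0^\infty A\psi_t \, dt$. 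I would close by noting that all the measurability requirements needed to apply Fubini follow from the joint measurability of $(t,x) \mapsto \psi_t(x)$ together with the definition of $Q_s$ as an expectation.
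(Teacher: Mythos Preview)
Your proof is correct and follows essentially the same approach as the paper: interchange $Q_s$ with $\int_0^\infty dt$ via Fubini using \myref{int1} and \myref{loc_bound}, then dominate the difference quotient by writing $\frac{Q_s\psi_t-\psi_t}{s}=\frac{1}{s}\int_0^s Q_\sigma A\psi_t\,d\sigma$ (Kolmogorov) and bounding $|Q_\sigma A\psi_t(x)|\le \sup_{y\in B'}|A\psi_t(y)|$ for a larger bounded set $B'$ furnished by \myref{loc_bound}, which is integrable in $t$ by \myref{int2}. The only cosmetic difference is that you check $\int_0^\infty A\psi_t\,dt\in C_b^{loc}({\cal X})$ at the start rather than at the end.
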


\begin{proof}[Proof]
Let us not get into the questions of measurability. First, $\psi$ is well defined, in $\B^{loc}(X)$, and we have
$
Q_s \psi (x)= \int_0^\infty Q_s \psi_t(x) dt$.
The interversion is justified by Fubini's theorem since, using \myref{int1},
\begin{align*}
\E \Big[ \int_0^\infty | \psi_t | (X_s(x)) dt  \Big] \le \int_0^\infty \sup_{y \in B} |\psi_t(y) | dt <\infty
\end{align*}
where $B \subset {\cal X}$ is a bounded set such that $\forall \sigma \in [0,s], \; \; X_\sigma(x) \in B, \; \mathbb{P}$ a.s. It follows that 
\begin{align*}
\frac{ Q_s \psi(x) - \psi(x) }{s} = \int_0^\infty \frac{Q_s \psi_t(x) - \psi_t(x)}{s} dt .
\end{align*}
For $x \in B(0,r)$ for some $r > 0$, for $s \le 1$, we have the bound
\begin{align*}
\Big|  \frac{Q_s \psi_t(x) - \psi_t(x)}{s} \Big| \le \sup_{\sigma \in [0,s]} \Big| \frac{d}{d\sigma} Q_\sigma \psi_t(x) \Big| = \sup_{\sigma \in [0,s]} \Big| Q_\sigma A \psi_t(x) \Big| \le \sup_{y \in B} \Big| A \psi_t(y) \Big|
\end{align*}
where $B \subset {\cal X}$ is bounded and such that $\forall x \in B(0,r)$, $\forall \sigma \in [0,1], \; \; X_\sigma(x) \in B$, $\mathbb{P}$-a.s. Dominated convergence then gives the b.p.c of $ \int_0^\infty s^{-1}(Q_s \psi_t(x) - \psi_t(x)) dt$ to $\int_0^\infty A \psi_t(x) dt$. 
Assumption \myref{int2} and the fact that $A \psi_t \in C_b^{loc}({\cal X})$ for all $t \ge 0$ guarantee that $\int_0^\infty A \psi_t dt$ is in $C_b^{loc}({\cal X})$, so that we can indeed state $\psi \in D(A)$.
\end{proof}

\begin{proposition}[Resolution of a Poisson equation $A \psi = \theta$] \label{poisson} 
Let $\theta \in D(A) \cap C_b^{loc}({\cal X})$. Assume that, for all $B \subset {\cal X}$ bounded,
\begin{align}
\int_0^\infty \sup_{x \in B} | Q_t \theta(x) | dt < \infty  \label{integrabilityPoisson} \\
\int_0^\infty \sup_{x \in B} | Q_t A \theta(x) | dt < \infty
\label{int22}
\end{align}
Then $\displaystyle{\psi = - \int_0^\infty Q_t \theta dt \in D(A)}$ and $A \psi = \theta$.
\end{proposition}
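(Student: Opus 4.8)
The plan is to apply Proposition \ref{gen_int} to the family $\psi_t := Q_t \theta$, for which all the hard analytic work has essentially been packaged already. First I would check that $\psi_t = Q_t \theta \in D(A)$ for every $t \ge 0$: this is exactly the content of the Remark following Proposition \ref{chap1-kolmo}, provided $Q_t A \theta \in C_b^{loc}({\cal X})$; since $\theta \in D(A)$, we have $A\theta \in C_b^{loc}({\cal X})$, and by stochastic continuity \myref{stoc_cont} and local boundedness \myref{loc_bound} the semigroup preserves $C_b^{loc}({\cal X})$, so indeed $Q_t\theta \in D(A)$ with $A Q_t \theta = Q_t A \theta$. Next, the two integrability hypotheses \myref{int1}--\myref{int2} of Proposition \ref{gen_int}, applied with $\psi_t = Q_t\theta$, become precisely $\int_0^\infty \sup_{x\in B} |Q_t\theta(x)| dt < \infty$ and $\int_0^\infty \sup_{x\in B} |A Q_t\theta(x)| dt = \int_0^\infty \sup_{x\in B}|Q_t A\theta(x)| dt < \infty$, i.e. exactly the assumptions \myref{integrabilityPoisson} and \myref{int22}. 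Hence Proposition \ref{gen_int} applies and yields
\begin{align*}
\psi := -\int_0^\infty Q_t\theta \, dt \in D(A), \qquad A\psi = -\int_0^\infty A Q_t\theta\, dt = -\int_0^\infty Q_t A\theta\, dt.
\end{align*}

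It then remains to identify $-\int_0^\infty Q_t A\theta\, dt$ with $\theta$. For this I would invoke Kolmogorov's equation (Proposition \ref{chap1-kolmo}): for $\theta \in D(A)$ we have $\frac{d}{dt} Q_t\theta(x) = Q_t A\theta(x)$ pointwise, so for any $x \in {\cal X}$ and $R>0$,
\begin{align*}
\int_0^R Q_t A\theta(x)\, dt = Q_R \theta(x) - \theta(x).
\end{align*}
Letting $R \to \infty$, the left-hand side converges (to $\int_0^\infty Q_t A\theta(x)\,dt$) by the absolute integrability furnished by \myref{integrabilityPoisson} — more precisely, $|Q_t A\theta(x)| \le \sup_{y\in B}|Q_t A\theta(y)|$ for a bounded $B$ containing a neighbourhood of $x$, which is integrable by \myref{int22}; note that \myref{integrabilityPoisson} on $Q_t\theta$ and Kolmogorov's equation together force $Q_R\theta(x) \to 0$ as $R\to\infty$ (a bounded function with integrable derivative that is itself integrable must tend to $0$ along $R\to\infty$, and continuity of $R \mapsto Q_R\theta(x)$ upgrades this to a genuine limit). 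Therefore $\int_0^\infty Q_t A\theta(x)\, dt = -\theta(x)$, i.e. $A\psi = \theta$, which is the claim.

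The main obstacle — such as it is — is the bookkeeping needed to justify interchanging the limit $R\to\infty$ with the integral and to confirm $Q_R\theta(x) \to 0$, rather than merely that $\liminf$ or some subsequential limit vanishes. One clean way is to observe that $t\mapsto Q_t\theta(x)$ has an integrable derivative $Q_tA\theta(x)$ on $[0,\infty)$ (by \myref{int22}), hence is Cauchy as $t\to\infty$ and admits a limit $\ell(x)$; the integrability of $Q_t\theta(x)$ in $t$ (by \myref{integrabilityPoisson}) then forces $\ell(x)=0$. Passing to the integral is immediate from the dominated-convergence bound already used in the proof of Proposition \ref{gen_int}. Everything else is a direct instantiation of results proved earlier in the section, so no genuinely new estimate is required.
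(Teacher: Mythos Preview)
There is a genuine gap at the very first step. You claim that ``by stochastic continuity \myref{stoc_cont} and local boundedness \myref{loc_bound} the semigroup preserves $C_b^{loc}({\cal X})$'', and use this to assert $Q_t\theta \in D(A)$ via the Remark after Proposition~\ref{chap1-kolmo}. But stochastic continuity \myref{stoc_cont} is a statement about continuity in the \emph{time} variable for a \emph{fixed} initial point $x$; it says nothing about continuity of $x \mapsto Q_t\psi(x)$. What you would need here is a Feller-type property, which is not assumed anywhere in the setup. In general $Q_t A\theta$ need not be continuous, so you cannot conclude $Q_t\theta \in D(A)$, and Proposition~\ref{gen_int} does not apply as stated.

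The paper's proof explicitly flags this obstruction (``Since $Q_t A\theta$ is not necessarily continuous, we cannot exactly state that $\psi_t = Q_t\theta \in D(A)$'') and works around it. Rather than invoking Proposition~\ref{gen_int}, it reruns the dominated-convergence argument from that proof directly on $\psi = -\int_0^\infty Q_t\theta\,dt$ to obtain the b.p.c.\ limit $\frac{Q_s\psi - \psi}{s} \to -\int_0^\infty Q_t A\theta\,dt$. The point is that one does not need each $Q_t\theta$ to lie in $D(A)$ for this computation; one only needs the final limit to land in $C_b^{loc}({\cal X})$. Your identification $-\int_0^\infty Q_t A\theta\,dt = \theta$ (which is correct, and argued the same way in the paper) then shows the limit is $\theta$, which \emph{is} in $C_b^{loc}({\cal X})$ by hypothesis, and this is what places $\psi$ in $D(A)$. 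So the second half of your argument is fine; it is only the shortcut through Proposition~\ref{gen_int} that fails.
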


\begin{proof}[Proof]
Since $Q_t A \theta$ is not necessarily continuous, we cannot exactly state that $\psi_t = Q_t \theta \in D(A)$. Let us hence be a little more careful: using the same arguments as the previous proof, we can show that
\begin{align*}
\frac{Q_s \psi(x) - \psi(x)}{s} \xrightarrow[s \to 0]{b.p.c} - \int_0^\infty Q_t A \theta(x) dt = \lim_{T \to \infty} - \int_0^T \frac{d}{dt} Q_t \theta(x) dt
= \theta(x) - \lim_{T \to \infty} Q_T \theta(x).
\end{align*}
Hence, $\lim_{T \to \infty} Q_T \theta(x)$ exists and assumption \myref{integrabilityPoisson} implies $\lim_{T \to \infty} Q_T \theta(x) = 0$. We have shown
\begin{align*}
\frac{Q_s \psi - \psi}{s} \xrightarrow[s \to 0]{b.p.c}  \theta
\end{align*}
and $\theta$ is in $C_b^{loc}({\cal X})$ by assumption, so that we have indeed $\psi \in D(A)$.
\end{proof}

\begin{remark} \label{rem}
Assuming \myref{integrabilityPoisson} only, one can still define the $\B^{loc}({\cal X})$ function $
\psi = - \int_0^\infty Q_t \theta dt$. If for some reason we know in advance that $\psi \in D(A)$, then we can conclude immediately that $A \psi = \theta$ without assumption \myref{int22}. Indeed,
\begin{align*}
Q_s \psi(x) = -  \int_0^\infty Q_{t+s} \theta(x) dt = - \int_s^\infty Q_t \theta(x) dt
\end{align*}
which gives, since $t \mapsto Q_t \theta(x)$ is continuous,
\begin{align*}
\frac{d}{ds^+} Q_s \psi(x) \Big|_{s = 0} = Q_0 \theta(x) = \theta(x)
\end{align*}
and using Kolmogorov's equation, we know that 
\begin{align*}
\frac{d}{ds^+} Q_s \psi(x) \Big|_{s = 0} = A \psi(x).
\end{align*}
\end{remark}

\end{subsection}

\begin{subsection}{The random driving term} \label{m_ass}

We now describe the assumptions made on the process $m = (m_t)_{t \ge 0}$ driving equation \myref{eq}. Let us define its canonical filtration
\begin{align*}
{\cal F}_t = \sigma(m_s, \; 0 \le s \le t), \; \; \; t \ge 0.
\end{align*}
We assume that $m=(m_t)_{t \ge 0}$ is a càdlàg, stochastically continuous Markov process taking values in some separable, complete normed space $(E, \| \cdot \|_E$), satisfying the continuous embedding
\begin{align}
m_t \in E \subset W_x^{2,\infty}, \; \text{ with } \;   \| \cdot \|_{W_x^{2,\infty}} \lesssim \| \cdot \|_E .
\end{align}
This enables us to consider linear test functions of the form
\begin{align*}
\psi_h : n \in E \mapsto \langle h,n \rangle, \; \; \; h \in (W^{2,\infty}_x)'.
\end{align*}
\begin{assumption}[Bounded state space] \label{ass1}
For some $C^* > 0$,
\begin{align}
\| n \|_{E} \le C^*
\; \; \text{for all $n \in E$}.
\end{align}
 \end{assumption}
We denote by $(P_t)_{t \ge 0}$ the semigroup associated to $m$ and  by $(M,D(M))$ its infinitesimal generator, in the sense of the b.p.c convergence introduced in section \ref{markov}. 

\begin{assumption}[Stationary measure] \label{ass2}
There exists some measure $\nu \in {\cal P}(E)$ satisfying:
\begin{align}
& \forall t \ge 0, \; \forall \psi \in \B(E), \; \; \; \int_E P_t \psi(n)  d\nu(n) = \int_E \psi(n) d\nu(n)
\tag{Stationary}
\\
& \forall h \in (W^{2,\infty}_x)', \; \;  \int \psi_h(n) d\nu(n) = 0 .
\tag{Centered}
\end{align}
We shall denote by $(\tld m_t)_{t \ge 0}$ the process with initial law $\nu$. 
\end{assumption}

Note that it is possible to build a càdlàg stationary process $(\widehat m_t)_{t \in \mathbb{R}}$ indexed by $t \in \mathbb{R}$: one may indeed use the Kolmogorov extension theorem with the finite dimensional distributions
$$
\mathbb{P} \Big[ \widehat m_s \in A_0, \;  \widehat m_{s+t_1} \in A_1, \; \ldots \;
\widehat m_{s+t_n} \in A_n \Big] = 
\mathbb{P} \Big[ \tld m_0 \in A_0, \;  \tld m_{t_1} \in A_1, \; \ldots \;
\tld m_{t_n} \in A_n \Big] 
$$
for $s \in \mathbb{R}$ and $t_1, \ldots, t_n \ge 0$.
This extended stationary process will still be denoted by 
 $(\tld m_t)_{t \in \mathbb{R}}$.
\vspace{3mm}
 
Furthermore, we require that $m$ satisfies the following mixing property.

\begin{assumption}[Mixing] \label{ass3}
There exists a non-increasing, integrable function $\gamma_{\text{mix}} \in L^1(\mathbb{R}^+)$ such that, for any initial state $n \in E$, there exists a càdlàg coupling $(m_t^*(n),\tld m_t^*)_{t \ge 0}$, whose marginal laws are respectively those of $(m_t(n))_{t \ge 0}$ and $(\tld m_t)_{t \ge 0}$, satisfying
\begin{align}
\E \| m_t^*(n) - \tld m_t^* \|_{E} \le \gamma_{\text{mix}}(t), \; \; \; t \ge 0 . \label{mixing}
\end{align}
\end{assumption}
An immediate consequence is the following: for any Lipschitz-continuous function $\psi : E \to \mathbb{R}$ such that $\int_E \psi(n) d\nu(n) = 0$,
\begin{equation}
\Big| P_t \psi(n) \Big| = \Big| \E[  \psi(m^*_t(n)) - \psi(\tld m^*_t) ]  \Big|
\le \| \psi \|_{Lip} \gamma_{mix}(t) \in L^1_t(\mathbb{R}^+).
\label{integrability}
\end{equation}
As a result, the coefficient $a(x)$ and the kernel $k(x,y)$ introduced in \myref{exp_a}, \myref{exp_Q} are well-defined: for $k(x,y)$ we may for instance write (using the stationarity of $\tld m$)
\begin{align*}
\int_\mathbb{R} \Big| \E \Big[ \tld m_0(x) \tld m_t(y) \Big] \Big| dt
= \int_0^\infty \Big| \E \Big[ \tld m_0(x) \tld m_t(y) \Big] \Big| dt  + \int_0^\infty \Big| \E \Big[ \tld m_0(y) \tld m_t(x) \Big] \Big| dt
\end{align*}
and note that, using the Markov property,
\begin{align*}
\int_0^\infty  \Big| \E \Big[ \tld m_0(x) \tld m_t(y) \Big] \Big| dt =
\int_0^\infty \Big|  \int_E n(x) \E \Big[ m_t(n)(y) \Big] d\nu(n) \Big| dt
\le C^* \int_0^\infty \gamma_{mix}(t) dt < \infty.
\end{align*}
As discussed in section \ref{markov}, the property \myref{integrability} allows to solve a number of Poisson equations.  We may indeed define, for $h \in (W^{2,\infty}_x)'$,
\begin{align*}
M^{-1}\psi_h(n) = -\int_0^\infty P_t \psi_h (n) dt = -\int_0^\infty \E \blangle h , m_t(n) \brangle dt .
\end{align*}
It is then natural to introduce the function $n \in E \mapsto M^{-1}I(n)$ defined by
\begin{align*}
\forall h \in (W^{2,\infty}_x)', \; \; \blangle M^{-1}I(n) , h \brangle = M^{-1} \psi_h(n).
\end{align*}
In other words,
\begin{equation}
M^{-1}I(n) = - \int_0^\infty \E [ m_t(n) ] dt \in W_x^{2,\infty}.
\label{M-1}
\end{equation}
Note that the function $M^{-1}I$ is bounded: since $\E [ \tld m_t] = \int_E n d\nu(n) = 0$ in $W^{2,\infty}_x$, we have
\begin{align*}
\| M^{-1} I(n) \|_{W^{2,\infty}_x} & \le \int_0^\infty  \| \E [  m_t(n) ] \|_{W^{2,\infty}_x} dt
= \int_0^\infty  \| \E [  m_t^*(n) - \tld m_t^* ] \|_{W^{2,\infty}_x} dt
\\ &
\le \int_0^\infty \E \| m_t^*(n) - \tld m_t^* \|_E dt \le \int_0^\infty \gamma_{mix}(t) dt < \infty.
\end{align*}

\begin{assumption}[Generator] \label{ass4} 
For all $h \in (W_x^{2,\infty})'$,
$
M^{-1} \psi_h \in D(M)
$
and $M^{-1}I$ satisfies
\begin{align}
\| M^{-1}I(n) - M^{-1}I(n') \|_{W^{2,\infty}_x} \lesssim \|n-n'\|_E.
\label{lipschitz}
\end{align}
Furthermore, for all $h \in L^1_x$,
\begin{align}
| M^{-1} \psi_h|^2 \in D(M) \; \; \text{ and } \; \;  \Big| M |M^{-1} \psi_h|^2 (n) \Big| \lesssim \| h \|^2_{L^1_x}.
\label{chap1-quadratic}
\end{align}
\end{assumption}
Note that, as stated in Remark \ref{rem}, the assumption $M^{-1} \psi_h \in D(M)$ guarantees that
$$M \blangle M^{-1}I(n) , h \brangle = \langle n,h \rangle.$$ 
It is typically obtained under the assumptions of Proposition \ref{poisson}. Combining \myref{lipschitz} with the mixing property \myref{mixing} essentially allows to define $M^{-2}I$, which shall prove necessary in the calculations developed later on.

Let us conclude by discussing the additional assumption \myref{regularity} made in Theorem \ref{chap1-thm2}. This requires of course some more regularity for the stationary process $\tld m$. A natural setup would be the following: assume that the stationary measure $\nu$ is reversible (that is $(\tld m_{-t})_{t \ge 0} = (\tld m_t)_{t \ge 0}$ in law) and supported in $W^{10+\delta, \infty}_x$, with
\begin{align*}
\E \| \tld m_t \|_{W_x^{10+\delta, \infty}}^2  =\int_{E} \| n \|^2_{W_x^{10+\delta, \infty}} d\nu(n) < \infty.
\end{align*}
Making use of the time-reversibility \myref{exp_a} and \myref{exp_Q} become
\begin{align}
a(x) = \int_0^\infty e^{-t} \E \Big[ \tld m_0(x) \partial_x \tld m_t(x) \Big] dt,
\hspace{10mm}
k(x,y) = 2 \int_0^\infty \E \Big[ \tld m_0(x) \tld m_t(y) \Big] dt
\end{align}
and it follows that
\begin{align*}
& \| a \|_{W_x^{4,\infty}} 
\lesssim \Big( \int_0^\infty e^{-t} dt  \Big)  \Big( \int_E \| n \|_{W_x^{5,\infty}}^2 d\nu(n) \Big) < \infty,
\end{align*}
and
\begin{align*}
 \| k( \cdot, y) \|_{W_x^{10+\delta, \infty}} & \le 2 \int_0^\infty \Big\| \int_E n \E \Big[ m_t(n)(y) \Big] d\nu(n) \Big\|_{W_x^{10+\delta, \infty}} dt
 \\
 & \le 2 \Big( \int_0^\infty \gamma_{mix}(t) dt \Big) \Big( \int_E \| n \|_{W_x^{10+\delta, \infty}} d\nu(n) \Big) < \infty.
\end{align*}
Alternatively, one could drop the reversibility assumption on $\nu$ and assume that the whole process $(m_t)_{t \ge 0}$ takes values in $E \subset W_x^{10+\delta, \infty}$ with $\| \cdot \|_{W_x^{10 + \delta, \infty}} \lesssim \| \cdot \|_E$.

\end{subsection}

\begin{subsection}{An elementary example}

Let us give a simple example of a Markov process $m=(m_t)_{t \ge 0}$ satisfying the assumptions detailed above, in the form of a jump process. Consider a discrete-time Markov chain 
$(M_i)_{i \in \mathbb{N}}$ whose  (bounded) countable state space $E$ consists of sufficiently smooth functions:
$$
E = \{ n^j, j \in \mathbb{N}^* \} \subset C^k(\mathbb{T}_x), \hspace{5mm}
 \| \cdot \|_E := \| \cdot \|_{C^k _x},
$$
for some $k$ large enough, say $ k > 10$. Let us denote by $P=(P(n,n'))_{n,n' \in E}$ its transition matrix. Let us consider a probability space $(\Omega, {\cal F}, \mathbb{P})$ where an i.i.d sequence $(U_i)_{i \in \mathbb{N}^*}$ of random variables uniformly distributed in $[0,1)$ is given. The trajectory of $(M_i(n))$ for any initial data $n \in E$ can then classically be simulated in the following manner: $M_0(n) = n$ and, for all $ i \in \mathbb{N}$,
$$ 
M_{i+1}(n) = T(M_i(n) ; U_{i+1} ),
$$
where the jump function is given by
$$
T(M ; U) = n^j \text{ when } \sum_{\ell =1}^{j-1} P(M, n^\ell) \le U < \sum_{\ell =1}^{j} P(M, n^\ell).
$$
We may define the time-continuous, constant-rate jump process associated with the discrete Markov chain $(M_i)$:  considering a Poisson process $(N(t))_{t \ge 0}$ of rate $1$ with $N(0)=0$, independent of $(U_i)$, we simply define
$$
m_t(n) = M_{N(t)}(n), \hspace{5mm} t \ge 0.
$$

Let us assume that the transition matrix $P=(P(n,n'))$ is irreducible, aperiodic and positive recurrent. As a result, let us introduce the unique invariant law $\nu \in {\cal P}(E)$ of the chain $(N_i)_{i \in \mathbb{N}}$ (which we may assumed to be centered, in accordance with Assumption \ref{ass2}). 
\vspace{3mm}

Let us prove that the mixing property given in Assumption \ref{ass3} is satisfied. To this intent, we introduce a coupling similar to the one developed in the classical proof of the ergodic theorem for Markov chains.
Let us consider a random variable $\tld n \in E $ of distribution $\nu$ and an i.i.d sequence $(\tld U_i)_{i \in \mathbb{N}^*}$ of uniform random variables, mutually independent and independent of $(U_i)_{i \in \mathbb{N}^*}$. One can then introduce the Markov chain $(\tld M_i(\tld n))_{i \in \mathbb{N}^*}$ defined by
$\tld M_0(\tld n) = \tld n$ and, for all $i \in \mathbb{N}$,
$$
\tld M_{i+1}( \tld n) = T ( \tld M_i(n) ; \tld U_{i+1} ).
$$
As a result, $(\tld M_i(\tld n))$ defines a stationary Markov chain of transition matrix $P$, independent of $(M_i(n))$. For fixed $n \in E$, we then define the stopping time
$$
\tau = \inf \left\{ i \ge 0, \; \; M_i(n) = \tld M_i(\tld n) \right\}
$$
and the process
$$
M_i^*(n) = \left\{
\begin{array}{l}
M_i(n) \text{ if } i \le \tau,
\\
\tld M_i(\tld n) \text{ if } i > \tau.
\end{array}
\right.
$$
One can easily see that $(M_i^*(n))$ defines a Markov chain of transition matrix $P$, so that
$$
\Big( M_i^*(n), \; i \ge 0  \Big) \sim \Big(  M_i(n), \; i \ge 0 \Big) \; \text{ in law}.
$$
Moreover, $(C_i)_{i \in \mathbb{N}} = (M_i(n) , \tld M_i(\tld n) )_{i \mathbb{N}}$ defines a Markov chain on $E \times E$, of transition matrix $P \otimes P$. The assumptions made on $P$ guarantee that $P \otimes P$ is irreducible, and since $\nu \otimes \nu$ is an invariant probability, it is also positive recurrent. As a result, since $\tau$ is the first hitting time of the diagonal $D = \{ (n,n), \; \; n \in E \}$ by $(C_i)$, we deduce that $\E [ \tau ] < \infty $. Now, defining the continuous-time processes
$$
m^*_t(n) = M^*_{N(t)}(n), \hspace{5mm} \tld m_t^* = \tld M_{N(t)} (\tld n), \hspace{5mm} t \ge 0,
$$
the coupling $(m^*_t(n) , \tld m^*_t)_{t \ge 0}$ has the expected marginal laws and 
$$
\mathbb{P} \Big[ m_t^*(n) \neq \tld m_t^*  \Big] = \mathbb{P} \Big[ N(t) < \tau  \Big]
= \E \Big[ e^{-t} \sum_{k=0}^{\tau -1} \frac{t^k}{k!} \Big].
$$
This tends to zero as $t$ goes to infinity by dominated convergence. Regarding the integrability in time, simple calculations give
$$
\int_0^\infty \mathbb{P} \Big[ m_t^*(n) \neq \tld m_t^*  \Big]  dt = \sum_{k=0}^\infty (k+1) \mathbb{P}(\tau > k) = \frac{1}{2} \E [ \tau(\tau+1) ].
$$
The mixing Assumption \ref{ass3} is therefore satisfied as soon as the hitting time $\tau$ admits a second moment. This holds for instance if the state space $E$ is finite: it is indeed a well-known fact that hitting times of Markov chains with finite state space have exponential tails (see e.g \mycite{roch}, Lemma 3.25). Another possible setup is the one where the transition matrix is given by $P(n,n') = \nu(n')$, that is when $M_{i+1}$ is drawn on $E$ according to the law $\nu$, independently of $M_i$.
\\
In the general case, the generator of $(m_t)_{t \ge 0}$ is given by
$$
M\psi(n) = \sum_{n' \in E} P(n,n') \psi(n') - \psi(n)
$$
and we derive that $M^{-1} I = (M^{-1}I(n))_{n \in E}$ satisfies the linear system
$$
\forall n \in E, \; \; \; \sum_{n' \in E} P(n,n') M^{-1}I(n') - M^{-1}I(n) = n.
$$
When the state space $E$ is finite, it is clear that $M^{-1}I$ satisfies the Assumption \ref{ass4}. In the case where $P(n,n') = \nu(n')$, since $\nu$ is centered, we get simply
$$
M^{-1}I(n) = -n.
$$

\end{subsection}

\end{section}

\begin{section}{Solutions of the kinetic system} \label{3}

For $k \ge 0$, let us introduce the Banach space $({G^k}, \| \cdot \|_{{G^k}})$ defined by
\begin{align}
{G^k} := \left\{ f \in L^1_{x,v}, \; \; \| f \|_{{G^k}} := \int \int (1 + |v|^k) |f(x,v)| dx dv < \infty \right\}.
\label{Gk}
\end{align}
When $f \in G_2$, we shall denote
\begin{align*}
\rho(f)(x) = \int f(x) dv, \hspace{7mm}
J(f)(x) = \int v f(x) dv, \hspace{7mm}
K(f) = \int v^2 f(x) dv.
\end{align*}
Since solutions $f^\eps$ of \myref{eq} are of course expected to be probability densities, we also introduce the complete normed space $(G_0^k, \| \cdot \|_{{G^k}})$ defined by
\begin{align}
G_0^k := {G^k} \cap \left\{  f \ge 0, \; \; \int \int f(x,v) dx dv = 1 \right\} .
\label{Gk0}
\end{align}

\begin{subsection}{Path-wise weak solutions} 
Let us start by stating the well-posedness of the PDE system  \myref{eq}.
Taking $\eps=1$ for simplicity, it may be written as
\begin{equation}
\left\{
\begin{array}{l }
\partial_t f + v   \partial_x f + \partial_v \Big[ (m + u - v) f \Big] = 0, \\
\partial_t u - \partial^2_x u = J(f) - \rho u.
\end{array}
\right.
\label{system1}
\end{equation}
We consider a fixed $\omega \in \Omega$, so that a trajectory $m \equiv m(\omega)$ belongs to $D([0,T] ; E)$, space of càdlàg functions taking values in the separable complete space $E$.
\\
Whenever $w \in D([0,T] ; E)$ is given, the solution $f[w]$ to the linear conservation equation
\begin{equation*}
\left\{
\begin{array}{l}
\partial_t f + v   \partial_x f + \partial_v \Big[ (w - v) f \Big] = 0, \\
f_0 \in L^1_{x,v},
\end{array}
\right. 
\end{equation*}
is naturally expressed as
\begin{align}
f_t[w](x,v) = e^t f_0 \circ \Phi_0^t(x,v), \label{char}
\end{align}
where $ \Phi_s^t(x,v) = (X_s^t(x,v) , V_s^t(x,v))$ is the flow associated to the characteristics:
\begin{equation}
\left\{
\begin{array}{l l}
\displaystyle{ \frac{d}{ds} X^t_s = V^t_s, } &  X^t_t(x,v) = x ,
\vspace{2mm}
\\
\displaystyle{ 
 \frac{d}{ds} V^t_s = w_s(X^t_s) - V^t_s, } & V^t_t(x,v) = v,
\end{array}
\right.
\label{flow}
\end{equation}
which is clearly well defined since $w$ is globally Lipschitz-continuous in the space variable.
Note that the fact that $w$ is only càdlàg in time causes no issue since this  differential problem should be thought of in integral form. It is in fact easy to check that, for $ x \in D([0,T] ; \mathbb{R})$,
\begin{align*}
\forall t \in [0,T] , \; \; y_t = \int_0^t x_s ds  \; \; \text{ if and only if } \; \; 
\forall t \in [0,T], \; \;  \frac{dy}{dt^+} = x_t, \; \; \frac{dy}{dt^-} = x_{t^-} .
\end{align*}
In the following, we will sometimes write "full" derivatives $\frac{d}{dt}$ for simplicity, but rigorously speaking one should rather consider left and right derivatives.
We may now state our  well-posedness result.

\begin{proposition} \label{ex_un}  Assume that
\begin{align} 
\int_x \int_v (1 + |v|^4) |f_0|^2 dx dv + \int_x \int_v (1+|v|) | \nabla_{x,v} f_0 | dx dv < \infty, 
\hspace{5mm}
u_0  \in H^\eta_x \text{ for } \eta \in (3/2,2).
\label{hyp_ex_un}
\end{align}
Then, for all $\beta \in (3/2 , \eta)$, there exists a unique couple $(f,u)$ with $u \in C([0,T] ; H^\beta_x)$ solution of the system \myref{system1}
in the sense that
\begin{equation}
\left\{
\begin{array}{l}
 f_t = f_t[m+u] \; \; \text{ as defined in \myref{char}},
\vspace{1mm} \\ 
\displaystyle{
 u_t = S(t) u_0 + \int_0^t S(t-s) \Big[ J(f_s) - \rho_s u_s \Big] ds,
 }
\end{array}
\right.
\label{sense}
\end{equation}
where $S(t) = e^{t \partial^2_x}$ denotes the semigroup associated to the heat equation.
For any $k \ge 0$, assuming $f_0 \in G_0^k$, we have additionally $f \in C([0,T] ; G_0^k)$.
\end{proposition}
The proof of Proposition \ref{ex_un} is based on a rather classical fixed point argument. Since it is not a central aspect of the present work, it is postponed to section \ref{app}. 
\begin{remark} \label{rem_measurable}
The uniqueness of the solutions via an explicit iterative construction method classically guarantees that $\omega \mapsto (f_t(\omega), u_t(\omega)) \in G_0^k \times H^\beta_x$ is ${\cal F}_t = \sigma ( m_s, 0 \le s \le t )$-measurable. 
It is of course to be expected that $(f_t, u_t, m_t)_{t \ge 0}$ defines a Markov process.
We will not however attempt to rigorously prove it, since it is in fact not necessary for the purpose of this work.
\end{remark}

Let us simply justify here that the unique solution $(f,u)$ in the sense of \myref{sense} satisfies some weak formulation of \myref{system1}. In particular, we derive some  estimates that will be useful to rigorously determine the generator of the process $(f_t,u_t,m_t)_{t \ge 0}$ later on.
\vspace{3mm}

In order to properly express the weak formulation of the equation satisfied by $f_t \in {G^k}$, we now introduce the space ${\cal C}_k$ which should be thought of as the natural dual of ${G^k}$ "along" equation \myref{system1}.

\begin{define} \label{norm_k}
For $k \ge 0$ and  $\xi \in C^1(\mathbb{T} \times \mathbb{R})$ let us introduce the norm
\begin{align}
& N_k(\xi) = \Big\| \frac{ \xi}{1 + |v|^k} \Big\|_{L^\infty_{x,v}} + \Big\| \frac{(1 + |v|) \nabla_{x,v} \xi }{1 + |v|^k} \Big\|_{L^\infty_{x,v}} 
\end{align}
and the associated normed space, given by the closure
$$
{\cal C}_k = \overline{ \;  \left\{ \xi \in C^1(\mathbb{T}^d \times \mathbb{R}^d), \; \; N_k(\xi) < \infty \right\} \; } .
$$
\end{define}

\begin{proposition} \label{weak_f}
Let $k \ge 0$ and assume that $f_0 \in {G^k}$. Then for all  $\xi \in {\cal C}_k$, 
\begin{equation*}
\langle f_t, \xi \rangle - \langle f_0 , \xi \rangle = \int_0^t  \blangle f_s,  v\partial_x \xi + (u_s + m_s-v)\partial_v \xi \brangle ds 
\end{equation*}
and the following estimate holds: for all $t,s \in [0,T]$,
\begin{align}
\Big|  \frac{(f_{t+s}, \xi) - (f_t, \xi)}{s} \Big| \le C(k, C^*, \| u \|_{L^\infty_{t,x}} ,T)  N_k(\xi) \| f_0 \|_{{G^k}}.
\end{align}
\end{proposition}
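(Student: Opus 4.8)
The plan is to establish the weak formulation of Proposition~\ref{weak_f} by differentiating the explicit representation formula \myref{char} along the characteristic flow \myref{flow}, and then to derive the quantitative estimate from pointwise bounds on how the flow distorts the weight $1+|v|^k$. Recall that $f_t = e^t f_0 \circ \Phi_0^t$, so for $\xi \in {\cal C}_k$ one has, by the change of variables $(x,v) \mapsto \Phi_0^t(x,v)$ (whose Jacobian is $e^{-t}$, since the vector field $v\partial_x + (w-v)\partial_v$ has divergence $-1$),
\begin{align*}
\langle f_t, \xi \rangle = e^t \int f_0(\Phi_0^t(x,v)) \xi(x,v)\, dx\, dv = \int f_0(y,w)\, \xi(\Phi_t^0(y,w))\, dy\, dw.
\end{align*}
So the first step is to differentiate $t \mapsto \xi(\Phi_t^0(y,w))$ in time. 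Writing $\Phi_t^0(y,w) = (X, V)$ with $\frac{d}{dt}X = V$, $\frac{d}{dt}V = (u_t + m_t)(X) - V$ (these being the forward characteristics, keeping in mind the left/right derivative subtlety from the c\`adl\`ag dependence in time), the chain rule gives $\frac{d}{dt}\xi(\Phi_t^0(y,w)) = (v\partial_x \xi + (u_t + m_t - v)\partial_v \xi)(\Phi_t^0(y,w))$. Integrating in $t$ and undoing the change of variables back to $f_t$ yields exactly the claimed identity $\langle f_t,\xi\rangle - \langle f_0,\xi\rangle = \int_0^t \langle f_s, v\partial_x\xi + (u_s+m_s-v)\partial_v\xi\rangle\, ds$. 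One should check $\xi \in {\cal C}_k$ provides enough integrability: the integrand is controlled by $N_k(\xi)$ times $(1+|v|^k)|f_s| \in L^1$, uniformly in $s$, justifying the manipulations by density from the $C^1$ functions with $N_k(\xi) < \infty$.

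For the estimate, the key is a Gr\"onwall-type bound on the characteristic flow: from \myref{flow}, as long as $\|u\|_{L^\infty_{t,x}} \le U$ and $\|m_s\|_E \le C^*$ (hence $\|m_s\|_{L^\infty_x} \lesssim C^*$), one gets $|V_s^t(x,v)| \le (|v| + C(C^*,U)T)e^{CT}$ on $[0,T]$, and similarly control on $X_s^t$ is automatic on the torus. Consequently $1 + |V_s^t(x,v)|^k \lesssim C(k,C^*,U,T)(1+|v|^k)$. Then, starting from
\begin{align*}
\frac{(f_{t+s},\xi) - (f_t,\xi)}{s} = \frac{1}{s}\int_t^{t+s} \langle f_r, v\partial_x\xi + (u_r + m_r - v)\partial_v\xi\rangle\, dr,
\end{align*}
one bounds the integrand by $N_k(\xi)\int (1+|v|^k)^{-1}\big(|v||\partial_x\xi| + |u_r + m_r - v||\partial_v\xi|\big)(1+|v|)^{-1}(1+|v|)\, |f_r|\cdots$ — more cleanly, using $N_k(\xi)$: the quantity $|v\partial_x\xi + (u+m-v)\partial_v\xi|$ is $\lesssim (1 + |v| + \|u\|_\infty + C^*)\,\frac{(1+|v|)|\nabla_{x,v}\xi|}{1+|v|} \lesssim C(C^*,U) N_k(\xi)(1+|v|^k)$, whence $|\langle f_r, \cdots\rangle| \lesssim C(k,C^*,U,T)N_k(\xi)\|f_r\|_{G^k}$. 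Finally one uses the transported bound $\|f_r\|_{G^k} = e^r\int (1+|v|^k)|f_0(\Phi_0^r)|\,dxdv \lesssim C(k,C^*,U,T)\|f_0\|_{G^k}$ (change of variables plus the flow distortion estimate above) to replace $\|f_r\|_{G^k}$ by $\|f_0\|_{G^k}$, and the $\frac{1}{s}\int_t^{t+s}$ averaging gives the stated inequality with no loss.

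\textbf{Main obstacle.} The genuinely delicate points are bookkeeping rather than conceptual: (i) handling the c\`adl\`ag-in-time dependence of $m$ rigorously — one must phrase everything in integral form and argue with left/right derivatives as the paper indicates, so the chain rule step needs a short justification (e.g.\ via dominated convergence on difference quotients, using that $r \mapsto m_r$ has at most countably many jumps and $u$ is continuous); and (ii) the density/approximation argument to pass from $C^1$ test functions $\xi$ with $N_k(\xi) < \infty$ to general $\xi \in {\cal C}_k$ — one approximates $\xi$ in $N_k$-norm, checks both sides of the identity are continuous in $\xi$ for the $N_k$ topology against a fixed $f_0 \in G^k$ (which is immediate from the estimate just proved), and concludes. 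The flow distortion estimate $1 + |V_s^t|^k \lesssim 1 + |v|^k$ is the one analytic ingredient, and it is a routine Gr\"onwall argument given Assumption~\ref{ass1} and the a priori bound $u \in C([0,T];H^\beta_x) \hookrightarrow L^\infty_{t,x}$ from Proposition~\ref{ex_un}.
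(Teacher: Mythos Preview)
Your approach is correct and essentially the same as the paper's: both rewrite $\langle f_t,\xi\rangle$ as $\int f_0\,(\xi\circ\Phi_t^0)$ via the Jacobian $e^{-t}$, differentiate $\xi\circ\Phi_t^0$ along the characteristics using the chain rule, and obtain the estimate from the flow distortion bound $1+|V_t(x,v)|^k \lesssim 1+|v|^k$ (a Gr\"onwall consequence of the sublinearity of \myref{flow}). The only cosmetic difference is that the paper bounds the difference quotient of $\xi\circ\Phi_t^0$ directly against $f_0$, whereas you first pass to $\langle f_r,\cdot\rangle$ and then invoke $\|f_r\|_{G^k}\lesssim\|f_0\|_{G^k}$; these are the same estimate unwound in a different order, and your added remarks on the c\`adl\`ag time dependence and the density step from $C^1$ to ${\cal C}_k$ are appropriate care that the paper leaves implicit.
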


\begin{proof}[Proof]
Let us denote $\Phi_t := \Phi^0_t$ for simplicity. Since
\begin{align*}
 \frac{\langle f_{t+s}, \xi \rangle - \langle f_t, \xi \rangle}{s}  = \int \int f_0(x,v) \frac{\xi \circ \Phi_{t+s}(x,v) - \xi \circ \Phi_t(x,v)}{s} dxdv,
\end{align*}
we want to dominate the integrand
uniformly for $t,t+s \in [0,T]$. For all $t \ge 0$,
\begin{align*}
\frac{\xi \circ \Phi_{t+s}(x,v) - \xi \circ \Phi_t(x,v)}{s} = \frac{1}{s}
\int_t^{t+s} \frac{d}{d\sigma}\xi(\Phi_\sigma(x,v)) d\sigma . 
\end{align*}
with
$$
\frac{d}{dt}\xi(\Phi_t(x,v)) =  V_t (x,v)    \partial_x \xi \circ \Phi_t(x,v)
+  ((u_t+m_t)(X_t(x,v)) - V_t(x,v))    \partial_v \xi \circ \Phi_t(x,v)
$$
which, by design, can be bounded by
$$
(1 + C^* + \| u \|_{L^\infty_{t,x}}) N_k(\xi) (1+ | V_t(x,v) |^k).
$$
From the sub-linearity of the equation of characteristics \myref{flow}, we easily derive
$$
\forall t \in [0,T], \; \;  |V_t(x,v)|^k  \le C(C^*, \| u \|_{L^\infty_{t,x}} ,T) (1 + |v|^k) .
$$
Combining these bounds, we have dominated
\begin{align*}
\Big| f(x,v) \frac{\xi \circ \Phi_{t+s}(x,v) - \xi \circ \Phi_t(x,v)}{s} \Big|
\end{align*}
 uniformly for $t,t+s \in [0,T]$ by an expression of the form
$$
C(k,C^*, \| u \|_{L^\infty_{t,x}} ,T) | f_0(x,v) |(1 + |v|^k).
$$
This is exactly the expected estimate, and the dominated convergence theorem leads to
\begin{align*}
\frac{d}{dt} \langle f_t, \xi \rangle & = \int_{x,v}f(x,v) \frac{d}{dt}  \xi \circ \Phi_t(x,v) dx dv
= \blangle f_t, v\partial_x \xi + (u_t + m_t-v)\partial_v \xi \brangle .
\end{align*}
\end{proof}
Similarly, $u$ is a weak solution of the corresponding equation:
\begin{proposition} \label{weak_u}
For all $\xi \in C^2(\mathbb{T})$,
\begin{align*}
\langle u_t, \xi \rangle - \langle u_0, \xi \rangle = \int_0^t \langle u_s, \partial^2_x \xi \rangle  + \blangle J(f_s) - \rho_s u_s, \xi \brangle ds .
\end{align*}
We deduce the estimate: for all $t,s \in [0,T]$,
\begin{align}
\Big| \frac{\langle u_{t+s},\xi \rangle - \langle u_t,\xi \rangle}{s} \Big| \le C( \| u \|_{L^\infty_{t,x}}, \| f_0 \|_{G_1}, T) \Big( \| \xi \|_\infty + \| \partial^2_x \xi \|_\infty \Big) .
\end{align}
\end{proposition}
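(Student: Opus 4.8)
The plan is to read the weak formulation off the mild (Duhamel) representation \eqref{sense} of $u$, and then to obtain the increment estimate as an immediate consequence of it.

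First I would record the regularity of the source term $g_s := J(f_s) - \rho_s u_s$. By Proposition \ref{ex_un} applied with $k=1$ we have $f \in C([0,T]; G_0^1)$, so $s \mapsto J(f_s)$ is continuous with values in $L^1_x$ and $\|J(f_s)\|_{L^1_x} \le \|f_s\|_{G_1} \le C(\|u\|_{L^\infty_{t,x}}, T)\,\|f_0\|_{G_1}$, the last bound being the $G^1$-growth estimate already exploited in the proof of Proposition \ref{weak_f} through the control $|V_t(x,v)|^k \lesssim 1+|v|^k$. Since $\beta > 3/2 > 1/2$, the Sobolev embedding $H^\beta_x \hookrightarrow L^\infty_x$ together with $u \in C([0,T]; H^\beta_x)$ shows that $s \mapsto \rho_s u_s$ is continuous in $L^1_x$, with $\|\rho_s u_s\|_{L^1_x} \le \|u\|_{L^\infty_{t,x}} \|\rho_s\|_{L^1_x} \le \|u\|_{L^\infty_{t,x}}\|f_0\|_{G_1}$. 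Hence $g \in C([0,T]; L^1_x)$ and is bounded there by a constant $C(\|u\|_{L^\infty_{t,x}}, \|f_0\|_{G_1}, T)$.

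Next, fix $\xi \in C^2(\mathbb{T})$. Using that the heat semigroup $S(t) = e^{t\partial_x^2}$ is self-adjoint on $L^2_x$ and Fubini's theorem (legitimate thanks to the bound on $g$ and the integrability near $0$ of $r \mapsto \|S(r)\|_{L^1_x \to L^\infty_x}$), \eqref{sense} reads $\langle u_t, \xi\rangle = \langle u_0, S(t)\xi\rangle + \int_0^t \langle g_s, S(t-s)\xi\rangle\,ds$. Both terms are differentiable in $t$: one has $\frac{d}{dt}S(t)\xi = S(t)\partial_x^2\xi = \partial_x^2 S(t)\xi$ (valid for $\xi \in C^2$), while differentiation under the integral sign — justified by the continuity of $s \mapsto g_s$ in $L^1_x$ and of $r \mapsto S(r)\partial_x^2\xi$ in $C^0(\mathbb{T})$ — yields $\langle g_t, \xi\rangle + \int_0^t \langle g_s, S(t-s)\partial_x^2\xi\rangle\,ds$. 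Adding the two, and recognizing $\langle u_0, S(t)\partial_x^2\xi\rangle + \int_0^t \langle g_s, S(t-s)\partial_x^2\xi\rangle\,ds$ as the right-hand side of \eqref{sense} tested against $\partial_x^2\xi \in C^0(\mathbb{T})$ instead of $\xi$, we obtain $\frac{d}{dt}\langle u_t, \xi\rangle = \langle u_t, \partial_x^2\xi\rangle + \langle J(f_t) - \rho_t u_t, \xi\rangle$. Both terms on the right are continuous in $t$ (again by $f \in C([0,T]; G_0^1)$ and $u \in C([0,T]; H^\beta_x) \hookrightarrow C([0,T]; L^\infty_x)$), so integrating over $[0,t]$ gives the announced weak formulation. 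Alternatively, one can avoid differentiation entirely and reach the same identity by inserting $S(r)\xi = \xi + \int_0^r S(\sigma)\partial_x^2\xi\,d\sigma$ into \eqref{sense} and exchanging the order of integration. The increment estimate is then immediate: for $t,s \in [0,T]$,
\begin{align*}
\frac{\langle u_{t+s}, \xi\rangle - \langle u_t, \xi\rangle}{s} = \frac{1}{s}\int_t^{t+s} \Big( \langle u_\sigma, \partial_x^2\xi\rangle + \langle J(f_\sigma) - \rho_\sigma u_\sigma, \xi\rangle \Big)\,d\sigma,
\end{align*}
and, using $|\mathbb{T}| = 1$, $|\langle u_\sigma, \partial_x^2\xi\rangle| \le \|u\|_{L^\infty_{t,x}}\|\partial_x^2\xi\|_\infty$ while $|\langle J(f_\sigma) - \rho_\sigma u_\sigma, \xi\rangle| \le \|\xi\|_\infty \|g_\sigma\|_{L^1_x} \le C(\|u\|_{L^\infty_{t,x}}, \|f_0\|_{G_1}, T)\|\xi\|_\infty$ by the first step.

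The only genuinely delicate point is the bookkeeping of regularity in the first step — checking that $g = J(f) - \rho u$ is continuous (not merely bounded) in $L^1_x$ — which hinges on the statement $f \in C([0,T]; G_0^1)$ of Proposition \ref{ex_un} and on the embedding $H^\beta_x \hookrightarrow L^\infty_x$ for $\beta > 1/2$; the rest is the standard manipulation of mild solutions of the heat equation.
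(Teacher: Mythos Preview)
Your proof is correct and follows the same approach as the paper: the weak formulation is derived from the mild form \myref{sense} (the paper simply cites \cite{Ball77} for this classical passage, whereas you carry it out explicitly), and the increment estimate is then an immediate consequence.
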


\begin{proof}[Proof]
The weak formulation is classically derived from the mild form \myref{sense} (\textit{cf.} \cite{Ball77} for example). The estimate is easily deduced.
\end{proof}

\end{subsection}

\begin{subsection}{Estimates uniform in \texorpdfstring{$\eps$}{}}

We now establish some uniform bounds for the solution $(f^\eps, u^\eps)$ of \myref{eq}.

\begin{proposition} \label{moments_eps}
For all $\eps \in (0, 1)$, we have (almost surely)
$$\forall t \in [0,T], \; \;  \| f^\varepsilon_t \|_{L^1_{x,v}} = \| f^\eps_0 \|_{L^1_{x,v}} = 1
$$
and
\begin{align*}
& \sup_{t \in [0,T]} \| f^\eps_t \|_{G_1} \le C(T,C^*, \| u^\eps_0 \|_{L^\infty_x}, \| f^\eps_0 \|_{G_1}),  \\
& \sup_{t \in [0,T]}  \| u^\eps_t \|_{L^\infty_x} \le C(T,C^*, \| u^\eps_0 \|_{L^\infty_x}, \| f^\eps_0 \|_{G_1}).
\end{align*}
\end{proposition}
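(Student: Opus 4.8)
The plan is to establish each bound separately, using the representation of $f^\eps$ along characteristics provided by \myref{char} and the mild formulation \myref{sense} for $u^\eps$, and then to close a Gronwall-type estimate in which the $G_1$-bound for $f^\eps$ and the $L^\infty_x$-bound for $u^\eps$ reinforce each other.

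First I would observe that the conservation $\| f^\eps_t \|_{L^1_{x,v}} = 1$ is immediate: the kinetic equation in \myref{eq} is a pure transport/conservation equation, and the change of variables $ \Phi^t_s$ in \myref{char} has Jacobian $e^{-(t-s)}$ (from $\nabla_v \cdot (\eps u^\eps - v + m^\eps) = -1$, up to the rescaling), which exactly cancels the prefactor $e^t$; since $f^\eps_0 \in G^1_0$ is a nonnegative probability density, so is $f^\eps_t$ for every $t$, by Proposition \ref{ex_un}. Next, for the first moment, I would apply Proposition \ref{weak_f} with the (formal) test function $\xi(x,v) = 1 + |v|$ — more precisely its regularized version lying in ${\cal C}_1$ — to obtain $\frac{d}{dt}\langle f^\eps_t, 1+|v|\rangle \le \blangle f^\eps_t, |\eps u^\eps_s(x) + m^\eps_s(x) - v|\,\text{sign}(v)\brangle$, and bound the right-hand side by $\| f^\eps_t \|_{L^1} (\eps\| u^\eps_t\|_{L^\infty_x} + C^*) + \| f^\eps_t \|_{G_1}$ using $\| m^\eps_t \|_{W^{2,\infty}_x} \le \| m^\eps_t\|_E \le C^*$ from Assumption \ref{ass1}. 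This gives $\| f^\eps_t \|_{G_1} \le \| f^\eps_0\|_{G_1} + C\int_0^t (1 + \| u^\eps_s\|_{L^\infty_x} + \| f^\eps_s\|_{G_1})\,ds$, using $\eps < 1$.

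For the $L^\infty_x$ bound on $u^\eps$ I would use the mild form \myref{sense}, rescaled: $u^\eps_t = S(t) u^\eps_0 + \int_0^t S(t-s)\big[ J(f^\eps_s) - \eps \rho^\eps_s u^\eps_s\big]ds$, and invoke $\| S(t)\|_{L^\infty_x \to L^\infty_x} \le 1$ together with $\|J(f^\eps_s)\|_{L^1_x} \le \| f^\eps_s\|_{G_1}$ and $\|\rho^\eps_s\|_{L^1_x} = 1$. The only subtlety is that the heat semigroup maps $L^1_x$ into $L^\infty_x$ only with an integrable-in-time but singular factor: $\|S(t-s) g\|_{L^\infty_x} \lesssim (t-s)^{-1/2}\|g\|_{L^1_x}$ on the torus (for the inhomogeneous part), so the Duhamel term is controlled by $\int_0^t (t-s)^{-1/2}\big(\| f^\eps_s\|_{G_1} + \| u^\eps_s\|_{L^\infty_x}\big)\,ds$. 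Combining with the $G_1$ estimate, I would then set $\varphi(t) := \sup_{s\le t}\| f^\eps_s\|_{G_1} + \sup_{s \le t}\| u^\eps_s\|_{L^\infty_x}$ and arrive at an inequality of the form $\varphi(t) \le C_0 + C\int_0^t (1 + (t-s)^{-1/2})\varphi(s)\,ds$, with $C_0$ depending only on $T, C^*, \| u^\eps_0\|_{L^\infty_x}, \| f^\eps_0\|_{G_1}$; a singular Gronwall lemma then yields the claimed bounds, with constants independent of $\eps$.

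The main obstacle is the coupling: the bound on $\| f^\eps_t\|_{G_1}$ requires an a priori bound on $\| u^\eps\|_{L^\infty_{t,x}}$, and vice versa, so neither can be estimated in isolation. The resolution, as sketched above, is to estimate the \emph{sum} $\varphi(t)$ and exploit that both feedback terms are linear in $\varphi$; the fact that the $J(f^\eps)$ term enters the $u^\eps$ equation with no power of $\eps$ (while the $\eps\rho^\eps u^\eps$ damping term carries a harmless $\eps$) is exactly what makes the closed Gronwall inequality work uniformly in $\eps \in (0,1)$. A secondary technical point is the justification of using $\xi = 1 + |v|$ in Proposition \ref{weak_f}, which is only stated for $\xi \in {\cal C}_1 \subset C^1$; this is handled by a standard approximation of $|v|$ by smooth functions with uniformly bounded $N_1$-norm and passing to the limit, using the moment bounds already in hand.
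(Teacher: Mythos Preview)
There is a genuine gap in your treatment of the $G_1$ bound for $f^\eps$: you have dropped the $\eps^{-2}$ factor that multiplies the velocity-transport term in \myref{eq}. The kinetic equation reads $\partial_t f^\eps + \eps^{-1}v\partial_x f^\eps + \eps^{-2}\partial_v[(\eps u^\eps + m^\eps - v)f^\eps] = 0$, so testing against $1+|v|$ actually gives
\[
\frac{d}{dt}\bar J(f^\eps_t) = \eps^{-2}\blangle f^\eps_t, (\eps u^\eps_t + m^\eps_t)\,\sign(v)\brangle - \eps^{-2}\bar J(f^\eps_t),
\]
where $\bar J(f) = \int |v|\,f\,dx\,dv$. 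If you bound the right-hand side crudely by the triangle inequality as you do, the $-\eps^{-2}\bar J$ damping becomes $+\eps^{-2}\bar J$ on the right, and Gr\"onwall yields a bound of order $e^{C\eps^{-2}T}$, which is not uniform in $\eps$.

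The paper instead keeps the sign of the damping term and applies Gr\"onwall in variation-of-constants form:
\[
\bar J(f^\eps_t) \le e^{-\eps^{-2}t}\bar J(f^\eps_0) + \int_0^t \eps^{-2}e^{-\eps^{-2}(t-s)}\big(C^* + \eps\|u^\eps_s\|_{L^\infty_x}\big)\,ds.
\]
The key observation is that $\int_0^t \eps^{-2}e^{-\eps^{-2}s}\,ds \le 1$, so the large factor $\eps^{-2}$ in the source is exactly compensated by the strong decay rate, yielding $\bar J(f^\eps_t) \le \bar J(f^\eps_0) + C^* + \eps\sup_{s\le t}\|u^\eps_s\|_{L^\infty_x}$ uniformly in $\eps$. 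This can then be fed into the mild formulation for $u^\eps$ and closed by a singular Gr\"onwall argument, essentially as you describe. Your treatment of the $u^\eps$ estimate and of the final coupling is fine; the only fix needed is to retain the sign of the $-|v|$ term when testing the kinetic equation, so that the $\eps^{-2}$ damping can absorb the $\eps^{-2}$ source.
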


\begin{proof}
The conservation of the $L^1$ norm of $f^\eps$ is clear from the transport form \myref{sense}. Assuming $f_0^\eps \ge 0$, $f_0^\eps \in C^\infty_c$ (we conclude in the general case by density), we get
\begin{align*}
\frac{d}{dt} \int_x \int_v |v| f^\eps_t(x,v) dx dv = - \eps^{-2} \int_x \int_v |v| \partial_v \Big[ (m^\eps_t + \eps u^\eps_t -v) f^\eps_t \Big]  dx dv
\end{align*}
hence denoting $\bar J(f) = \int_x \int_v |v| f dx dv$, (so that $\| f \|_{G_1} = \| f \|_{L^1_{x,v}} + \bar J(f)$) we get
\begin{align*}
\frac{d}{dt} \bar J(f^\eps_t) \le \eps^{-2} ( \| m^\eps_t \|_{L^\infty_x} +  \eps \| u^\eps_t \|_{L^\infty_x} ) \  - \eps^{-2} \bar J(f^\eps_t)
\end{align*}
and Grönwall's lemma gives
\begin{align*}
\bar J(f^\eps_t) & \le e^{- \eps^{-2} t}\bar J(f^\eps_0)+ \int_0^t \eps^{-2} e^{-\eps^{-2} (t-s)} 
( \| m^\eps_s \|_{L^\infty_x} + \eps \| u^\eps_s \|_{L^\infty_x}) ds .
\end{align*}
We may note that
$
\int_0^t \eps^{-2} e^{-\eps^{-2}s} ds = 1-e^{-\eps^{-2} t} \le 1
$
so that we get
\begin{align}
\bar J(f^\eps_t)\le \bar J(f^\eps_0) + (C^* + \eps \sup_{s \in [0,t]} \| u_s^\eps \|_{L^\infty_x}).
\label{estim_J}
\end{align}
On the other hand, using the mild form \myref{sense} for $u^\eps$, usual heat-semigroup estimates, and making use of the embedding $H_x^{1/2+\delta} \subset L^\infty_x$ for any $\delta > 0$, we have
\begin{align*}
\| u^\eps_t \|_{L^\infty_x} & \le \| u^\eps_0 \|_{L^\infty_x} + C \int_0^t |t-s|^{-{1/2-\delta}} \| J(f^\eps_s) - \eps \rho^\eps_s u^\eps_s \|_{L^1_x} ds \\
& \le \| u^\eps_0 \|_{L^\infty_x}  + C \int_0^t | t-s|^{-1/2 - \delta} \| J(f^\eps_s) \|_{L^1_x} + C \eps  \int_0^t |t-s|^{-1/2 - \delta} \| u_s^\eps \|_{L^\infty_x} ds \\
& \le \| u_0^\eps \|_{L^\infty_x} + C(  \| J(f_0^\eps) \|_{L^1_x} +C^*)
+  C \eps  \int_0^t |t-s|^{-1/2-\delta} \sup_{\sigma \in [0,s]} \| u^\eps_\sigma \|_{L^\infty_x} ds,
\end{align*}
using \myref{estim_J}.
We finally get an estimate of the form 
\begin{align*}
\sup_{s \in [0,t]} \| u^\eps \|_{L^\infty_x} \le C(\| u^\eps_0 \|_{L^\infty_x}, \| f_0^\eps \|_{G_1},C^*) \Big( 1 + \eps \int_0^t |t-s|^{-1/2-\delta} \sup_{\sigma \in [0,s]} \| u^\eps_\sigma \|_{L^\infty_x} ds \Big)
\end{align*}
and deduce the desired estimates using a singular Grönwall inequality.
\end{proof}

The higher $v$-moments for $f$ are obtained by induction using the same arguments. For the sake of brevity, we omit the proof.
\begin{proposition}[Higher moments for $f$] For all $k \ge 1$, we have (almost surely)
\begin{align*}
\sup_{t \in [0,T]} \| f^\eps_t \|_{{G^k}} \le C(k,T,C^*, \|u_0^\eps\|_{L^\infty_x}, \|f_0^\eps\|_{{G^k}}).
\end{align*}
\end{proposition}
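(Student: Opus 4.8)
The plan is to establish the higher-moment bound by induction on $k$, mirroring the argument given for $k=1$ in Proposition \ref{moments_eps} and relying on the uniform control of $\|u^\eps\|_{L^\infty_{t,x}}$ and $\|f^\eps_0\|_{G^k}$ already in hand. The base case $k=1$ is Proposition \ref{moments_eps}. For the inductive step, assume $\sup_{t \in [0,T]} \|f^\eps_t\|_{G^{k-1}} \le C(k-1,T,C^*,\|u_0^\eps\|_{L^\infty_x},\|f_0^\eps\|_{G^{k-1}})$ and bound the quantity $\bar J_k(f^\eps_t) := \int_x \int_v |v|^k f^\eps_t \, dx\, dv$.

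First I would differentiate $\bar J_k(f^\eps_t)$ in time, working as in the cited proof with $f^\eps_0 \ge 0$, $f^\eps_0 \in C^\infty_c$ and concluding by density. Using the weak formulation of Proposition \ref{weak_f} with the (formal) test function $\xi = |v|^k$ — legitimate after the usual truncation/regularization since $|v|^k$ sits in the relevant weighted space $\mathcal{C}_{k}$ up to harmless modification — the transport part $v\partial_x|v|^k = 0$ drops out, and one is left with
\begin{align*}
\frac{d}{dt}\bar J_k(f^\eps_t) = \eps^{-2} \int_x \int_v k\,|v|^{k-1}\sign(v)\,(m^\eps_t + \eps u^\eps_t - v) f^\eps_t \, dx\, dv.
\end{align*}
The term $-\eps^{-2}\int k|v|^{k}f^\eps_t = -k\eps^{-2}\bar J_k(f^\eps_t)$ is the dissipative one, while the contributions of $m^\eps_t$ and $\eps u^\eps_t$ are bounded by $k\eps^{-2}(C^* + \eps\|u^\eps_t\|_{L^\infty_x})\,\bar J_{k-1}(f^\eps_t)$, which is uniformly controlled by the induction hypothesis and Proposition \ref{moments_eps}. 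Thus
\begin{align*}
\frac{d}{dt}\bar J_k(f^\eps_t) \le k\eps^{-2} C_{k-1} - k\eps^{-2}\bar J_k(f^\eps_t),
\end{align*}
with $C_{k-1}$ independent of $\eps$.

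Then I would apply Gr\"onwall exactly as in the $k=1$ case: integrating the above gives
\begin{align*}
\bar J_k(f^\eps_t) \le e^{-k\eps^{-2}t}\bar J_k(f^\eps_0) + \int_0^t k\eps^{-2} e^{-k\eps^{-2}(t-s)} C_{k-1}\, ds \le \bar J_k(f^\eps_0) + C_{k-1},
\end{align*}
using $\int_0^t k\eps^{-2}e^{-k\eps^{-2}(t-s)}ds \le 1$. Since $\|f^\eps_t\|_{G^k} \le \|f^\eps_t\|_{L^1_{x,v}} + \bar J_k(f^\eps_t) = 1 + \bar J_k(f^\eps_t)$ and $\bar J_k(f^\eps_0) \le \|f^\eps_0\|_{G^k}$, this closes the induction with a constant of the claimed form. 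The main (and only real) obstacle is justifying the formal computation — namely that $|v|^k$, which is not $C^1$ at $v=0$ and is not bounded, may be used in the weak formulation of Proposition \ref{weak_f}: this is handled by approximating $|v|^k$ by smooth, compactly truncated functions $\chi_R(v)\,\phi_\delta(v)$ with $\phi_\delta \to |v|^k$ in $C^1_{loc}$ and $\chi_R$ a cutoff, using the $v$-moment bounds of order $k+1$ (available by running the induction one step further, or by a bootstrap on the already-established finite-$\eps$ moments from Proposition \ref{ex_un} via \eqref{fixed}) to pass to the limit $R \to \infty$, and then $\delta \to 0$. This is precisely the routine step the authors elect to omit.
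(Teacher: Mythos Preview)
Your proposal is correct and matches exactly what the paper indicates: the paper omits the proof entirely, saying only that ``the higher $v$-moments for $f$ are obtained by induction using the same arguments,'' and your induction on $k$ via the differential inequality for $\bar J_k$ is precisely that argument. The one cosmetic difference is in the justification of the formal computation: the paper (in the $k=1$ case) approximates the initial datum by $f_0^\eps \in C^\infty_c$ and argues by density, whereas you propose truncating the test function $|v|^k$; either route works, though the former is slightly cleaner since it avoids any appeal to higher moments at fixed $\eps$.
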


Let us finally state some compactness estimate for $u^\eps$.

\begin{proposition}[Compactness for $u$] \label{u_compact1}
For $\alpha \in (1/2, 3/2)$, we have (almost surely)
\begin{align*}
& \| u^\eps_t \|_{H_x^{\alpha}} \le C(\alpha, T,C^*, \| u^\eps_0 \|_{H_x^{\alpha}}, \| f^\eps_0 \|_{G_1})
\\
& \forall \beta < \alpha \; \; \; \| u^\eps_t - u^\eps_s \|_{H_x^{\beta}} \le C(\alpha, \beta, T,C^*, \| u^\eps_0 \|_{H_x^{\alpha}}, \| f^\eps_0 \|_{G_1}) \int_s^t \theta(\sigma) d\sigma,
\end{align*}
where $\theta \in L^1([0,T])$ is independent of $\eps$. 
\end{proposition}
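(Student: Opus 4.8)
The plan is to exploit the mild formulation \myref{sense} for $u^\eps$, combined with the uniform $G_1$-bounds and $L^\infty_x$-bounds already established in Proposition \ref{moments_eps}. The key observation is that the source term $J(f^\eps_s) - \eps \rho^\eps_s u^\eps_s$ is, by those results, uniformly bounded in $L^1_x$ (recall $\|J(f^\eps_s)\|_{L^1_x} \le \|f^\eps_s\|_{G_1}$ and $\|\rho^\eps_s u^\eps_s\|_{L^1_x} \le \|f^\eps_s\|_{L^1_{x,v}}\|u^\eps_s\|_{L^\infty_x} = \|u^\eps_s\|_{L^\infty_x}$), so that one can treat the nonlinear system essentially as a linear heat equation with an $L^\infty_t L^1_x$ right-hand side and perform standard parabolic smoothing.

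First I would bound $\|u^\eps_t\|_{H^\alpha_x}$. Writing $u^\eps_t = S(t)u^\eps_0 + \int_0^t S(t-s)\big[J(f^\eps_s) - \eps\rho^\eps_s u^\eps_s\big]ds$, I use the semigroup estimate $\|S(\sigma)g\|_{H^\alpha_x} \lesssim \sigma^{-(\alpha - \beta_0)/2}\|g\|_{H^{\beta_0}_x}$ applied with $g \in L^1_x \hookrightarrow H^{\beta_0}_x$ for $\beta_0 < -1/2$ (one-dimensional Sobolev embedding), so $\|S(\sigma)g\|_{H^\alpha_x} \lesssim \sigma^{-(\alpha+1/2+\delta)/2}\|g\|_{L^1_x}$. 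Since $\alpha < 3/2$, the exponent $(\alpha + 1/2 + \delta)/2$ is strictly less than $1$ for $\delta$ small, so the time integral $\int_0^t \sigma^{-(\alpha+1/2+\delta)/2}d\sigma$ converges; combined with $\|S(t)u^\eps_0\|_{H^\alpha_x} \le \|u^\eps_0\|_{H^\alpha_x}$ and the uniform $L^1_x$-bound on the source, this yields the first estimate. The $\eps$ in front of $\rho^\eps_s u^\eps_s$ only helps.

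For the time-increment estimate I would write, for $s<t$,
\begin{align*}
u^\eps_t - u^\eps_s = (S(t-s) - I)u^\eps_s + \int_s^t S(t-r)\big[J(f^\eps_r) - \eps\rho^\eps_r u^\eps_r\big]dr.
\end{align*}
The second term is controlled in $H^\beta_x$ by $\int_s^t (t-r)^{-(\beta+1/2+\delta)/2}\|\,\cdot\,\|_{L^1_x}dr$, which for $\beta<\alpha<3/2$ is an integral of an $L^1([0,T])$ function of $t-r$; after a change of variable this is $\le \int_0^{t-s}\theta_1(\sigma)d\sigma$ with $\theta_1 \in L^1$ independent of $\eps$. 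For the first term, I use the standard bound $\|(S(\sigma)-I)g\|_{H^\beta_x} \lesssim \sigma^{(\alpha-\beta)/2}\|g\|_{H^\alpha_x}$ together with the just-proved uniform $H^\alpha_x$-bound on $u^\eps_s$; writing $\sigma^{(\alpha-\beta)/2} = \int_0^\sigma \tfrac{\alpha-\beta}{2}r^{(\alpha-\beta)/2 - 1}dr$ and noting $(\alpha-\beta)/2 - 1 > -1$, this too is of the form $\int_0^{t-s}\theta_2(\sigma)d\sigma$ with $\theta_2 \in L^1([0,T])$. Setting $\theta = \theta_1 + \theta_2$ gives the claim; note $\theta$ genuinely depends only on $\alpha,\beta,T$ and not on $\eps$ because all the norms of $(f^\eps,u^\eps)$ entering the constants have been bounded uniformly in $\eps$ in Proposition \ref{moments_eps}.

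The main obstacle is purely bookkeeping: one must keep track of the admissible range of the smoothing exponents so that every time integral converges, which forces the restriction $\alpha < 3/2$ and the introduction of the small parameter $\delta>0$; and one must be slightly careful with the $(S(\sigma)-I)$ estimate, which costs regularity $(\alpha-\beta)/2$ and is the reason the increment bound holds only for $\beta<\alpha$ rather than $\beta=\alpha$. Neither difficulty is deep, but the argument does not close if one is cavalier about these exponents.
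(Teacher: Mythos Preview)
Your proof is correct and follows essentially the same route as the paper for the first estimate: mild formulation, embedding $L^1_x \hookrightarrow H^{-1/2-\delta}_x$, parabolic smoothing, and the uniform $L^1_x$ bound on the source from Proposition~\ref{moments_eps}.

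For the time-increment the paper uses a slightly different but equivalent decomposition. Rather than restarting the semigroup at $s$ and invoking the analytic bound $\|(S(\sigma)-I)g\|_{H^\beta_x} \lesssim \sigma^{(\alpha-\beta)/2}\|g\|_{H^\alpha_x}$ as you do, the paper stays with the mild form from time $0$ and writes $\langle S(t)u_0^\eps - S(s)u_0^\eps,\xi\rangle = \int_s^t \langle S(\sigma)u_0^\eps,\partial_x^2\xi\rangle\,d\sigma$, which yields the singular weight $\theta_1(\sigma)=1+\sigma^{-(2+\beta-\alpha)/2}$ and hence literally the form $\int_s^t\theta(\sigma)\,d\sigma$ stated. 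Your decomposition instead gives the H\"older bound $C(t-s)^{(\alpha-\beta)/2}$, which is (at least) as strong for the equicontinuity needed in the tightness argument of Proposition~\ref{tight}, but does not quite match the stated form $\int_s^t\theta(\sigma)\,d\sigma$ with a \emph{fixed} $\theta$. This is a cosmetic discrepancy, not a gap; either version suffices for the application.
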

\begin{proof}
Consider the mild form
\begin{align}
& u^\eps_t = S(t)u_0^\eps + \int_0^t S(t-\sigma) g_\sigma^\eps d\sigma, \; \; \text{ where } g^\eps_\sigma = J(f_\sigma^\eps) - \eps \rho_\sigma^\eps u_\sigma^\eps .
\label{mildy}
\end{align}
The embedding $L^1_x \subset H^{-1/2-\delta}_x$ for all $\delta > 0$, and a classical regularity estimate for the heat kernel $S(t)$ lead to
\begin{equation}
\| S(t) w \|_{H^\alpha_x} \lesssim (1 + t^{-(\alpha+1/2 + \delta)/2}) \| w \|_{L^1_x},
\label{classic}
\end{equation}
from which we derive
$$
\| u^\eps_t \|_{H^{\alpha}_x} \lesssim \| u_0^\eps \|_{H^\alpha} +  \int_0^t (1 + |t-\sigma|^{-(\alpha + 1/2 + \delta)/2}) \; \| g^\eps_\sigma \|_{L^1_x} d\sigma.
$$
Using Proposition \ref{moments_eps}, we have $\| g_t^\eps \|_{L^1_{x,v}} \le C(T,C^*, \| u^\eps_0 \|_{L^\infty}, \| f^\eps_0 \|_{G_1})$, which leads to the first estimate, since $ (\alpha + 1/2 + \delta) / 2 < 1$ for $\delta$ small enough.
\\
For the second estimate, we first write, for all $\xi \in H^{-\beta}_x$,
\begin{align*}
\Big| \langle S(t)u_0^\eps - S(s)u_0^\eps, \xi \rangle \Big| & = \Big| \int_s^t \blangle S(\sigma) u_0^\eps , \partial^2_x \xi \brangle d\sigma \Big| \le \| \xi \|_{H^{-\beta}_x} \int_s^t \| S(\sigma) u_0^\eps \|_{H^{2+\beta}_x} d\sigma \\
&
\le C \| \xi \|_{H^{-\beta}_x} \| u_0^\eps \|_{H^{\alpha}_x}  \int_s^t \Big( 1 +\sigma^{-(2+\beta - \alpha)/2} \Big) d\sigma,
\end{align*}
which gives $\| S(t) u_0^\eps - S(s) u_0^\eps \|_{H^\beta_x} \le \| u_0^\eps \|_{H^{\alpha}_x} \int_s^t \theta_1(\sigma) d\sigma$ with $\theta_1 \in L^1$ since $(2+\beta - \alpha)/2 < 1$. 
Finally, 
\begin{align*}
\Big\| \int_s^t S(t-\sigma) g_\sigma^\eps d\sigma \Big\|_{H^\beta_x} \le \int_s^t \| S(t-\sigma) g_\sigma^\eps d\sigma \|_{H^\beta_x} d\sigma
\end{align*}
and we use again \myref{classic} with $\beta$ instead of $\alpha$ to conclude from the mild form \myref{mildy}.
\end{proof}

\end{subsection}

\begin{subsection}{Generator of the process}

As previously mentioned, $(f^\varepsilon_t, u^\eps)_{t \ge 0}$ is not a Markov process in itself, however the process $(f^\varepsilon_t, u^\eps, m^\varepsilon_t)_{t \ge 0}$ is. In view of \myref{eq}, its generator is naturally expected to be
\begin{equation}
{\cal L}^\varepsilon \psi= \frac{1}{\varepsilon^2} {\cal L}_\# \psi + \frac{1}{\varepsilon} {\cal L}_\flat \psi
+ {\cal L}_0 \psi + \eps {\cal L}_1 \psi,
\end{equation}
where
\begin{equation}
\left\{
\begin{array}{l}
 {\cal L}_\# \psi(f,u,n) = - \blangle \partial_v[(n-v)f ] , D_f \psi(f,u,n) \brangle + M \psi(f,u,n), \\
 {\cal L}_\flat \psi(f,u,n) = - \blangle v \partial_x f + u \partial_v f , D_f \psi(f,u,n) \brangle ,\\
 {\cal L}_0 \psi(f,u,n) = \blangle \partial^2_x u + J(f), D_u \psi(f,u,n) \brangle, \\
 {\cal L}_1 \psi(f,u,n) = - \blangle \rho u , D_u \psi(f,u,n) \brangle.
\end{array}
\right.
\label{generator}
\end{equation}
Let us quickly get into the details of the notation $"D"$ used here.
For $\psi : {G^k} \to \mathbb{R}$, let $\text{Diff}_f \psi$ denote the usual differential in $f \in {G^k}$ of $\psi$, that is \begin{align*}
\forall f \in {G^k}, \; \; \; \text{Diff}_f \psi(f) \in (G^k)' = L^1\Big( \mathbb{T}^d \times \mathbb{R}^d , (1+|v|^k)dxdv \Big)' \simeq L^\infty_{x,v} .
\end{align*}
There exists $h(f) \in L^\infty_{x,v}$ such that 
\begin{align*}
\forall g \in {G^k}, \; \; \text{Diff}_f \psi(f)(g) = \blangle g, h(f) \brangle_{{G^k}, (G^k)'} = \blangle g , (1+|v|^k) h(f) \brangle.
\end{align*}
The differential $\text{Diff}_f\psi(f)$ is naturally identified with the $L^\infty_{x,v}$ function $h(f)$. For convenience, we shall rather denote by $D_f\psi(f)$ the function $(1+|v|^k)h(f)$, so that
$$
\forall g \in {G^k}, \; \; \; \text{Diff}_f \psi(f)(g) = \blangle g , D_f\psi(f)  \brangle.
$$
To avoid cluttering notation, we do not indicate the dependence of $D_f\psi(f)$ on the choice of the parameter $k \ge 1$. In section~\ref{5}, where the perturbed test-function method is developed, the value $k=3$ is fixed for good.
The natural bound on $D_f \psi$ is consequently
$$
 \Big\| \frac{D_f\psi(f)}{1+|v|^k} \Big\|_{L^\infty_{x,v}}  = \| \text{Diff}_f \psi(f) \|_{{G^k}'}   < \infty .
$$
 Similarly, for $\psi : H^\beta_x \to \mathbb{R}$, $D_u \psi$ is defined so that
$$
\forall w \in H^\beta_x, \; \; \; \text{Diff}_u \psi(u)(w) = \blangle w , D_u\psi(u)  \brangle.
$$

We will not properly identify the complete domain of the infinitesimal generator ${\cal L}^\varepsilon$: for the purpose of this paper, it will in fact be sufficient to show that ${\cal L}^\eps$ satisfies the martingale problem associated with $(f^\eps, u^\eps,m^\eps)$ for a large enough class of test functions.

\begin{define}[Good test function] 
Let $k \ge 1$ and $\alpha \in (1/2, 3/2)$ be given. Recalling the space $G_0^k$ introduced \myref{Gk0}, define the state space for the process 
$(f_t^\eps,u_t^\eps,m_t^\eps)_{t \ge 0}$:
\begin{align}
{\cal X}_k^\alpha = G^k_0 \times H^\alpha_x \times E.
\end{align}
A function 
$\psi : (f,u,n) \in G^k \times H^\alpha_x \times E \to \mathbb{R}$
is said to be a "good test function" on ${\cal X}_k^\alpha$ when: 
\begin{enumerate}
\item $\psi \in \B^{loc}({\cal X}_k^\alpha)$ 

\item For all $(f,u) \in {G_0^k} \times H^\alpha_x$, $\psi(f,u, \cdot) \in D(M)$ and $M \psi \in C_b^{loc}({\cal X}_k^\alpha)$

\item For all $(u,n) \in H^\alpha_x \times E$, $\psi( \cdot ,u,n)$ is ${G^k}$-differentiable and
$$
\begin{array}{l c l l}
D_f \psi :&  {\cal X}_k^\alpha & \longrightarrow & ({\cal C}_k, N_k) \\
& (f,u,n) & \longmapsto & D_f \psi(f,u,n)
\end{array} \text{ is } C_b^{loc}({\cal X}_k^\alpha ; {\cal C}_k),
$$
where the normed space $({\cal C}_k, N_k)$ has been introduced in Definition \myref{norm_k}

\item For all $(f,n) \in {G_0^k} \times E$, $\psi(f, \cdot ,n)$ is $H^\alpha_x$-differentiable and
$$
\begin{array}{l c l l}
D_u \psi :&  {\cal X}_k^\alpha & \longrightarrow & C^2(\mathbb{T}) \\
& (f,u,n) & \longmapsto & D_u \psi(f,u,n)
\end{array} \text{ is } C_b^{loc}({\cal X}_k^\alpha ; C^2(\mathbb{T})).
$$
\end{enumerate}
\label{good}\end{define}
Note that the local boundedness required in point 3. can be expressed as
$$
\left\{
\begin{array}{l}
|D_f\psi(f,u,n)| \le C \Big(\| f \|_{{G^k}}, \| u \|_{H^\alpha_x} \Big)(1+ |v|^k),  \\
|v \partial_x D_f\psi(f,u,n)| \le  C \Big(\| f \|_{{G^k}}, \| u \|_{H^\alpha_x} \Big)(1+ |v|^k), \\
|(1+|v|)  \partial_v D\psi(f,n)| \le  C \Big(\| f \|_{{G^k}}, \| u \|_{H^\alpha_x} \Big)(1+ |v|^k),
\end{array}
\right.
$$
which is a natural condition to ensure that ${\cal L^\eps \psi}$ is well-defined and $C_b^{loc}( {\cal X}_k^\alpha)$. This suggests that, when $f \in {G_0^k}$, typical good test functions involve moments of order at most $k-1$ for $f$. Later on, we will for instance consider the test function
\begin{align*}
\psi(f,n) = \blangle J_{k-1}(f), ( \partial_x M^{-1}I(n)) \xi \brangle, \; \; \; \xi \in W^{1,\infty}_x,
\end{align*}
and linear combinations of such functions.
Introducing the canonical filtration of $m^\eps$
$$
{\cal F}^\eps_t = \sigma \Big( m^\eps_s, \; 0 \le s \le t \Big) = {\cal F}_{\eps^{-2} t}, \; \; \; t \ge 0,
$$
the process $t \mapsto (f_t^\eps, u_t^\eps) \in {G_0^k} \times H^\alpha_x$ is naturally $({\cal F}^\eps_t)$-adapted, as mentioned in Remark \ref{rem_measurable}. 
We can now state the following result:

\begin{proposition} \label{prop_gen} Let $k \ge 1$, and $\alpha \in (1/2,3/2)$ be given.
Let  $\psi \equiv \psi(f,u,n)$ be a good test function on ${\cal X}_k^\alpha$. 
 Then $\psi$ satisfies the martingale problem for ${\cal L}^\eps$ in the sense that
$$
\forall (f,u,n) \in {\cal X}_k^\alpha, \;  \; {\cal L}^\varepsilon \psi(f,u,n) \text{ is well-defined}, \; \; 
{\cal L}^\varepsilon \psi \in C_b^{loc}({\cal X}_k^\alpha)
$$
and for any $(f^\eps_0,u^\eps_0,n) \in {\cal X}_k^\alpha$ (with $f_0^\eps, u_0^\eps$ satisfying the assumption \myref{hyp_ex_un}), 
$$
M^\eps_\psi(t) = \psi(f^\eps_t, u^\eps_t,m^\eps_t) - \int_0^t {\cal L}^\eps \psi(f^\eps_s,u^\eps_s,m^\eps_s) ds, 
\; \; \; t \ge 0
$$
defines a càdlàg, $L^2$ martingale with respect to the filtration $({\cal F}^\eps_t)_{t \ge 0}$  under 
$\mathbb{P}_{(f^\eps_0,u^\eps_0,n)}$. 
\\
Moreover, if $|\psi|^2$ is also a good test function, the predictable quadratic variation is given by:
$$
\Big[ M_\psi^\eps \Big](t) = \frac{1}{\eps^2} \int_0^t  \Big( M |\psi|^2 - 2 \psi M \psi \Big)(f^\eps_s,u^\eps_s,m^\eps,s) ds .
$$

\end{proposition}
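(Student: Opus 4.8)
The plan is to establish the three assertions in turn. First, to see that ${\cal L}^\eps\psi$ is a well-defined element of $C_b^{loc}({\cal X}_k^\alpha)$, one inspects the four operators in \myref{generator}. After integrating by parts in $v$ (legitimate since ${\cal C}_k\subset C^1$, first for $f$ in a dense subclass of $G^k$ and then by density), one has ${\cal L}_\#\psi(f,u,n)=\langle (n-v)f,\partial_v D_f\psi(f,u,n)\rangle+M\psi(f,u,n)$ and ${\cal L}_\flat\psi(f,u,n)=\langle f, v\,\partial_x D_f\psi(f,u,n)+u\,\partial_v D_f\psi(f,u,n)\rangle$ (the field $u=u(x)$ being $v$-independent). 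Since $D_f\psi$ takes values in $({\cal C}_k,N_k)$ and obeys the local-boundedness bounds recorded after Definition \ref{good}, both integrands are dominated by $C(\|f\|_{G^k},\|u\|_{H^\alpha_x},C^*)(1+|v|^k)$, hence integrable against $f\in G^k$; similarly ${\cal L}_0\psi$ and ${\cal L}_1\psi$ are controlled via $J(f),\rho(f)\in L^1_x$, the embedding $H^\alpha_x\subset L^\infty_x$, and $D_u\psi\in C^2(\mathbb{T})$ (with $\partial_x^2 u$ acting on $D_u\psi$ by duality). Joint continuity and local boundedness of ${\cal L}^\eps\psi$ on ${\cal X}_k^\alpha$ then follow from the hypotheses that $D_f\psi,D_u\psi$ and $M\psi$ are $C_b^{loc}$, together with the continuity of $f\mapsto(\rho(f),J(f))$ and $u\mapsto\partial_x^2 u$ in the relevant topologies.

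The key to the martingale property is the following pathwise \emph{differentiation identity}: for a \emph{fixed} $n\in E$, the map $t\mapsto\psi(f^\eps_t,u^\eps_t,n)$ is left- and right-differentiable with
\[
\frac{d}{dt}\psi(f^\eps_t,u^\eps_t,n)=\Big({\cal L}^\eps-\tfrac1{\eps^2}M\Big)\psi(f^\eps_t,u^\eps_t,n).
\]
To obtain it I would write the difference quotient of $\psi$, split the increment into its $f$-part and its $u$-part, use the fundamental theorem of calculus for the $G^k$- and $H^\alpha_x$-differentials of $\psi$, and pass to the limit: the difference-quotient estimates of Propositions \ref{weak_f}--\ref{weak_u} (applied to the time-rescaled solution $f^\eps_t=f_{\eps^{-2}t}$, $u^\eps$ of \myref{eq}) provide uniform domination, while the continuity $f^\eps\in C([0,T];G_0^k)$, $u^\eps\in C([0,T];H^\alpha_x)$ and the $C_b^{loc}$-continuity of $D_f\psi,D_u\psi$ identify the limit as the pairing of $D_f\psi$ (resp.\ $D_u\psi$) with the weak time-derivative of $f^\eps$ (resp.\ $u^\eps$) furnished by Propositions \ref{weak_f}--\ref{weak_u}. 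Collecting the resulting terms according to the powers of $\eps$ appearing in \myref{eq} reconstructs exactly $\eps^{-2}$ times the transport part of ${\cal L}_\#$, plus $\eps^{-1}{\cal L}_\flat$, ${\cal L}_0$ and $\eps{\cal L}_1$, evaluated at $(f^\eps_t,u^\eps_t,n)$.

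With this in hand I would prove the martingale property by telescoping along a partition $0=t_0<\dots<t_N=t$: writing $\psi(f^\eps_t,u^\eps_t,m^\eps_t)-\psi(f^\eps_0,u^\eps_0,m^\eps_0)=\sum_i(I_i)+\sum_i(II_i)$ with $(I_i)=\psi(f^\eps_{t_{i+1}},u^\eps_{t_{i+1}},m^\eps_{t_{i+1}})-\psi(f^\eps_{t_i},u^\eps_{t_i},m^\eps_{t_{i+1}})$ and $(II_i)=\psi(f^\eps_{t_i},u^\eps_{t_i},m^\eps_{t_{i+1}})-\psi(f^\eps_{t_i},u^\eps_{t_i},m^\eps_{t_i})$, the identity above applied pathwise with the fixed point $n=m^\eps_{t_{i+1}}(\omega)$ gives $\sum_i(I_i)=\sum_i\int_{t_i}^{t_{i+1}}({\cal L}^\eps-\eps^{-2}M)\psi(f^\eps_s,u^\eps_s,m^\eps_{t_{i+1}})\,ds$, which tends to $\int_0^t({\cal L}^\eps-\eps^{-2}M)\psi(f^\eps_s,u^\eps_s,m^\eps_s)\,ds$ as the mesh shrinks, by right-continuity of $m^\eps$ and dominated convergence; the domination rests on the fact that $(f^\eps_t,u^\eps_t,m^\eps_t)$ remains a.s.\ in a fixed bounded subset of ${\cal X}_k^\alpha$ (Propositions \ref{moments_eps} and \ref{u_compact1} and Assumption \ref{ass1}), on which ${\cal L}^\eps\psi$ and $M\psi$ are bounded. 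For $(II_i)$, the Markov property of $m^\eps$ (whose semigroup is $P_{\eps^{-2}s}$, with generator $\eps^{-2}M$), the hypothesis $\psi(f,u,\cdot)\in D(M)$ and Kolmogorov's equation (Proposition \ref{chap1-kolmo}) give $\E[(II_i)|{\cal F}^\eps_{t_i}]=\eps^{-2}\int_{t_i}^{t_{i+1}}\E[M\psi(f^\eps_{t_i},u^\eps_{t_i},m^\eps_s)|{\cal F}^\eps_{t_i}]\,ds$, so each $(II_i)-\E[(II_i)|{\cal F}^\eps_{t_i}]$ is an $({\cal F}^\eps_{t_i})$-martingale increment; passing to the limit identifies $\sum_i(II_i)$ as $\eps^{-2}\int_0^t M\psi(f^\eps_s,u^\eps_s,m^\eps_s)\,ds$ plus the $L^2$-limit of these increments, which is exactly $M^\eps_\psi(t)$. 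Since $\psi$ and ${\cal L}^\eps\psi$ are bounded on bounded sets, $M^\eps_\psi$ is bounded, hence $L^2$; and it is c\`adl\`ag because $f^\eps,u^\eps$ and $t\mapsto\int_0^t{\cal L}^\eps\psi\,ds$ are continuous while $m^\eps$ is c\`adl\`ag.

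For the bracket, I would apply the martingale statement to $|\psi|^2$ (a good test function by hypothesis) and note that ${\cal L}_\flat,{\cal L}_0,{\cal L}_1$ and the transport part of ${\cal L}_\#$ are derivations, so that ${\cal L}^\eps|\psi|^2-2\psi{\cal L}^\eps\psi=\eps^{-2}(M|\psi|^2-2\psi M\psi)$; comparing the ensuing expansion of $|\psi|^2$ with the one obtained by applying It\^o's formula to the bounded semimartingale $\psi(f^\eps_t,u^\eps_t,m^\eps_t)=\psi(f^\eps_0,u^\eps_0,m^\eps_0)+\int_0^t{\cal L}^\eps\psi\,ds+M^\eps_\psi(t)$ shows that $[M^\eps_\psi]_t-\eps^{-2}\int_0^t(M|\psi|^2-2\psi M\psi)\,ds$ is a martingale; since $[M^\eps_\psi]_t-\langle M^\eps_\psi\rangle_t$ is also a martingale and $\eps^{-2}\int_0^t(M|\psi|^2-2\psi M\psi)\,ds$ is continuous, adapted and of finite variation, the difference $\langle M^\eps_\psi\rangle_t-\eps^{-2}\int_0^t(M|\psi|^2-2\psi M\psi)\,ds$ is a predictable finite-variation martingale null at $0$, hence vanishes identically. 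I expect the real work to lie in the differentiation identity and in controlling every passage to the limit in the telescoping argument: both hinge on simultaneously having, uniformly along the random trajectory, the difference-quotient bounds of Propositions \ref{weak_f}--\ref{weak_u} and the a priori confinement of $(f^\eps_t,u^\eps_t,m^\eps_t)$ to a bounded subset of ${\cal X}_k^\alpha$, which makes $\psi$, its differentials, ${\cal L}^\eps\psi$ and $M\psi$ all bounded there so that dominated convergence is applicable.
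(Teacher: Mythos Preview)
Your overall strategy coincides with the paper's: telescope along a partition, handle the $m$-increment via the Markov property of $m^\eps$ and Kolmogorov's equation, and handle the $(f,u)$-increment by pathwise differentiation. The paper separates the latter further into an $f$-piece and a $u$-piece (its terms $B$ and $C$), but that is inessential.

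There is, however, a slip in your ``differentiation identity''
\[
\frac{d}{dt}\psi(f^\eps_t,u^\eps_t,n)=\bigl({\cal L}^\eps-\eps^{-2}M\bigr)\psi(f^\eps_t,u^\eps_t,n).
\]
The right-hand side contains the transport part of $\eps^{-2}{\cal L}_\#$, namely $\eps^{-2}\langle (n-v)f^\eps_t,\partial_v D_f\psi(f^\eps_t,u^\eps_t,n)\rangle$, in which the drift is the \emph{frozen} $n$. But the left-hand side is the derivative along the \emph{actual} trajectory of $f^\eps$, which is driven by $m^\eps_t$; differentiating as in Proposition~\ref{weak_f} yields $m^\eps_t$, not $n$, in the drift, while $D_f\psi$ remains evaluated at the frozen $n$. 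This is precisely what the paper records in its Lemma~\ref{le_lemma}. Consequently your exact formula for $\sum_i(I_i)$ is not right as written: on $[t_i,t_{i+1}]$ the integrand carries $m^\eps_s$ in the transport drift and $m^\eps_{t_{i+1}}$ only in the third argument of $D_f\psi$ and $D_u\psi$. Once this is corrected, your limiting argument is valid: as the mesh shrinks, $m^\eps_{t_{i+1}}\to m^\eps_s$ by right-continuity, the $C_b^{loc}$-continuity of $D_f\psi,D_u\psi$ passes the third slot to $m^\eps_s$, and dominated convergence (legitimate by the a~priori confinement to a bounded set of ${\cal X}_k^\alpha$) gives $\int_0^t({\cal L}^\eps-\eps^{-2}M)\psi(f^\eps_s,u^\eps_s,m^\eps_s)\,ds$ as you claim. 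The treatment of $(II_i)$ and of the predictable bracket matches the paper.
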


\begin{remark}
The predictable quadratic variation $\Big[ M \Big](t)$ is the unique càdlàg, predictable, increasing process such that $|M(t)|^2 - \Big[ M \Big](t)$ defines a martingale. When $M(t)$ is moreover continuous, this coincides with the standard quadratic variation (see \mycite{JacodShiryaev03}, p.38).
\end{remark}

\begin{proof}[Proof] 
The assumptions on $\psi$ clearly guarantee that ${\cal L}^\varepsilon \psi$ is well defined and $C_b^{loc}({\cal X}_k^\alpha)$. 
To simplify, we shall consider the case $\varepsilon = 1$ and write $(f_t,u_t,m_t)$ instead of $(f^\eps_t,u^\eps_t,m^\eps_t)$.
The estimates established in the previous subsections show that, $(f_0,u_0)$ being fixed,
\begin{align*}
\forall T > 0, \; \forall t \in [0,T], \; \; \; \| f_t \|_{{G^k}} + \|u_t\|_{H^\alpha_x} \le C(T), \; \; \;  \; \; \; \; 
\|m_t\|_E \le C^*,
\end{align*}
almost surely under the probability $\mathbb{P}_{(f^\eps_0,u^\eps_0,n)}$. If follows that $M^\eps_\psi(t)$ is well defined and in $L^\infty(\Omega)$ for $t \ge 0$. 
We now need to prove that, for $s < t$ and $h$ bounded and ${\cal F^\eps}_s-$measurable,
\begin{align}
\E \Big[ \Big( \psi(f_t,u_t,m_t) - \psi(f_s,u_s,m_s) - \int_s^t {\cal L}^\eps \psi(f_\sigma,u_\sigma,m_\sigma) d\sigma \Big) h \Big] = 0 . \label{mart}
\end{align}
To this purpose, let us formulate two natural lemmas:
\begin{lemma} \label{le_lemma} We have
\begin{align*}
 \frac{d}{dt} \psi(f_t,u,n) & = "\blangle D_f \psi(f_t,u,n) , \partial_t f_t  \brangle" 
\\ &
:= \blangle f_t, v  \partial_x D_f\psi(f_t,u,n) +  (u_t + m_t - v)   \partial_v D_f\psi(f_t,u,n) \brangle.
\end{align*}
\end{lemma}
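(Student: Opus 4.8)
The plan is to fix $(u,n)$ and differentiate the real-valued function $t \mapsto \psi(f_t, u, n)$ along the trajectory $f_t = f_t[m+u]$ of the kinetic equation, using the chain rule in the Banach space $G^k$ together with the weak formulation already established in Proposition \ref{weak_f}. The key point is that $\psi(\cdot, u, n)$ is $G^k$-differentiable with differential represented by $D_f\psi(f,u,n) \in {\cal C}_k$ (good test function, point 3.), so formally
$$
\frac{d}{dt}\psi(f_t,u,n) = \mathrm{Diff}_f\psi(f_t,u,n)\big( \tfrac{d}{dt} f_t \big) = \blangle \tfrac{d}{dt} f_t , D_f\psi(f_t,u,n) \brangle,
$$
and then one substitutes the weak form of $\partial_t f_t$. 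Rigorously, since $f_t$ need not be differentiable as a $G^k$-valued curve, I would instead work with the difference quotient
$$
\frac{\psi(f_{t+s},u,n) - \psi(f_t,u,n)}{s}
$$
and pass to the limit $s \to 0$.

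First I would write the increment of $\psi$ using the mean value / fundamental theorem of calculus in $G^k$: for fixed $t$, introduce $g_\lambda = \lambda f_{t+s} + (1-\lambda) f_t \in G^k$ for $\lambda \in [0,1]$ (the segment stays in $G^k$ since $G^k$ is a vector space; positivity and mass are not needed here because point 3. only requires $G^k$-differentiability on all of $G^k$), so that
$$
\psi(f_{t+s},u,n) - \psi(f_t,u,n) = \int_0^1 \blangle f_{t+s} - f_t , D_f\psi(g_\lambda,u,n) \brangle \, d\lambda.
$$
Dividing by $s$, the pairing $\langle \cdot, \cdot\rangle$ here is the $({G^k},(G^k)')$ duality, and $\big\|(f_{t+s}-f_t)/s\big\|$ need not be bounded in $G^k$; so instead I would keep the quantity in the form appearing in Proposition \ref{weak_f}, namely
$$
\frac{\langle f_{t+s} - f_t, \xi\rangle}{s} = \frac{1}{s}\int_t^{t+s} \blangle f_\sigma, v\partial_x \xi + (u_\sigma + m_\sigma - v)\partial_v \xi\brangle \, d\sigma
$$
for any $\xi \in {\cal C}_k$, applied with $\xi = D_f\psi(g_\lambda,u,n)$. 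Using the uniform-in-$\sigma$ domination from Proposition \ref{weak_f} (valid since $N_k(D_f\psi(g_\lambda,u,n))$ is locally bounded by point 3. and $g_\lambda$ ranges over a bounded subset of $G^k$), together with the continuity of $\sigma \mapsto f_\sigma$ in $G^k$ and of $\lambda \mapsto D_f\psi(g_\lambda,u,n)$ in ${\cal C}_k$ (which follows from $g_\lambda \to f_t$ in $G^k$ as $s \to 0$ and the $C_b^{loc}$ continuity of $D_f\psi$ into ${\cal C}_k$), I would apply dominated convergence in $(\sigma,\lambda)$ to obtain
$$
\lim_{s \to 0} \frac{\psi(f_{t+s},u,n) - \psi(f_t,u,n)}{s} = \blangle f_t, v\partial_x D_f\psi(f_t,u,n) + (u_t + m_t - v)\partial_v D_f\psi(f_t,u,n)\brangle,
$$
which is exactly the claimed formula (the left derivative being handled identically, replacing $t+s$ by $t-s$ and $u_t, m_t$ by $u_{t^-}, m_{t^-}$ as noted in the paper's convention for c\`adl\`ag data).

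The main obstacle I anticipate is the interplay of two limiting procedures — the difference quotient in time $s \to 0$ and the chain-rule integral over $\lambda$ — in a setting where $f_t$ is only continuous (not differentiable) in $G^k$ and the dual element $D_f\psi$ is only continuous into ${\cal C}_k$. The crux is to show that $D_f\psi(g_\lambda,u,n) \to D_f\psi(f_t,u,n)$ in the norm $N_k$ uniformly enough, and that the weak-formulation integrand is dominated uniformly for $\sigma \in [t, t+s]$ and $\lambda \in [0,1]$; both of these rest squarely on the local boundedness and $C_b^{loc}$ continuity built into the definition of a good test function (point 3.) and on the $v$-moment bounds from Propositions \ref{weak_f} and \ref{moments_eps}. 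Once the domination is in place, Fubini and dominated convergence close the argument with no genuine difficulty.
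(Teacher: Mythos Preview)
Your proposal is correct and follows essentially the same approach as the paper: both use the mean-value representation along the segment from $f_t$ to $f_{t+s}$, invoke the weak formulation of Proposition~\ref{weak_f} for test functions in ${\cal C}_k$, and conclude via the $C_b^{loc}$-continuity of $D_f\psi$ into $({\cal C}_k,N_k)$. The paper organizes the argument slightly more directly by splitting off the main term $\blangle D_f\psi(f_t,u,n), (f_{t+s}-f_t)/s\brangle$ and treating the remainder $R_s = \int_0^1 \blangle D_f\psi(g_\theta,u,n) - D_f\psi(f_t,u,n), (f_{t+s}-f_t)/s\brangle\,d\theta$ separately (bounding $|R_s|$ by the uniform estimate of Proposition~\ref{weak_f} times $\sup_\theta N_k\big(D_f\psi(g_\theta,u,n) - D_f\psi(f_t,u,n)\big) \to 0$), which avoids your double integral in $(\sigma,\lambda)$ but is otherwise the same argument.
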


\begin{proof}[Proof of the lemma]
One can simply write
\begin{align*}
\frac{\psi(f_{t+s},u,n) - \psi(f_t,u,n)}{s} & = 
\blangle D_f\psi(f_t,u,n) , \frac{f_{t+s} - f_t}{s} \brangle + R_s
\end{align*}
with
\begin{align*}
R_s = \int_0^1 \blangle D_f\psi(f_t + \theta (f_{t+s} - f_t) , u,n) - D_f\psi(f_t,u,n)  , \frac{f_{t+s} - f_t}{s}\brangle d\theta .
\end{align*}
Since $D_f \psi(f_t,u,n) \in {\cal C}_k$, Proposition \ref{weak_f} guarantees that
\begin{align*}
\blangle D_f\psi(f_t,u,n) , \frac{f_{t+s} - f_t}{s} \brangle \xrightarrow[s \to 0]{}"\blangle D_f \psi(f_t,u,n) , \partial_t f_t  \brangle"
\end{align*}
and that
\begin{align*}
| R_s | \lesssim \sup_{\theta \in [0,1]} N_k\Big( D_f \psi(f_t +\theta(f_{t+s}-f_t),u,n) - D_f \psi(f_t,u,n) \Big)
\end{align*}
which tends to $0$ as $s \to 0$, since $f_{t+s} \to f_t$ in ${G_0^k}$ and $D_f \psi$ is continuous.
\end{proof}
This second lemma is proved in a similar fashion.
\begin{lemma} We have
\begin{align*}
\frac{d}{dt} \psi(f,u_t,n) & = "\blangle D_u \psi(f,u_t,n) , \partial_t u_t  \brangle"  \\
& := \blangle u_s, \partial_x^2 D_u \psi(f,u_t,n) \brangle + \blangle J(f_t)-\rho_t u_t, D_u \psi(f,u_t,n) \brangle.
\end{align*}
\end{lemma}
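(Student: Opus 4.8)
The plan is to repeat, \emph{mutatis mutandis}, the argument used for Lemma~\ref{le_lemma}, with the role played there by the $f$-variable and Proposition~\ref{weak_f} now played by the $u$-variable and Proposition~\ref{weak_u}. Fix $(f,n)$. Recall from Proposition~\ref{ex_un} that $\sigma\mapsto u_\sigma$ is continuous into $H^\alpha_x$ and $\sigma\mapsto f_\sigma$ is continuous into $G_0^k$, so that in particular $\|u_\sigma\|_{H^\alpha_x}$ and $\|f_\sigma\|_{{G^k}}$ remain bounded on $[0,T]$. Since $\psi(f,\cdot,n)$ is $H^\alpha_x$-differentiable (point~4 of Definition~\ref{good}), I would write
\begin{align*}
\frac{\psi(f,u_{t+s},n) - \psi(f,u_t,n)}{s} = \blangle D_u\psi(f,u_t,n) , \frac{u_{t+s}-u_t}{s} \brangle + R_s,
\end{align*}
with
\begin{align*}
R_s = \int_0^1 \blangle D_u\psi\big(f,u_t+\theta(u_{t+s}-u_t),n\big) - D_u\psi(f,u_t,n) , \frac{u_{t+s}-u_t}{s} \brangle d\theta .
\end{align*}

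For the main term, the function $\xi := D_u\psi(f,u_t,n) \in C^2(\mathbb{T})$ is fixed, and Proposition~\ref{weak_u} applied to this $\xi$ gives
\begin{align*}
\blangle \xi , \frac{u_{t+s}-u_t}{s} \brangle = \frac{1}{s} \int_t^{t+s} \Big( \langle u_\sigma, \partial_x^2\xi\rangle + \blangle J(f_\sigma) - \rho_\sigma u_\sigma , \xi \brangle \Big) d\sigma .
\end{align*}
Since $\sigma\mapsto u_\sigma$ and $\sigma\mapsto f_\sigma$ are continuous (into $H^\alpha_x$ and $G_0^k$ respectively), the integrand is continuous at $\sigma = t$, so the right-hand side converges as $s\to 0$ to $\langle u_t,\partial_x^2\xi\rangle + \blangle J(f_t)-\rho_t u_t,\xi\brangle$, i.e.\ to the right-hand side of the asserted identity (the misprint $u_s$ for $u_t$ in the statement being harmless).

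For the remainder, I would set $\delta_\theta := D_u\psi\big(f,u_t+\theta(u_{t+s}-u_t),n\big) - D_u\psi(f,u_t,n)$ and invoke the uniform estimate of Proposition~\ref{weak_u}: as $u_t+\theta(u_{t+s}-u_t)$ stays in a fixed bounded subset of $H^\alpha_x$ for $\theta\in[0,1]$ and $s$ small, and $f_0$ is fixed, the constant there is uniform, whence
\begin{align*}
|R_s| \le \sup_{\theta\in[0,1]} \Big| \blangle \delta_\theta , \frac{u_{t+s}-u_t}{s} \brangle \Big| \lesssim \sup_{\theta\in[0,1]} \big( \|\delta_\theta\|_\infty + \|\partial_x^2\delta_\theta\|_\infty \big) \lesssim \sup_{\theta\in[0,1]} \| \delta_\theta \|_{C^2(\mathbb{T})} ,
\end{align*}
with a constant independent of $s$. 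This tends to $0$ as $s\to 0$, because $u_{t+s}\to u_t$ in $H^\alpha_x$ and $D_u\psi : {\cal X}_k^\alpha \to C^2(\mathbb{T})$ is continuous (point~4 of Definition~\ref{good}). Combining the two contributions proves the lemma. I do not anticipate any genuine obstacle: the only points requiring (minor) care are that every quantity appearing on the right-hand side of Proposition~\ref{weak_u} depends continuously on time near $t$—so that the time average converges to the pointwise value—and that the constant in the estimate of Proposition~\ref{weak_u} can be chosen uniform in $\theta$ and in small $s$, which holds since $(u_\sigma)_\sigma$ remains in a fixed bounded subset of $H^\alpha_x$ and $\|f_0\|_{G_1}$ is fixed.
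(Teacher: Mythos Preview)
Your proposal is correct and follows exactly the approach the paper intends: the paper simply states that this lemma ``is proved in a similar fashion'' to Lemma~\ref{le_lemma}, and you have carried out precisely that analogue, replacing the $f$-differentiability and Proposition~\ref{weak_f} by the $u$-differentiability and Proposition~\ref{weak_u}. The minor points you flag (uniformity of the constant in Proposition~\ref{weak_u} over $\theta$ and small $s$, and the typo $u_s$ for $u_t$) are handled correctly.
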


Introducing a subdivision $t_0=s < t_1 < ... < t_N = t$ of $[s,t]$ of step $\delta = \max_i |t_{i+1}-t_i|$, we may split \myref{mart} into 3 terms:
\begin{align*}
& A = \E \Big[ \Big( \sum_i \psi(f_{t_i},u_{t_i},{m_{t_{i+1}}}) - \psi(f_{t_i},u_{t_i},{m_{t_{i}}})
- \int_{t_i}^{t_{i+1}}  M \psi(f_\sigma,u_\sigma,m_\sigma) d\sigma \Big)  h \Big],
\\
& B = \E \Big[ \Big( \sum_i \psi(f_{t_{i+1}},u_{t_{i+1}},{m_{t_{i+1}}}) - \psi(f_{t_i},u_{t_{i+1}},{m_{t_{i+1}}})
- \int_{t_i}^{t_{i+1}}  \blangle D_f\psi(f_\sigma,u_\sigma,m_\sigma) , \partial_\sigma f_\sigma \brangle d\sigma \Big) h \Big] ,
\\
& C = \E \Big[ \Big( \sum_i \psi(f_{t_{i}},u_{t_{i+1}},{m_{t_{i+1}}}) - \psi(f_{t_i},u_{t_{i}},{m_{t_{i+1}}})
- \int_{t_i}^{t_{i+1}}  \blangle D_u\psi(f_\sigma,u_\sigma,m_\sigma) , \partial_\sigma u_\sigma \brangle d\sigma \Big) h \Big].
\end{align*}
Let us treat the first term $A$: conditioning with respect to ${\cal F}_{t_i}^\eps$ and using the Markov property, 
\begin{align*}
\E \Big[ \Big(\psi(f_{t_i},u_{t_i},{m_{t_{i+1}}}) - \psi(f_{t_i},u_{t_i},{m_{t_{i}}}) \Big) h \Big]
= \E \Big[ \Big( P_{t_{i+1}-t_i} [ \psi(f_{t_i},u_{t_i}, \cdot ) ](m_{t_i}) - \psi(f_{t_i},u_{t_i},{m_{t_{i}}}) \Big) h \Big].
\end{align*}
We note that, for all $t \ge 0$ and $(f,u,n) \in {\cal X}_k^\alpha$,
$$
P_t[\psi(f,u,.)](n) - \psi(f,u,n)  = \int_0^t P_\sigma M \psi(f,u,n) d\sigma = 
\E_n  \int_0^t M \psi(f,u,m_\sigma) d\sigma,
$$
where we have used Kolmogorov's forward equation from Proposition \ref{chap1-kolmo} and the interversion $\E \int = \int \E$ is justified since $M \psi$ is locally bounded.
We are hence led to
\begin{align*}
\E \Big[ \Big(\psi(f_{t_i},u_{t_i},{m_{t_{i+1}}}) - \psi(f_{t_i},u_{t_i},{m_{t_{i}}}) \Big) h \Big]
& = \E \Big[ \Big(  \E_{m_{t_i}} \int_0^{t_{i+1}-t_i} M \psi(f_{t_i},u_{t_i},m_\sigma) d\sigma  \Big) h \Big] \\
& = \E \Big[ \Big( \int_0^{t_{i+1}-t_i} M\psi(f_{t_i},u_{t_i},m_{t_i + \sigma}) d\sigma \Big) h \Big] \\
& =  \E \Big[ \Big( \int_{t_i}^{t_{i+1}} M\psi(f_{t_i},u_{t_i},m_{\sigma}) d\sigma \Big) h \Big] 
\end{align*}
where we have once again used the Markov property. It follows that $A$ can be rewritten as
$$
A=\E \Big[ \Big (\int_s^t a_\delta(\sigma) d\sigma \Big) h \Big]
$$
with
$$
a_\delta(\sigma) = \sum_i \mathbb{1}_{[t_i,t_{i+1}]}(\sigma) \Big( M \psi(f_{t_i},u_{t_i},m_\sigma) - M \psi(f_\sigma, u_\sigma, m_\sigma) \Big).
$$
For fixed $\sigma \in [s,t]$, since $t \mapsto (f_t,u_t) \in {G_0^k} \times H^\alpha_x$ and $M\psi$ are continuous, it is clear that, as $\delta \to 0$, we have
$
a_\delta(\sigma) \to  0
$ almost surely (hence in probability).
Moreover, $M\psi$ being locally bounded, we get $a_\delta \in L^\infty([s,t] \times \Omega)$ with a bound uniform in $\delta$. In particular, $\E[a_\delta(\sigma) h]$ tends to $0$ and is uniformly integrable in $\sigma \in [s,t]$, so that we may conclude $A \xrightarrow[\delta \to 0]{} 0$. 
\vspace{3mm}

Using the two lemmas introduced earlier, one can write in a similar fashion
\begin{align*}
B = \E \Big[ \Big( \int_s^t b_\delta(\sigma) d\sigma \Big) h \Big], \; \; \; \; 
C = \E \Big[ \Big( \int_s^t c_\delta(\sigma) d\sigma \Big) h \Big],
\end{align*}
with
\begin{align*}
& b_\delta(\sigma) = \sum_i \mathbb{1}_{[t_i,t_{i+1}]}(\sigma) 
\blangle D_f\psi(f_\sigma,u_{t_{i+1}},m_{t_{i+1}}) - D_f\psi(f_\sigma,u_\sigma,m_\sigma) , \partial_\sigma f_\sigma \brangle \\
& c_\delta(\sigma) = \sum_i \mathbb{1}_{[t_i,t_{i+1}]}(\sigma) 
\blangle D_u\psi(f_{t_i},u_\sigma,m_{t_{i+1}}) - D_u\psi(f_\sigma,u_\sigma,m_\sigma) , \partial_\sigma u_\sigma \brangle
\end{align*}
and conclude in the same way using the estimates from Propositions \ref{weak_f} and \ref{weak_u} and the fact that $(m_t)_{t \ge 0}$ is stochastically continuous and $D_f\psi$, $D_u\psi$ are $C_b^{loc}({\cal X}_k^\alpha)$. 
\vspace{3mm}

If $| \psi|^2$ is also a good test function, then ${\cal L}^\eps |\psi|^2$ is locally bounded, and $M^\eps_{|\psi|^2}(t)$ defines a martingale. Using these facts, (we may refer to the proof of Theorem B.3 in \cite{dv2} for details) one can show that the predictable quadratic variation of $M^\eps_\psi$ is given by
\begin{align*}
d[M^\eps_\psi](t) = \Big( {\cal L}^\eps |\psi|^2 - 2 \psi {\cal L}^\eps \psi \Big) (f^\eps_t,u^\eps_t,m^\eps_t) dt
\end{align*}
which, in our case, boils down to $\eps^{-2} (M |\psi|^2 - 2 \psi M \psi)$ since ${\cal D}|\psi|^2 = 2 \psi {\cal D} \psi$ for any first order differential operator ${\cal D}$.
\end{proof}

\end{subsection}

\end{section}

\begin{section}{The auxiliary process} \label{4}

In section \ref{5}, we will be led to solve Poisson equations relative to the partial generator ${\cal L}_\#$ defined in \myref{generator}. To this purpose, it is natural to study the auxiliary Markov process $(g_t , m_t)_{t \ge 0}$ whose generator is expected to be ${\cal L}_\#$.  We hence define $g_t \equiv g_t(f,n)$ as the solution of
\begin{equation}
\left\{
\begin{array}{l}
\displaystyle{  \partial_t g_t + \partial_v[ (m_t(n) - v) g_t ] = 0 }, 
\vspace{1mm} \\ 
 g_0(f,n) = f , \; \; \; m_0(n) = n ,
 \end{array}
 \right.
 \label{kinetic g}
\end{equation}
and let $(Q_t)_{t \ge 0}$ be the corresponding semigroup
$$
\forall \psi \in \B^{loc}({G_0^k} \times E), \; \; Q_t \psi(f,n) = \E[ \psi(g_t(f,n), m_t(n))  ] .
$$
The characteristics associated with the conservation equation \myref{kinetic g} solve
$$ \dot V_t(v)  = m_t(x) - V_t(v),$$
leading to the explicit expression
$$
V_t(v) =e^{-t} \Big( v + \int_0^t e^u m_u(x) du \Big).
$$

\begin{define}
The solution of $\myref{kinetic g}$ is defined by
\begin{align}
 g_t(f,n)(x,v) =e^t f \Big( x, e^t \Big[ v - w_t(n)(x) \Big] \Big) , \; \; t \ge 0 \label{def_g}
 \end{align}
 where
\begin{align}
 w_t(n) = \int_0^t e^{-(t-s)}m_s(n) ds = \int_0^t e^{-s} m_{t-s}(n) ds \in W^{2,\infty}_x .
 \label{w_t}
\end{align}
\end{define}
It is easy to verify that, for all $f \in {G_0^k}$, we have $g(f,n) \in C([0,T] ; {G_0^k})$ with a bound
$$
\| g_t(f,n) \|_{{G^k}} \le C(T, C^*, \| f \|_{{G^k}} ).
$$
Additionally, $g_t(f,n)$ defines a weak solution of \myref{kinetic g} just as in Proposition \ref{weak_f}.

\begin{subsection}{Generator of the auxiliary process}
With the explicit expression \myref{def_g}, one could easily prove that the auxiliary process $(g_t,m_t)_{t \ge 0}$ rigorously defines a (locally bounded) Markov process on the state space ${G_0^k} \times E$. 
In order to solve Poisson equations later, it is of interest this time to truly characterize the generator in terms of b.p.c convergence.
\begin{proposition} \label{generator_g}
Let $k \ge 1$ be given, and
let  $\psi \equiv \psi(f,n)$ be a good test function on ${G_0^k} \times E$. Then $\psi \in D({\cal L}_\#)$ in the sense that ${\cal L}_\# \psi \in C_b^{loc}({G_0^k} \times E)$ and
$$
\frac{Q_t \psi (f,n) - \psi(f,n)}{t}
\xrightarrow[t \to \, 0^+]{ b.p.c} {\cal L}_\#\psi(f,n) .
$$
\end{proposition}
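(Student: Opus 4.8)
The plan is to establish the b.p.c.\ convergence by splitting the increment $Q_t\psi(f,n)-\psi(f,n)$ into a part that sees the evolution of $g_t$ (governed by the transport operator $-\langle \partial_v[(n-v)f], D_f\psi\rangle$) and a part that sees the evolution of $m_t$ (governed by $M$). Concretely, I would write, for $t>0$,
\begin{align*}
\frac{Q_t\psi(f,n)-\psi(f,n)}{t}
&= \E\Big[ \frac{\psi(g_t(f,n),m_t(n)) - \psi(f,m_t(n))}{t} \Big]
+ \E\Big[ \frac{\psi(f,m_t(n)) - \psi(f,n)}{t} \Big] \\
&=: \mathrm{I}_t(f,n) + \mathrm{II}_t(f,n).
\end{align*}
The second term $\mathrm{II}_t(f,n) = t^{-1}(P_t[\psi(f,\cdot)](n) - \psi(f,n))$ converges b.p.c.\ to $M\psi(f,n)$ exactly because $\psi$ is a good test function, i.e.\ $\psi(f,\cdot)\in D(M)$ and $M\psi\in C_b^{loc}$; the local boundedness in $(f,n)$ needed for the b.p.c.\ (as opposed to mere pointwise) statement is part of the definition of "good test function" together with Assumption \ref{ass1} ($\|n\|_E\le C^*$).

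For the first term I would proceed as in the proof of Proposition \ref{weak_f} / Lemma \ref{le_lemma}. Fix the realization of $m_\cdot(n)$; then $s\mapsto g_s(f,n)$ is, for that realization, a weak solution of \myref{kinetic g} in the sense of Proposition \ref{weak_f} (applied with the field $w_s=m_s(n)$ and no $u$, no $x$-transport), so by the mean value / Taylor argument used in Lemma \ref{le_lemma},
$$
\frac{\psi(g_t(f,n),m_t(n)) - \psi(f,m_t(n))}{t}
= \frac{1}{t}\int_0^t \blangle g_s(f,n), (m_s(n)-v)\partial_v D_f\psi(g_s(f,n),m_t(n))\brangle ds + \text{(remainder)},
$$
where the remainder is controlled by $\sup_{\theta}N_k(D_f\psi(\,\cdot\,) - D_f\psi(f,m_t(n)))$ along the segment, which vanishes as $t\to0$ because $g_s(f,n)\to f$ in $G_0^k$, $m_t(n)\to n$ (stochastic continuity, and then pass through the coupling/locally bounded domination to get expectations), and $D_f\psi$ is $C_b^{loc}({\cal X};{\cal C}_k)$. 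Taking expectations and using dominated convergence — the integrand being dominated uniformly by $C(\|f\|_{G^k},C^*,T)\|f\|_{G^k}$ via the sublinear growth of the characteristics $|V_t(v)|^k\lesssim 1+|v|^k$ exactly as in Proposition \ref{weak_f} — gives $\mathrm{I}_t(f,n)\to \blangle f,(n-v)\partial_v D_f\psi(f,n)\brangle = -\blangle \partial_v[(n-v)f],D_f\psi(f,n)\brangle$, which is the transport part of ${\cal L}_\#$. The same domination, being locally uniform in $(f,n)$, upgrades pointwise convergence to b.p.c.

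The main obstacle is the careful handling of the \emph{joint} dependence on the two slots: in $\mathrm{I}_t$ the test function $D_f\psi$ is evaluated at $m_t(n)$ rather than at a frozen $n$, so one must simultaneously use the stochastic continuity of $m$ and the continuity of $D_f\psi$ in \emph{both} variables, and be sure the remainder estimate survives taking expectations (this is where the uniform local bound in Definition \ref{good}, point 3, together with $\|m_t(n)\|_E\le C^*$, is essential). A minor additional point is to verify that ${\cal L}_\#\psi\in C_b^{loc}({G_0^k}\times E)$ in the first place, which follows directly from the hypotheses on $\psi$ (continuity and local boundedness of $M\psi$ and of $D_f\psi$ as a ${\cal C}_k$-valued map) combined with the estimate of Proposition \ref{weak_f}. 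Everything else is a routine repetition of the arguments already carried out for Proposition \ref{prop_gen}.
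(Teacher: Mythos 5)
Your proposal is correct and follows essentially the same route as the paper: the same decomposition of the increment into the $m$-part (handled via $P_t$ and Kolmogorov's equation, using $\psi(f,\cdot)\in D(M)$ and the local boundedness of $M\psi$) and the $g$-part (handled by a Taylor expansion with integral remainder, the weak formulation of the auxiliary kinetic equation, the $C_b^{loc}$ continuity of $D_f\psi$ into $({\cal C}_k,N_k)$, and dominated convergence via the sublinear growth of the characteristics). The only difference is cosmetic bookkeeping in the Taylor remainder (you keep $D_f\psi$ evaluated along the trajectory, the paper freezes it at $(f,n)$ and puts the rest in $R_t$); both yield the same limit and the same locally uniform domination needed for the b.p.c.\ statement.
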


\begin{proof}[Proof]
The properties satisfied by the good test function $\psi$ clearly guarantee that ${\cal L}_\# \psi$ is well defined and $C_b^{loc}({G_0^k} \times E)$.
Let us denote $(g_t(f,n),m_t(n))$ by $(g_t,m_t)$ for simplicity. We split the ratio into two terms 
$$
\frac{Q_t \psi (f,n) - \psi(f,n)}{t} =
\E \Big[ \frac{\psi(f,m_t) - \psi(f,n)}{t} \Big] + \E \Big[ \frac{\psi(g_t,m_t) - \psi(f,m_t)}{t}   \Big] . \label{chap1-split}
$$
The first term is simply $t^{-1}(P_t \psi(f, n) - \psi(f,n))$ which converges to $M \psi(f,n)$ and can be bounded as $t \to 0$ by
\begin{align*}
\sup_{t \in [0,1]} \Big| \frac{d}{ds} P_s \psi(f, n) \Big| = \sup_{t \in [0,1]} \Big| P_t M \psi(f,n) \Big|
\end{align*}
which is locally bounded in $(f,n)$. For the second term, we write
\begin{equation}
  \frac{\psi(g_t,m_t) - \psi(f,m_t)}{t}  = \Big \langle D_f \psi(f,n) , \frac{g_t -f}{t} \Big \rangle + R_t
\label{ratio1}
\end{equation}
 with
 $$
 R_t = \int_0^1 \blangle D_f\psi(f+\theta(g_t-f),m_t) - D_f \psi(f,n) , \frac{g_t - f}{t} \brangle d\theta.
 $$
As in the proof of Lemma \ref{le_lemma}, we see that \myref{ratio1} converges to $\Big( D_f \psi(f,n) , - \partial_v [ (n-v) f ] \Big)$ and is uniformly locally bounded in $(f,n)$ as $t \to 0$. We conclude by dominated convergence.
\end{proof}

\end{subsection}

\begin{subsection}{Limiting measures and mixing properties}

We will now determine to some extent the limiting law of $(g_t, m_t)(f,n)$ as $t$ goes to infinity, which naturally corresponds to an invariant measure for the semi-group $(Q_t)_{t \ge 0}$. More precisely, we shall describe how the mixing assumption \myref{mixing} on the driving process $(m_t)_{t \ge 0}$ impacts the speed of convergence of $Q_t$ to this invariant measure, along a certain type of test functions. From the explicit form \eqref{def_g} for example, we see that 
$$
\rho(g_t(f,n))=\rho(f),
$$
for all $t\geq 0$. The density $\rho$ being invariant along the trajectory $(g_t, m_t)(f,n)$, the limiting law (which we define in \eqref{defmurho}) will therefore be naturally parametrized by $\rho$.

\begin{define}
For $\rho : \mathbb{T} \to \mathbb{R}_+$ with $\int \rho(x) dx = 1$ and $w : \mathbb{T} \to \mathbb{R}$, we define the probability measure $\rho \otimes \delta_{w}$ on $\mathbb{T} \times \mathbb{R}$ as
$$
\forall \xi \in C_b(\mathbb{T} \times \mathbb{R}),  \; \;
 \langle \xi , \rho \otimes \delta_{ w} \rangle = \int_{x} \xi(x,  w(x)) \rho(x) dx,
$$
\end{define}
Keeping in mind expression \myref{def_g}, we easily have
$$
e^{t} f \Big( x, e^t [v-w(x)] \Big) \xrightarrow[t \to \infty]{} \rho(x) \otimes \delta_{w(x)}
$$ in the distributions sense. Moreover, we will see that
$w_t(n) = \int_0^t e^{-s}m_{t-s}(n) ds \xrightarrow[t \to \infty]{} 
 \tld w $
in law, where
\begin{align}
\tld w = \int_{-\infty}^0 e^{s} \tld m_{s} ds.
\label{w}
\end{align}
We therefore expect the limiting measures to have the following form.
\begin{define}[Invariant measures] 
For $\rho : \mathbb{T} \to \mathbb{R}_+$ with $\int \rho(x) dx = 1$, we (partially) define the probability measure $\mu_\rho$ on ${\cal P}(\mathbb{T} \times \mathbb{R}) \times E$ by
\begin{align}\label{defmurho}
\forall \psi \equiv \psi(f,n), \; \; \; 
\llangle \psi , \mu_\rho \rrangle := \int \psi(f,n) \, d\mu_\rho(f,n)  = \E \Big[ \psi( \rho \otimes \delta_{ \tld w} , \tld m_0 ) \Big] .
\end{align}
when this expression makes sense.
\end{define}
Remaining on a formal level, one may compute the first moments
\begin{align*}
& \rho( \rho \otimes \delta_{\tld w}) := \int \rho \otimes \delta_{\tld w} \, dv = \rho, \\
& J( \rho \otimes \delta_{\tld w}) := \int v  \rho \otimes \delta_{\tld w} \, dv = \tld w \rho, \\
& K( \rho \otimes \delta_{\tld w}) := \int v^2 \rho \otimes \delta_{\tld w} \, dv = \tld w^2 \rho .
\end{align*}
We are now ready to state the following result.
\begin{proposition}[Mixing speed for the auxiliary process] \label{mix}
Let us work with $f \in {G_0^k}$ for $k \ge 3$. Let $\psi$ be a function of the form
\begin{align}
\psi(f,n) = \blangle \xi_1(n) \rho , \xi_2(n) \brangle \; \text{ or } \; \blangle J(f) , \xi_1(n) \brangle \;  \text{ or } \; \blangle K(f) , \xi \brangle
\end{align}
with $\xi_1, \xi_2 : E \to W^{1,\infty}_x$ Lipschitz-continuous functions:
\begin{align*}
\; \; \; \; \;  \| \xi_{i}(n) - \xi_{i}(n') \|_{W^{1,\infty}_x} \lesssim \| n - n' \|_E
\end{align*}
or $\xi \in W^{1,\infty}_x$. 
Then for all $(f,n) \in {G_0^k} \times E$, letting $\rho = \rho(f)$, we have
$$
Q_t \psi(f,n) \xrightarrow[t \to \infty]{} \llangle \psi, \mu_\rho \rrangle.
$$
More precisely, for all $B \subset {G_0^k} \times E$ bounded, 
\begin{align*}
& \sup_{(f,n) \in B} \Big| Q_t \psi(f,n) - \llangle \psi, \mu_\rho \rrangle \Big| \in L^1(\mathbb{R}^+),
\\
& \sup_{(f,n) \in B} N_k \Big(  D_f \Big[ Q_t \psi(f,n) - \llangle \psi, \mu_\rho \rrangle \Big]  \Big) \in L^1(\mathbb{R}^+).
\end{align*}
\end{proposition}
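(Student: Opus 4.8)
The plan is to exploit the explicit representation \eqref{def_g} for $g_t(f,n)$, which reduces each of the three functionals $\psi$ to an explicit expression in $\rho = \rho(f)$ and in the random field $w_t(n)$. For instance, using the change of variables $v \mapsto e^{-t}v + w_t(n)(x)$ one computes directly
\begin{align*}
J(g_t(f,n))(x) &= \int v \, e^t f(x, e^t[v - w_t(n)(x)]) \, dv = e^{-t} J(f)(x) + w_t(n)(x) \rho(x), \\
K(g_t(f,n))(x) &= e^{-2t} K(f)(x) + 2 e^{-t} w_t(n)(x) J(f)(x) + w_t(n)(x)^2 \rho(x),
\end{align*}
and $\rho(g_t(f,n)) = \rho(f)$. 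Hence $Q_t \psi(f,n)$ becomes, in each case, $\E$ of an explicit polynomial expression in $e^{-t}$, $\xi_i(m_t(n))$, and $w_t(n)$. For the first type $\psi(f,n) = \langle \xi_1(n)\rho, \xi_2(n)\rangle$ there is no $g$-dependence at all, so $Q_t\psi(f,n) = \E[\langle \xi_1(m_t(n))\rho, \xi_2(m_t(n))\rangle]$; for the second and third types, after discarding the exponentially decaying terms $e^{-t}J(f)$, $e^{-2t}K(f)$, $e^{-t}w_t J(f)$ (which contribute $O(e^{-t})\|f\|_{G^k}$, integrable in $t$), the dominant contribution is $\E[\langle w_t(n)\rho, \xi_1(m_t(n))\rangle]$ and $\E[\langle w_t(n)^2 \rho, \xi\rangle]$ respectively. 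The target quantities $\llangle \psi, \mu_\rho\rrangle$ are the same expressions with $(w_t(n), m_t(n))$ replaced by $(\tld w, \tld m_0)$, where $\tld w = \int_{-\infty}^0 e^s \tld m_s \, ds$.

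The core of the argument is therefore a quantitative mixing estimate for the pair $(w_t(n), m_t(n))$ towards $(\tld w, \tld m_0)$. The natural device is the coupling $(m^*_t(n), \tld m^*_t)$ from Assumption \ref{ass3}: set $w^*_t(n) = \int_0^t e^{-(t-s)} m^*_s(n)\, ds$ and, using the two-sided stationary process, $\tld w^*_t := \int_{-t}^0 e^{s} \tld m^*_{t+s} \, ds$ (reindexed so it is built from the same coupled trajectory on $[0,t]$). Then
\begin{align*}
\E \| w^*_t(n) - \tld w \|_{W^{1,\infty}_x} \le \int_0^t e^{-(t-s)} \E\|m^*_s(n) - \tld m^*_s\|_{W^{1,\infty}_x}\, ds + \E\Big\| \int_{-\infty}^{-t} e^s \tld m_s \, ds \Big\|_{W^{1,\infty}_x},
\end{align*}
where the first term is bounded by $\int_0^t e^{-(t-s)} \gamma_{mix}(s)\, ds$ and the second by $C^* e^{-t}$. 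Since $\gamma_{mix}$ is non-increasing and integrable, the convolution $\int_0^t e^{-(t-s)}\gamma_{mix}(s)\, ds$ is itself $L^1(\mathbb{R}^+)$ in $t$ (split at $t/2$: on $[0,t/2]$ use $\gamma_{mix}(s) \le \gamma_{mix}(0)$ and $e^{-(t-s)} \le e^{-t/2}$; on $[t/2,t]$ use $\gamma_{mix}(s) \le \gamma_{mix}(t/2)$ and $\int e^{-(t-s)}\, ds \le 1$, then $\int_0^\infty \gamma_{mix}(t/2)\, dt < \infty$). Combined with the Lipschitz bounds on $\xi_1, \xi_2$ and the uniform bound $\|m_t\|_E \le C^*$, this gives $\sup_{(f,n)\in B} |Q_t\psi(f,n) - \llangle\psi,\mu_\rho\rrangle| \lesssim \gamma_{mix}*e^{-\cdot}(t) + e^{-t}$, which lies in $L^1(\mathbb{R}^+)$; this handles the first asserted bound, and in particular the convergence $Q_t\psi \to \llangle\psi,\mu_\rho\rrangle$.

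For the second asserted bound, on the $f$-derivative, one differentiates the explicit expressions: $D_f[\langle\xi_1(n)\rho,\xi_2(n)\rangle]$ is $(x,v)\mapsto \xi_1(n)(x)\xi_2(n)(x)$, independent of $v$, so $N_k$ of the difference is controlled exactly as above. For the second type, $D_f J(g_t(f,n))$ produces a term $\propto e^{-t}$ (from $e^{-t}J(f)$, whose $f$-derivative is the $v$-linear function $(x,v)\mapsto v\,\xi_1(\cdot)(x)$ — controlled in $N_k$ since $k \ge 3 > 1$) plus the derivative of $w_t(n)(x)\rho(x)$, which is the $v$-independent function $(x,v)\mapsto w_t(n)(x)\xi_1(m_t(n))(x)$; the difference against the $\mu_\rho$-analogue is again governed by $\E\|w^*_t(n) - \tld w\|_{W^{1,\infty}_x} + \E\|m^*_t(n) - \tld m^*_t\|_E$ (note we need the $W^{1,\infty}_x$-norm precisely because $N_k$ involves one $x$-derivative), plus the $O(e^{-t})$ remainders. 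The third type is analogous, with $w_t(n)^2$ in place of $w_t(n)$; here one uses $\|w_t(n)\|_{W^{1,\infty}_x} \le C^*$ together with the coupling bound to estimate $\E\|w^*_t(n)^2 - \tld w^2\|_{W^{1,\infty}_x} \le (\|w^*_t(n)\|_{W^{1,\infty}_x} + \|\tld w\|_{W^{1,\infty}_x}) \E\|w^*_t(n) - \tld w\|_{W^{1,\infty}_x}$ via the algebra property of $W^{1,\infty}_x$. The main obstacle, and the point requiring the most care, is the bookkeeping around the coupling for $w_t$: one must set up a single coupled trajectory on $[0,t]$ from which both $w^*_t(n)$ and a correctly-distributed copy of $\tld w$ are measurable functions, so that $\E\|w^*_t(n) - \tld w\|$ is genuinely small — this is where the two-sided extension $(\tld m_t)_{t\in\mathbb{R}}$ and the reindexing $s \mapsto t+s$ of the coupling are essential — and then to verify that the resulting bound, a convolution of $\gamma_{mix}$ with an exponential, is indeed in $L^1(\mathbb{R}^+)$ uniformly over the bounded set $B$.
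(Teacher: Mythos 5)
Your proposal is correct and follows essentially the same route as the paper's proof: explicit computation of $\rho$, $J$, $K$ along \eqref{def_g}, reduction to the coupling estimate $\E\|w_t^* (n)-\tld w_t^*\|_E \lesssim \int_0^t e^{-s}\gamma_{\text{mix}}(t-s)\,ds + e^{-t}$ built from Assumption \ref{ass3} and the two-sided stationary extension, the product/Lipschitz inequality for the three types of test functions, and the same explicit $v$-independent, $v$-linear and $v$-quadratic expressions for $D_f$ controlled in $N_k$ using $k\ge 3$. The only cosmetic difference is that you prove the $L^1$-in-$t$ integrability of the convolution by splitting at $t/2$, whereas the paper uses Fubini; both work.
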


\begin{remark} 
\begin{itemize}
\item Note that in particular, when $\llangle \psi, \mu_\rho \rrangle = 0$, we have 
\begin{align*}
& \int_0^\infty \sup_{(f,n) \in B} \Big| Q_t \psi(f,n) \Big| dt < \infty , \hspace{8mm} \int_0^\infty \sup_{(f,n) \in B} N_k( D_f Q_t \psi(f,n) ) dt < \infty.
\end{align*}
This allows to define $\psi_1(f,n) = \int_0^\infty Q_t\psi(f,n) dt$ which then satisfies
 \begin{align*}
 \psi_1 \in \B^{loc}({G_0^k} \times E) \text{ and } D_f \psi_1 \in C_b^{loc}({G_0^k} \times E ; {\cal C}^1_k)
 \end{align*}
 as in Definition \ref{good} of a good test function.
\item Later on, we will typically consider $\xi_i(n) = n, M^{-1}I(n), \partial_x M^{-1}I(n)$ which are indeed Lipschitz-continuous functions according to the assumptions made on the driving process.
\item The proof of this result will in fact emphasize that it holds for any test function $\psi$ such that $\psi \Big( g_t(f,n),m_t(n) \Big)$ depends "at most quadratically on $n$" with the general rule of thumb: 
$$
\begin{array}{c c c }
\xi(n) \; \;  (\text{ with $\xi$ Lipschitz }) & \longleftrightarrow & n, \\
 \rho & \longleftrightarrow & 1, \\
 J(f) & \longleftrightarrow  & n, \\
  K(f) & \longleftrightarrow & n^2.
\end{array}
$$
For instance, the test function $\psi(f,n) = \blangle \rho, \xi(n) \brangle \blangle J(f), \xi \brangle$ can be considered.
\end{itemize}
\end{remark} 

\begin{proof}[Proof] 
Recalling \myref{def_g}, simple computations lead to
\begin{align*}
& \rho(g_t(f,n)) = \rho(f) = \rho \\
& J(g_t(f,n)) = e^{-t} J(f) + w_t(n) \rho(f) \\
& K(g_t(f,n)) = e^{-2t} K(f) + 2e^{-t}  w_t(n)   J(f) + w_t(n)^2  \rho(f).
\end{align*}
Let us introduce the coupling $(m_t^*(n),\tld m^*_t)_{t \ge 0}$ given by the mixing Assumption \ref{ass3}, and define
\begin{align*}
w^*_t(n) := \int_0^t e^{-s} m^*_{t-s}(n) ds, \; \; \; \;
 \tld w^*_t := \int_{-\infty}^0 e^{s} \tld m^*_{t+s} ds.
\end{align*}
Recalling \myref{w_t} and \myref{w}, one can observe that, since $(\tld m_t)_{t \in \mathbb{R}}$ is stationary,
\begin{align*}
 (w_t(n), m_t(n)) \sim (w^*_t(n), m^*_t(n) ) \; \; \text{in law}, \hspace{8mm}
 (\tld w , \tld m_0) \sim (\tld w_t^*, \tld m_t^*) \; \; \text{in law} .
\end{align*}
The mixing assumption gives
\begin{align*}
  \E \| w_t^*(n) - \tld w^*_t \|_E  
  & \le \int_0^t e^{-s} \E \| m_{t-s}^*(n) - \tld m^*_{t-s} \|_E ds 
  + \int_t^\infty e^{-s} \E \| \tld m^*_{t-s} \|_E ds \\
  & \lesssim \int_0^t e^{-s} \gamma_{\text{mix}}(t-s) ds + e^{-t}.
 \end{align*}
Noting that
\begin{align*}
& \int_0^t e^{-s} \gamma_{\text{mix}}(t-s) ds \le \int_0^\infty e^{-s} \gamma_{\text{mix}}(t-s) ds 
 \xrightarrow[t \to \infty]{} 0,
\\
& \int_{t=0}^\infty  \int_{s=0}^t e^{-s} \gamma_{\text{mix}}(t-s) ds dt
  =  \int_{s=0}^\infty e^{-s}  \int_{t=s}^\infty  \gamma_{\text{mix}}(t-s) dt ds 
  = \int_0^\infty \gamma_{mix}(t) dt < \infty,
 \end{align*} 
we have shown that 
$$t \mapsto \E \| w_t^*(n) - \tld w^*_t \|_E  \in L^1(\mathbb{R}^+)$$
 and tends to $0$ as $t \to \infty$.
Let us first consider $\psi(f,n)=\blangle \xi_1(n) \rho, \xi_2(n) \brangle$. In this case,
\begin{align*}
& Q_t \psi(f,n) -  \llangle \psi, \mu_\rho \rrangle = \E \blangle \xi_1(m^*_t(n)) \rho, \xi_2(m^*_t(n)) \brangle
- \E \blangle \xi_1(\tld m^*_t) \rho, \xi_2(\tld m^*_t) \brangle,
\\
& D_f \Big[ Q_t \psi(f,n) -  \llangle \psi, \mu_\rho \rrangle \Big] = \E \Big[  \xi_1(m^*_t(n)) \xi_2(m^*_t(n)) - \xi_1(\tld m_t^*) \xi_2(\tld m_t^*) \Big] .
\end{align*}
The Lipschitz assumption on $\xi_i$ and the inequality
\begin{align}
\Big| \langle h,k \rangle -\langle \tld h, \tld k \rangle \Big| \le  \| h \| \| k - \tld k \| + \| k \| \| h - \tld h \| \label{identity}
\end{align} 
lead to
\begin{align*}
 \Big| Q_t \psi(f,n) - \llangle \psi, \mu_\rho \rrangle \Big| \lesssim \| f \|_{G_0} \gamma_{mix}(t),
 \hspace{10mm}
N_k \Big( D_f \Big[ Q_t \psi(f,n) -  \llangle \psi, \mu_\rho \rrangle \Big] \Big) \lesssim  \gamma_{mix}(t).
\end{align*}

Let us now consider $\psi(f,n)=\blangle J(f), \xi_1(n) \brangle$. In this case,
\begin{align*}
& Q_t \psi(f,n) -  \llangle \psi, \mu_\rho \rrangle = e^{-t} \E \blangle J(f), \xi_1(m_t(n)) \brangle
+ \Big[
 \E \blangle w^*_t (n)\rho , \xi_1(m^*_t(n)) \brangle - \E \blangle  \tld w^*_t(n) \rho, \xi_1( \tld m^*_t) \brangle
 \Big] 
 \\
 &D_f \Big[ Q_t \psi(f,n) -  \llangle \psi, \mu_\rho \rrangle \Big] = e^{-t} \E[v \xi_1(m_t(n)) ] +
 \E \Big[ w^*_t(n) \xi_1(m^*_t(n)) - \tld w_t^*(n) \xi_1(\tld m_t^*(n)) \Big] .
\end{align*}
The first terms can be bounded by some $C e^{-t}$, and we can use \myref{identity} again to bound the second terms. 
Finally, let us consider $\psi(f,n)=\blangle K(f), \xi \brangle$. In this case,
\begin{align*}
 Q_t \psi(f,n) -  \llangle \psi, \mu_\rho \rrangle  = & e^{-2t} \E \blangle K(f), \xi \brangle
+ 2 e^{-t} \E \blangle w_t(n)   J(f) , \xi \brangle
\\ & \hspace{5mm}
+ \Big[
 \E \blangle w^*_t(n)^2 \rho , \xi \brangle - \E \blangle (\tld{w}^*_t)^2  \rho , \xi \brangle 
 \Big],
\\
 D_f \Big[ Q_t \psi(f,n) -  \llangle \psi, \mu_\rho \rrangle \Big]  & = e^{-2t} v^2 \xi 
+ 2 e^{-t} \E [ w_t(n)  ] v \xi + 
 \E \Big[ w^*_t(n)^2   - (\tld w^*_t)^2 \Big] \xi .
 \end{align*}
We conclude in the same way.
\end{proof}

Let us complete this section by pointing out some identities regarding the laws of $\tld w$, $\tld m_0$,  and $M^{-1}I(\tld m_0)$ which will be useful for calculations later on.

\begin{proposition} \label{relation} 
We have the following identities:
\begin{align}
 & \E \Big[ \tld w(x)   \tld w(y) \Big]  =
\frac{1}{2} \int_{\mathbb{R}} e^{-|t|} \E \Big[ \tld m_0(x) \tld m_t(y) \Big] dt,
 \label{relation1}
 \\
& \E \Big[ \tld w(x)   M^{-1}I(\tld m_0)(y) \Big]  = 
- \int_{t=0}^\infty (1 - e^{-t}) \E \Big[ \tld m_0(x) \tld m_t(y) \Big] dt \label{relation2}
\\
&  \hspace{37mm} = \E \Big[ \tld m_0(x) M^{-1} I(\tld m_0)(y) + \tld w(x) \tld m_0(y) \Big]
.
\label{relation3}
\end{align}

\end{proposition}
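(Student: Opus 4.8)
The plan is to establish the three identities by direct computation: substitute the integral representations $\tld w = \int_{-\infty}^0 e^s \tld m_s\,ds$ and $M^{-1}I(n) = -\int_0^\infty \E[m_u(n)]\,du$ from \eqref{w} and \eqref{M-1}, and reduce each expression to an integral of the two-point correlation $C(t) := \E[\tld m_0(x)\tld m_t(y)]$, $t \in \mathbb{R}$, using the stationarity of the two-sided process $(\tld m_t)_{t \in \mathbb{R}}$ together with the Markov property. The Fubini interchanges are legitimate thanks to the uniform bound $\|\tld m_s\|_E \le C^*$ from Assumption \ref{ass1} and the integrability $C \in L^1(\mathbb{R})$ already recorded after \eqref{integrability}; I would not dwell on these. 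The one structural fact I use repeatedly is that, for $t \ge 0$, the random variable $\tld m_0$ is $\sigma(\tld m_s : 0 \le s \le t)$-measurable, so that the Markov property and the definition of $M^{-1}I$ yield $\E[\tld m_0(x)\,M^{-1}I(\tld m_t)(y)] = -\int_0^\infty \E[\tld m_0(x)\,\E[\tld m_{t+u}(y)\mid \tld m_t]]\,du = -\int_0^\infty C(t+u)\,du$.

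For \eqref{relation1} I would expand $\E[\tld w(x)\tld w(y)] = \int_{-\infty}^0\!\int_{-\infty}^0 e^{s+s'}\,\E[\tld m_s(x)\tld m_{s'}(y)]\,ds\,ds'$ and replace $\E[\tld m_s(x)\tld m_{s'}(y)]$ by $C(s'-s)$ using stationarity (here the two-sided extension is handy, since $s'-s$ has either sign). The change of variables $t = s'-s$ at fixed $s$ leaves the inner integral $\int_{-\infty}^{\min(0,-t)} e^{2s}\,ds = \tfrac12 e^{2\min(0,-t)}$, and multiplying by $e^{t}$ collapses this to $\tfrac12 e^{-|t|}$ in both cases $t \ge 0$ and $t < 0$; integrating in $t$ gives \eqref{relation1}.

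For \eqref{relation2} I would write $\E[\tld w(x) M^{-1}I(\tld m_0)(y)] = \int_{-\infty}^0 e^s\,\E[\tld m_s(x) M^{-1}I(\tld m_0)(y)]\,ds$, turn it into $\int_0^\infty e^{-t}\,\E[\tld m_0(x) M^{-1}I(\tld m_t)(y)]\,dt$ by stationarity ($t = -s$), and insert the identity quoted above to obtain $-\int_0^\infty\!\int_0^\infty e^{-t} C(t+u)\,du\,dt$; exchanging the order of integration in the quadrant via $r = t+u$ produces the weight $\int_0^r e^{-t}\,dt = 1 - e^{-r}$, which is the middle expression. For the second equality \eqref{relation3} I would compute the right-hand side termwise: the $t=0$ case of the same identity gives $\E[\tld m_0(x) M^{-1}I(\tld m_0)(y)] = -\int_0^\infty C(u)\,du$, while stationarity gives $\E[\tld w(x)\tld m_0(y)] = \int_{-\infty}^0 e^s C(-s)\,ds = \int_0^\infty e^{-t}C(t)\,dt$; adding them yields $-\int_0^\infty (1-e^{-t})C(t)\,dt$, matching \eqref{relation2}.

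All the manipulations are elementary once the representations are in place; the step deserving the most care — and the one I would write out in full — is the bookkeeping under stationarity (keeping track of the signed arguments of $C$, and noting that $\E[\tld m_0(x)\tld m_{-t}(y)]$ is the "$x\leftrightarrow y$" correlation at time $t$), together with the justification, via the Markov property and the $\sigma(\tld m_s : 0\le s \le t)$-measurability of $\tld m_0$, of the decomposition of $\E[\tld m_0(x) M^{-1}I(\tld m_t)(y)]$ that underpins \eqref{relation2} and \eqref{relation3}.
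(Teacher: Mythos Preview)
Your proposal is correct and follows essentially the same route as the paper: expand via the integral representations of $\tld w$ and $M^{-1}I$, reduce everything to integrals of the stationary correlation $C(t)=\E[\tld m_0(x)\tld m_t(y)]$, and compute. The only cosmetic difference is in \eqref{relation2}: the paper keeps the variable $s\le 0$ and uses that $\tld m_s$ is $\mathcal{F}_0$-measurable to pull it past the conditional expectation defining $M^{-1}I(\tld m_0)$, whereas you first shift by stationarity to $t=-s\ge 0$ and then use that $\tld m_0$ is $\mathcal{F}_t$-measurable; these are the same manipulation performed in a different order.
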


\begin{proof}[Proof] Recalling Definition \ref{w} and using the fact that $\tilde m$ is stationary, the quantity 
$\E\Big[ \tld w(x) \tld w(y)  \Big]$ 
can be written
$$
 \int_{s=-\infty}^0 \int_{t=-\infty}^0 e^{t+s}  \E[ \tld m_s(x) \tld m_t(y) ] dt ds 
 = \int_{s=-\infty}^0 \int_{t=-\infty}^0 e^{t+s} \E[ \tld m_0  (x) \tld m_{t-s}  (y) ] dt ds.
$$
We set $r=t-s$ and use Fubini's Theorem to obtain the expression
$$
 \int_{r=-\infty}^0 \Big( \int_{s=-\infty}^0 e^{2s} ds \Big) e^r \E [ \tld m_0(x) \tld m_r(y)] dr + 
\int_{r=0}^\infty \Big( \int_{s=-\infty}^{-r} e^{2s} ds \Big) e^r \E [ \tld m_0(x) \tld m_r(y)] dr, 
$$
which gives \eqref{relation1}.
As for the second relation \myref{relation2}, we have
\begin{align}
\E \Big[ \tld w  (x)  M^{-1}I(\tld m_0)  (y) \Big] =  \int_{-\infty}^0 e^s 
\E \Big[ \tld m_s  (x)  M^{-1}I(\tld m_0)  (y) \Big] ds.
\label{torelation2}
\end{align}
For $s \le 0$, recalling definition \myref{M-1} and using the Markov property,
\begin{align}
 \E \Big[ \tld m_s  (x)  M^{-1}I(\tld  m_0)  (y) \Big] & = 
 \int_E \E \Big[  m_s  (n)(x) M^{-1}I(n) \Big] d\nu(n) \nonumber \\
& = - \int_E \int_0^\infty \E\Big[ m_s  (n)(x) \E[ m_t  (n)(y) | {\cal F}_0  ]  \Big] dt d\nu(n)\nonumber   \\
& = - \int_E \int_0^\infty \E \Big[ m_s  (n)(x) m_t  (n)(y) \Big] dt d\nu(n) \nonumber  \\
& = - \int_0^\infty \E \Big[ \tld m_s  (x) \tld m_t  (y) \Big] dt . \label{here}
\end{align}
It follows that \eqref{torelation2} is equal to
\[
- \int_{s=-\infty}^0 \int_{t=0}^\infty e^s \E \Big[ \tld m_s  (x) \tld m_t  (y) \Big] ds dt 
 = - \int_{s=-\infty}^0 \int_{t=0}^\infty e^s \E \Big[ \tld m_0  (x) \tld m_{t-s}  (y) \Big] ds dt.
\]
Setting again $r=t-s$ and using Fubini's Theorem leads to the expression 
\[
 - \int_{r=0}^\infty (1-e^{-r} ) \E \Big[ \tld m_0  (x) \tld m_r  (y) \Big] dr
\]
for \eqref{torelation2}. Using \myref{here}, we also recognize this to be \myref{relation3}
\end{proof}

 \end{subsection}

\end{section}

\begin{section}{The perturbed test function method} \label{5}

From this point on, we shall work in the state space $(f,u,n) \in {\cal X}^\alpha_k = {G_0^k} \times H^\alpha_x \times E$, for $k=3$ and $\alpha \in (1/2, 3/2)$.
The aim of this section is to identify a "limiting generator" ${\cal L}$ satisfying, for a large class of test functions $\phi$,
\begin{equation}
{\cal{L}}^\varepsilon \phi^\varepsilon (f,u,n) = {\cal{L}} \phi(\rho,u) + O(\varepsilon)
\label{limit}
\end{equation}
where $\phi^\varepsilon$ is a perturbed version of $\phi$ of the form
$$
\phi^\varepsilon(f,u,n) = \phi(f,u) + \varepsilon \phi_1(f,u,n) + \varepsilon^2 \phi_2(f,u,n)
$$
and the correctors $\phi_1$, $\phi_2$ are good test functions to be identified. The $O(\eps)$ is to be understood as some $R^\eps(f,u,n)$ such that $\sup_{ B} | R^\eps(f,u,n) | = O(\eps)$ as $\eps \to 0$ for all bounded  $B \subset {\cal X}^\alpha_k$.
Let us start with a good test function $\phi \equiv \phi(f,u) \in \B^{loc}({\cal X}^\alpha_k)$ of the form
\begin{align}
\phi(f,u) = \Phi \Big( \langle f,\xi \rangle ; \langle u,\zeta \rangle \Big) = \Phi \Big( \langle \rho,\xi \rangle ; \langle u,\zeta \rangle \Big) \label{form} 
\end{align}
with $\Phi \in C^3(\mathbb{R}^2)$, and $\xi \equiv \xi(x)$, $\zeta \equiv \zeta(x) \in C^2(\mathbb{T})$.
Since, for fixed initial data $f$, the quantity $\rho(x)$ is conserved along the auxiliary equation \myref{kinetic g}, in the following we shall often write $\Phi$, $\partial_i \Phi$ (etc...) instead of specifying 
$\Phi \Big(  \langle \rho,\xi \rangle ; \langle u,\zeta \rangle   \Big)$, 
$\partial_i \Phi\Big( \langle \rho,\xi \rangle ; \langle u,\zeta \rangle \Big)$ (etc...).
Identifying the different orders in $\varepsilon$ in $\myref{limit}$, we get the set of equations
\begin{equation*}
\left\{
\begin{array}{l l}
 \varepsilon^{-2}  : & {\cal L}_\# \phi(f,u,n) = 0, \\
 \varepsilon^{-1}  :  & {\cal L}_\# \phi_1(f,u,n)  + {\cal L}_\flat \phi(f,u,n) = 0, \\
 \varepsilon^{0}  :  & {\cal L}_\# \phi_2(f,u,n)  + {\cal L}_\flat \phi_1(f,u,n) + {\cal L}_0 \phi(f,u,n)= {\cal L} \phi(\rho,u) ,
\end{array}
\right.
\end{equation*}
as well as
\begin{equation}
\left\{
\begin{array}{l l}
 \varepsilon^1  :  & {\cal L}_1 \phi(f,u,n) + {\cal L}_0 \phi_1(f,u,n) +  {\cal L}_\flat \phi_2(f,u,n) = O(1), \\
 \eps^2 : & {\cal L}_1 \phi_1(f,u,n) + {\cal L}_0 \phi_2(f,u,n) = O(\eps^{-1}), \\
 \eps^3 : &{\cal L}_1 \phi_2(f,u,n) = O(\eps^{-2}) . 
\end{array}
\right.
\label{positive_order}
\end{equation}

\begin{subsection}{Order \texorpdfstring{$\varepsilon^{-2}$}{}}

Given the form \myref{form} for $\phi$, we have $\phi(f,u) = \phi(\rho(f),u)$, which is invariant along the trajectories of the auxiliary process $(g_t, m_t)_{t \ge 0}$. It follows that, indeed $ {\cal L}_\# \phi(f,u,n)=0$. Furthermore, one may observe that
\begin{align*}
{\cal L}_\flat \phi(f,u) = \blangle J(f) , \partial_x D_f\phi(f,u) \brangle = \blangle J(f) , \partial_x \xi \brangle \partial_1 \Phi
\end{align*}
is a good test function satisfying the assumptions of Proposition \ref{mix}.

\end{subsection}

\begin{subsection}{Order \texorpdfstring{$\varepsilon^{-1}$}{}: first corrector}
We wish to solve the Poisson equation in $\phi_1$:
$$
{\cal L}_\# \phi_1(f,u,n) = -{\cal L}_\flat \phi(f,u) .
$$
The solution is expected to be given by
$$
\phi_1(f,u,n) = \int_0^\infty Q_t [ {\cal L}_\flat \phi ](f,u,n) dt .
$$
Proposition \ref{mix} guarantees that this corrector is indeed well-defined and locally bounded as long as
$
\llangle {\cal L}_\flat \phi , \mu_\rho \rrangle = 0 
$, and indeed, since $\tld m$ is centered,
\begin{align*}
\llangle {\cal L}_\flat \phi , \mu_\rho \rrangle & = 
\E \blangle \rho \tld w , \partial_x D_f \phi(\rho, u) \brangle = 0.
\end{align*}
 The first corrector $\phi_1$ can in fact be calculated explicitly:
\begin{align*}
\phi_1(f,u,n) = \int_0^\infty \E \blangle f, w_t(n)   \partial_x D_f\phi(f,u) \brangle dt = \blangle f , \int_0^\infty \E[w_t(n)] dt  \; \partial_x D_f\phi(f,u) \brangle,
\end{align*}
with
\begin{align*}
\int_0^\infty \E[w_t(n)] dt & = \int_{t=0}^\infty \int_{s=0}^t e^{-(t-s)} \E[m_s(n)] ds dt 
 = \int_{s=0}^\infty e^{s} \E[m_s(n)] \int_{t=s}^\infty e^{-t} dt ds \\
& = \int_{s=0}^\infty \E[m_s(n)]ds = -M^{-1}I(n) .
\end{align*}
We hence get the expression
\begin{equation}
\begin{array}{r l}
\phi_1(f,u,n) = \blangle \partial_x[ M^{-1}I(n)\rho - J(f)],  D_f\phi(f,u) \brangle,
\end{array}
\label{1st}
\end{equation}
that is, with \myref{form},
$$
\phi_1(f,u,n) =  \blangle J(f) - M^{-1}I(n) \rho, \partial_x \xi \brangle \partial_1 \Phi,
$$
which is clearly a good test function, thus in the domain of generator ${\cal L}_\#$. Making use of Remark~\ref{rem}, we have hence rigorously proven the following.
\begin{proposition}
The first corrector given by $\myref{1st}$ is a good test function which satisfies
$$
{\cal L}_\# \phi_1(f,u,n) = -{\cal L}_b \phi(f,u).
$$
\end{proposition}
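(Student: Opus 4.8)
The plan is to realize $\phi_1$ as the resolvent-type integral $\int_0^\infty Q_t[{\cal L}_\flat\phi]\,dt$ relative to the auxiliary semigroup $(Q_t)_{t\ge0}$, to check that this integral converges in the $\B^{loc}$ and $C_b^{loc}$ senses required for $\phi_1$ to be a good test function, and finally to invoke Remark \ref{rem} in order to read off the Poisson equation ${\cal L}_\#\phi_1 = -{\cal L}_\flat\phi$ without having to control $Q_t\,{\cal L}_\#{\cal L}_\flat\phi$.

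First I would record, as already observed at order $\eps^{-2}$, that ${\cal L}_\flat\phi(f,u,n) = \langle J(f),\partial_x\xi\rangle\,\partial_1\Phi$ is a good test function covered by Proposition \ref{mix}: along the auxiliary dynamics the factor $\partial_1\Phi(\langle\rho,\xi\rangle;\langle u,\zeta\rangle)$ is a constant, since it depends only on $\rho=\rho(f)$ and on $u$, both frozen, so it pulls out of $Q_t$, while $\langle J(\cdot),\partial_x\xi\rangle$ has exactly the form treated in Proposition \ref{mix}. Its average against the invariant measure vanishes because $\tld m$ is centered:
\[
\llangle {\cal L}_\flat\phi,\mu_\rho\rrangle = \partial_1\Phi\;\E\langle\rho\,\tld w,\partial_x\xi\rangle = 0,
\]
since $\E[\tld w] = \int_{-\infty}^0 e^s\,\E[\tld m_s]\,ds = 0$ by Assumption \ref{ass2}. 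Proposition \ref{mix} and the first item of the remark following it then ensure that $t\mapsto\sup_B|Q_t[{\cal L}_\flat\phi]|$ and $t\mapsto\sup_B N_k(D_f Q_t[{\cal L}_\flat\phi])$ lie in $L^1(\mathbb{R}^+)$ for every bounded $B\subset{\cal X}_k^\alpha$, so that $\phi_1 := \int_0^\infty Q_t[{\cal L}_\flat\phi]\,dt$ is a well-defined element of $\B^{loc}({\cal X}_k^\alpha)$ with $D_f\phi_1\in C_b^{loc}({\cal X}_k^\alpha;{\cal C}_k)$.

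Next I would identify $\phi_1$ with the closed form \myref{1st}. From \myref{def_g} one has $J(g_t(f,n)) = e^{-t}J(f) + w_t(n)\rho$, hence $Q_t[{\cal L}_\flat\phi](f,u,n) = \partial_1\Phi\,\E\langle e^{-t}J(f)+w_t(n)\rho,\partial_x\xi\rangle$. Integrating over $t\ge0$ — exchanging $\int_0^\infty$ with $\E$ and with the $x$-integration, which is legitimate by the integrability just established and by boundedness on bounded sets — and using the identity $\int_0^\infty\E[w_t(n)]\,dt = \int_0^\infty\E[m_s(n)]\,ds = -M^{-1}I(n)$ from \myref{M-1}, the $e^{-t}J(f)$ contribution combines with the $w_t(n)\rho$ contribution to give
\[
\phi_1(f,u,n) = \langle J(f) - M^{-1}I(n)\rho,\partial_x\xi\rangle\,\partial_1\Phi = \blangle\partial_x[M^{-1}I(n)\rho - J(f)],D_f\phi(f,u)\brangle,
\]
which is \myref{1st}.

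Finally I would verify from this expression that $\phi_1$ meets the four requirements of Definition \ref{good}. Membership in $\B^{loc}$, the $f$- and $u$-differentiability, and the local boundedness of $D_f\phi_1$ as a ${\cal C}_k$-valued map (with $k=3$; note that $D_f\phi_1$ carries a $v\,\partial_x\xi$ term of order $|v|$, harmless for $k=3$) and of $D_u\phi_1 = \langle J(f)-M^{-1}I(n)\rho,\partial_x\xi\rangle\,\partial_1\partial_2\Phi\,\zeta\in C^2(\mathbb{T})$ follow from the $C^3$ regularity of $\Phi$, the moment bounds on $J(f)$, and the Lipschitz bound \myref{lipschitz} on $M^{-1}I$, which governs the $n$-dependence and hence the required continuity. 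For the generator condition, the only $n$-dependence of $\phi_1(f,u,\cdot)$ is through $\langle M^{-1}I(n),\rho\,\partial_x\xi\rangle = M^{-1}\psi_h(n)$ with $h = \rho\,\partial_x\xi\in L^1_x\subset(W^{2,\infty}_x)'$; Assumption \ref{ass4} states precisely that $M^{-1}\psi_h\in D(M)$, whence $\phi_1(f,u,\cdot)\in D(M)$ with $M\phi_1(f,u,n) = -\partial_1\Phi\,\langle n,\rho\,\partial_x\xi\rangle\in C_b^{loc}$. Thus $\phi_1$ is a good test function, and $\phi_1\in D({\cal L}_\#)$ by Proposition \ref{generator_g}. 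Since moreover $\phi_1 = -\int_0^\infty Q_t\theta\,dt$ with $\theta := -{\cal L}_\flat\phi\in C_b^{loc}$ (so that $t\mapsto Q_t\theta$ is continuous), Remark \ref{rem} applies and yields ${\cal L}_\#\phi_1 = \theta = -{\cal L}_\flat\phi$. The one genuinely delicate step is this last one: one cannot differentiate $\int_0^\infty Q_t[{\cal L}_\flat\phi]\,dt$ under the integral sign, because $Q_t\,{\cal L}_\#[\cdot]$ need not be continuous and need not satisfy the integrability hypothesis \myref{int22} of Proposition \ref{poisson}; this is exactly why the argument must be routed through the explicit formula \myref{1st}, Assumption \ref{ass4}, and Remark \ref{rem} rather than through Proposition \ref{poisson} directly.
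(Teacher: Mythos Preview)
Your proof is correct and follows essentially the same route as the paper: compute $\int_0^\infty Q_t[{\cal L}_\flat\phi]\,dt$ explicitly using the formula for $J(g_t(f,n))$ and the identity $\int_0^\infty\E[w_t(n)]\,dt=-M^{-1}I(n)$, observe from the closed form \myref{1st} that $\phi_1$ is a good test function (hence in $D({\cal L}_\#)$ by Proposition~\ref{generator_g}), and then invoke Remark~\ref{rem} rather than Proposition~\ref{poisson} to conclude ${\cal L}_\#\phi_1=-{\cal L}_\flat\phi$. Your write-up is in fact more careful than the paper's in several places --- you explicitly track the $e^{-t}J(f)$ contribution, you spell out why Assumption~\ref{ass4} gives $\phi_1(f,u,\cdot)\in D(M)$, and you articulate clearly why the detour through Remark~\ref{rem} is necessary.
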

A simple calculation then leads to
\begin{equation}
\begin{array}{l l}
\blangle D_f \phi_1(f,u,n) , h  \brangle = & \blangle \partial_x [M^{-1}I(n)\rho(h) - J(h)], D_f\phi(\rho,u) \brangle \\
& + D_f^2\phi(\rho,u) \Big( \rho(h), \partial_x [ M^{-1}I(n)\rho - J(f) ] \Big) .
\end{array} \label{Dphi1}
\end{equation}

\end{subsection} 

\begin{subsection}{Order \texorpdfstring{$\varepsilon^{0}$}{}: Limiting generator and second corrector}
We now wish to solve the Poisson equation in $\phi_2$:
$$
{\cal L}_\# \phi_2(f,u,n) = -{\cal L}_\flat \phi_1(f,u,n) - {\cal L}_0 \phi(f,u,n) + {\cal L}\phi(\rho,u) .
$$
Again, the solution is expected to be given by
$$
\phi_2(f,u,n) = \int_0^\infty Q_t \Big[ {\cal L}_\flat \phi_1 + {\cal L}_0 \phi - {\cal L}\phi(\rho,u)  \Big](f,u,n) dt .
$$
which should converge whenever
\begin{align}
{\cal L}\phi(\rho,u) = \llangle {\cal L}_\flat \phi_1, \mu_\rho \rrangle
+ \llangle {\cal L}_0 \phi , \mu_\rho \rrangle .
\label{condition}
\end{align}
This last condition determines the form of the limiting generator ${\cal L} \phi$. 

\begin{subsubsection}{Limiting generator}

We first compute ${\cal L}_\flat \phi_1(f,u,n)$: letting $h=-(v   \partial_x f + u   \partial_v f)$ in $\myref{Dphi1}$, noting that
\begin{align*}
 \rho(h) = - \partial_x J(f), \hspace{10mm} J(h) = - \partial_x K(f) + u \rho
\end{align*}
leads to the expression
\begin{align}
{\cal L}_\flat \phi_1(f,u,n) = & \blangle \partial_x \Big[ - M^{-1}I(n) \partial_x J(f) + \partial_x K(f) - u \rho \Big], D_f\phi(\rho,u) \brangle \nonumber \\
& + D_f^2\phi(\rho,u) \Big( - \partial_x J(f) , \partial_x[M^{-1}I(n)\rho - J(f) ] \Big) . \label{L_beta}
\end{align}
According to \myref{condition}, the first contribution ${\cal L}_\rho$ to the limiting generator is hence given by
\begin{equation}
\begin{array}{l l}
{\cal L}_\rho \phi(\rho,u) := 
\llangle {\cal L}_\flat \phi_1, \mu_\rho \rrangle 
= &  \E \blangle \partial_x\Big[  \partial_x[  (\tld w)^2  \rho ] - M^{-1}I(\tld m_0) \partial_x[\tld w \rho] - u\rho \Big], D_f\phi(\rho,u)  \brangle  \\
 & + 
 \E D_f^2\phi(\rho,u) \Big( \partial_x[\tld w \rho], \partial_x\Big[ \Big( \tld w  - M^{-1}I(\tld m_0) \Big) \rho \Big] \Big)  .
\end{array} 
\label{contr1}
\end{equation}
Similarly, recalling that
\begin{align}
{\cal L}_0\phi(f,u) = \blangle \partial^2_x u + J(f) , D_u \phi(f,u) \brangle
\label{L_0}
\end{align}
the second contribution ${\cal L}_u$ to the limiting generator is given by
\begin{align}
{\cal L}_u \phi(\rho,u) := \llangle {\cal L}_0 \phi , \mu_\rho \rrangle 
= \blangle \partial^2_x u , D_u \phi(\rho,u) \brangle .
\label{contr2}
\end{align}
Defining the operator ${\cal L}$ as
\begin{align}
{\cal L} \phi(\rho,u) = {\cal L}_\rho \phi(\rho,u) + {\cal L}_u \phi(\rho,u),
\label{limit generator}
\end{align}
let us now describe the corresponding process.
\end{subsubsection}

\begin{subsubsection}{Limiting SPDE } \label{limit_spde}

The limiting process corresponding to the generator ${\cal L}$, will be denoted by $(\rho_t, u_t)_{t \ge 0}$ for simplicity in this subsection only. Since it appears to be a diffusion process, we are naturally led to determine the associated SPDE. 

Firstly, note that this limiting process $(\rho_t, u_t)_{t \ge 0}$
is decoupled: the contribution ${\cal L}_u \phi$ given by \myref{contr2} corresponds to the simple deterministic PDE
\begin{align*}
\partial_t u = \partial_x^2 u .
\end{align*}
Let us hence focus on the contribution ${\cal L}_\rho \phi$ given by \myref{contr1} which describes the evolution of $(\rho_t)_{t \ge 0}$. The second order differential in $\phi$ is associated to to the diffusive part of the SPDE: we must identify the corresponding covariance operator. Let us assume that $D_f^2\phi(\rho,u)$ is associated to some symmetric kernel $\Psi$. Then,
\begin{align}
& \E \Big[ D_f^2\phi(\rho,u) \Big( \partial_x[\tld w \rho], \partial_x\Big[ \Big( \tld w  - M^{-1}I(\tld m_0) \Big) \rho \Big] \Big) \Big] \nonumber \\
& = \E \int_x \int_y \Psi(x,y) \partial_x[\tld w \rho](x) \partial_y \Big[ (\tld w - M^{-1}I(\tld m_0) ) \rho  \Big](y)dx dy  \nonumber \\
& = \E \int_x \int_y \Psi(x,y) \partial_x  \Big[
\partial_y \Big[ \tld w(x)   \Big( \tld w - M^{-1}I(\tld m_0) \Big)(y) \rho(x) \rho(y)  \Big]
\Big] dx dy \label{sym}  .
\end{align}
We are thus naturally led to study the kernel
$\E \Big[ \tld w(x)   \Big( \tld w(y) - M^{-1}I(\tld m_0)(y) \Big) \Big] $. Note that since $\Psi$ is symmetric in \myref{sym}, we may as well consider the symmetrized version
\begin{align*}
\E \Big[ \tld w(x) \tld w(y) \Big] - \frac{1}{2} 
\Big(  
\E \Big[ \tld w(x) M^{-1}I(\tld m_0)(y) \Big]
+ \E \Big[ \tld w(y) M^{-1}I(\tld m_0)(x) \Big]
\Big).
\end{align*}
Making use of the expressions obtained in Proposition \ref{relation} as well as the stationary character of $({\tilde m}_t)_{t \in \mathbb{R}}$, we easily derive
\begin{align}
\E \Big[ \tld w(x) \tld w(y) \Big] - \frac{1}{2} 
\Big(  
\E \Big[ \tld w(x) M^{-1}I(\tld m_0)(y) \Big]
+ \E \Big[ \tld w(y) M^{-1}I(\tld m_0)(x) \Big]
\Big)
= \frac{1}{2} k(x,y)
\label{weget}
\end{align}
where $k(x,y)$ is the kernel defined in \myref{exp_Q}.
Let us denote by $Q$ the linear operator on $L^2_x$ associated to this kernel:
\begin{align}
\forall f \in L^2_x, \; \; \; Qf(x) = \int_y k(x,y)f(y)dy.
\end{align}

\begin{proposition}
The operator $Q$ is self-adjoint, compact and non-negative:
$$
\forall f \in L^2(\mathbb{T} ; \mathbb{R}) \; \; \; (Qf,f ) \ge 0.
$$
\end{proposition}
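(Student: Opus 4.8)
The plan is to establish self-adjointness and compactness directly from the form of the kernel $k$, and non-negativity via a time-averaging (Cesàro) argument.

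First I would check that the kernel is symmetric. By stationarity of the two-sided process $(\tld m_t)_{t \in \mathbb{R}}$ one has $\E[\tld m_0(x)\tld m_t(y)] = \E[\tld m_{-t}(x)\tld m_0(y)] = \E[\tld m_0(y)\tld m_{-t}(x)]$ for every $t$, so the substitution $t \mapsto -t$ in \myref{exp_Q} gives $k(x,y) = k(y,x)$. Moreover, the bound obtained in the discussion following Assumption~\ref{ass3} shows that $\int_{\mathbb{R}} |\E[\tld m_0(x)\tld m_t(y)]|\,dt \le 2C^* \int_0^\infty \gamma_{mix}(t)\,dt$ uniformly in $(x,y)$, so $k \in L^\infty(\mathbb{T}\times\mathbb{T}) \subset L^2(\mathbb{T}\times\mathbb{T})$. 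Hence $Q$ is a Hilbert--Schmidt operator on $L^2_x$ — in particular bounded and compact — and the symmetry of $k$ makes it self-adjoint.

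For non-negativity, fix $f \in L^2(\mathbb{T};\mathbb{R})$ and note that, since $\|\tld m_s\|_E \le C^*$, the quantity $\langle \tld m_s, f \rangle = \int_x \tld m_s(x) f(x)\,dx$ is well defined and bounded uniformly in $s$ and $\omega$. For every $T > 0$,
\begin{align*}
0 \le \frac{1}{T}\,\E\Big[ \Big( \int_0^T \langle \tld m_s , f \rangle \, ds \Big)^2 \Big]
= \frac{1}{T} \int_0^T \int_0^T \int_x \int_y \E\big[ \tld m_s(x) \tld m_{s'}(y) \big]\, f(x) f(y)\, dx\, dy\, ds\, ds',
\end{align*}
where the interchange of $\E$ with the integrals is justified by the boundedness of $\tld m$ and $f \in L^1_x$. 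Using stationarity, $\E[\tld m_s(x)\tld m_{s'}(y)] = \E[\tld m_0(x)\tld m_{s'-s}(y)]$, and the change of variable $r = s'-s$ rewrites the time average as $\int_{-T}^{T} (1 - |r|/T)\,\E[\tld m_0(x)\tld m_r(y)]\,dr$. Letting $T \to \infty$ and applying dominated convergence — with the dominating function $|\E[\tld m_0(x)\tld m_r(y)]|\,|f(x)|\,|f(y)|$, integrable on $\mathbb{R}\times\mathbb{T}\times\mathbb{T}$ by the uniform bound above together with $f \in L^1_x$ — yields
\begin{align*}
(Qf,f) = \int_x \int_y k(x,y) f(x) f(y)\, dx\, dy = \lim_{T \to \infty} \frac{1}{T}\,\E\Big[ \Big( \int_0^T \langle \tld m_s , f \rangle \, ds \Big)^2 \Big] \ge 0 .
\end{align*}

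None of these steps is deep; the only real care is needed in the non-negativity argument, namely in identifying the Fej\'er-type weight $1-|r|/T$ after the change of variables and in invoking the uniform integrability of $t \mapsto \E[\tld m_0(x)\tld m_t(y)]$ both to interchange the expectation with the space-time integrals and to pass to the limit $T \to \infty$.
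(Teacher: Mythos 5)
Your argument is correct. Note that the paper does not actually prove this proposition itself --- it refers to Lemma~1 of \mycite{dv1} --- and the proof there is the same standard one you give: symmetry of $k$ from stationarity of the two-sided process, Hilbert--Schmidt compactness from the uniform bound $\int_{\mathbb{R}}|\E[\tld m_0(x)\tld m_t(y)]|\,dt \le 2C^*\|\gamma_{mix}\|_{L^1}$, and non-negativity by recognizing $(Qf,f)$ as the $T\to\infty$ limit of the manifestly non-negative Fej\'er averages $\tfrac1T\,\E\big[(\int_0^T\langle\tld m_s,f\rangle\,ds)^2\big]$, with the mixing-induced integrability of the autocorrelation justifying both Fubini and the passage to the limit. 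All the steps you flag as needing care are handled correctly.
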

We refer to \cite{dv1}, Lemma 1 for a proof. 
As a result, we can introduce the square root $Q^{1/2}$ of the operator $Q$, and denote by $q$ the associated kernel:
$$
 Q^{1/2}f(x) = \int_y q(x,y) f(y) dy, \hspace{10mm} k(x,y) = \int_z q(x,z) q(z,y) dz .
$$
We may now define the operator on $L^2_x$
\begin{align}
& \partial_x \Big[ \rho Q^{1/2} \Big] (f) (x)  := \int_y \partial_x \Big[ \rho(x) q(x,y) f(y) \Big] dy, \label{chap1-operator}
\end{align}
whose adjoint is given by
$$
\partial_x \Big[ \rho Q^{1/2} \Big]^*(g)(y) = \int_x \partial_x \Big[ \rho(x) q(x,y) \Big] g(x) dx .
$$
A straight-forward calculation then leads to
$$ 
\partial_x \Big[ \rho Q^{1/2} \Big]^* D^2\phi(\rho,u) \partial_x \Big[ \rho Q^{1/2} \Big] (f) (t)
 = \int_z N(t,z) f(z)  dz
$$
 where
$$
N(t,z)  = \int_x \int_y \partial_x \Big[ \rho(x) q(x,t) \Big]
\Psi(x,y)
\partial_y \Big[ \rho(y) q(y,z) \Big]   dx dy .
$$
We can now get back to \myref{sym} and derive
\begin{align*}
&  \E \Big[  D_f^2\phi(\rho, u ) \Big( \partial_x[\tld w \rho], \partial_x\Big[ \Big( \tld w  - M^{-1}I(\tld m_0) \Big) \rho \Big] \Big)  \Big]
 = \frac{1}{2} \int_x \int_y \Psi(x,y) \partial_x \Big[ \partial_y \Big[ \rho(x) \rho(y) k(x,y) \Big] \Big] dx dy \\
&  = \frac{1}{2} \int_x \int_y \int_z \Psi(x,y) 
\partial_x \Big[ \partial_y \Big[ \rho(x) \rho(y) q(x,z)q(z,y) \Big] \Big] dx dy dz 
 = \frac{1}{2}  \int_z N(z,z) dz \\
 & = \frac{1}{2} 
 \text{Tr} \Big( \partial_x \Big[ \rho Q^{1/2} \Big]^* D_f^2\phi(\rho) \partial_x \Big[ \rho Q^{1/2} \Big]  \Big) .
\end{align*}
Consequently, the contribution ${\cal L}_\rho$ to the limiting generator ${\cal L}$ may finally be rewritten as
\begin{equation}
{\cal L}_\rho \phi(\rho,u) = \blangle {\cal A}^I \rho, D_f \phi(\rho,u) \brangle + 
 \frac{1}{2} 
 \text{Tr} \Big( \partial_x \Big[ \rho Q^{1/2} \Big]^* D_f^2\phi(\rho,u) \partial_x \Big[ \rho Q^{1/2} \Big]  \Big) ,
\label{limiting_L}
\end{equation}
where ${\cal A}^I$ denotes the second order differential operator
\begin{equation}
{\cal A}^I \rho  =  \E \Big[  \partial_x \Big(  \partial_x [  \tld w^2 \rho ] -
M^{-1}I(\tld m_0) \partial_x [\tld w \rho] - \rho u \Big) \Big].
\label{AI}
\end{equation}
This naturally corresponds to the following SPDE, given in Itô form
\begin{align}
d\rho_t = {\cal A}^I \rho_t \; dt + \partial_x \Big[ \rho_t Q^{1/2} dW_t \Big].
\label{SPDE_ito}
\end{align}
Finally, let us determine the Stratonovich form associated with \myref{SPDE_ito}.
Recalling \myref{weget}, the Itô operator ${\cal A}^I$ may be rewritten as
\begin{align*}
{\cal A}^I \rho = \frac{1}{2} \partial^2_x \Big[ k(x,x) \rho \Big] + \partial_x \Big[ \E \Big( \partial_x[M^{-1}I(\tld m_0)] \tld w \Big) \rho \Big]
- \partial_x[ u \rho] .
\end{align*}
The correction from the Itô to the Stratonovich form is given by the formula
\begin{align*}
 & \partial_x \Big[ \rho Q^{1/2} \circ dW_t \Big] =  \partial_x \Big[ \rho Q^{1/2} dW_t \Big]
 +   {\cal A}^{I \to S} \rho
 \end{align*}
 where
 \begin{align*}
{\cal A}^{I \to S} \rho := \frac{1}{2} \partial_x \Big[ \int_y q(x,y) \partial_x \Big( \rho(x) q(x,y) \Big) dy \Big] =
 \frac{1}{2} \partial_x^2 \Big[ k(x,x) \rho \Big] - 
\frac{1}{4} \partial_x \Big[ \Big( \partial_x k(x,x) \Big) \rho \Big] 
\end{align*}
since $k(x,x) = \int_y q(x,y)^2dy$.
Hence, \myref{SPDE_ito} can be expressed in Stratonovich form as
$$
d\rho_t = {\cal A}^S \rho_t \; dt + \partial_x \Big[ \rho Q^{1/2} \circ dW_t \Big]
$$
where the differential operator ${\cal A}^S$ is of order one:
\begin{align*}
{\cal A}^S \rho = {\cal A}^I \rho - {\cal A}^{I \to S} \rho = 
 \partial_x \Big[\Big(  \E \partial_x[M^{-1}I(\tld m_0)] \tld w + \frac{1}{4} \partial_x k(x,x) - u\Big) \rho \Big].
\end{align*}
Using once again expression \myref{weget} and the identities from Proposition \ref{relation}, straight-forward calculations give
$$
{\cal A}^S \rho = \partial_x \Big[ (a-u) \rho \Big]
$$
where $a \equiv a(x)$ is given by \myref{exp_a}.

\end{subsubsection}

\begin{subsubsection}{Second corrector}

Using the form \myref{form} for $\phi$, recalling \myref{L_beta} and \myref{L_0}, calculations lead to the expressions
\begin{equation}
\begin{array}{l l}
{\cal L}_\flat \phi_1(f,u,n) = &
- \blangle J(f) , \partial_x ( M^{-1}I(n)   \partial_x \xi )  \brangle \partial_1 \Phi  
+ \blangle K(f), \partial^2_x \xi \brangle \partial_1 \Phi 
+ \blangle \rho, u   \partial_x \xi \brangle \partial_1 \Phi \\
& - \blangle J(f) , \partial_x \xi \brangle  \blangle \rho , M^{-1}I(n) \partial_x \xi \brangle \partial_1^2 \Phi 
+  \blangle J(f) , \partial_x \xi \brangle^2  \partial_1^2 \Phi 
 \end{array}
 \label{expr1}
\end{equation}
and
\begin{align}
{\cal L}_0 \phi(f,u) = \blangle u, \partial^2_x \zeta \brangle \partial_2 \Phi 
+ \blangle J(f), \zeta \brangle \partial_2 \Phi .
\label{expr2}
\end{align}
We now prove the following result.

\begin{proposition}
The second corrector given by 
$$
\phi_2(f,u,n) = \int_0^\infty Q_t \Big[ {\cal L}_\flat \phi_1 + {\cal L}_0 \phi - {\cal L}\phi \Big](f,u,n) dt
$$
defines a good test function which satisfies
\begin{equation}
{\cal L}_\# \phi_2(f,u,n) = -{\cal L}_\flat \phi_1(f,u,n) - {\cal L}_0 \phi(f,u,n) + {\cal L}\phi(\rho,u),
\label{eq_phi2}
\end{equation}
where ${\cal L}$ is the limiting generator given by \myref{limit generator}. 
\end{proposition}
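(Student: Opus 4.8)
Set $\theta := {\cal L}_\flat \phi_1 + {\cal L}_0 \phi - {\cal L}\phi(\rho,u)$, so that $\phi_2 = \int_0^\infty Q_t \theta \, dt$ and the identity to be proved is ${\cal L}_\# \phi_2 = -\theta$. The plan is to show, in two stages, that $\phi_2$ is a well-defined good test function on ${\cal X}^\alpha_k = G_0^k \times H^\alpha_x \times E$ and then to read off the Poisson equation. The second stage is short: once $\phi_2$ is known to be a good test function, Proposition \ref{generator_g} puts it in $D({\cal L}_\#)$; since $\theta \in C_b^{loc}$ (it is a combination of ${\cal L}_\flat$ and ${\cal L}_0$ applied to good test functions, plus a function of $(\rho,u)$ alone) the map $t \mapsto Q_t\theta(f,u,n)$ is continuous, and the integrability $\int_0^\infty \sup_B |Q_t\theta|\,dt < \infty$ will be part of the first stage; then the argument of Remark \ref{rem}, transposed to the auxiliary generator ${\cal L}_\#$ and its semigroup $(Q_t)$ $-$ write $Q_s\phi_2 = \int_s^\infty Q_t\theta\,dt$ and differentiate at $s = 0^+$ $-$ gives ${\cal L}_\#\phi_2 = -\theta = -{\cal L}_\flat\phi_1 - {\cal L}_0\phi + {\cal L}\phi(\rho,u)$, which is \myref{eq_phi2}.

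For the first stage, I would begin by using the explicit formulas \myref{expr1}, \myref{expr2} for ${\cal L}_\flat\phi_1$ and ${\cal L}_0\phi$, together with \myref{contr1}--\myref{limiting_L} for ${\cal L}\phi$, to write
\[
\theta(f,u,n) = \sum_j c_j\big(\langle\rho,\xi\rangle,\langle u,\zeta\rangle\big)\,\psi_j(f,u,n) \;-\; {\cal L}\phi(\rho,u),
\]
where the coefficients $c_j$ are derivatives of $\Phi$ $-$ smooth, locally bounded, and constant along the auxiliary dynamics $(g_t,m_t)$ $-$ and each $\psi_j$ is one of the elementary test functions covered by Proposition \ref{mix}: $\blangle\xi_1(n)\rho,\xi_2(n)\brangle$, $\blangle J(f),\xi_1(n)\brangle$, $\blangle K(f),\xi\brangle$, or a product such as $\blangle J(f),\partial_x\xi\brangle\blangle\rho,M^{-1}I(n)\partial_x\xi\brangle$, with the functions $\xi_i(n)$ chosen among $M^{-1}I(n)$, $\partial_x M^{-1}I(n)$ and fixed elements of $W^{1,\infty}_x$, all Lipschitz from $E$ into $W^{1,\infty}_x$ by Assumptions \ref{ass1} and \ref{ass4}. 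The decisive point is that $\llangle\theta,\mu_\rho\rrangle = 0$, which is exactly the solvability condition \myref{condition} defining ${\cal L}$ (recall that ${\cal L}\phi(\rho,u)$ is $\mu_\rho$-a.s.\ constant). Consequently $Q_t\theta = \sum_j c_j\big(Q_t\psi_j - \llangle\psi_j,\mu_\rho\rrangle\big)$, and applying Proposition \ref{mix} termwise (together with the remark that follows it) shows that $t\mapsto\sup_B|Q_t\theta|$ and $t\mapsto\sup_B N_k(D_f Q_t\theta)$ belong to $L^1(\mathbb{R}^+)$ for every bounded $B$. Hence $\phi_2 = \int_0^\infty Q_t\theta\,dt \in \B^{loc}({\cal X}^\alpha_k)$, with $D_f\phi_2 = \int_0^\infty D_f Q_t\theta\,dt \in C_b^{loc}({\cal X}^\alpha_k;{\cal C}_k)$; the $u$-dependence of $\phi_2$, which enters only through the $c_j$ and the affine $u$-terms in \myref{expr1}--\myref{expr2}, similarly yields $D_u\phi_2 \in C_b^{loc}({\cal X}^\alpha_k;C^2(\mathbb{T}))$. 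This settles points 1, 3 and 4 of Definition \ref{good}.

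The remaining requirement $-$ point 2 of Definition \ref{good}, namely $\phi_2(f,u,\cdot)\in D(M)$ with $M\phi_2\in C_b^{loc}$ $-$ is the genuine obstacle, and it is here that Assumption \ref{ass4} is really used. The key observation is that $g_t(f,n)$ depends on $n$ only through $w_t(n) = \int_0^t e^{-s}m_{t-s}(n)\,ds$, and that $\rho(g_t)$, $J(g_t)$, $K(g_t)$ are explicit functions of $w_t(n)$ (see the proof of Proposition \ref{mix}); carrying out the $t$-integration in $\phi_2 = \int_0^\infty Q_t\theta\,dt$ therefore expresses $\phi_2(f,u,\cdot)$ through iterated time integrals of $m$, that is, through $M^{-1}I$ (cf.\ \myref{M-1}), its further integral $M^{-2}I$, and quadratic combinations of $M^{-1}\psi_h$-type quantities. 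By Assumption \ref{ass4} $-$ the membership $M^{-1}\psi_h\in D(M)$, the Lipschitz bound \myref{lipschitz} (which, together with the mixing property \myref{mixing}, makes $M^{-2}I$ meaningful with the same regularity), and the quadratic estimate \myref{chap1-quadratic} (which handles the $K(f)$-term and products like $\blangle J(f),\partial_x\xi\brangle^2$) $-$ each of these building blocks lies in $D(M)$ with a locally bounded, continuous image under $M$, hence so does $\phi_2(f,u,\cdot)$. The delicate part is the careful bookkeeping of these iterated resolvents and the verification that each interchange of $M$ with the $t$-integral is a genuine b.p.c.\ limit, argued as in the proofs of Propositions \ref{poisson} and \ref{generator_g} and keeping in mind that $P_t(M\,\cdot\,)$ need not be continuous, so that Proposition \ref{gen_int} does not apply verbatim. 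Once point 2 is secured, $\phi_2$ is a good test function and \myref{eq_phi2} follows as in the first paragraph.
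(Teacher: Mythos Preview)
Your overall structure is sound, and the treatment of points 1, 3, 4 of Definition \ref{good} matches the paper. The divergence is in how you establish the $D(M)$ property (point 2 of Definition \ref{good}) and, relatedly, how you obtain $\phi_2 \in D({\cal L}_\#)$.

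The paper does \emph{not} compute $\phi_2$ explicitly. Instead it observes that $\theta$ itself is a good test function, hence $\theta \in D({\cal L}_\#)$, and then verifies the \emph{second} integrability hypothesis of Proposition \ref{poisson}: $\int_0^\infty \sup_B |Q_t {\cal L}_\#\theta|\,dt < \infty$. This is done by computing ${\cal L}_\#\psi_j$ for each elementary piece $\psi_j$ of $\theta$, checking that the result is again of the form covered by Proposition \ref{mix}, and $-$ crucially $-$ verifying $\llangle {\cal L}_\#\psi_j,\mu_\rho\rrangle = 0$ term by term using the identities of Proposition \ref{relation}. Proposition \ref{poisson} then gives $\phi_2 \in D({\cal L}_\#)$ and \myref{eq_phi2} directly. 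Only \emph{after} this does the paper address $D(M)$: writing, as in the proof of Proposition \ref{generator_g},
\[
M\phi_2 = {\cal L}_\#\phi_2 - \blangle \partial_v[(v-n)f],\, D_f\phi_2\brangle,
\]
the right-hand side is already known to be in $C_b^{loc}$, so $\phi_2(f,u,\cdot) \in D(M)$ falls out for free.

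Your route inverts this logic: you want $D(M)$ first (via explicit computation of $\phi_2$), then $D({\cal L}_\#)$ via Proposition \ref{generator_g}, then \myref{eq_phi2} via Remark \ref{rem}. The concern is the explicit computation. The quadratic building blocks you would obtain in $\phi_2$ are \emph{not} of the form $|M^{-1}\psi_h|^2$ covered by \myref{chap1-quadratic}: for instance the $K(f)$-contribution produces the map $n \mapsto \int_0^\infty \big(\E[w_t(n)(x)w_t(n)(y)] - \E[\tld w(x)\tld w(y)]\big)\,dt$, a second-moment iterated integral that is not obviously in $D(M)$ under Assumption \ref{ass4} alone; similar issues arise from the $\blangle J(f),\partial_x\xi\brangle^2$ term. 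The bookkeeping you allude to would require either further structural hypotheses on $m$ or a substantially longer argument to close. The paper's device $-$ deducing $D(M)$ as a corollary of $D({\cal L}_\#)$ rather than the other way round $-$ is precisely what sidesteps this difficulty.
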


\begin{proof}[Proof]
Firstly, note that $\theta := {\cal L}_\flat \phi_1 + {\cal L}_0 \phi - {\cal L}\phi \in C_b^{loc}({\cal X}_k^\alpha)$ (recall that $k=3$) has the form of a good test function. In particular, it is in the domain of ${\cal L}_\#$. 
For $B \subset {\cal X}_k^\alpha$ bounded, we shall prove the following points:
\begin{enumerate}
\item \textbf{First point}:
$\displaystyle{
\int_0^\infty  \sup_{(f,u,n) \in B}  \Big| Q_t \Big[ {\cal L}_\flat \phi_1 {\cal L}_0 \phi - {\cal L}\phi  \Big] (f,u,n) \Big| dt < \infty,
}$
\item  \textbf{Second point}:
$\displaystyle{
\int_0^\infty  \sup_{(f,u,n) \in B} \Big| Q_t {\cal L}_\# \Big[ {\cal L}_\flat \phi_1 + {\cal L}_0 \phi - {\cal L}\phi  \Big] (f,u,n) \Big| dt < \infty,
}$
\item  \textbf{Third point}:
$\displaystyle{
\int_0^\infty  \sup_{(f,u,n) \in B} N_k \Big( D_f Q_t \Big[ {\cal L}_\flat \phi_1 + {\cal L}_0 \phi - {\cal L}\phi  \Big] (f,u,n) \Big) dt < \infty ,
}$
\item  \textbf{Fourth point}:
$\displaystyle{
\int_0^\infty  \sup_{(f,u,n) \in B} \| D_u Q_t \Big[ {\cal L}_\flat \phi_1 + {\cal L}_0 \phi - {\cal L}\phi  \Big] (f,u,n) \|_{C^2(\mathbb{T})} dt < \infty,
}$
\end{enumerate}

Point $1$ shows that $\phi_2$ is well defined and in $\B^{loc}({\cal X}_k^\alpha)$.
Once point $2$ is obtained, we can use Corollary \ref{poisson} to conclude immediately that $\phi_2 \in D({\cal L}_\#)$ and satisfies \myref{eq_phi2}.
 It then remains to prove that $\phi_2$ is a good test function. 
 
 Points $3$ and $4$ guarantee that $\phi_2$ meets the last two requirements of Definition \ref{good}. Moreover, using the same decomposition as in the proof of Proposition \ref{generator_g}, we may write
 \begin{align*}
\frac{P_t \phi_2(f,u,n) - \phi_2(f,u,n)}{t}  =
\frac{Q_t \phi_2 (f,u,n) - \phi_2(f,u,n)}{t}  -  \E \Big[ \frac{\phi_2(g_t,u,m_t) - \phi_2(f,u,m_t)}{t}   \Big]  . 
\end{align*}
Considering the b.p.c convergence of each term of the right hand side as $t$ goes to zero, we obtain  
$\phi_2(f, u, \cdot) \in D(M)$ and 
$$
M \phi_2(f, u, n) = {\cal L}_\# \phi_2(f,u,n) - 
\blangle \partial_v[(v-n) f], D_f \phi_2(f,u,n) \brangle
$$
which is in $C_b^{loc}({\cal X}^\alpha_k)$, concluding the proof.
Let us now review the different points.
\paragraph{First point} 
It is clear from the explicit expressions \myref{expr1} and \myref{expr2} that ${\cal L}_\flat \phi_1$ and ${\cal L}_0 \phi$ are linear combinations of functions satisfying the assumptions of Proposition \ref{mix}, i.e essentially "quadratic in $n$". By definition, ${\cal L} \phi(f,u)=  \llangle {\cal L}_\flat \phi_1 + {\cal L}_0 \phi , \mu_\rho \rrangle$, so that
\begin{align*}
  Q_t \Big[ {\cal L}_\flat \phi_1 + {\cal L}_0 \phi - {\cal L}\phi  \Big] = 
  Q_t \Big[ {\cal L}_\flat \phi_1 + {\cal L}_0 \phi \Big] - \llangle {\cal L}_\flat \phi_1 + {\cal L}_0 \phi , \mu_\rho \rrangle .
\end{align*}
One can easily check that the estimates obtained from Proposition \ref{mix} are also locally bounded in $u \in L^\infty_x \subset H^\alpha_x$ (since $\alpha > 1/2$).
\paragraph{Second point}
Let us compute explicitely ${\cal L}_\# \Big[ {\cal L}_\flat \phi_1 + {\cal L}_0 \phi - {\cal L} \phi \Big]$, recalling that
$$
{\cal L}_\# \psi(f,n)  =  \blangle h, D_f \psi(f,n) \brangle + M \psi(f,n) 
$$
where $h = \partial_v[ (v-n) f ]$.
Since ${\cal L} \phi \equiv {\cal L} \phi(\rho,u)$, we have
${\cal L}_\# [{\cal L}_\flat \phi_1 + {\cal L}_0 \phi - {\cal L}\phi] = {\cal L}_\# {\cal L}_\flat \phi_1 + {\cal L}_\# {\cal L}_0 \phi$. 
Using Proposition \ref{mix} once again, we simply need to show that the test functions involved have the right form and are centered with respect to the invariant measure $\mu_\rho$ of the auxiliary process.
Calculations give, for $\chi \equiv \chi(x)$,
\begin{align*}
& \langle h,\chi \rangle = \langle \rho(h),\chi \rangle = 0, \\
& \langle J(h),\chi \rangle = \langle n \rho - J(f),\chi \rangle, \\
& \langle K(h),\chi \rangle = 2 \blangle n   J(f) - K(f),\chi \brangle.
\end{align*}
Let us review the different terms in \myref{expr1} and express the corresponding parts in 
${\cal L}_\#  {\cal L}_\flat \phi_1 $. 
\\
The first term is $\psi_1(f,n) = - \blangle J(f), \partial_x(M^{-1}I(n)   \partial_x \xi) \brangle \partial_1 \Phi$. We get
\begin{align*}
& M\psi_1(f,n) = - \blangle J(f), \partial_x(n.\partial_x \xi) \brangle \partial_1 \Phi, \\
& \blangle h,D\psi_1(f,n) \brangle = \blangle J(f)-n \rho, \partial_x(M^{-1}I(n).\partial_x \xi) \brangle \partial_1 \Phi,
\end{align*}
so that ${\cal L}_\# \psi_1$ has the form required in Proposition \ref{mix}. Calculations give the following expression for $\llangle {\cal L}_\# \psi_1, \mu_\rho \rrangle $:
$$
\E \Big[ - \blangle \tld w \rho , \partial_x(\tld m_0.\partial_x \xi) \brangle
+ \blangle \tld w \rho, \partial_x(M^{-1}I(\tld m_0).\partial_x \xi) \brangle
- \blangle \tld m_0 \rho, \partial_x(M^{-1}I(\tld m_0).\partial_x \xi) \brangle \Big] \partial_1 \Phi.
$$
Using the identity \myref{relation3} from Proposition \ref{relation}, this is indeed $0$.
\\
 The next term is $\psi_2(f,n) = \blangle K(f), \partial_x^2 \xi \brangle \partial_1 \Phi$. We get
\begin{align*}
& M \psi_2(f,n) = 0, \\
& \blangle h, D\psi_2(f,n) \brangle = 2 \blangle n   J(f) - K(f), \partial_x^2 \xi \brangle \partial_1 \Phi,
\end{align*}
so that ${\cal L}_\# \psi_2$ satisfies the assumption of Proposition \ref{mix}. Here, 
\begin{align*}
\llangle {\cal L}_\# \psi_2, \mu_\rho \rrangle = 2 \E \blangle (\tld m_0   \tld w - \tld w^2) \rho, \partial^2_x \xi \brangle \partial_1 \Phi.
\end{align*}
Using the identity \myref{relation1} from Proposition \ref{relation} as well as the stationarity of $\tld m$, we see that $\E (\tld m_0   \tld w) = \E (\tld w^2)$, so that this is indeed $0$.
\\
The next term $\langle \rho, u   \partial_x \xi\rangle \partial_1 \Phi$ is in fact constant along the trajectories of $(g_t,m_t)_{t \ge 0}$, giving no contribution.
The following term is 
$\psi_3(f,n) = - \blangle J(f), \partial_x \xi \brangle \blangle \rho, M^{-1}I(n)  \partial_x \xi \brangle \partial_1^2 \Phi$.
 We get
\begin{align*}
& M \psi_3(f,n) = - \blangle J(f), \partial_x \xi \brangle \blangle \rho, n  \partial_x \xi \brangle \partial_1^2 \Phi, \\
& \blangle h , D\psi_3(f,n) \brangle = - \blangle n \rho - J(f), \partial_x \xi \brangle 
\blangle \rho, M^{-1}I(n).\partial_x \xi \brangle \partial_1^2 \Phi,
\end{align*}
so that ${\cal L}\# \psi_3$ satisfies the assumption of Proposition \ref{mix}. 
Here,
\begin{align*}
\llangle {\cal L}_\# \psi_3, \mu_\rho \rrangle  = & \E \Big[ - \blangle \tld w \rho, \partial_x \xi \brangle \blangle \rho,\tld m_0.\partial_x \xi \brangle - \blangle \tld m_0 \rho,\partial_x \xi \brangle \blangle \rho, M^{-1}I(\tld m_0).\partial_x \xi \brangle \\
& \; \;  + \blangle \tld w \rho, \partial_x \xi \brangle \blangle \rho, M^{-1}I(\tld m_0).\partial_x \xi \brangle \Big] \partial_1^2 \Phi .
\end{align*}
Again, using the identity \myref{relation3} from Proposition \ref{relation}, this is indeed $0$.
 Finally, the last term is $\psi_4(f,n) = \blangle J(f), \partial_x \xi \brangle^2 \partial_1^2 \Phi$. We get
\begin{align*}
& M \psi_4(f,n) = 0, \\
& \blangle h, D\psi_4(f,n) \brangle = 2 \blangle n \rho - J(f), \partial_x \xi \brangle \blangle J(f), \partial_x \xi \brangle \partial_1^2 \Phi,
\end{align*}
so that ${\cal L}_\# \psi_4$ satisfies the assumption of Proposition \ref{mix}. 
Here,
\begin{align}
\llangle {\cal L}_\# \psi_4, \mu_\rho \rrangle = 2 \E \Big[  
\blangle \tld m_0 \rho , \partial_x \xi \brangle \blangle \tld w \rho, \partial_x \xi \brangle - \blangle \tld w \rho, \partial_x \xi \brangle \blangle \tld w \rho, \partial_x \xi \brangle
\Big] \partial_1^2 \Phi. 
\label{this_sym}
\end{align}
Using the identity \myref{relation1} as well as the stationarity of $\tld m$, we see that
$$
\frac{1}{2} \E[ \tld m_0(x) \tld w(y) +  \tld m_0(y) \tld w(x)  ]
=
\E[ \tld w(x) \tld w(y) ]
$$
hence, making use of the symmetry of \myref{this_sym}, it is indeed $0$.
It remains to study ${\cal L}_\# {\cal L}_0 \phi(f,u)$. The first term $\langle u, \partial_x^2 \zeta \rangle \partial_2 \Phi$ is constant. As for the second term $\blangle J(f), \zeta \brangle \partial_2 \Phi$, we get
\begin{align*}
{\cal L}_\# \Big[ \blangle J(f), \zeta \brangle \partial_2 \Phi \Big] = \blangle J(h), \zeta \brangle \partial_2 \Phi = \blangle n \rho - J(f), \zeta \brangle \partial_2 \Phi
\end{align*}
which satisfies the assumption of Proposition \ref{mix} and
\begin{align*}
\llangle \blangle n \rho - J(f), \zeta \brangle \partial_2 \Phi, \mu_\rho \rrangle = \E \blangle \tld m_0 \rho - \tld w \rho , \zeta \brangle \partial_2 \Phi = 0.
\end{align*}
This concludes the proof of the second point.
\paragraph{Third point} 
Once again, we simply use Proposition \ref{mix} (the part regarding the mixing speed of $D_f Q_t \psi$) since, 
as previously mentioned,
\begin{align*}
Q_t \Big[ {\cal L}_\flat \phi_1 + {\cal L}_0 \phi - {\cal L} \phi \Big] =
\Big( Q_t[ {\cal L}_\flat \phi_1 ] - \llangle {\cal L}_\flat \phi_1 , \mu_\rho \rrangle  \Big)
+ \Big( Q_t[ {\cal L}_0 \phi ] - \llangle {\cal L}_0 \phi , \mu_\rho \rrangle  \Big)
\end{align*}
and the different terms in \myref{expr1}, \myref{expr2} are good test functions satisfying the requested assumptions.
\paragraph{Fourth point} 
Reviewing $Q_t\Big[{\cal L}_\flat \phi_1 + {\cal L}_0 \phi - {\cal L}\phi \Big]$, we see that it is a sum of terms of the form
\begin{align*}
\Big( Q_t \psi (f,n) - \llangle \psi, \mu_\rho \rrangle \Big) \; \partial_j^i \Phi((f, \xi) ; (u, \zeta) ) .
\end{align*}
with $i,j=1$ or $2$ and $\psi$ of the form required in Proposition \ref{mix} (the terms involving $u$ in \myref{expr1} and \myref{expr2} are constant along the trajectories of $(g_t,m_t)_{t \ge 0}$ and are hence canceled out). The differential with respect to $u$ of such a term is hence simply
\begin{align*}
\Big( Q_t \psi (f,n) - \llangle \psi, \mu_\rho \rrangle \Big) \partial_2 \partial_j^i \Phi((f,\xi) ; (u,\zeta)) \zeta .
\end{align*}
Since we assumed $\Phi \in C^3$, its $\| \cdot \|_{C^2(\mathbb{T})}$ norm over $B \subset {\cal X}_k^\alpha$ can be bounded by
$$
C \sup_{(f,u,n) \in B} \Big| Q_t \psi (f,n) - \llangle \psi, \mu_\rho \rrangle \Big| \| \zeta \|_{C^2(\mathbb{T})}
$$ 
which is integrable over $\mathbb{R}^+$ according to Proposition \ref{mix}.
\end{proof}

\end{subsubsection}

\end{subsection}

\begin{subsection}{Consequence}
Note that since $\phi, \phi_1, \phi_2$ are good test functions, the equations with a positive order \myref{positive_order} are automatically satisfied since ${\cal L}_\star \phi$, ${\cal L}_\star \phi_i$ are locally bounded for $i=1,2,$ and $\star =0,1, \flat$.
Let us summarize the results we have obtained: for all $\phi$ good test-function of the form
\begin{align*}
\phi(f) = \Phi((\rho,\xi) ; (u, \zeta))
\end{align*}
with $\Phi \in C^3(\mathbb{R}^2)$, $\xi \equiv \xi(x)$, $\zeta \equiv \zeta(x) \in C^2(\mathbb{T})$, there exist correctors $\phi_1$, $\phi_2$ which are good test functions, such that the perturbed test function
$
\phi^\varepsilon = \phi + \varepsilon \phi_1 + \varepsilon^2 \phi_2
$
satisfies
$$
{\cal L}^\varepsilon \phi^\varepsilon(f, u, n) = {\cal L} \phi(\rho, u ) + R^\eps(f,u,n)
$$
where ${\cal L}$ is the limiting generator identified in \myref{limit generator}, and $R^\eps / \eps$ is locally bounded. 
As a result, we know from Proposition \ref{prop_gen} that
\begin{align*}
M^\varepsilon_\phi(t) = \phi^\varepsilon(f^\varepsilon_t, u^\eps_t, m^\varepsilon_t) - \int_0^t {\cal L}^\varepsilon \phi^\varepsilon(f^\varepsilon_s, u^\eps_s, m^\varepsilon_s) ds, \; \; \; t \ge 0
\end{align*}
defines a  $({\cal F}^\eps_t)_{t \ge 0}$-martingale, so that
\begin{align*}
N^\varepsilon_\phi(t) = \phi(\rho^\varepsilon_t, u^\eps_t) - \int_0^t {\cal L} \phi(\rho^\varepsilon_s, u^\eps_s) ds, \; \; \; t \ge 0
\end{align*}
is "almost" a  martingale, up to some $O(\varepsilon)$. This is an essential step towards our diffusion-approximation result.
\end{subsection}

\end{section}

\begin{section}{Diffusion-approximation} \label{6}

\begin{subsection}{Tightness}
We now prove the following result.
\begin{proposition}\label{tight}
Let $\alpha \in (1/2, 3/2]$ and assume that
$$
\sup_\eps \| u_0^\eps \|_{H^\alpha_x} + \sup_\eps \| f_0^\eps \|_{G_3} < \infty.
$$
Then the family of random variables $(\rho^\eps,u^\eps)_{\eps > 0}$ is tight in the space 
$$C([0,T] ; H_x^{-\gamma}) \times C([0,T] ; H_x^\beta), \; \; \gamma > 1/2, \; \; \; \beta < \alpha .$$
\end{proposition}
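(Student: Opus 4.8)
The plan is to establish tightness of the two components of $(\rho^\eps,u^\eps)$ separately; that of the pair then follows, the product topology on $C([0,T];H^{-\gamma}_x)\times C([0,T];H^\beta_x)$ being metrizable. The component $u^\eps$ is deterministic: Proposition~\ref{u_compact1}, together with $\sup_\eps\|u_0^\eps\|_{H^\alpha_x}<\infty$ and $\sup_\eps\|f_0^\eps\|_{G_1}<\infty$, provides a constant $C$ and a function $\theta\in L^1([0,T])$, both independent of $\eps$ and of $\omega$, with $\sup_{t\le T}\|u^\eps_t\|_{H^\alpha_x}\le C$ and $\|u^\eps_t-u^\eps_s\|_{H^\beta_x}\le C\int_s^t\theta(\sigma)\,d\sigma$ almost surely. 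Since $H^\alpha_x\hookrightarrow H^\beta_x$ is compact for $\beta<\alpha$ and $t\mapsto\int_0^t\theta$ is a fixed modulus of continuity, Arzel\`a--Ascoli shows that $(u^\eps)_\eps$ lies almost surely in a single compact subset of $C([0,T];H^\beta_x)$, hence is tight. For the random component, note first that by Proposition~\ref{moments_eps} one has $\|\rho^\eps_t\|_{L^1_x}=1$ for all $t$, almost surely; since for $\gamma>1/2$ the embedding $H^\gamma_x\hookrightarrow C(\mathbb{T})$ is compact, its transpose sends bounded subsets of ${\cal M}(\mathbb{T})$ (a fortiori of $L^1_x$) into a relatively compact subset of $H^{-\gamma}_x$, so that $\rho^\eps_t$ takes values, a.s.\ and uniformly in $\eps$ and $t\in[0,T]$, in one fixed compact ${\cal K}_0\subset H^{-\gamma}_x$.

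The heart of the matter is the time-equicontinuity of $\rho^\eps$ in $H^{-\gamma}_x$, for which we call upon the perturbed test function construction of Section~\ref{5}. Fix $\xi\in C^2(\mathbb{T})$ and apply that construction to the good test function $\phi(f,u)=\langle\rho,\xi\rangle$ of the form \myref{form} (that is, $\Phi(a;b)=a$, $\zeta=0$): there are good correctors $\phi_1,\phi_2$ such that $\phi^\eps=\phi+\eps\phi_1+\eps^2\phi_2$ satisfies ${\cal L}^\eps\phi^\eps={\cal L}\phi+R^\eps$ with $R^\eps=O(\eps)$ locally. Using the deterministic, $\eps$-uniform bounds of Propositions~\ref{moments_eps} and~\ref{u_compact1} for $(f^\eps_\sigma,u^\eps_\sigma,m^\eps_\sigma)$, both $|R^\eps(f^\eps_\sigma,u^\eps_\sigma,m^\eps_\sigma)|\le C_\xi\eps$ and $|{\cal L}\phi(\rho^\eps_\sigma,u^\eps_\sigma)|\le C_\xi$ hold a.s. By Proposition~\ref{prop_gen}, $M^\eps_\phi(t)=\phi^\eps(f^\eps_t,u^\eps_t,m^\eps_t)-\int_0^t{\cal L}^\eps\phi^\eps(f^\eps_\sigma,u^\eps_\sigma,m^\eps_\sigma)\,d\sigma$ is a square-integrable martingale; and since $\phi$ does not depend on $n$ (so $M\phi=M|\phi|^2=0$, whence $M|\phi^\eps|^2-2\phi^\eps M\phi^\eps=O(\eps^2)$ locally, by Assumption~\ref{ass4}), its predictable quadratic variation obeys $[M^\eps_\phi]_t-[M^\eps_\phi]_s=\eps^{-2}\int_s^t\bigl(M|\phi^\eps|^2-2\phi^\eps M\phi^\eps\bigr)(f^\eps_\sigma,u^\eps_\sigma,m^\eps_\sigma)\,d\sigma\le C_\xi(t-s)$ almost surely, with $C_\xi$ independent of $\eps$. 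Burkholder--Davis--Gundy then gives $\E\,|M^\eps_\phi(t)-M^\eps_\phi(s)|^4\le C_\xi(t-s)^2$; combined with $|\phi^\eps(f^\eps_t,u^\eps_t,m^\eps_t)-\phi^\eps(f^\eps_s,u^\eps_s,m^\eps_s)-(M^\eps_\phi(t)-M^\eps_\phi(s))|\le C_\xi(t-s)$ and with $|\langle\rho^\eps_t,\xi\rangle-\phi^\eps(f^\eps_t,u^\eps_t,m^\eps_t)|=|\eps\phi_1+\eps^2\phi_2|\le C_\xi\eps$, this yields
\begin{equation}
\E\,\bigl|\langle\rho^\eps_t-\rho^\eps_s,\,\xi\rangle\bigr|^4\ \le\ C_\xi\bigl((t-s)^2+\eps^4\bigr),\qquad s,t\in[0,T].
\label{tight_incr}
\end{equation}

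It remains to upgrade \eqref{tight_incr} to an $H^{-\gamma}_x$-modulus of continuity. Expanding in the Fourier basis $(e_k)_{k\in\mathbb{Z}}$ of $\mathbb{T}$ and using $|\langle\rho^\eps_t,e_k\rangle|\le\|\rho^\eps_t\|_{L^1_x}=1$, we get, for every $K\ge1$,
\begin{equation*}
\|\rho^\eps_t-\rho^\eps_s\|_{H^{-\gamma}_x}^2\ \le\ C_K\max_{|k|\le K}\bigl|\langle\rho^\eps_t-\rho^\eps_s,\,e_k\rangle\bigr|^2\ +\ 4\sum_{|k|>K}(1+4\pi^2k^2)^{-\gamma},
\end{equation*}
where the tail $\tau_K$ vanishes as $K\to\infty$, uniformly in $\eps,s,t$ (this is where $\gamma>1/2$ is used). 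For fixed $K$, applying \eqref{tight_incr} with $\xi$ running over the real and imaginary parts of $e_k$, $|k|\le K$, and invoking the Kolmogorov--Chentsov continuity criterion, we obtain $\lim_{h\to0}\limsup_{\eps\to0}\mathbb{P}\bigl[\sup_{|t-s|\le h}\max_{|k|\le K}|\langle\rho^\eps_t-\rho^\eps_s,e_k\rangle|>\eta\bigr]=0$ for every $\eta>0$ (the $\eps^4$ term in \eqref{tight_incr} being absorbed by $\limsup_{\eps\to0}$). Choosing $K$ large so that $\tau_K$ is small and then $\eps$ small, we conclude that $\lim_{h\to0}\limsup_{\eps\to0}\mathbb{P}\bigl[\sup_{|t-s|\le h}\|\rho^\eps_t-\rho^\eps_s\|_{H^{-\gamma}_x}>\delta\bigr]=0$ for all $\delta>0$; together with the a.s.\ confinement of $\rho^\eps_t$ to the compact ${\cal K}_0$, the Arzel\`a--Ascoli / Prokhorov criterion for $C([0,T];H^{-\gamma}_x)$ gives tightness of $(\rho^\eps)_\eps$, and hence of $(\rho^\eps,u^\eps)_\eps$ in the product space. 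The only genuinely delicate step is the increment estimate \eqref{tight_incr}: $\rho^\eps$ obeys a first-order transport equation driven by a fast oscillating forcing, so no energy estimate is available, and one must instead extract a bona fide martingale with an $\eps$-uniform quadratic variation --- precisely what the perturbed test functions of Section~\ref{5} furnish, the corrector terms being of the harmless order $\eps$. A secondary point is that those correctors involve $J(f)$, $K(f)$ and their $x$-derivatives and so are not regular enough to be absorbed into $\rho^\eps$ as an $H^{-\gamma}_x$-valued process when $\gamma$ is close to $1/2$; this is why the passage to the $H^{-\gamma}_x$ norm is carried out Fourier mode by Fourier mode.
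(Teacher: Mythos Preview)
Your overall plan---deterministic compactness of $(u^\eps)$ from Proposition~\ref{u_compact1}, spatial compactness of $(\rho^\eps_t)$ via the $L^1$ bound, and time-equicontinuity via a perturbed-test-function martingale---is correct and matches the paper's. Two differences are worth noting. First, you invoke the full second-order correction $\phi^\eps=\phi+\eps\phi_1+\eps^2\phi_2$ of Section~\ref{5}; the paper keeps only the first corrector $\phi_1$ (see \eqref{first_cor}), which already yields ${\cal L}^\eps\phi^\eps=O(1)$ and an explicit quadratic-variation bound via \eqref{chap1-quadratic}. This is simpler and avoids having to check that $|\phi^\eps|^2$ (which involves $\phi_2^2$) is a good test function. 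Second, the paper reduces to $\gamma$ large by interpolation and then sums over \emph{all} Fourier modes, whereas you truncate at a finite $K$; both devices are fine.

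There is however a genuine gap in your martingale step. You bound the \emph{predictable} quadratic variation $[M^\eps_\phi]_t-[M^\eps_\phi]_s\le C_\xi(t-s)$ a.s.\ and then assert that Burkholder--Davis--Gundy gives $\E|M^\eps_\phi(t)-M^\eps_\phi(s)|^4\le C_\xi(t-s)^2$. For c\`adl\`ag martingales this inference is false: BDG involves the true quadratic variation, and a compensated Poisson process ($\langle M\rangle_t=t$, yet $\E|M_t-M_s|^4=3(t-s)^2+(t-s)$) already shows that the $(t-s)^2$ bound fails. Here $m$ is only assumed c\`adl\`ag, so $M^\eps_\phi$ has jumps (of size $O(\eps)$, inherited from $\eps\phi_1+\eps^2\phi_2$), and the paper's assumptions give no control on the jump intensity. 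Relatedly, your Kolmogorov argument with the $\eps^4$ remainder does not close as written: a dyadic chaining with $\E|\cdot|^4\le C((t-s)^2+\eps^4)$ diverges at fine scales, and ``absorbed by $\limsup_{\eps\to 0}$'' is not a proof. The paper sidesteps both issues by using Aldous's criterion, which needs only the $L^2$ identity
\[
\E\big|M^\eps_\phi(\tau_2)-M^\eps_\phi(\tau_1)\big|^2=\E\big([M^\eps_\phi](\tau_2)-[M^\eps_\phi](\tau_1)\big)\le C\delta
\]
for stopping times $\tau_1\le\tau_2\le\tau_1+\delta$---and this \emph{does} follow directly from the predictable quadratic-variation bound. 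Replacing your BDG/Kolmogorov route by Aldous would close the gap.
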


\begin{proof}

The estimates from Proposition \ref{u_compact1} guarantee that $(u^\eps)_\eps$ lies in a (deterministic) compact subset of $C([0,T] ; H^\beta_x)$. We only need to prove the tightness of $(\rho^\eps)_\eps$ in $C([0,T] ; H^{-\gamma}_x)$. To this intent, we will once again use the perturbed test function method, but this time only up to the first order. 
Firstly, note that since the injection $L^1_x \subset  H^{-\gamma}_x$ is compact for $\gamma > 1/2$, it is sufficient to prove
\begin{align}
& \limsup_\eps \mathbb{P}( \| \rho^\eps \|_{L^\infty_tL^1_x} > M) \xrightarrow[M \to \infty]{} 0,  \label{point1} \\
& \forall r > 0, \; \; \limsup_\eps \mathbb{P}( w_\gamma(\rho^\eps ; \delta) > r) \xrightarrow[\delta \to 0]{} 0,
\label{point2}
\end{align} 
where
$w_\gamma(\rho ; \delta)$ denote the modulus of continuity  
$$w_\gamma(\rho ; \delta) = \sup_{|t-s| \le \delta} \| \rho_t - \rho_s \|_{H_x^{-\gamma}}.$$
 One may refer to \cite{billing}, Chapter 2, for details on this tightness criterion. 
The first point \myref{point1} is obvious since $\| \rho^\eps_t \|_{L^1_x} = \| \rho^\eps_0 \|_{L^1_x}=1 $. Note that for any $\eta > \gamma$ an interpolation inequality gives, for some $\theta \in (0,1)$,
\begin{align*}
w_{\gamma}(\rho ; \delta) \le  \Big( 2 \sup_{t \in [0,T]} \| \rho_t \|_{L^1_x}\Big)^\theta w_{\eta}(\rho ; \delta)^{1-\theta} 
\end{align*}
so that it is in fact sufficient to establish \myref{point2} for $\gamma$ large enough.
Let us define the diagonal operator 
$$
J = (Id - \Delta_x)^{-1/2} \label{chap1-J}
$$
that is, considering the $L^2(\mathbb{T})$ Hilbert-base 
$\left\{ e_j(x) = (2\pi)^{-1/2} e^{ijx}, \;  j \in \mathbb{Z} \right\}$,
\begin{align*}
J \xi = \sum_j (1+|j|^2)^{-1/2} \langle \xi,e_j \rangle e_j,
\end{align*}
so that
$$
 \| J^\gamma \rho \|^2_{L^2} = \sum_j | \langle \rho, J^\gamma e_j \rangle |^2 = \sum_j (1+|j|^2)^{-\gamma} |\langle \rho,e_j \rangle|^2 = \| \rho \|^2_{H^{-\gamma}_x} .
$$
For fixed $j \in \mathbb{Z}$, we consider the good test function
\begin{align}
\phi_j(f) = \langle \rho, J^\gamma e_j \rangle.
\end{align}
which is indeed of the form \myref{form} used in the perturbed test function method, with 
\begin{align*}
& \| J^\gamma e_j \|_{C^2} = (2\pi)^{-1/2} |j|^2(1+|j|^2)^{-\gamma/2}  .
\end{align*}
The associated first corrector
\begin{align}
\phi^1_j(f,n) = \blangle J(f) - M^{-1}I(n) \rho, \partial_x J^\gamma e_j \brangle \label{first_cor}
\end{align}
is designed so that ${\cal L}_\# \phi_j^1(f,n) = - {\cal L}_\flat \phi_j(f)$. Defining the first order perturbation 
$$
\phi_j^\eps(f,n) = \phi_j(\rho) + \eps \phi_j^1(f,n)
$$
it results that 
\begin{equation}
{\cal L}^\eps \phi^\eps_j(f,u,n) = {\cal L}_\flat \phi^1_j(f,u,n) = O(1)
\label{perturbed_order1}
\end{equation}
in the sense that it can be bounded by $C \| J^\gamma e_j \|_{C^2}$ when $(f,n)$ lies in a bounded $B \subset {\cal X}_k^\alpha$. Let us choose $\gamma$ large enough so that
\begin{align*}
\sum_j \| J^\gamma e_j \|_{C^2}^2 < \infty,
\end{align*}
that is in fact, $\gamma > 5/2$. We now define
$$
\theta^\eps(t) = J^{-\gamma} \sum_j \phi_j^\eps(f^\eps_t,m^\eps_t) e_j \in H^{-\gamma}_x
$$
and notice that, using the form \myref{first_cor} and Proposition \ref{moments_eps}, for all $t \in [0,T]$,
\begin{align*}
\| \rho_t^\eps - \theta^\eps(t) \|_{H^{-\gamma}_x} & = \| J^\gamma \rho_t^\eps - J^\gamma \theta^\eps(t) \|_{L^2}
= \sum_j \Big| \phi_j(\rho_t^\eps) - \phi^\eps_j(f_t^\eps,m_t^\eps) \Big|^2
= \eps \sum_j \Big| \phi^1_j(f^\eps_t, m^\eps_t) \Big|^2
\\
& \le \eps \, C(T,C^*, \|u_0^\eps\|_{L^\infty_x}, \| f_0^\eps \|_{G^1})  \sum_j \| J^\gamma e_j \|_{C^1}^2
\lesssim \eps .
\end{align*}
We deduce that \myref{point2} can be derived from the tightness of $(\theta^\eps)_\eps$ in $D([0,T] ; H^{-\gamma}_x)$: indeed, $\rho^\eps$ being continuous,
\begin{align*}
w_\gamma(\rho^\eps ; \delta) \lesssim w'_\gamma(\rho^\eps ; \delta) \lesssim w'_\gamma(\theta^\eps ; \delta) + \eps
\end{align*}
where $w'_\gamma$ denotes the modulus of continuity in $D([0,T] ; H^{-\gamma}_x)$ (see again \cite{billing}, Chapter 3, for details). 
We apply Aldou's criterion to prove the tightness of $(\theta^\eps)_\eps$ and conclude: let $\delta > 0$ and $\tau_1, \tau_2$ be two $({\cal F}^\eps_t)_{t \ge 0}$-stopping time satisfying
\begin{align}
\tau_1 \le \tau_2 \le \tau_1 + \delta, \hspace{10mm} \tau_2 \le T . \label{stop_time}
\end{align}
Introducing the square integrable martingale
\begin{align*}
M^\eps_j(t) = \phi^\eps_j(f^\eps_t, m^\eps_t) - \int_0^t {\cal L}^\eps \phi^\eps_j(f^\eps_s, u^\eps_s, m^\eps_s) ds
\end{align*}
we have
\begin{align*}
\E \| \theta^\eps(\tau_2) - \theta^\eps(\tau_1) \|^2_{H^{-\gamma}_x}
& = \sum_j \Big| \phi_j^\eps(f^\eps_{\tau_2}, m^\eps_{\tau_2})  - \phi_j^\eps(f^\eps_{\tau_1}, m^\eps_{\tau_1})\Big|^2
\\
&
=
\sum_j \E \Big| M_j^\eps(\tau_2) - M_j^\eps(\tau_1) - \int_{\tau_1}^{\tau_2} {\cal L}^\eps \phi^\eps_j(f^\eps_s, u^\eps_s, m^\eps_s) ds \Big|^2 .
\end{align*}
On one hand, using Doob's optional sampling theorem, we have
\begin{align*}
\E \Big| M_j^\eps(\tau_2) - M_j^\eps(\tau_1)  \Big|^2  =
 \E \Big( | M_j^\eps(\tau_2)| ^2 - | M_j^\eps(\tau_1)| ^2   \Big)
 = \E \Big( \Big[M^\eps_j \Big](\tau_2) - \Big[ M_j^\eps \Big](\tau_1)   \Big)
\end{align*}
where $[M^\eps_j](t)$ denotes the predictable quadratic variation of $M^\eps_j(t)$. We know from Proposition \ref{prop_gen} that it is given by
\begin{align*}
\Big[ M_j^\eps(t) \Big] = \frac{1}{\eps^2} \int_0^t \Big( M |\phi^\eps_j|^2 - 2 \phi^\eps_j M \phi^\eps_j \Big) (f^\eps_s, m^\eps_s) ds
= \int_0^t \Big( M |\phi^1_j|^2 - 2 \phi^1_j M \phi^1_j \Big) (f^\eps_s, m^\eps_s) ds
\end{align*}
after some straight-forward calculation. Using expression \myref{first_cor} and the quadratic inequality \myref{chap1-quadratic}, we deduce that the integrand can be bounded over $[0,T]$ by $C \| J^\gamma e_j \|^2_{C^1}$, so that
\begin{align}
\sum_j \E \Big| M_j^\eps(\tau_2) - M_j^\eps(\tau_1)  \Big|^2 \lesssim \delta .
\label{comb1}
\end{align}
On the other hand, making use of \myref{perturbed_order1},
\begin{align*}
\Big| \int_s^t {\cal L}^\eps \phi^\eps_j(f^\eps_s, u^\eps_s, m^\eps_s) ds \Big|^2 \lesssim  \int_s^t 
| {\cal L}^\eps \phi^\eps_j(f^\eps_s, u^\eps_s, m^\eps_s) |^2 ds \lesssim \| J^\gamma e_j \|^2_{C^2} |t-s|
\end{align*}
so that the choice of $\gamma$ guarantees that
\begin{align}
\sum_j \E \Big| \int_{\tau_1}^{\tau_2}  {\cal L}^\eps \phi^\eps_j(f^\eps_s, u^\eps_s, m^\eps_s) ds \Big|^2 \lesssim  \delta . \label{comb2}
\end{align}
Combining \myref{comb1} and \myref{comb2} and using Markov's inequality, we conclude that
\begin{align*}
\forall r > 0, \; \; \limsup_\eps \sup_{\tau_1, \tau_2} \mathbb{P} \Big( \Big| \theta^\eps(\tau_2) - \theta^\eps(\tau_1) \Big| > r \Big) \xrightarrow[\delta \to 0]{} 0,
\end{align*}
where the supremum is taken over all stopping times $\tau_1, \tau_2$ satisfying \myref{stop_time}. Aldou's criterion is met, which concludes the proof. 

\end{proof}

In the following, let us fix $\gamma > 1/2$ and $\beta < \alpha$ and a subsequence $(\rho^{\eps_j}, u^{\eps_j})_{j \ge 1}$ which converges in law to some $(\rho, u)$. Using Skorokhod's representation theorem,  we may introduce some probability space $(\Omega^*, \mathbb{P}^*)$ hosting some random variables $(\rho_*^{\eps_j},  u_*^{\eps_j})_{j \ge 1}$ , $(\rho_*, u_*)$ satisfying
\begin{align*}
(\rho_*^{\eps_j},  u_*^{\eps_j}) \sim (\rho^{\eps_j}, u^{\eps_j}),
\hspace{3mm} (\rho_*, u_*) \sim (\rho, u) \hspace{3mm} \text{in law in } C([0,T] ; H^{-\gamma}_x) \times C([0,T] ; H^\beta_x)
\end{align*}
and
$$
(\rho_*^{\eps_j},  u_*^{\eps_j}) \xrightarrow[j \to \infty]{} (\rho_*, u_*) \text{ a.s in } C([0,T] ;  H^{-\gamma}_x) \times C([0,T] ; H^\beta_x).
$$
Additionally, note that passing to the limit in the equality $\| \rho^\eps_*(t) \|_{L^1_x} = 1$ leads to the inequality
$$
\sup_{[0,T]} \| \rho_*(t) \|_{L^1_x} \le 1 \; \; \; a.s
$$
or equivalently, regarding the limiting law,
\begin{align}
\mathbb{P} \Big[ \sup_{t \in [0,T]}\| \rho_t \|_{L^1_x} \le 1 \Big] = 1.
\label{L1bound}
\end{align}

\end{subsection}

\begin{subsection}{Convergence of the martingale problem}

Let us consider a good test-function $\phi$ of the form \myref{form}. The final result obtained in section \ref{5} can be summarized as follows: for any $s_1 < s_2 < \ldots < s_n = s < t$ and any
$\theta : (H^{-\gamma}_x \times H^\beta_x)^n \to \mathbb{R}$ continuous and bounded, 
\begin{align}
\E\Big[ \Big( \phi(  \rho^\eps_t,   u^\eps_t) - \phi(  \rho^\eps_s,   u^\eps_s) - \int_s^t {\cal L} \phi( \rho^\eps_\sigma,   u^\eps_\sigma) d\sigma \Big) \theta(  \rho^\eps_{s_1},   u^\eps_{s_1}, \ldots,   \rho^\eps_{s_n},   u^\eps_{s_n})  \Big] = O(\eps) .
\label{chap1-mart1}
\end{align}
Regarding the adaptedness of the process, note that since $\omega \mapsto (\rho^\eps_t , u^\eps_t)(\omega) \in L^1_x \times H^\alpha_x$ is ${\cal F}^\eps_t$-measurable, and since the injection $L^1_x \times H^\alpha_x \subset H^{-\gamma}_x \times H^\beta_x $ is continuous, the random variable $\omega \mapsto (\rho^\eps_t , u^\eps_t)(\omega) \in H^{-\gamma}_x \times H^\beta_x$ is also ${\cal F}^\eps_t$-measurable.
The martingale property \myref{chap1-mart1} only depends on the law of $(\rho^\eps, u^\eps) \in C([0,T] ; H^{-\gamma}_x \times H^\beta_x)$, so that we may in fact write, on the new probability space $(\Omega^*, \mathbb{P}^*)$,
\begin{align*}
\E^* \Big[ \Big( \phi(  \rho_*^{\eps_j}(t),   u_*^{\eps_j}(t)) - \phi(  \rho_*^{\eps_j}(s),   u_*^{\eps_j}(s)) - \int_s^t {\cal L} \phi( \rho_*^{\eps_j}(\sigma),   u_*^{\eps_j}(\sigma)) d\sigma \Big) \theta_* \Big] = O(\eps),
\end{align*}
where $\theta_*=\theta(  \rho_*^{\eps_j}(s_1),    \ldots,  u_*^{\eps_j}(s_n)) $.
From the expressions \myref{contr1}, \myref{contr2} and \myref{limit generator}, we easily deduce that ${\cal L}\phi$ is continuous and locally bounded on $H^{-\gamma}_x \times H^\beta_x$ as soon as the functions $\xi$ and $\zeta$ involved in \myref{form} have enough regularity (namely
$\xi \in H_x^{\gamma + 2}$, $\zeta \in H_x^{2 - \beta}$). We may then apply the dominated convergence theorem on the probability space $\Omega^*$ to send $\eps_j$ to zero. Using once again the invariance of the law, we obtain
$$
\E\Big[ \Big( \phi(  \rho_t,   u_t) - \phi(  \rho_s,   u_s) - \int_s^t {\cal L} \phi( \rho_\sigma,   u_\sigma) d\sigma \Big) \theta(  \rho_{s_1},   u_{s_1}, \ldots,   \rho_{s_n},   u_{s_n})  \Big] = 0.
$$
This equality holding true for any $s_1 < \ldots < s_n=s < t$ and $\theta$, we deduce that
$$
N_\phi(t) = \phi(\rho_t, u_t) - \phi(\rho_0, u_0) - \int_0^t {\cal L}\phi(\rho_s, u_s) ds, \; \; t \ge 0
$$
defines a continuous martingale, starting at $0$, with respect to the natural filtration 
\begin{align*}
{\cal F}_t = \sigma \Big( (\rho_s, u_s) \in H^{-\gamma}_x \times H^\beta_x, \; s \in [0,t] \Big), \; \; t \ge 0.
\end{align*}
This reasoning also holds for the good test function $|\phi|^2$ which is still of the form \myref{form}, so that $N_{|\phi|^2}(t)$ defines a continuous martingale as well. As mentioned in the proof of Proposition \ref{prop_gen} (see again \cite{dv2}, Theorem B.3) we derive that the quadratic variation of $N_\phi(t)$ is given by
$$
\Big[ N_\phi \Big](t) = \int_0^t \Big( {\cal L} | \phi |^2 - 2 \phi {\cal L} \phi \Big)(\rho_s, u_s) ds .
$$
Let us consider $\phi(\rho,u) = (\rho, \xi) + (u, \zeta)$. Simple calculations from the expression \myref{limiting_L} give
\begin{align*}
& {\cal L}\phi(\rho,u) = ({\cal A}^I \rho, \xi) + (\partial^2_x u , \zeta),
\\
& \Big( {\cal L} | \phi |^2 - 2 \phi {\cal L} \phi \Big)(\rho,u) 
= \int_x \int_y k(x,y) \rho(x) \partial_x \xi(x) \rho(y) \partial_y \xi (y) dx dy.
\end{align*}
where the operator ${\cal A}^I$ and the kernel $k(x,y)$ are defined in \myref{AI} and \myref{exp_Q}. 
Let us introduce the $H_x^{-\gamma-2} \times H_x^{\beta-2}$-valued process
$$
N(t) = \Big( \rho_t -\rho_0 - \int_0^t {\cal A}^I \rho_s ds \; , \; u_t - u_0 - \int_0^t \partial^2_x u_s ds \Big),
\; \; \; t \ge 0.
$$
Then for all $(\xi, \zeta) \in H_x^{\gamma + 2} \times H_x^{2-\beta}$,
\begin{align*}
\blangle N(t) , (\xi , \zeta ) \brangle, \; \; t \ge 0
\end{align*}
defines a continuous real-valued $({\cal F}_t)_{t \ge 0}$-martingale of quadratic variation
\begin{align}
\Big[ \blangle N , (\xi, \zeta) \brangle \Big](t) = \int_0^t  \int_x \int_y k(x,y) \rho_t(x) \partial_x \xi(x) \rho_t(y) \partial_y \xi (y) dx dy .
\label{quad}
\end{align}
Using the polarization formula 
$\langle N , h_1 \rangle \langle N , h_2 \rangle = \frac{1}{4} \Big( \langle N, h_1 + h_2\rangle - \langle N,h_1-h_2 \rangle \Big)$,
 we derive from \myref{quad} that
\begin{align*}
\blangle N(t) , (\xi_1 n \zeta_1) \brangle \blangle N(t) ,(\xi_2 , \zeta_2 ) \brangle - \blangle V(t) \xi_1,\xi_2 \brangle , \; \; t \ge 0
\end{align*}
is a continuous real-valued $({\cal F}_t)_{t \ge 0}$-martingale, where the operator $V(t)$ is given by
\begin{align*}
\blangle V(t) \xi_1,\xi_2 \brangle & = \int_0^t \int_x \int_y k(x,y) \rho_t(x) \partial_x \xi_1(x) \rho_t(y) \partial_y \xi_2(y) dx dy
\\
& = \int_z \Big( \int_x \partial_x(\rho_t(x) q(x,z) ) \xi_1(x) dx \Big) \Big( \int_x \partial_x(\rho_t(y) q(y,z) ) \xi_2(y) dy \Big) dz
\\
& = \int_z \Big( \partial_x[ \rho_t Q^{1/2} ]^* \xi_1(z) \Big) \Big(  \partial_x[ \rho_t Q^{1/2} ]^* \xi_2 (z)  \Big) dz.
\end{align*}
We have used the notation introduced in \myref{chap1-operator}. That is in fact
$$
V(t) = \partial_x[\rho_t Q^{1/2} ] \partial_x[ \rho_t Q^{1/2} ]^* .
$$
We may therefore apply the martingale representation theorem \cite[Theorem 8.2]{da_prato}:
there exists some filtered probability space $(\widehat \Omega, (\widehat{\cal F}_t)_{t \ge 0})$ equipped with a Wiener process $\hat W$ and a process $(\widehat \rho, \widehat u)$ defined on $\widehat \Omega$ whose law is that of $(\rho,u)$, such that the process
$$
\widehat N(t) = \Big( \widehat \rho_t -\rho_0 - \int_0^t {\cal A}^I \widehat \rho_s ds \; , \; \widehat u_t - u_0 - \int_0^t \partial^2_x \widehat u_s ds \Big),
\; \; \; t \ge 0.
$$
can be represented as the Wiener integral
$$
\widehat N(t) = \Big( \int_0^t \partial_x \Big[ \widehat \rho_t Q^{1/2} d\widehat W_t \Big] ; 0 \Big).
$$
This shows  that the limiting process $(\rho_t, u_t)_{t \ge 0}$ is a weak solution of the expected SPDE \myref{edps}, which concludes the proof of Theorem \ref{chap1-thm1}.

\end{subsection}

\end{section}


\begin{section}{Uniqueness for the limiting SPDE} \label{7}

We now prove the path-wise uniqueness for the limiting SPDE stated in Theorem \ref{chap1-thm2}. 
For conveniency, let us drop the hats on the quantities $\widehat \rho, \widehat u$ introduced earlier and consider the limiting process $(\rho,u) \in C([0,T] ; H^{-\gamma}_x \times H^\beta_x)$, where $\rho$ satisfies the SPDE of \myref{edps} given on a filtered probability space $(\Omega, ({\cal F}_t)_{t \ge 0})$ equipped with a Wiener process $W=(W_t)_{t \ge 0}$.
Of course, $u = (u_t)_{t \in [0,T]}$ is completely determined as the solution the deterministic heat equation with initial data $u_0$. 
Let us denote 
$$
b_t(x) = a(x) - u_t(x),
$$
 with $a$ given by \myref{exp_a}. Diagonalizing the compact operator $Q^{1/2}$ involved in \myref{edps}, we may introduce functions $(\phi_k)_{k \ge 0}$ such that
$$
Q^{1/2} \circ dW_t = \sum_k \phi_k \circ d\beta^k_t, \hspace{10mm} (\beta^k) \text{ independent Brownian motions}.
$$
As a consequence, equation \myref{edps} can be re-written as
$$
\left\{
\begin{array}{l}
d \rho_t + \partial_x(b_t \rho_t) dt
 = \partial_x\Big( \rho_t \circ \sum_k \phi_k d\beta^k_t \Big), 
 \\
  \rho(0) = \rho_0 \in L^1_x,
\end{array}
\right.
$$
or, equivalently, in Itô form,
\begin{equation}
\left\{
\begin{array}{l}
 d \rho_t + \Big( \partial_x(b_t \rho_t) - \frac{1}{2} \sum_k \partial_x(\phi_k \partial_x(\phi_k \rho_t)) \Big) dt
 = \sum_k \partial_x(\phi_k \rho_t) d\beta^k_t \label{the_spde} \\
  \rho(0) = \rho_0 \in L^1_x,
\end{array}
\right.
\end{equation}
Recall that, as mentioned in \myref{L1bound}, the solution $\rho$ constructed as a weak limit of $(\rho^\eps)_{\eps}$ in Theorem~\ref{chap1-thm1} satisfies the additional bound
\begin{align}
\sup_{t \in [0,T]} \| \rho_t \|_{L^1_x} \le 1, \; \; a.s.
\label{thebound}
\end{align}
Under the additional regularity assumptions of Theorem \ref{chap1-thm2}, it is clear that $ b \in L^\infty([0,T] ; W^{4,\infty}_x)$. 
Regarding the regularity of $\phi_k$, one may note that, for $\sigma > 4 + 1/2$, 
$$
\sum_k \| \phi_k \|^2_{W_x^{4,\infty}} \le \sum_k \| \phi_k \|^2_{H_x^\sigma} = \| Q^{1/2} \|^2_{{\cal L}_2(L^2 ; H_x^\sigma)} = \text{Tr}_{L^2} \Big( J^{-2 \sigma} Q \Big)
$$
where $J$ is the positive regularizing diagonal operator introduced in \myref{chap1-J}.
Applying Corollary C.2 and Proposition C.3 from appendix C of \cite{da_prato}, we deduce, for any $\theta > 1$,
$$
\text{Tr}_{L^2_x} \Big( J^{-2 \sigma} Q \Big) =  \text{Tr}_{L^2_x} (J^\theta J^{-\theta} J^{-2 \sigma} Q)
\le \text{Tr}_{L^2_x}(J^\theta) \; \| J^{-(\theta + 2 \sigma)} Q \|_{{\cal L}(L^2_x)}.
$$
On one hand, $\text{Tr}_{L^2_x}(J^\theta) = \sum_k (1 + |k|^2)^{-\theta/2} < \infty$ since $\theta > 1$. 
On the other hand, for $f \in L^2_x$, we have
\begin{align*}
J^{-(\theta + 2 \sigma)} Q f(x) = \int_y \Big( Id - \partial^2_x \Big)^{(\theta + 2 \sigma)/2} k(x,y) f(y) dy
\end{align*}
so that the bound $\| J^{-(\theta + 2 \sigma)} Q \|_{{\cal L}(L^2_x)} < \infty$ is ensured as soon as 
\begin{align*}
\sup_{y \in \mathbb{T}} \| k( \cdot , y) \|_{W_x^{\theta + 2 \sigma, \infty}} < \infty .
\end{align*}
The number $\theta + 2 \sigma$ can be chosen as close to $1 + 2 \times (4+1/2) = 10$ as desired. Hence, the regularity assumption \myref{regularity} of Theorem \ref{chap1-thm2} leads to
\begin{align*}
\sum_k \| \phi_k \|^2_{W_x^{4,\infty}} < \infty.
\end{align*}
It is now clear that the proof of Theorem \ref{chap1-thm2} boils down to that of the following result.

\begin{proposition} \label{thm_unique}
Assume that $\rho_0 \in L^1_x$ and
\begin{align}
 b \in L^\infty([0,T] ; W^{4,\infty}_x),
 \hspace{10mm}
 \sum_k \| \phi_k \|^2_{W^{4,\infty}_x} < \infty . \label{ass}
\end{align}
Then a solution $\rho \in C([0,T] ; H^{-\gamma}_x)$ a.s of \myref{the_spde} satisfying \myref{thebound} is path-wise unique.
\end{proposition}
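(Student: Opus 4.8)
The natural approach is a duality argument: since the SPDE \myref{the_spde} is linear and transport-dominated (first order when written in Stratonovich form), pathwise uniqueness should follow by pairing a solution $\rho$ against the solution of a suitable \emph{backward} SPDE. Given that $\rho$ only lives in $C([0,T];H^{-\gamma}_x)$ but enjoys the extra bound $\sup_t \|\rho_t\|_{L^1_x}\le 1$, the dual test object must be taken in $L^\infty_x$ (so that the pairing $\langle \rho_t, \cdot\rangle$ makes sense against the $L^1$ bound) and must be smooth enough to absorb the two spatial derivatives appearing in the It\^o drift $\frac12\sum_k\partial_x(\phi_k\partial_x(\phi_k\rho))$. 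Concretely, I would fix an arbitrary terminal datum $g\in C^\infty(\mathbb{T})$ and $\tau\in(0,T]$, and consider the backward stochastic equation
\begin{align*}
-d\zeta_t = \Big( b_t \partial_x \zeta_t + \tfrac12\sum_k \phi_k\partial_x(\phi_k\partial_x\zeta_t)\Big)dt - \sum_k \phi_k \partial_x\zeta_t\, d\beta^k_t,
\qquad \zeta_\tau = g,
\end{align*}
which is the formal adjoint (in the backward It\^o sense) of \myref{the_spde}. The coefficient assumptions \myref{ass} — $b\in L^\infty([0,T];W^{4,\infty}_x)$ and $\sum_k\|\phi_k\|^2_{W^{4,\infty}_x}<\infty$ — are exactly what is needed to solve this backward equation in, say, $L^p(\Omega;C([0,\tau];W^{3,\infty}_x))$ with enough room to lose derivatives, by a standard fixed-point / energy argument in high-order Sobolev spaces (using that $H^{7/2+\delta}_x\subset W^{3,\infty}_x$ and that the stochastic term is a first-order operator whose symmetric second-order It\^o correction is genuinely parabolic with smooth coefficients).

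The core step is then the \emph{It\^o product rule}: I would show that for a solution $\rho$ of \myref{the_spde} and the backward solution $\zeta$, the process $t\mapsto \langle \rho_t,\zeta_t\rangle$ is (almost surely, after taking expectations) constant on $[0,\tau]$, so that $\E\langle\rho_\tau,g\rangle = \langle\rho_0,\zeta_0\rangle$. Since $\zeta_0$ depends only on $g$, $\tau$, and the (deterministic) coefficients $b,\phi_k$ — not on the particular solution $\rho$ — this identity determines $\E\langle\rho_\tau,g\rangle$ for every smooth $g$, hence determines the law of $\rho_\tau$; combined with continuity in $t$ and linearity (the difference of two solutions solves the same equation with $\rho_0=0$, forcing $\E\langle\rho_\tau,g\rangle=0$ for all $g,\tau$, i.e. $\rho\equiv 0$), this gives pathwise uniqueness. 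Actually, to get \emph{pathwise} (not just in-law) uniqueness one argues directly on the difference $\bar\rho=\rho^{(1)}-\rho^{(2)}$: it solves \myref{the_spde} with zero initial data and still satisfies an $L^1$ bound, the duality identity gives $\langle\bar\rho_\tau,g\rangle=0$ a.s.\ for each fixed $g,\tau$, and a separability/continuity argument upgrades this to $\bar\rho\equiv 0$ a.s.

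Two technical points deserve care. First, the pairing $\langle\rho_t,\zeta_t\rangle$ is a product of an $H^{-\gamma}_x$-valued process and a $W^{3,\infty}_x$-valued one, so applying It\^o's formula requires a regularization (mollify $\rho$ in space, or equivalently test the weak formulation of \myref{the_spde} against $\zeta_t$ and justify the stochastic Fubini / cross-variation terms), and one must check that the $L^1$ bound \myref{thebound} makes all the resulting space integrals converge and lets one pass to the limit in the mollification — this is where the bound $\sup_t\|\rho_t\|_{L^1}\le1$ is genuinely used, since $\rho_t$ is only a measure-like object. Second, one must verify the cross-term cancellation: the It\^o correction from $d\langle\rho_t,\zeta_t\rangle$ produces $\sum_k\langle \partial_x(\phi_k\rho_t),\phi_k\partial_x\zeta_t\rangle\,dt$ from the quadratic covariation, which must exactly cancel the sum of the two It\^o drift corrections (the one in \myref{the_spde} and the one in the backward equation); this is a routine integration-by-parts identity but it is the algebraic heart of why the backward equation is the right adjoint. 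I expect the main obstacle to be not the duality computation itself but the rigorous well-posedness of the backward SPDE with the merely-$W^{4,\infty}$ (rather than $C^\infty$) coefficients and the justification of the product-rule step at the low regularity of $\rho$; both are handled by mollification and by working one notch below the available derivatives, losing only finitely many of them thanks to \myref{ass}.
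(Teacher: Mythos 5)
Your overall strategy --- duality against a backward stochastic equation, with mollification to justify the product rule at the low regularity of $\rho$ and the $L^1$ bound \myref{thebound} controlling the pairing --- is the same as the paper's. But the backward equation you propose is not the right object, and this is not a cosmetic issue. You prescribe the martingale part of $\zeta$ to be $-\sum_k\phi_k\partial_x\zeta_t\,d\beta^k_t$ and impose terminal data at $t=\tau$; a backward equation with \emph{prescribed} diffusion coefficient and prescribed terminal condition does not in general admit a solution adapted to the forward filtration $({\cal F}_t)$, and without adaptedness the cross-variation term in the It\^o product rule for $\langle\rho_t,\zeta_t\rangle$ (which is the algebraic heart of your cancellation) is not defined by ordinary It\^o calculus. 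This is precisely why the paper works with the BSPDE \myref{BSPDE} in the unknown \emph{pair} $(\psi,Z)$: the martingale part $\sum_k Z^k_t\,d\beta^k_t$ is an additional adapted unknown furnished by martingale representation, and the drift acquires the extra term $-\sum_k\phi_k\partial_x Z^k_t$ that your formal adjoint computation misses. Relatedly, with a deterministic terminal datum $g$ the duality identity only yields $\E\langle\bar\rho_\tau,g\rangle=0$ (the pairing $\langle\bar\rho_t,\zeta_t\rangle$ is a martingale vanishing at $t=0$, not an a.s.\ constant), which gives $\E[\bar\rho_\tau]=0$ and not path-wise uniqueness; the paper's Lemma \ref{le_lem} takes ${\cal F}_T$-measurable \emph{random} terminal data $\psi_T$ precisely so that $\E\langle\bar\rho_T,\psi_T\rangle=0$ for all such $\psi_T$ forces $\bar\rho_T=0$ for a.e.\ $(x,\omega)$. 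Random terminal data makes the anticipating/time-reversal route unavailable and again mandates the $(\psi,Z)$ formulation.

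The second gap is your claim that the backward equation is solvable by "a standard fixed-point / energy argument" because the It\^o correction is "genuinely parabolic". It is not: the principal part $\frac12\sum_k\phi_k\partial_x(\phi_k\partial_x\cdot)$ has symbol $\frac12\big(\sum_k\phi_k(x)^2\big)\xi^2$, which is only non-negative --- no uniform ellipticity of $k(x,x)=\sum_k\phi_k(x)^2$ is assumed --- so the BSPDE is degenerate parabolic and naive energy estimates lose a derivative at each step. The paper handles this by adding a vanishing viscosity $\frac{\eps}{2}\partial_x^2$ (equation \myref{RSPDE_eps}), invoking the super-parabolic BSPDE existence theory of \mycite{du_meng}, and then proving $\eps$-uniform $H^m_x$ estimates (Proposition \ref{big_calc}, Corollary \ref{le_cor}) in which the dangerous $\|\partial_x^{m+1}\psi\|^2_{L^2_x}$ contributions cancel exactly between the second-order drift and the square of the degenerate martingale term; one then passes to the limit $\eps\to0$ via a Cauchy estimate. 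Without this structure (or an equivalent degenerate-BSPDE input), the existence of a sufficiently regular dual solution --- which you yourself identify as the main obstacle --- is not established, and the proof is incomplete.
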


We are naturally led to consider the dual BSPDE (Backward SPDE) associated with \myref{the_spde}, of unknown $(\psi, Z)$:
\begin{equation}
\left\{
\begin{array}{l}
 d \psi_t + \Big( - b_t \partial_x \psi_t + \frac{1}{2} \sum_k \phi_k \partial_x(\phi_k \partial_x \psi_t)  - \sum_k \phi_k \partial_x Z_t^k \Big) dt = \sum_k Z^k_t d\beta^k_t  \label{BSPDE}
\\
 \psi(T) = \psi_T .
\end{array}
\right.
\end{equation}
For $\sigma > 0$, let us define the spaces (where "prog." stands for "progressively measurable")
\begin{align*}
& \mathbb{H}_\psi^\sigma = \Big\{ \psi : \Omega \times [0,T] \to H^\sigma_x \text{ prog.}, \; \| \psi \|^2_{\mathbb{H}^\sigma} :=   \E \int_0^T \| \psi_t \|^2_{H_x^\sigma} dt < \infty  \Big\},
\\
& \mathbb{H}_Z^\sigma = \Big\{ Z=(Z^k)_k : \Omega \times [0,T] \to H^\sigma_x \text{ prog.}, \; \| Z \|^2_{\mathbb{H}^\sigma} :=  \sum_k \E \int_0^T \| Z_t \|^2_{H_x^\sigma} dt < \infty  \Big\}.
\end{align*}
The existence theory for \myref{BSPDE} is linked to the uniqueness for \myref{the_spde} in the following way.

\begin{lemma} \label{le_lem}
Assume that for some $s > 1/2$, for all ${\cal F}_T$-measurable $\psi_T : \Omega \to C^\infty(\mathbb{T}_x)$, \myref{BSPDE} has a solution $(\psi, Z) \in \mathbb{H}_\psi^{s+2} \times \mathbb{H}_Z^{s+1}$. Then the conclusion of Proposition \ref{thm_unique} holds.
\end{lemma}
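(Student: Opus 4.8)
The plan is to run a standard duality argument: if $\rho$ is a solution of \myref{the_spde} with $\rho(0)=0$ (we subtract two solutions to reduce to this case), we want to show $\rho_t = 0$ for all $t$. Since $\rho \in C([0,T];H^{-\gamma}_x)$ and satisfies the bound \myref{thebound}, it suffices to prove $\langle \rho_t, \zeta \rangle = 0$ a.s. for a dense enough family of test functions $\zeta$, say $\zeta \in C^\infty(\mathbb{T}_x)$; by right-continuity in $t$ it is enough to prove this for each fixed $t = T'$ (working on $[0,T']$ instead of $[0,T]$, which changes nothing structurally). So fix $T' \le T$ and $\zeta \in C^\infty(\mathbb{T}_x)$, and let $(\psi, Z) \in \mathbb{H}_\psi^{s+2}\times\mathbb{H}_Z^{s+1}$ be the solution of the backward equation \myref{BSPDE} on $[0,T']$ with terminal data $\psi_{T'} = \zeta$, which exists by hypothesis.

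The key step is to compute $d\langle \rho_t, \psi_t\rangle$ by the It\^o product rule and check that the drift terms cancel. Writing the forward equation \myref{the_spde} for $\rho$ (with zero initial data and zero drift from $b$ being replaced by the full drift including the It\^o correction) and the backward equation for $\psi$, the It\^o correction term $-\sum_k \phi_k \partial_x Z^k_t$ in \myref{BSPDE} is precisely designed to cancel the cross-variation contribution $\sum_k \langle \partial_x(\phi_k\rho_t), Z^k_t\rangle$ coming from the martingale parts, while the first-order transport terms $-\partial_x(b_t\rho_t)$ and $-b_t\partial_x\psi_t$ pair up to a total derivative $-\partial_x(b_t\rho_t\psi_t)$ which integrates to zero on $\mathbb{T}$, and the second-order terms $\tfrac12\sum_k\partial_x(\phi_k\partial_x(\phi_k\rho_t))$ and $\tfrac12\sum_k\phi_k\partial_x(\phi_k\partial_x\psi_t)$ likewise combine, after two integrations by parts, into something that cancels. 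After these cancellations one is left with
$$
\langle \rho_{T'}, \zeta \rangle = \langle \rho_0, \psi_0 \rangle + (\text{martingale}) = (\text{martingale}),
$$
and taking expectations gives $\E\langle \rho_{T'}, \zeta\rangle = 0$; since this also works with $\zeta$ replaced by $h\,\zeta'$ for bounded ${\cal F}_{T'}$-measurable $h$ (or one argues directly that $\langle\rho_{T'},\zeta\rangle$ is itself a martingale increment), one concludes $\langle \rho_{T'},\zeta\rangle = 0$ a.s., hence $\rho \equiv 0$.

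The main obstacle is rigor in the It\^o product rule: $\rho_t$ only lives in $H^{-\gamma}_x$ (negative regularity) and is merely a weak/martingale solution, so the pairing $\langle\rho_t,\psi_t\rangle$ and the manipulations above are not immediately justified at that regularity. The remedy is a regularization/mollification argument: mollify $\rho$ in space (e.g. convolve with a smooth kernel), obtain an equation for $\rho^\delta_t$ with commutator error terms, apply the classical finite-dimensional It\^o formula to $\langle\rho^\delta_t,\psi_t\rangle$ where now everything is smooth, then pass to the limit $\delta\to 0$. This is where the regularity demanded of the solution $(\psi,Z)$ of the BSPDE — namely $\psi\in\mathbb{H}^{s+2}_\psi$, $Z\in\mathbb{H}^{s+1}_Z$ with $s>1/2$ so that $H^{s}_x$ embeds into $L^\infty_x$ — is used: it ensures $\partial_x\psi_t, \partial_x Z^k_t$ are controlled in $L^\infty$ or at least in spaces dual to where $\rho_t$ lives, so that all the pairings make sense in the limit and the commutators vanish, using also $\rho\in C([0,T];H^{-\gamma})$, the $L^1$ bound \myref{thebound}, and the summability $\sum_k\|\phi_k\|^2_{W^{4,\infty}_x}<\infty$ from \myref{ass} to handle the infinite sum over $k$. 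I would also note that one needs $\psi_t$ to remain nonnegative / the backward flow to preserve suitable bounds if one wants to pair against the $L^1$-bounded $\rho$, but in fact the duality identity only needs the integrability afforded by the stated function spaces together with \myref{thebound}.
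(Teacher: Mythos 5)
Your proposal is correct and follows essentially the same route as the paper: reduce to $\rho=\rho^1-\rho^2$ with $\rho(0)=0$, pair $\rho$ against the backward solution $(\psi,Z)$ with (random, ${\cal F}_T$-measurable) terminal data, justify the It\^o product rule by mollifying $\rho$ in space so that the drift terms become commutators of the form $k_\delta * [\,\cdot\,] - (\,\cdot\,)(k_\delta * \cdot)$ that vanish as $\delta\to 0$ thanks to \myref{thebound}, the embeddings $H^{s+2}_x\subset W^{2,\infty}_x$, $H^{s+1}_x\subset W^{1,\infty}_x$ and the summability in \myref{ass}, and conclude from $\E\langle\rho_T,\psi_T\rangle=0$ for all such terminal data that $\rho_T=0$ a.e.\ and hence $\rho\equiv 0$ by continuity in $t$. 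The points you flag as the main obstacles (the low regularity of $\rho$, the role of the $L^1$ bound, the cancellation of the cross-variation against $-\sum_k\phi_k\partial_x Z^k$) are exactly the ones the paper's proof addresses.
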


\begin{proof}[Proof of the lemma]

Considering two solutions $\rho^1, \rho^2$ of \myref{the_spde}, let us define $\rho = \rho^1 - \rho^2$, so that $\rho(0) = 0$.
We start by regularizing \myref{the_spde} using an appropriate kernel $k_\delta \in C^\infty(\mathbb{T}_x), \delta > 0$ (one may for instance consider the Féjer kernel). Denoting $\rho_\delta = k_\delta * \rho$ and $\partial_\delta = k_\delta * \partial_x$, we have
\begin{equation*}
\left\{
\begin{array}{l}
 d \rho^\delta_t + \Big( \partial_\delta(b_t \rho_t) 
- \frac{1}{2} \sum_k \partial_\delta(\phi_k \partial_x(\phi_k \rho_t)) \Big) dt
 = \sum_k \partial_\delta(\phi_k \rho_t) d\beta^k_t \\
  \rho^\delta(0) = k_\delta * \rho_0 := \rho^\delta_0
\end{array}
\right.
\end{equation*}
in the "strong sense" that $(\rho^\delta_t)_{t \ge 0}$ is an $L^2_x$-valued semi-martingale. Itô's formula is now licit (one may verify that the assumptions of \cite{da_prato} Theorem 4.17 are satisfied) and gives
\begin{align*}
d \langle \rho^\delta_t, \psi_t \rangle & = \Big[ -\blangle \partial_\delta(b_t \rho_t) , \psi_t \brangle - \blangle \rho^\delta_t , b \partial_x \psi_t \brangle  \Big] dt
\\
& + \Big[ \frac{1}{2} \sum_k  \blangle \partial_\delta(\phi_k \partial _x\phi_k \rho_t)) , \psi_t \brangle - \blangle \rho^\delta_t , \phi_k \partial_x(\phi_k \partial_x \psi_t) \brangle \Big] dt
\\
& + \Big[ \sum_k \blangle \rho^\delta_t , \phi_k \partial_x Z^k_t \brangle + \blangle \partial_\delta (\phi_k \rho_t), Z^k_t \brangle \Big] dt
\\
& + \sum_k \Big[ \blangle \partial_\delta(\phi_k \rho_t) , \psi_t \brangle + \blangle \rho^\delta_t, Z^k_t \brangle \Big] d\beta^k_t .
\end{align*}
The assumptions easily guarantee that the last term defines a square-integrable martingale. We may hence write
\begin{align}
\E \Big[ \langle \rho_T , k_\delta * \psi_T \rangle \Big] & = - \E \int_0^T \blangle \rho_t , b_t \partial_x( k_\delta * \psi_t) - k_\delta * [ b_t \partial_x \psi_t] \brangle dt  \nonumber
\\
& - \frac{1}{2} \E \sum_k \int_0^T \blangle \rho_t , \phi_k \partial_x(\phi_k \partial_x (k_\delta * \psi_t)) - k_\delta * [ \phi_k \partial_x(\phi_k \partial_x \psi_t)]  \brangle dt \label{2nd}
\\
& - \E \sum_k \int_0^T \blangle \rho_t, k_\delta * [ \phi_k \partial_x Z^k_t] - \phi_k \partial_x(k_\delta * Z^k_t) \brangle \label{3rd}
dt .
\end{align}
The left-hand side easily goes to $\E[ \langle \rho_T, \psi_T \rangle ]$,while
the right-hand side vanishes as $\delta \to 0$. For instance, using \myref{thebound}, one may control the second term \myref{2nd} with
\begin{align*}
\sum_k \E \int_0^T \| \phi_k \partial_x(\phi_k \partial_x (k_\delta * \psi_t)) - k_\delta * [ \phi_k \partial_x(\phi_k \partial_x \psi_t)]  \|_{L_x^\infty} dt
\end{align*}
The assumptions guarantee that $\psi_t \in H_x^{s+2} \subset W_x^{2,\infty}$ a.s, so that the integrand tends to $0$ for all fixed $k, t$ and almost every $\omega$. With the uniform bound 
\begin{align*}
\| \phi_k \partial_x(\phi_k \partial_x (k_\delta * \psi_t)) - k_\delta * [ \phi_k \partial_x(\phi_k \partial_x \psi_t)]  \|_{L_x^\infty}
\le C \|\phi_k \|^2_{W_x^{1,\infty}} \| \psi_t \|_{W_x^{2,\infty}} \le C \|\phi_k \|^2_{W_x^{1,\infty}} \| \psi_t \|_{H_x^{s+2}}
\end{align*}
and the regularity assumption \myref{ass}, we may apply dominated convergence to conclude. Note that the bound for the third term \myref{3rd} is
\begin{align*}
\| k_\delta * [ \phi_k \partial_x Z^k_t] - \phi_k \partial_x(k_\delta * Z^k_t)  \|_{L^\infty_x} \le \| \phi_k \|_{L^\infty} \| Z^k_t \|_{W_x^{1,\infty}} \le \| \phi_k \|_{L^\infty}^2 + \| Z^k_t \|_{H_x^{s+1}}^2 .
\end{align*}
Since $\| \rho_T \|_{L^1_x} \le 1$ a.s, the equality $\E [ \langle \rho_T, \psi_T \rangle ] = 0$ for all ${\cal F}_T$-measurable $\psi_T : \Omega \to C^\infty(\mathbb{T})$ leads to 
$\rho_T(x) = 0$ for almost every $(x, \omega) \in \mathbb{T} \times \Omega$. This holds for all $T > 0$ and since $\rho \in C([0,T] ; H^{-\gamma}_x)$ almost surely, we  conclude $\rho = 0$ a.s.
\end{proof}

It now remains to prove the existence of sufficiently regular solutions to the BSPDE \myref{BSPDE}. Let us consider the equation perturbed with an additional $\frac{\eps}{2} \partial_x^2 \psi$:
\begin{equation}
\hspace{-4mm}
\left\{
\hspace{-1mm}
\begin{array}{l}
 d \psi_t + \Big(
 \left(- b_t  + \frac{1}{2} \sum_k  \phi_k \partial_x \phi_k  \right) \partial_x \psi_t  - \sum_k \phi_k \partial_x Z_t^k \Big) dt 
+  \frac{1}{2} \Big( \sum_k \phi_k^2 + \eps \Big) \partial_x^2 \psi_t dt  = \sum_k Z^k_t d\beta^k_t  \label{RSPDE_eps}
\\
 \psi(T) = \psi_T .
 \end{array}
 \right.
\end{equation}
In this super-parabolic setup, in view of the assumptions \myref{ass} on the coefficients, assuming furthermore that 
$$
\psi_T \in L^2 \Big( (\Omega, {\cal F}_T) ; H_x^4 \Big),
$$
Theorem 2.3 of \cite{du_meng} guarantees the existence of a solution $(\psi^\eps, Z^\eps)$ of \myref{RSPDE_eps} such that
\begin{align}
& (\psi^\eps, Z^\eps) \in {\mathbb H}^5 \times {\mathbb H}^4
\; \; \text{ and } \; \;
\E \Big[ \sup_{t \in [0,T]} \| \psi_t^\eps \|_{H_x^4}^2 \Big] < \infty.
\label{meng}
\end{align} 
Of course, the corresponding estimates in these spaces may depend on $\eps > 0$.
 
\begin{proposition} \label{big_calc}
For all $0 \le m \le 4$, the solution $(\psi^\eps, Z^\eps) \equiv (\psi, Z)$ of \myref{RSPDE_eps} satisfies the estimate
\begin{align}
&  \frac{1}{2} \| \partial_x^m \psi_t \|_{L^2_x}^2 + \frac{1}{2} \int_t^T \Big( \eps \| \partial_x^{m+1} \psi_s \|_{L^2_x}^2 +
 \sum_k \| \partial_x^m Z^k_s + m(\partial_x^m \psi_s)(\partial_x \phi_k) - \partial_x(\partial_x^m \psi_s \phi_k) \|_{L^2_x}^2 \Big) ds \nonumber
 \\
&   \le \frac{1}{2} \| \partial_x^m \psi_T \|_{L^2_x}^2 + C \int_t^T  \Big(  \| \psi_s \|_{H_x^m}^2 + \sum_k \| Z^k_s \|^2_{H_x^{m-1}}  \Big)ds - \sum_k \int_t^T \blangle \partial_x^m \psi_s, \partial_x^m Z^k_s \brangle d\beta^k_s ,
\label{big_est}
\end{align}
with the convention $\| Z \|_{H_x^{m-1}} = 0$ when $m=0$, where the constant $C$ depends only on the quantities involved in \myref{ass}.
\end{proposition}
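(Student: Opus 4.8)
The plan is to run a Bessel-type energy estimate, level by level in $m$, directly on the super-parabolic equation \eqref{RSPDE_eps}, exploiting the backward It\^o correction to complete a square. First I would fix $\eps>0$ and the solution $(\psi,Z)=(\psi^\eps,Z^\eps)$ enjoying the regularity \eqref{meng}, fix $m\in\{0,\dots,4\}$, apply $\partial_x^m$ to \eqref{RSPDE_eps} (using the Leibniz rule; on $\mathbb{T}$ no boundary terms appear), and note that by \eqref{meng} and \eqref{ass} the process $t\mapsto\partial_x^m\psi_t\in H^{5-m}_x\subset H^1_x$ while its drift lives in $H^{-1}_x$, so that It\^o's formula applied to $t\mapsto\tfrac12\|\partial_x^m\psi_t\|_{L^2_x}^2$ (in the backward, $H^1_x$--$H^{-1}_x$ duality sense, which is licit here exactly as in \cite[Theorem 4.17]{da_prato} and in the setting of \cite{du_meng}) is legitimate. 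This yields, writing $g_s$ for the $dt$-coefficient of \eqref{RSPDE_eps},
\begin{align*}
\tfrac12\|\partial_x^m\psi_t\|_{L^2_x}^2 = \tfrac12\|\partial_x^m\psi_T\|_{L^2_x}^2 + \int_t^T \blangle \partial_x^m\psi_s,\partial_x^m g_s\brangle\,ds - \tfrac12\int_t^T\sum_k\|\partial_x^m Z_s^k\|_{L^2_x}^2\,ds - \int_t^T\sum_k\blangle\partial_x^m\psi_s,\partial_x^m Z_s^k\brangle\,d\beta_s^k .
\end{align*}

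The core of the argument is then the algebraic identification of the top-order part of $\blangle\partial_x^m\psi_s,\partial_x^m g_s\brangle$. Rewriting $\tfrac12\sum_k\phi_k\partial_x\phi_k\,\partial_x\psi+\tfrac12(\sum_k\phi_k^2+\eps)\partial_x^2\psi=\tfrac12\sum_k\phi_k\partial_x(\phi_k\partial_x\psi)+\tfrac{\eps}{2}\partial_x^2\psi$, an integration by parts extracts from the super-parabolic term the contribution $-\tfrac{\eps}{2}\|\partial_x^{m+1}\psi_s\|_{L^2_x}^2$, from $\tfrac12\sum_k\phi_k\partial_x(\phi_k\partial_x\psi)$ the leading term $-\tfrac12\sum_k\|\phi_k\partial_x^{m+1}\psi_s\|_{L^2_x}^2$, and from $-\sum_k\phi_k\partial_x Z^k$ the leading term $\sum_k\blangle P_k^s,\partial_x^m Z_s^k\brangle$ with $P_k^s:=\phi_k\partial_x^{m+1}\psi_s+(1-m)(\partial_x\phi_k)\partial_x^m\psi_s=\partial_x(\partial_x^m\psi_s\,\phi_k)-m(\partial_x\phi_k)\partial_x^m\psi_s$. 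Combining these three contributions with the It\^o correction $-\tfrac12\sum_k\|\partial_x^m Z_s^k\|^2$ gives $-\tfrac12\sum_k\|\partial_x^m Z_s^k-P_k^s\|_{L^2_x}^2-\tfrac{\eps}{2}\|\partial_x^{m+1}\psi_s\|^2$ plus the lower-order leftover $\tfrac12\sum_k(\|P_k^s\|^2-\|\phi_k\partial_x^{m+1}\psi_s\|^2)$; since $\partial_x^m Z_s^k-P_k^s$ is precisely $\partial_x^m Z_s^k+m(\partial_x^m\psi_s)(\partial_x\phi_k)-\partial_x(\partial_x^m\psi_s\,\phi_k)$, the desired left-hand side of \eqref{big_est} emerges, and moving it across produces an identity which implies \eqref{big_est}.

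It then remains to absorb everything else into $C\int_t^T(\|\psi_s\|_{H^m_x}^2+\sum_k\|Z_s^k\|_{H^{m-1}_x}^2)\,ds$. The transport term $-b_s\partial_x\psi_s$ is handled by the standard commutator bound, using $b\in L^\infty_t W^{4,\infty}_x$ and the Sobolev embedding $H^m_x\hookrightarrow W^{1,\infty}_x$ for $m\ge 2$ (and direct integration by parts for $m=0,1$). All sub-leading terms coming from the Leibniz expansion of $\partial_x^m[\phi_k\partial_x(\phi_k\partial_x\psi)]$, $\partial_x^m[\phi_k\partial_x Z^k]$ and from the leftover $\|P_k^s\|^2-\|\phi_k\partial_x^{m+1}\psi_s\|^2$ are quadratic in $\partial_x^{\le m}\psi_s$ and $\partial_x^{\le m-1}Z_s^k$: whenever a factor $\partial_x^{m+1}\psi_s$ is paired against $\partial_x^m\psi_s$ times a bounded coefficient one integrates by parts once more to turn it into $\langle(\text{coeff})',(\partial_x^m\psi_s)^2\rangle$, and the $Z$-terms are closed by Young's inequality, the constants being controlled by $\sum_k\|\phi_k\|_{W^{4,\infty}_x}^2$ via Cauchy--Schwarz in $k$. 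The martingale term is exactly the last summand of \eqref{big_est}. The main obstacle I anticipate is precisely this last bookkeeping step: one must organize the integrations by parts so that no term ever needs more than $\|\psi_s\|_{H^m_x}$ derivatives on $\psi$ or more regularity than \eqref{ass} allows on the coefficients, \emph{and} so that the resulting constant $C$ stays independent of $\eps$ (no term may be absorbed using the coercive $\tfrac{\eps}{2}\|\partial_x^{m+1}\psi_s\|^2$); this requires exploiting the specific divergence/commutator structure rather than crude bounds.
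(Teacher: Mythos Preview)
Your proposal is correct and follows essentially the same approach as the paper: apply It\^o's formula to $\tfrac12\|\partial_x^m\psi_t\|_{L^2_x}^2$, expand via the Leibniz rule, and complete the square so that the It\^o correction $-\tfrac12\sum_k\|\partial_x^m Z_s^k\|^2$, the cross-term from $-\phi_k\partial_x Z^k$, and the leading part $-\tfrac12\sum_k\|\phi_k\partial_x^{m+1}\psi_s\|^2$ of the second-order coefficient combine into $-\tfrac12\sum_k\|\partial_x^m Z_s^k - P_k^s\|^2$ plus harmless remainders, with the crucial cancellation of all $\partial_x^{m+1}\psi$ contributions (so that $C$ is $\eps$-free). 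The paper carries out the same computation but organizes it by labeling the individual pieces $A^1_t,\dots,A^5_t$, $B^1_t,B^2_t$, $C^1_t,C^2_t,C^3_t$, $D_t$ rather than naming $P_k^s$ directly; your mention of the Sobolev embedding for the $b$-term is unnecessary (a single integration by parts on the top-order piece, as in the paper, already gives the bound by $\|b\|_{L^\infty_t W^{m,\infty}_x}\|\psi_s\|_{H^m_x}^2$), but this is cosmetic.
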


\begin{remark} \label{L1_mart}
Note that \myref{meng} guarantees that, for $0 \le m \le 4$,
\begin{align*}
M_t =  \sum_k \int_0^t \blangle \partial_x^m \psi_s, \partial_x^m Z^k_s \brangle d\beta^k_s, \; \; \; t \ge 0
\end{align*}
defines a real-valued $L^1$ martingale. Indeed, 
\begin{align*}
V_T := \sum_k \int_0^T \Big| \blangle \partial_x^m \psi_t, \partial_x^m Z_t^k  \brangle \Big|^2 dt
\le \sup_{t \in [0,T]} \| \psi_t \|_{H_x^m}^2 \sum_k \int_0^T \| Z_t^k \|^2_{H_x^m} dt < \infty \; \; a.s
\end{align*}
ensures that $M_t$ is a local martingale. Burkholder-Davis-Gundy's inequality \cite{bdg} then gives
\begin{align*}
\E \Big[ \sup_{t \in [0,T]} |M_t| \Big] \lesssim \E \Big[ V_T^{1/2} \Big] 
\lesssim \E \Big[ \sup_{t \in [0,T]} \| \psi_t \|^2_{H_x^m} \Big] + \sum_k \E \int_0^T \| Z_t^k \|^2_{H_x^m} dt   < \infty.
\end{align*}
\end{remark}

Let us now give a proof of Proposition \ref{big_calc}.

\begin{proof}[Proof]
We apply Itô's formula to $\| \partial_x^m \psi_t \|_{L^2_x}^2$. To make it rigorous (so that the assumptions of \cite{da_prato} Theorem 4.17 are again satisfied), one may start by regularizing the coefficients $b$ and $(\phi_k)_k$ involved in \myref{RSPDE_eps}, so that \mycite{du_meng} provides the regularity $(\psi, Z) \in \mathbb{H}^6 \times \mathbb{H}^5$. We obtain
\begin{align*}
 \frac{1}{2} d \| \partial_x^{m} \psi_t \|_{L^2_x}^2 & =  \Big( A^1_t + A^2_t  + A^3_t + A^4_t + A^5_t \Big) dt + \blangle \partial_x^m \psi_t, \partial_x^m Z_t dW_t \brangle,
\end{align*}
with
\begin{align*}
& A^1_t := - \frac{\eps}{2} \blangle \partial_x^m \psi_t, \partial_x^{m+2} \psi_t \brangle ,
\hspace{5mm}
 A^2_t := \blangle \partial_x^m \psi_t, \partial_x^m(b_t \partial_x \psi_t) \brangle  ,
 \hspace{5mm}
A^3_t := \sum_k \blangle \partial_x^m \psi_t , \partial_x^m (\phi_k \partial_x Z^k_t) \brangle ,
\\ & A^4_t :=  \frac{1}{2} \sum_k \| \partial_x^m Z_t^k \|^2_{L^2_x} ,
\hspace{5mm}
 A^5_t := - \frac{1}{2} \sum_k \blangle \partial_x^m \psi_t, \partial_x^m (\phi_k \partial_x(\phi_k \partial_x \psi_t)) \brangle dt.
\end{align*}
Let us review the different terms. The first term $A^1_t$
appears untouched in \myref{big_est}. We may expand the second term into
\begin{align*}
A^2_t &  = 
 \blangle \partial_x^m  \psi_t,  b_t \partial_x^{m+1} \psi_t \brangle
+ \sum_{l=0}^{m-1}\binom{m}{l} \blangle \partial_x^m \psi_t, \partial_x^{m-l} b_t \partial_x^{l+1} \psi_s \brangle \nonumber
\\
& = - \frac{1}{2} \blangle | \partial_x^m \psi_t|^2, \partial_x b_t \brangle
+ \sum_{l=0}^{m-1} \binom{m}{l}  \blangle \partial_x^m \psi_t, \partial_x^{m-l} b_t \partial_x^{l+1} \psi_s \brangle,
\end{align*}
which can be bounded by $C \| b \|_{L^\infty_t W_x^{m,\infty}} \| \psi_t \|_{H_x^m}^2$.
Similarly, the third term can be expanded into
\begin{align*}
A_t^3  & = \Big[ \sum_k 
 \blangle \partial_x^m \psi_t, \phi_k \partial_x^{m+1} Z^k_t \brangle
 + m \blangle \partial_x^m \psi_t, \partial_x \phi_k \partial_x^m Z^k_t \brangle \Big] \\
 & \hspace{5mm} + \Big[ \sum_k \ \sum_{l=0}^{m-2} \binom{m}{l}
 \blangle \partial_x^m \psi_t \partial_x^{m-l} \phi_k, \partial_x^{l+1} Z^k_t \brangle
 \Big]
 \hspace{5mm} := B_t^{1} + B_t^{2}.
\end{align*}
The term $B_t^2$ can be bounded by 
$$
C \sum_k \| \psi_t \|_{H_x^m} \| \phi_k \|_{W_x^{m,\infty}} \| Z^k_t \|_{H_x^{m-1}}
\lesssim  \| \psi_t \|_{H_x^m}^2 \sum_k \| \phi_k \|_{W_x^{m,\infty}}^2 + \sum_k  \| Z^k_t \|_{H_x^{m-1}}^2.
$$
Using a polarizing identity, the term $B^1_t$ may be rewritten as
\begin{align*}
B^1_t & =  - \sum_k 
\blangle \partial_x(\partial_x^m \psi_t \phi_k) - m \partial_x^m \psi_t \partial_x \phi_k, \partial_x^m Z^k_t  \brangle
\\
& = - \frac{1}{2} \sum_k  \| \partial_x^m Z^k_t \|^2_{L^2_x} -
 \frac{1}{2} \sum_k
\|  \partial_x(\partial_x^m \psi_t \phi_k) - m \partial_x^m \psi_t \partial_x \phi_k \|_{L^2_x}^2
\\
& \hspace{6mm} +  \frac{1}{2} \sum_k \| \partial_x^m Z^k _t + m \partial_x^m \psi_t \partial_x \phi_k - \partial_x(\partial_x^m \psi_t \phi_k) \|_{L^2_x}^2
\\
& =: C^1_t + C^2_t + C^3_t.
\end{align*}
The term $C^1_t$ cancels out $A^4_t$. The term $C^3_t$ appears untouched in \myref{big_est}. We are hence left to study
\begin{align*}
D_t := A^5_t + C^2_t = 
- \frac{1}{2} \sum_k  \Big( \partial_x^m \psi_t, \partial_x^m(\phi_k \partial_x(\phi_k \partial_x \psi_t) ) \Big) 
- \frac{1}{2} \sum_k 
\|  \partial_x(\partial_x^m \psi_t \phi_k) - m \partial_x^m \psi_t \partial_x \phi_k \|_{L^2_x}^2
 .
\end{align*}
By expanding the derivatives and integrating by parts as done previously, we get the expression
\[
-\frac{1}{2} \sum_k \Big[ \Big( | \partial_x^m \psi_t |^2, \partial_x(\phi_k \partial_x \phi_k) \Big) - \Big( | \partial_x^{m+1} \psi_t|^2, | \phi_k |^2 \Big) 
  + \sum_{l=0}^{m-2} \binom{m}{l}
\Big( \partial_x^m \psi_t, \partial_x^{m-l} \phi_k \partial_x^{l+1}(\phi_k \partial_x \psi_t) \Big) \Big] 
\]
for the term $A^5_t$, and
\begin{align*}
C_t^2 = - \frac{1}{2} \sum_k \Big[
\Big( | \partial_x^m \psi_t|^2 , (m-1)^2 |\partial_x \phi_k|^2 + (m-1) \partial_x(\phi_k \partial_x \phi_k) \Big)
+ \Big( | \partial_x^{m+1} \psi_t|^2, | \phi_k |^2 \Big) \Big] .
\end{align*}
We see that the unwanted terms involving $\partial_x^{m+1} \psi_t$ therefore cancel out in $D_t$, and the remaining terms can once again be bounded by $C \| \psi_t \|_{H_x^m}^2 \sum_k \| \phi_k \|^2_{W_x^{m,\infty}}$.
\end{proof}

From Proposition \ref{big_calc}, we derive the following uniform bound.
\begin{corollary} \label{le_cor}
The solution $(\psi^\eps, Z^\eps)$ of \myref{RSPDE_eps} satisfies the bound
\begin{align*}
\| \psi^\eps \|_{\mathbb{H}^4}^2 + \| Z^\eps \|_{\mathbb{H}^3}^2  \lesssim 1.
\end{align*}
The constant involved in $\lesssim$ depends only on $T$ and the quantities of \myref{ass}.
\end{corollary}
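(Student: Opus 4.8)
The plan is to run a short induction on the differentiation order $m=0,1,\dots,4$, turning the energy identity \eqref{big_est} of Proposition~\ref{big_calc} into a backward Gr\"onwall estimate at each level, while decoupling the $\psi$-norms from the $Z$-norms by exploiting a cancellation already built into \eqref{big_est}.

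First I take the expectation of \eqref{big_est}. By Remark~\ref{L1_mart} the stochastic integral $\sum_k\int_t^T\blangle\partial_x^m\psi_s,\partial_x^m Z^k_s\brangle\,d\beta^k_s$ is a genuine mean-zero martingale, hence drops out, and one is left, for each $0\le m\le 4$, with
\begin{align}
\tfrac12\,\E\|\partial_x^m\psi_t\|_{L^2_x}^2 &+ \tfrac12\,\E\int_t^T\Big(\eps\|\partial_x^{m+1}\psi_s\|_{L^2_x}^2+\mathcal{Z}_m(s)\Big)ds \nonumber \\
&\le \tfrac12\,\E\|\partial_x^m\psi_T\|_{L^2_x}^2 + C\,\E\int_t^T\Big(\|\psi_s\|_{H^m_x}^2+\|Z_s\|_{H^{m-1}_x}^2\Big)ds, \label{planbase}
\end{align}
where (with the convention $\|Z\|_{H^{-1}_x}=0$)
\[
\mathcal{Z}_m(s):=\sum_k\Big\|\partial_x^m Z^k_s+m(\partial_x^m\psi_s)(\partial_x\phi_k)-\partial_x\big(\partial_x^m\psi_s\,\phi_k\big)\Big\|_{L^2_x}^2\ \ge\ 0
\]
and $C$ depends only on the quantities in \eqref{ass}. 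The key point for $\eps$-uniformity is that the $\eps$-term and the $\mathcal{Z}_m$-term on the left are nonnegative and may be discarded at will. The second ingredient is the Leibniz identity $\partial_x(\partial_x^m\psi\,\phi_k)=\phi_k\,\partial_x^{m+1}\psi+(\partial_x\phi_k)\,\partial_x^m\psi$, which together with Young's inequality yields
\begin{equation}
\sum_k\|\partial_x^m Z^k_s\|_{L^2_x}^2\le 2\,\mathcal{Z}_m(s)+C'\big(\|\partial_x^m\psi_s\|_{L^2_x}^2+\|\partial_x^{m+1}\psi_s\|_{L^2_x}^2\big), \label{planZ}
\end{equation}
with $C'$ controlled by $\sum_k\|\phi_k\|_{W^{1,\infty}_x}^2<\infty$ (assumption \eqref{ass}); that is, the $Z$-norm of order $m$ costs no more than $\mathcal{Z}_m$ plus $\psi$-norms of order $\le m+1$.

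Now for the induction. The base case $m=0$ is immediate: the right-hand side of \eqref{planbase} then carries no $Z$-term, so a backward Gr\"onwall inequality gives $\sup_t\E\|\psi_t\|_{L^2_x}^2\lesssim\E\|\psi_T\|_{L^2_x}^2$. For the inductive step, assume $\sup_t\E\|\partial_x^j\psi_t\|_{L^2_x}^2$ bounded for $j\le m$ and $\E\int_0^T\sum_k\|\partial_x^j Z^k_s\|_{L^2_x}^2\,ds$ bounded for $j\le m-1$. I take \eqref{planbase} at order $m+1$, discard its positive $\eps$- and $\mathcal{Z}_{m+1}$-terms, substitute \eqref{planZ} for the $\|Z_s\|_{H^m_x}^2$ on its right-hand side, and add $4C$ times \eqref{planbase} at order $m$ with the $\mathcal{Z}_m$-term kept on the left; the two occurrences of $\E\int_t^T\mathcal{Z}_m(s)\,ds$ are calibrated so as to cancel. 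Absorbing into a fixed constant all $\psi$-norms of order $\le m$ and all $Z$-norms of order $\le m-1$ (bounded by the induction hypothesis), one obtains a closed inequality $\E\|\partial_x^{m+1}\psi_t\|_{L^2_x}^2\le K+L\int_t^T\E\|\partial_x^{m+1}\psi_s\|_{L^2_x}^2\,ds$ with $K,L$ finite and depending only on $T$, the data of \eqref{ass}, $\E\|\psi_T\|_{H^4_x}^2$ and the induction constants. Backward Gr\"onwall bounds $\sup_t\E\|\partial_x^{m+1}\psi_t\|_{L^2_x}^2$; inserting this into \eqref{planbase} at order $m$ with $t=0$ bounds $\E\int_0^T\mathcal{Z}_m(s)\,ds$, and then \eqref{planZ} bounds $\E\int_0^T\sum_k\|\partial_x^m Z^k_s\|_{L^2_x}^2\,ds$. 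Running the step for $m=0,1,2,3$ produces uniform bounds on $\sup_t\E\|\partial_x^j\psi_t\|_{L^2_x}^2$ for all $j\le 4$ and on $\E\int_0^T\sum_k\|\partial_x^j Z^k_s\|_{L^2_x}^2\,ds$ for all $j\le 3$; summing over $j$ and using that $\sum_{j\le m}\|\partial_x^j\,\cdot\,\|_{L^2_x}$ is equivalent to $\|\cdot\|_{H^m_x}$ on $\mathbb{T}$ gives exactly $\|\psi^\eps\|_{\mathbb{H}^4}^2+\|Z^\eps\|_{\mathbb{H}^3}^2\lesssim 1$. All manipulations are legitimate because the a priori regularity \eqref{meng} --- in particular $\psi^\eps\in\mathbb{H}^5$, $Z^\eps\in\mathbb{H}^4$ and $\E\sup_t\|\psi^\eps_t\|_{H^4_x}^2<\infty$ --- makes every quantity above finite and, at the top level $m=4$, justifies applying \eqref{big_est}.

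The one genuine obstacle is this circularity between consecutive regularity levels: \eqref{big_est} at order $m$ sees $\partial_x^{m+1}\psi$ through $\mathcal{Z}_m$, while controlling $\partial_x^{m+1}\psi$ via \eqref{big_est} at order $m+1$ requires $\partial_x^m Z$. Choosing the precise linear combination of the order-$(m+1)$ and order-$m$ estimates that makes the $\mathcal{Z}_m$ contributions cancel is what closes the loop; the rest is routine backward Gr\"onwall together with the bookkeeping of which lower-order norms have already been secured.
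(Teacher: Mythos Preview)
Your proof is correct and follows essentially the same strategy as the paper: take expectations in \eqref{big_est} (using Remark~\ref{L1_mart}), recover $\|\partial_x^m Z\|_{L^2_x}$ from $\mathcal{Z}_m$ by the triangle/Leibniz inequality at the cost of $\|\partial_x^{m+1}\psi\|_{L^2_x}$, and close by backward Gr\"onwall. The only difference is organizational: the paper first iterates the $Z$-bound all the way down (their \eqref{expect2}$\to$\eqref{expect3}) and then closes once at the top level $m=4$, whereas you perform an explicit induction on $m$ and cancel $\mathcal{Z}_m$ at each step by a suitable linear combination of two consecutive levels---the content is the same.
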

\begin{proof}[Proof]
Let us simply denote $(\psi, Z) = (\psi^\eps, Z^\eps)$.
As mentioned in Remark \ref{L1_mart}, we may take the expectation in \myref{big_est}, which gives, in particular, for $0 \le m \le 4$, the estimate
\begin{align}
& \E \|  \psi_t \|^2_{H_x^m} \le  \E \| \psi_T \|_{H_x^m}^2
+ C \Big( \int_t^T \E \| \psi_s \|^2_{H_x^m} ds + \sum_k \int_t^T \E \| Z^k_s \|^2_{H_x^{m-1}} ds \Big).
\label{expect1}
\end{align}
Writing
$$
\| \partial_x^m Z^k_s \|_{L^2_x} \le  \| \partial_x^m Z^k_s + m(\partial_x^m \psi_s)(\partial_x \phi_k) - \partial_x(\partial_x^m \psi_s \phi_k) \|_{L^2_x} + \| m(\partial_x^m \psi_s)(\partial_x \phi_k) - \partial_x(\partial_x^m \psi_s \phi_k) \|_{L^2_x},
$$
and using \eqref{big_est} again, we also obtain
\begin{align}
& \sum_k \int_t^T \E \| Z^k_s \|^2_{H_x^m} ds  \le
 \E \|  \psi_T \|_{H_x^m}^2 + C \Big( \int_t^T \E \| \psi_s \|^2_{H_x^{m+1}} ds + \sum_k \int_t^T \E \| Z^k_s \|^2_{H_x^{m-1}} ds \Big) .
 \label{expect2}
\end{align}
Recalling the convention $\| Z \|_{H_x^{-1}} = 0$, in Proposition \ref{big_calc}, inequality \myref{expect2} yields, for $0 \le m \le 3$,
\begin{align}
\sum_k \int_t^T \E \| Z^k_s \|^2_{H_x^m} ds  \lesssim 
\E \| \psi_T \|^2_{H_x^3} + \int_t^T \E \| \psi_s \|^2_{H_x^4} . 
\label{expect3}
\end{align}
Coming back to \myref{expect1}, we get, for $0 \le m \le 4$,
\begin{align*}
& \E \|  \psi_t \|^2_{H_x^4} \lesssim \E \| \psi_T \|_{H_x^4}^2
 + \int_t^T \E \| \psi_s \|^2_{H_x^4} ds  
\end{align*}
and Grönwall's Lemma provides the desired estimate for $\psi$. The estimate for $Z$ is obtained by taking $t=0$ in \myref{expect3}.
\end{proof}

Now, let us consider $\eps_1, \eps_2 > 0$ and denote $(\psi, Z) = (\psi^{\eps_1} - \psi^{\eps_2}, Z^{\eps_1} - Z^{\eps_2})$. In the same fashion as Proposition \ref{big_calc}, one can prove the following estimate, for $0 \le m \le 3$:
\begin{align*}
& \frac{1}{2} \| \partial_x^m \psi_t \|_{L^2_x}^2 + \frac{1}{2} \int_t^T 
 \sum_k \| \partial_x^m Z^k_s + m(\partial_x^m \psi_s)(\partial_x \phi_k) - \partial_x(\partial_x^m \psi_s \phi_k) \|_{L^2_x}^2  ds
 \\
 & \le C | \eps_2 - \eps_1 |  \int_0^T \Big( \| \psi^{\eps_1}_s \|^2_{H_x^{m+1}}  + \| \psi^{\eps_2}_s \|^2_{H_x^{m+1}}  \Big) ds  + C \int_t^T  \Big(  \| \psi_s \|_{H_x^m}^2 + \sum_k \| Z^k_s \|^2_{H_x^{m-1}}  \Big)ds \\
 & \hspace{10mm}- \sum_k \int_t^T \Big( \partial_x^m \psi_s, \partial_x^m Z^k_s \Big) d\beta^k_s  .
\end{align*}
Relying on the bound from Corollary \ref{le_cor} and applying similar calculations as in its proof, we deduce
$$
\| \psi^{\eps_1} - \psi^{\eps_2} \|^2_{\mathbb{H}^3} + \| Z^{\eps_1} - Z^{\eps_2}  \|^2_{\mathbb{H}^2} \lesssim |\eps_1 - \eps_2|.
$$
As a consequence, $(\psi^\eps, Z^\eps)$ converges to some $(\psi, Z)$ in the Banach space $\mathbb{H}^3 \times \mathbb{H}^2$ as $\eps$ goes to zero. It is now easy to send $\eps$ to zero in \myref{RSPDE_eps} and verify that $(\psi, Z) \in {\mathbb{H}}^3 \times \mathbb{H}^3$ is indeed a solution of \myref{BSPDE}. 
Lemma \ref{le_lem} may be applied, which concludes the proof of Proposition \ref{thm_unique}, and therefore that of Theorem \ref{chap1-thm2}.

\end{section}





\begin{section}{Appendix: Well-posedness of the path-wise kinetic system} \label{app}

For the sake of completeness, we wish to prove the well-posedness of the deterministic PDE system
\begin{equation}
\left\{
\begin{array}{l }
\partial_t f + v   \partial_x f + \partial_v \Big[ (m + u - v) f \Big] = 0, \\
\partial_t u - \partial^2_x u = J(f) - \rho u.
\end{array}
\right.
\label{S}
\end{equation}
Here, we consider a fixed $\omega \in \Omega$, so that $m \equiv m(\omega)$ belongs to $D([0,T] ; E)$, space of càdlàg functions taking values in the separable complete space $E$.

\begin{subsection}{\texorpdfstring{$L^1$}{} and \texorpdfstring{$L^2$}{} estimates for a conservation equation with Lipschitz coefficients}
Recall that, whenever $w \in D([0,T] ; E)$ is given, the solution $f[w]$ to the linear conservation equation
\begin{equation*}
\left\{
\begin{array}{l}
\partial_t f + v   \partial_x f + \partial_v \Big[ (w - v) f \Big] = 0, \\
f_0 \in L^1_{x,v},
\end{array}
\right. 
\end{equation*}
is naturally expressed as
$$
f_t[w](x,v) = e^t f_0 \circ \Phi_0^t(x,v), \label{char1}
$$
where $ \Phi_s^t(x,v) = (X_s^t(x,v) , V_s^t(x,v))$ is the flow associated to the characteristics:
\begin{equation}
\left\{
\begin{array}{l l}
\displaystyle{ \frac{d}{ds} X^t_s = V^t_s, } &  X^t_t(x,v) = x 
\vspace{2mm}
\\
\displaystyle{ 
 \frac{d}{ds} V^t_s = w_s(X^t_s) - V^t_s, } & V^t_t(x,v) = v
\end{array}
\right.
\label{flow1}
\end{equation}
Grönwall's lemma immediately gives the following result.
\begin{lemma} \label{stab_flow}
Let $w, \tld w \in D([0,T] ; E)$ and let $\Phi[w],  \Phi[\tld w]$ denote the respective flows. Then,
$$
\forall s,t \in [0,T], \; \;  \Big\| \Phi_s
^t[w] - \ \Phi_s^t[\tld w] \Big\|_{L^\infty_{x,v}} \lesssim \| w - \tld w \|_{L^\infty_t L^\infty_x} .
$$
The constant involved in $\lesssim$ depends on $T, \| \partial_x  w \|_{L^\infty_{t,x}},  \| \partial_x \tld w \|_{L^\infty_{t,x}}$ only.
\label{stability}\end{lemma}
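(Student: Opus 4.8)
The plan is to fix $s \in [0,T]$ and compare the two trajectories $(X_\sigma, V_\sigma) := \Phi_\sigma^s[w](x,v)$ and $(\tilde X_\sigma, \tilde V_\sigma) := \Phi_\sigma^s[\tilde w](x,v)$, both written in integral form because $w, \tilde w$ are only càdlàg in time. Subtracting the two integral equations for $X$ and for $V$ and setting $\delta_X(\sigma) = |X_\sigma - \tilde X_\sigma|$, $\delta_V(\sigma) = |V_\sigma - \tilde V_\sigma|$, one gets, for $\sigma$ between $s$ and $t$,
\begin{align*}
\delta_X(\sigma) &\le \Big| \int_s^\sigma \delta_V(\tau)\, d\tau \Big|, \\
\delta_V(\sigma) &\le \Big| \int_s^\sigma \big( |w_\tau(X_\tau) - w_\tau(\tilde X_\tau)| + |w_\tau(\tilde X_\tau) - \tilde w_\tau(\tilde X_\tau)| + \delta_V(\tau) \big) d\tau \Big|.
\end{align*}
Here the first term in the $\delta_V$ bound is controlled by $\|\partial_x w\|_{L^\infty_{t,x}}\, \delta_X(\tau)$ using the Lipschitz regularity of $w$ in $x$, and the second term is bounded uniformly by $\|w - \tilde w\|_{L^\infty_t L^\infty_x}$.

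Then I would add the two inequalities, put $\delta(\sigma) = \delta_X(\sigma) + \delta_V(\sigma)$, and obtain an estimate of the form
\begin{align*}
\delta(\sigma) \le C\, |s - \sigma|\, \|w - \tilde w\|_{L^\infty_t L^\infty_x} + C \Big| \int_s^\sigma \delta(\tau)\, d\tau \Big|,
\end{align*}
where $C$ depends only on $T$, $\|\partial_x w\|_{L^\infty_{t,x}}$ and $\|\partial_x \tilde w\|_{L^\infty_{t,x}}$. Grönwall's lemma (applied on $[\min(s,t),\max(s,t)]$, in the appropriate time direction) then yields $\delta(t) \le C\, e^{CT} \|w - \tilde w\|_{L^\infty_t L^\infty_x}$, and since this bound is uniform in the initial point $(x,v) \in \mathbb{T} \times \mathbb{R}$ and since $\delta_X, \delta_V \le \delta$, taking the supremum over $(x,v)$ gives the claimed $L^\infty_{x,v}$ bound on $\Phi_s^t[w] - \Phi_s^t[\tilde w]$.

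There is essentially no serious obstacle here: the only points requiring a little care are (i) that the ODE system must be read in integral form because $w_\tau$ is merely càdlàg in $\tau$, so one should not differentiate but rather work with the integral identities throughout, and (ii) keeping track of the direction of integration when $t < s$ versus $t > s$, which is why I write the integrals with absolute value bars $\big|\int_s^\sigma \cdot\big|$ and apply Grönwall on the interval with the correct orientation. The dependence of the constant on $T$, $\|\partial_x w\|_{L^\infty_{t,x}}$ and $\|\partial_x \tilde w\|_{L^\infty_{t,x}}$ (and nothing else) is transparent from the computation: only the Lipschitz-in-$x$ norm of one of the two fields enters, through the term $|w_\tau(X_\tau) - w_\tau(\tilde X_\tau)| \le \|\partial_x w\|_{L^\infty_{t,x}} \delta_X(\tau)$, and the exponential factor from Grönwall contributes the $T$-dependence.
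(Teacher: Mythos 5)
Your proof is correct and is precisely the argument the paper has in mind: the paper dispatches this lemma with the single sentence ``Gr\"onwall's lemma immediately gives the following result,'' and your write-up (integral form of the characteristics because $w$ is only c\`adl\`ag, adding and subtracting $w_\tau(\tilde X_\tau)$ to isolate the Lipschitz term and the $\|w-\tilde w\|_{L^\infty}$ term, then Gr\"onwall on the suitably oriented interval) is the standard way to fill in those details. Nothing is missing.
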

Let us now fix some $w \in D([0,T] ; E)$ and give some estimates regarding the $v$-moments of $f_t[w]$. Recall the notation, for $f \in G^1$,
$$
\rho(f)(x) = \int f(x,v)dv, \hspace{5mm} J(f)(x) = \int v f(x,v) dv.
$$ 

\begin{lemma}[$L^1$ moments estimates] \label{lemL1}
Let $f_0 \in L^1_{x,v}$. Then for all $t \in [0,T]$,
$$
\| f_t[w] \|_{L^1_{x,v}} = \| f_0 \|_{L^1_{x,v}} .
$$
Assume furthermore that $f_0 \in {G^k}$, $k \ge 1$. Then for all $t \in [0,T]$, 
$$
 \| f_t[w] \|_{G^k} \lesssim \| f_0 \|_{{G^k}} .
$$
The constant involved in $\lesssim$ depends on $T,k$ and $\| w \|_{L^\infty_{t,x}}$ only. \\
The estimate for the first moment is precisely:
$$
\| J(f_t[w]) \|_{L^1_{x,v}} \le \Big( \| J(f_0) \|_{L^1_x} + t \sup_{s \in [0,t]} \| w_s \|_{L^\infty_x} \| f_0\|_{L^1_{x,v}} \Big)e^{-t}
$$
\end{lemma}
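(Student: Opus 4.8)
The plan is to work directly from the explicit representation $f_t[w](x,v) = e^t f_0\circ\Phi_0^t(x,v)$ and the characteristic flow \myref{flow1}. First I would establish the conservation of the $L^1$-norm: since $\Phi_0^t$ is the time-$t$ flow map, its Jacobian determinant is governed by the divergence of the vector field $(v, w_s(x)-v)$ in the $(x,v)$ variables, which equals $-1$. Hence $\det D\Phi_s^t = e^{-(t-s)}$, and the change of variables $y = \Phi_0^t(x,v)$ gives
$$
\|f_t[w]\|_{L^1_{x,v}} = e^t \int |f_0(\Phi_0^t(x,v))|\,dx\,dv = e^t \cdot e^{-t}\int |f_0(y)|\,dy = \|f_0\|_{L^1_{x,v}}.
$$

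For the higher moments, I would first derive a sub-linear bound on the velocity component $V_s^t(x,v)$ of the characteristics, exactly as in the proof of Proposition \ref{weak_f}: from $\frac{d}{ds}V_s^t = w_s(X_s^t) - V_s^t$ written in integral form and Gr\"onwall's lemma, one gets $|V_s^t(x,v)| \le |v|e^{-(t-s)} + \sup_s\|w_s\|_{L^\infty_x}$, hence $|V_0^t(x,v)|^k \le C(k,T,\|w\|_{L^\infty_{t,x}})(1+|v|^k)$. Combining this with the $L^1$-conservation and the same change of variables, $\|f_t[w]\|_{G^k} = \int\int(1+|v|^k)e^t|f_0(\Phi_0^t(x,v))|\,dx\,dv = \int\int (1+|V_t^0(y)|^k)|f_0(y)|\,dy$, and bounding $|V_t^0(y)|^k$ (the backward flow) by the same sub-linear estimate yields $\|f_t[w]\|_{G^k} \lesssim \|f_0\|_{G^k}$.

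Finally, for the sharp first-moment estimate, I would compute $\frac{d}{dt}\|J(f_t[w])\|_{L^1_x}$ directly. Writing $\bar J(f) = \int\int |v|\,f\,dx\,dv \ge \|J(f)\|_{L^1_x}$ and using the weak formulation (Proposition \ref{weak_f} applied with test function $\operatorname{sign}(v)\,v$, suitably regularized, or equivalently working from the characteristics with $f_0\ge 0$, $f_0\in C_c^\infty$ and concluding by density), one obtains
$$
\frac{d}{dt}\bar J(f_t[w]) \le \sup_{s\in[0,t]}\|w_s\|_{L^\infty_x}\,\|f_0\|_{L^1_{x,v}} - \bar J(f_t[w]),
$$
since the transport term $v\partial_x$ integrates to zero and $\partial_v[(w-v)f]$ contributes $\int\int \operatorname{sign}(v)(w_s(x)-v)f\,dx\,dv \le \sup_s\|w_s\|_{L^\infty_x}\|f_0\|_{L^1} - \bar J(f_t)$. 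Gr\"onwall's lemma then gives the stated bound $\|J(f_t[w])\|_{L^1_x} \le (\|J(f_0)\|_{L^1_x} + t\sup_{s}\|w_s\|_{L^\infty_x}\|f_0\|_{L^1_{x,v}})e^{-t}$.

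The main obstacle I anticipate is the rigorous justification that the exponent $k$ in the general moment bound can be handled uniformly and that the change of variables is legitimate for merely $L^1$ (not smooth) $f_0$; this is resolved by the standard density argument, first proving everything for $f_0\in C_c^\infty$ with $f_0\ge 0$ and then passing to the limit using the $L^1$-conservation, exactly as in Proposition \ref{moments_eps}. The remaining computations are routine.
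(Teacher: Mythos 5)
Your plan follows the paper's proof quite closely for the $L^1$ conservation (change of variables along the characteristics, density in $w$) and for the sharp first-moment estimate (differential inequality for $\bar J(f_t)=\int\int|v|f_t$, first for $f_0\in C^\infty_c$, $f_0\ge 0$, then density/Fatou). The one genuine difference is the general $G^k$ bound: you obtain it directly from the sub-linearity of the characteristics together with the change of variables, whereas the paper proves the $G^1$ case by the same differential-inequality technique and invokes induction for higher moments. Your route is arguably more economical and equally valid; just note that for the backward flow one has $|V_0^t(x,v)|\le e^{t}|v|+\|w\|_{L^\infty_{t,x}}(e^{t}-1)$ (growth, not decay), which still gives $|V_0^t|^k\lesssim 1+|v|^k$ with a constant depending on $T$ and $\|w\|_{L^\infty_{t,x}}$, which is all that is needed. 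A small slip: you state $\det D\Phi_s^t=e^{-(t-s)}$, but since the divergence of $(v,w-v)$ is $-1$ and $\Phi_s^t$ flows \emph{backward} from $t$ to $s$, the correct value is $\det D\Phi_0^t=e^{t}$ (as in the paper); your displayed computation $e^t\cdot e^{-t}$ is consistent with this correct value, so only the labelling is off.

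One point deserves genuine care, and it is inherited from the paper rather than introduced by you: the last Gr\"onwall step does not actually produce the stated ``precise'' bound. From $\frac{d}{dt}\bar J(f_t)\le C-\bar J(f_t)$ with $C=\sup_s\|w_s\|_{L^\infty_x}\|f_0\|_{L^1_{x,v}}$, the correct conclusion (multiply by $e^t$ and integrate) is
\begin{equation*}
\bar J(f_t)\le \bar J(f_0)e^{-t}+C\,(1-e^{-t}),
\end{equation*}
which is strictly \emph{larger} than $(\bar J(f_0)+tC)e^{-t}$ for $t>0$ because $1-e^{-t}>te^{-t}$. The stated inequality is in fact false in general (take $w\equiv c>0$ constant and $f_0$ concentrated near $v=0$: then $\bar J(f_t)=c(1-e^{-t})\|f_0\|_{L^1}>cte^{-t}\|f_0\|_{L^1}$). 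The paper's own argument, which integrates the differential inequality and then applies Gr\"onwall with a negative kernel, has the same defect. Fortunately only the weaker, correct consequence $\bar J(f_t)\le \bar J(f_0)+C$ is used downstream (e.g.\ in \myref{estim_J} and in the well-posedness iteration), so you should simply state and prove that version.
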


\begin{proof}[Proof]
Without loss of generality, we may assume $f_0 \ge 0$. 
When $w \in D([0,1] ; E)$ is smooth, it is clear that the flow $\Phi[\tld w]$  is a globally-defined $C^\infty$-diffeomorphism, whose Jacobian determinant is given by
$
J \Phi_0^t[\tld w] = \det(D\Phi_0^t[\tld w]) = e^t.
$
The conservation of the $L^1$-norm is then obtained by a change of variable using the form \myref{char}. The result is generalized to $w \in D([0,1] ; E)$ by density using Lemma \ref{stability}.

For simplicity, we only prove the estimate in $G^1$: the estimates on higher moments can be obtained by induction using the same process. Assuming $f_0 \in C_c^\infty(\mathbb{T} \times \mathbb{R})$, the following calculations are justified:
\begin{align*}
\frac{d}{dt} \| J(f_t) \|_{L^1_x} & = \frac{d}{dt} \int_x \int_v | v | f_t(x,v) dx dv
= - \int_x \int_v | v | \Big( v   \partial_x f + \partial_v[(w_t(x) -v)f_t(x,v)] \Big) dx dv \\
&
 = - \int_v \sign(v) w_t(x) f_t(x,v) dv + \int_v |v| f_t(x,v) dv,
\end{align*}
so that
\begin{align*}
\frac{d}{dt} \| J(f_t) \|_{L^1_x}  \le \| w_t \|_{L^\infty_x} \| f_0 \|_{L^1_x}  - \| J(f_t) \|_{L^1_x}
\le \sup_{s \in [0,t]} \| w_s \|_{L^\infty_x} \| f_0 \|_{L^1_x}  - \| J(f_t) \|_{L^1_x},
\end{align*}
that is, integrating over $[0,t]$,
\begin{align*}
\| J(f_t) \|_{L^1_x} \le \Big( \|J(f_0) \|_{L^1_x} + t \sup_{s \in [0,t]} \| w_s \|_{L^\infty_x} \| f_0 \|_{L^1_x} \Big) - \int_0^t \| J(f_s) \|_{L^1_x} ds.
\end{align*}
Grönwall's lemma gives the expected result. Again, the inequality holds for a general initial data $f_0 \ge 0$ by a density argument, using Fatou's Lemma.
\end{proof}

\begin{corollary} \label{cor}
Let $f_0 \in L^2_{x,v}$. Then for all $t \in [0,T]$, 
$$
 \| f_t[w] \|_{L^2_{x,v}} = e^{t/2} \| f_0 \|_{L^2_{x,v}}.
$$
Assume furthermore that $|f_0|^2 \in {G^k}$, $k \ge 1$. Then for all $t \in [0,T]$, 
$$
\| f_t[w]^2 \|_{G^k} \lesssim \| f_0^2 \|_{{G^k}} .
$$
The constant involved in $\lesssim$ depends on $T$, $k$ and $\| w \|_{L^\infty_{t,x}}$ only.
\end{corollary}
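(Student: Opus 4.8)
The plan is to deduce both assertions directly from Lemma~\ref{lemL1}, by squaring the explicit characteristic formula for $f_t[w]$. Writing $f_0^2$ for $|f_0|^2$, the representation $f_t[w](x,v) = e^t f_0\circ\Phi_0^t(x,v)$ gives
\[
f_t[w](x,v)^2 = e^{2t}\,(f_0^2)\circ\Phi_0^t(x,v) = e^t\,\Big(e^t\,(f_0^2)\circ\Phi_0^t(x,v)\Big) = e^t\,(f_0^2)_t[w](x,v),
\]
where $(f_0^2)_t[w]$ denotes the solution of the \emph{same} linear conservation equation driven by $w$ but started from the datum $f_0^2$. Thus, up to the harmless scalar factor $e^t$, the function $f_t[w]^2$ is itself a transported density, and Lemma~\ref{lemL1} applies to it.

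First I would prove the $L^2$ identity. For $f_0\in L^2_{x,v}$ one has $f_0^2\in L^1_{x,v}$, so the conservation-of-mass part of Lemma~\ref{lemL1} yields $\|(f_0^2)_t[w]\|_{L^1_{x,v}} = \|f_0^2\|_{L^1_{x,v}}$ for every $t\in[0,T]$; combined with the identity above this gives
\[
\|f_t[w]\|_{L^2_{x,v}}^2 = \int\int f_t[w]^2\,dx\,dv = e^t\,\|(f_0^2)_t[w]\|_{L^1_{x,v}} = e^t\,\|f_0^2\|_{L^1_{x,v}} = e^t\,\|f_0\|_{L^2_{x,v}}^2,
\]
whence $\|f_t[w]\|_{L^2_{x,v}} = e^{t/2}\|f_0\|_{L^2_{x,v}}$. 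For the weighted estimate, assuming $f_0^2\in G^k$, the $G^k$-bound of Lemma~\ref{lemL1} (with a constant depending only on $T$, $k$ and $\|w\|_{L^\infty_{t,x}}$) gives $\|(f_0^2)_t[w]\|_{G^k}\lesssim\|f_0^2\|_{G^k}$, and therefore
\[
\|f_t[w]^2\|_{G^k} = e^t\,\|(f_0^2)_t[w]\|_{G^k}\lesssim e^T\,\|f_0^2\|_{G^k}\lesssim\|f_0^2\|_{G^k},
\]
the factor $e^T$ being absorbed into the implicit constant.

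There is no substantial obstacle here: the statement is a genuine corollary, and the only delicate points have already been handled in the proof of Lemma~\ref{lemL1}. These are the change-of-variables identity $\det D\Phi_0^t[w] = e^t$ (proved first for smooth $w$ and extended to $w$ merely right-continuous with left limits by density, via the stability estimate of Lemma~\ref{stability}) and the density argument reducing a general datum in $L^2_{x,v}$, resp.\ in the $G^k$-square class, to $C_c^\infty$ data, using the continuity of $f_0\mapsto(f_0^2)_t[w]$ from $L^2_{x,v}$ into $L^1_{x,v}$, resp.\ into $G^k$, so that both the equality and the inequality pass to the limit (with Fatou's lemma where only an upper bound is needed). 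If one prefers a self-contained computation, one may instead differentiate $t\mapsto\|f_t[w]\|_{L^2_{x,v}}^2$ and integrate by parts in the transport and friction terms exactly as in the proof of Lemma~\ref{lemL1}; the reduction above is simply shorter.
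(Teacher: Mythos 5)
Your proof is correct and is essentially identical to the paper's: the paper likewise observes that $e^{-t}(f_t[w])^2 = e^t f_0^2\circ\Phi_0^t$ is the solution of the same transport equation with datum $f_0^2$ and then applies Lemma~\ref{lemL1} to it. Nothing further is needed.
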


\begin{proof}[Proof]
Simply note that $h_t := e^{-t} (f_t[w])^2 =  e^t f_0^2 \circ \Phi_0^t$ and apply Lemma \ref{lemL1} to $h$.
\end{proof}

\begin{corollary}[$L^2$ moments estimates] \label{lemL2}
Let $f_0 \in L^2_{x,v}$ and $f_0^2 \in G_4$. Then, for all $t \in [0,T]$, 
\begin{align*}
& \| \rho(f_t[w]) \|_{L^2_x} \lesssim \| f_0^2 \|_{G_2}^{1/2}, \\
& \| J(f_t[w]) \|_{L^2_x}  \lesssim  \| f_0^2 \|_{G_4}^{1/2} . 
\end{align*}
The constants involved in $\lesssim$ depends on $T$ and $\| w \|_{L^\infty_{t,x}}$ only.
\end{corollary}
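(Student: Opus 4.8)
The plan is to bound the zeroth and first $v$-moments $\rho(f_t[w])$ and $J(f_t[w])$ in $L^2_x$ by combining the pointwise control of the characteristics from Lemma \ref{stab_flow} (more precisely, from the sub-linear growth of the flow already used in the proof of Proposition \ref{weak_f}) with the $L^2$-type moment bounds of Corollary \ref{cor}. The key observation is that, by Cauchy--Schwarz in the $v$-variable, for any nonnegative weight one has
$$
|\rho(f_t)(x)|^2 = \left( \int_v f_t(x,v)\,dv \right)^2 \le \left( \int_v (1+|v|^2) |f_t(x,v)|^2\,dv \right) \left( \int_v \frac{dv}{1+|v|^2} \right),
$$
and the last integral is a finite constant. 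Integrating in $x$ gives $\|\rho(f_t[w])\|_{L^2_x}^2 \lesssim \|f_t[w]^2\|_{G_2}$, and then Corollary \ref{cor} bounds $\|f_t[w]^2\|_{G_2}$ by $\|f_0^2\|_{G_2}$ up to a constant depending only on $T$ and $\|w\|_{L^\infty_{t,x}}$. This yields the first estimate.

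For the first moment $J(f_t[w])$ the same device applies, but one must absorb the extra factor of $v$: write
$$
|J(f_t)(x)|^2 = \left( \int_v v f_t(x,v)\,dv \right)^2 \le \left( \int_v (1+|v|^4) |f_t(x,v)|^2\,dv \right) \left( \int_v \frac{v^2\,dv}{1+|v|^4} \right),
$$
where again the second factor is a finite numerical constant. Integrating in $x$ gives $\|J(f_t[w])\|_{L^2_x}^2 \lesssim \|f_t[w]^2\|_{G_4}$, and Corollary \ref{cor} (applied with $k=4$) bounds this by $\|f_0^2\|_{G_4}$ times a constant depending only on $T$ and $\|w\|_{L^\infty_{t,x}}$. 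Taking square roots produces the two stated inequalities.

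I do not expect a serious obstacle here: the whole argument is a Cauchy--Schwarz interpolation in the velocity variable followed by an appeal to the already-established weighted $L^2$ propagation in Corollary \ref{cor}. The only mild care needed is to check that the auxiliary $v$-integrals $\int_v (1+|v|^2)^{-1}dv$ and $\int_v v^2(1+|v|^4)^{-1}dv$ are finite (they are, since the integrands decay like $|v|^{-2}$ at infinity), and to note that the density argument reducing to smooth compactly supported $f_0$ — already used repeatedly above, e.g. in Lemma \ref{lemL1} — legitimises the manipulations. Hence the proof is essentially a two-line computation invoking Corollary \ref{cor}, and I would present it as such.
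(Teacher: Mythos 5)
Your proposal is correct and follows essentially the same route as the paper: a Cauchy--Schwarz splitting in the velocity variable with a polynomial weight (the paper writes $|v|f = (1+|v|)|v|f\cdot(1+|v|)^{-1}$, you use the weight $(1+|v|^4)^{1/2}$ directly, which is equivalent), followed by the weighted $L^2$ propagation of Corollary \ref{cor}. The only difference is cosmetic, and your treatment of the $\rho$ estimate (which the paper leaves implicit) is the obvious analogue.
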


\begin{proof}[Proof]
For instance, for the second estimate, Hölder's inequality gives
\begin{align*}
 \Big( \int_v | v | f dv \Big)^2 & = \Big( \int_v (1+|v|) |v| f \times \frac{1}{1+|v|} dv \Big)^2  
 \le \Big( \int_v (1+|v|)^2 |v|^2 f^2 dv \Big) \Big( \int_v \frac{1}{(1+|v|)^2} dv \Big)
\end{align*}
from which we deduce $ \int_x J(f)^2 dx \le C \| f^2 \|_{G_4} $, and we use the estimate from Corollary \ref{cor}.
\end{proof}

We will now establish some kind of stability estimate for $f$ as a function of $w$. Let us start with a lemma regarding the gradient.

\begin{lemma}[$L^1$ gradient estimates] \label{lemgrad}
Assume that $\| \nabla_{x,v} f_0 \|_{L^1_{x,v}} < \infty$. Then, 
$$
\| \nabla_{x,v} f_t[w] \|_{L^1_{x,v}} \le C(T, \| \partial_x w \|_{L^\infty_{t,x}} ) \| \nabla_{x,v} f_0 \|_{L^1_{x,v}} .
$$
Assume furthermore that $\| v \nabla_{x,v} f_0 \|_{L^1_{x,v}} < \infty$. Then,
$$
 \| v \nabla_{x,v} f_t[w] \|_{L^1_{x,v}} \le C(T, \| w \|_{L^\infty_{t,x}}, \| \partial_x w \|_{L^\infty_{t,x}}, \| \nabla_{x,v} f_0 \|_{L^1_{x,v}}, \| v \nabla_{x,v} f_0 \|_{L^1_{x,v}}).
$$
\end{lemma}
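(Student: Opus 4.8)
The plan is to work directly from the explicit representation $f_t[w](x,v) = e^t f_0 \circ \Phi_0^t(x,v)$ used throughout (see \myref{char}), differentiate it by the chain rule, and then reduce both estimates to two ingredients: (i) a uniform bound on the Jacobian matrix $D_{(x,v)}\Phi_0^t$, and (ii) the change of variables $y = \Phi_0^t(x,v)$, whose Jacobian determinant equals $e^t$ as recalled in the proof of Lemma \ref{lemL1}. To make the chain rule and the area formula licit when $f_0$ is merely $W^{1,1}$ and $w \in D([0,T];E)$ is only $W^{2,\infty}_x$ in space (so the flow is only $C^1$ in $(x,v)$), I would first carry out the computation after mollifying $w$ in time and in $x$ and $f_0$ in $(x,v)$, and then pass to the limit using the stability of the flow from Lemma \ref{stab_flow} together with the fact that the bounds obtained below are uniform in the regularization parameter — exactly as the conservation of the $L^1$ norm is obtained in Lemma \ref{lemL1} (invoking Fatou's lemma at the end).

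For the first estimate, the chain rule gives in the smooth setting $\nabla_{x,v} f_t[w] = e^t\,(D\Phi_0^t)^\top (\nabla f_0)\circ \Phi_0^t$, so that
$$
\| \nabla_{x,v} f_t[w] \|_{L^1_{x,v}} \le e^t \Big( \sup_{x,v}\|D\Phi_0^t(x,v)\| \Big) \int |(\nabla f_0)\circ\Phi_0^t|\,dx\,dv ;
$$
the change of variables $y = \Phi_0^t(x,v)$ turns the last integral into $e^{-t}\|\nabla_{x,v} f_0\|_{L^1_{x,v}}$, and the two exponential factors cancel. It then remains to bound $\sup_{x,v}\|D\Phi_0^t\|$: the Jacobian $Y_s := D_{(x,v)}\Phi_s^t$ solves the linear variational equation $\tfrac{d}{ds}Y_s = (Db_s)(\Phi_s^t)\,Y_s$, $Y_t = \mathrm{Id}$, where $b_s(x,v)=(v,\,w_s(x)-v)$ has differential $Db_s = \begin{pmatrix} 0 & 1 \\ \partial_x w_s & -1 \end{pmatrix}$, hence $\|Db_s\| \lesssim 1 + \|\partial_x w\|_{L^\infty_{t,x}}$ uniformly in $s$; Gr\"onwall's lemma then yields $\sup_{x,v}\|D\Phi_0^t\| \le C(T, \|\partial_x w\|_{L^\infty_{t,x}})$, which is the announced bound.

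For the weighted estimate I would repeat the same steps, keeping the weight $|v|$ inside the integral. After the change of variables $y = (x_0,v_0) = \Phi_0^t(x,v)$, the factor $|v|$ becomes $|V_t^0(y)|$, the modulus of the velocity component of the forward characteristic issued from $(x_0,v_0)$ (see \myref{flow}). The Duhamel identity for the characteristics, $V_t^0(y) = e^{-t}v_0 + \int_0^t e^{-(t-\sigma)} w_\sigma(X_\sigma^0(y))\,d\sigma$, gives the pointwise bound $|V_t^0(y)| \le |v_0| + T\|w\|_{L^\infty_{t,x}}$, so that
$$
\int |V_t^0(y)|\,|\nabla_{x,v} f_0(y)|\,dy \le \|v\,\nabla_{x,v} f_0\|_{L^1_{x,v}} + T\|w\|_{L^\infty_{t,x}}\|\nabla_{x,v} f_0\|_{L^1_{x,v}} ,
$$
and multiplying by $\sup_{x,v}\|D\Phi_0^t\| \le C(T,\|\partial_x w\|_{L^\infty_{t,x}})$ produces a bound of exactly the stated form.

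I expect the only genuinely delicate point to be the justification step of the first paragraph — differentiating $f_0\circ\Phi_0^t$ and applying the change of variables when $f_0$ is only $W^{1,1}$ and $\Phi_0^t$ is only a bi-Lipschitz $C^1$ diffeomorphism; everything else is a combination of Gr\"onwall's lemma and Fubini's theorem. As in Lemma \ref{lemL1}, this is handled by first treating smooth data (where the chain rule and area formula are classical), observing that all the constants are independent of the regularization, and recovering the general case by the density argument sketched above.
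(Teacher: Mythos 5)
Your proof is correct, but it takes a genuinely different route from the paper's. You work at the Lagrangian level: you differentiate the explicit representation $f_t[w]=e^t f_0\circ\Phi_0^t$ by the chain rule, control $\sup_{x,v}\|D\Phi_0^t\|$ through the variational equation $\tfrac{d}{ds}Y_s=(Db_s)(\Phi_s^t)Y_s$ and Gr\"onwall, and absorb the weight $|v|=|V_t^0(y)|$ after the change of variables via the Duhamel formula for the characteristics. The paper instead stays Eulerian: it writes the transport equations satisfied by $g_1=\partial_x f$, $g_2=\partial_v f$ (and then by $h_i=v\,\partial_{x,v}f$, after a small commutation to trade $v(w-v)\partial_v g_i$ for $(w-v)\partial_v h_i$ plus lower-order terms), multiplies by $\sign(g_i)$, integrates in $(x,v)$, and closes the system of differential inequalities with Gr\"onwall. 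Both arguments regularize the data and pass to the limit by density, as you anticipate. Your version buys explicit, visibly linear constants and makes the role of the Jacobian bound and of the weight completely transparent; the paper's version avoids having to justify the chain rule and area formula for $W^{1,1}$ data composed with a $C^1$ bi-Lipschitz flow (the one point you rightly flag as delicate), and it reuses the exact same sign-multiplication technique that drives the stability estimate of Corollary \ref{corstab}, so the appendix is self-contained with a single mechanism. Either proof yields the stated estimates.
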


\begin{proof}[Proof]
Let us assume that $f_0 \in C^\infty_c$, so that we are dealing with smooth, compact-supported, strong solutions. The general case is once again deduced by density arguments. From 
\begin{align*}
\partial_t f + v \partial_x f +(w-v) \partial_v f - f= 0
\end{align*}
we derive that $g_1=\partial_x f, g_2 = \partial_v f$  satisfy
\begin{align}
& \partial_t g_1 + v \partial_x g_1 + \partial_x w \; g_2 + (w-v) \partial_v g_1 - g_1 = 0, \label{eq g1}\\
& \partial_t g_2 + g1 + v \partial_x g_2 + (w-v) \partial_v g_2 - 2 g_2 = 0 \label{eq g2}
\end{align}
Multiplying \myref{eq g1} by $\sign(g_1)$ and integrating over time
(to be rigorous, one should replace $\sign(x)$ by $s_\eta(x) = x / (|x| + \eta)$ and carefully let $\eta$ go to zero), we are easily led to 
\begin{align*}
\frac{d}{dt} \int_x \int_v |g_1| dx dv &  
\le  \| \partial_x w \|_{L^\infty_{t,x}} \int_x \int_v |g_2| dx dv .
\end{align*}
Similarly, multiplying \myref{eq g2} by $\sign(g_2)$ we get 
\begin{align*}
\frac{d}{dt} \int_x \int_v |g_2| dx dv  
 \le  \int_x \int_v | g_1 | dx dv + \int_x \int_v |g_s| dx dv .
\end{align*}
Summing these and applying Gr\"onwall's inequality gives the expected estimate.
Let us now establish the estimate on $\| v \nabla_{x,v} f \|_{L^1_{x,v}}$. From \myref{eq g1}, \myref{eq g2}, we derive that $h_1 := v \partial_x f$ and $h_2 := v \partial_v f$ satisfy
\begin{align*}
& \partial_t h_1 + v \partial_x h_1 + (\partial_x w) h_2 + v(w-v) \partial_v g_1 - h_1 = 0, \\
& \partial_t h_2 + h_1 + v \partial_x h_2 + v(w-v) \partial_v g_2 - 2h_2 = 0 
\end{align*}
Now since $(w-v) \partial_v h_i = (w-v) g_i + v(w-v) \partial_v g_i$, we can rewrite
\begin{align*}
 v(w-v) \partial_v g_i = (w-v) \partial_v h_i +h_i - w g_i
\end{align*}
and get 
\begin{align*}
& \partial_t h_1 + v \partial_x h_1 + (\partial_x w) h_2 +(w-v) \partial_v h_1 - wg_1 = 0, \\
& \partial_t h_2 +h_1 + v \partial_x h_2 +(w-v) \partial_v h_2  - h_2 -wg_2 = 0.
\end{align*}
We then use the same  approach to obtain
\begin{align*}
& \frac{d}{dt} \| h_1(t) \|_{L^1_{x,v}} \le \|\partial_x w \|_{L^\infty_{t,x}} \| h_2(t) \|_{L^1_{x,v}} - \| h_1(t) \|_{L^1_{x,v}} + \| w \|_{L^\infty_{t,x}} \| g_1(t) \|_{L^1_{x,v}} 
\\
&\frac{d}{dt} \| h_2(t) \|_{L^1_{x,v}} \le \| h_1(t) \|_{L^1_{x,v}} + \| w \|_{L^\infty_{t,x}} \| g_2(t) \|_{L^1_{x,v}}
\end{align*}
and conclude by applying the previous estimates and Gr\"onwall's inequality.
\end{proof}

\begin{corollary}[$L^1$ stability] \label{corstab} 
Let $w, \tld w \in D([0,T] ; E)$.
Assume 
$$
\| (1+|v|) \nabla_{x,v} f_0 \|_{L^1_{x,v}} <+\infty.
$$
Then the following estimates hold:
\begin{align*}
& \| \rho_t[w] -  \rho_t[\tld w] \|_{L^1_x} \le \| f_t[w] - f_t [\tld w]\|_{L^1_{x,v}} \lesssim \| w - \tld w \|_{L^\infty_{t,x}}  ,
\\
& \| J(f_t[w]) - J( f_t[ \tld w]) \|_{L^1_x} \lesssim \| w - \tld w \|_{L^\infty_{t,x}} .
\end{align*}
The constants involved in $\lesssim$ depend on $T$, $\| (1+|v|) \nabla_{x,v} f_0 \|_{L^1_{x,v}}$, $\|w \|_{L^\infty_{t,x}}$, $\| \tld w \|_{L^\infty_{t,x}}$, 
$\| \partial_x w \|_{L^\infty_{t,x}}$, and  $\| \partial_x \tld w \|_{L^\infty_{t,x}}$ 
 only.
\end{corollary}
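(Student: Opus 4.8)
The plan is to exploit the linearity of the conservation equation. Setting $g_t := f_t[w] - f_t[\tld w]$, a direct subtraction shows that $g$ solves the driven transport problem
\begin{equation*}
\partial_t g + v\,\partial_x g + \partial_v\big[(w_t-v)g\big] = -\,\partial_v\big[(w_t-\tld w_t)\,f_t[\tld w]\big], \qquad g_0 = 0,
\end{equation*}
where the right-hand side is a well-defined element of $L^1_{x,v}$ by Lemma \ref{lemgrad} and the hypothesis $\|(1+|v|)\nabla_{x,v}f_0\|_{L^1_{x,v}} < \infty$. As in the proofs of Lemmas \ref{lemL1} and \ref{lemgrad}, I would first assume $f_0\in C_c^\infty(\mathbb{T}\times\mathbb{R})$ so that all the manipulations below are classical, and recover the general case by density together with the flow-stability estimate of Lemma \ref{stab_flow}.

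For the first estimate I would run a plain $L^1$ energy estimate on $g$: multiplying the equation by $\sign(g)$ (rigorously, by $s_\eta(g)=g/(|g|+\eta)$, letting $\eta\to 0$) and integrating over $\mathbb{T}\times\mathbb{R}$, the two transport terms cancel after integration by parts by exactly the computation used in the proof of Proposition \ref{moments_eps}, leaving
\begin{equation*}
\frac{d}{dt}\|g_t\|_{L^1_{x,v}} \le \int_{\mathbb{T}}\!\int_{\mathbb{R}} \big|(w_t-\tld w_t)\,\partial_v f_t[\tld w]\big|\,dv\,dx \le \|w-\tld w\|_{L^\infty_{t,x}}\,\|\partial_v f_t[\tld w]\|_{L^1_{x,v}},
\end{equation*}
using that $w_t-\tld w_t$ depends on $x$ only. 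Lemma \ref{lemgrad} bounds $\|\partial_v f_t[\tld w]\|_{L^1_{x,v}}$ by a constant depending on $T$ and $\|\partial_x\tld w\|_{L^\infty_{t,x}}$ times $\|\nabla_{x,v}f_0\|_{L^1_{x,v}}$; integrating in time from $g_0=0$ gives $\|g_t\|_{L^1_{x,v}}\lesssim\|w-\tld w\|_{L^\infty_{t,x}}$, and $\|\rho_t[w]-\rho_t[\tld w]\|_{L^1_x}\le\|g_t\|_{L^1_{x,v}}$ by Fubini.

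For the estimate on $J$ I would redo the same computation with the weight $|v|$, i.e. test against $|v|\sign(g)$. The $x$-transport term still vanishes, but the $v$-transport term now leaves a remainder $\int_{\mathbb{T}}\int_{\mathbb{R}}\sign(v)(w_t-v)\,|g_t|\,dv\,dx$, which is controlled by $\|w\|_{L^\infty_{t,x}}\|g_t\|_{L^1_{x,v}}+\int_{\mathbb{T}}\int_{\mathbb{R}}|v|\,|g_t|\,dv\,dx$ $-$ the first summand being already controlled by the previous step, the second feeding into Grönwall's lemma. The source term is bounded by $\|w-\tld w\|_{L^\infty_{t,x}}\,\|\,v\,\partial_v f_t[\tld w]\|_{L^1_{x,v}}$, which is $\lesssim\|w-\tld w\|_{L^\infty_{t,x}}$ by the second part of Lemma \ref{lemgrad}. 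Hence
\begin{equation*}
\frac{d}{dt}\int_{\mathbb{T}}\!\int_{\mathbb{R}}|v|\,|g_t|\,dv\,dx \le C\,\|w-\tld w\|_{L^\infty_{t,x}} + \int_{\mathbb{T}}\!\int_{\mathbb{R}}|v|\,|g_t|\,dv\,dx,
\end{equation*}
and Grönwall's lemma with vanishing initial data yields $\int_{\mathbb{T}}\int_{\mathbb{R}}|v|\,|g_t|\lesssim\|w-\tld w\|_{L^\infty_{t,x}}$; the claimed bound follows from $\|J(f_t[w])-J(f_t[\tld w])\|_{L^1_x}\le\int_{\mathbb{T}}\int_{\mathbb{R}}|v|\,|g_t|\,dv\,dx$. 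Keeping track of the constants coming out of Lemma \ref{lemgrad} shows they depend only on $T$, $\|(1+|v|)\nabla_{x,v}f_0\|_{L^1_{x,v}}$, $\|w\|_{L^\infty_{t,x}}$, $\|\tld w\|_{L^\infty_{t,x}}$, $\|\partial_x w\|_{L^\infty_{t,x}}$ and $\|\partial_x\tld w\|_{L^\infty_{t,x}}$, as stated.

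The only genuinely delicate point is the justification of the $\sign$ manipulations and of the integrations by parts for merely $L^1$ data; this is handled exactly as in Lemma \ref{lemgrad}, by smoothing $f_0$ (and, if necessary, $w,\tld w$), using the regularized sign function, and passing to the limit with the help of the uniform estimates of Lemmas \ref{lemL1}, \ref{lemgrad} and \ref{stab_flow}. I expect this regularization/passage-to-the-limit bookkeeping to be the main $-$ though routine $-$ obstacle. Note that the alternative route of estimating $f_0\circ\Phi_0^t[w]-f_0\circ\Phi_0^t[\tld w]$ directly through the fundamental theorem of calculus and Lemma \ref{stab_flow} runs into the fact that a convex combination of the two flows need not be a diffeomorphism, so the change of variables needed to bound $\int\int|\nabla f_0|$ along the interpolation is not available; the energy approach above circumvents this.
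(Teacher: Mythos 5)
Your argument is correct and is essentially the paper's own proof: you write the same transport equation for the difference $F=f[w]-f[\tld w]$ with source $-(w-\tld w)\,\partial_v f[\tld w]$, run the same $\sign$-multiplication $L^1$ estimates (the paper works with $G=vF$ rather than testing against $|v|\sign(F)$, which is the same computation), invoke Lemma \ref{lemgrad} for the gradient bounds, and close with Gr\"onwall and a density argument.
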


\begin{proof}[Proof]
Let us once again assume that $f_0 \in C^\infty_c$. Let us denote $f_t = f_t[w]$, $\tld f_t = f_t[\tld w]$, $F = f - \tld f$ and $W = w - \tld w$. These quantities satisfy
\begin{align*}
\partial_t F + v   \partial_x F + \partial_v \Big[ (w-v) F + W \tld f \Big] = 0 
\end{align*}
which can be rewritten as
$$
\partial_t F + v   \partial_x F + (w-v)   \partial_v F - F + W   \partial_v \tld f = 0. \label{eq F}
$$
Multiplying by $\sign(F)$, we get
\begin{align*}
\partial_t | F | + v   \partial_x | F | + (w-v)   \partial_v |F| - |F| + \sign(F) \; W   \partial_v \tld f = 0
\end{align*}
and we deduce
\begin{align*}
\frac{d}{dt} \| F(t) \|_{L^1_{x,v}} \le \| W \|_{L^\infty_{t,x}}  \| \nabla_{x,v} \tld f \|_{L^1_{x,v}} .
\end{align*}
Using the $L^1$ gradient estimates on $\tld f$ from Lemma \ref{lemgrad}, we get the first inequality. 
Now let $G = vF$. Multiplying \myref{eq F} by $v$, we get
\begin{align*}
\partial_t G + v   \partial_x G + v(w-v)   \partial_v F - G + v W   \partial_v \tld f = 0.
\end{align*}
Since $(w-v) \partial_v G = (w-v) F + v(w-v) \partial_v F$ this can be rewritten as
\begin{align*}
\partial_t G + v \partial_x G+ (w-v) \partial_v G  -wF + v W \partial_v \tld f = 0.
\end{align*}
Multiplying by $\sign(G)$, we get
\begin{align*}
\partial_t |G| + v \partial_x |G| + (w-v) \partial_v |G| - \sign(G) w F + \sign(G) W v \partial_v \tld f = 0
\end{align*}
and we deduce
\begin{align*}
\frac{d}{dt} \| G(t) \|_{L^1_{x,v}} & \le - \| G(t) \|_{L^1_{x,v}} + \| w \|_{L^\infty_{t,x}} \| F(t) \|_{L^1_{x,v}} + \| W \|_{L^\infty_{t,x}} \| v \nabla_{x,v} \tld f(t) \|_{L^1_{x,v}}  .
\end{align*}
Using the $L^1$ gradient estimates on $\tld f$ and the inequality we just showed, Gr\"onwall's lemma gives the expected estimate.
\end{proof}

\end{subsection}

\begin{subsection}{Existence and uniqueness}

We are now ready to prove the following result.

\begin{proposition}  Assume that
\begin{align*} 
\int_x \int_v (1 + |v|^4) |f_0|^2 dx dv + \int_x \int_v (1+|v|) | \nabla_{x,v} f_0 | dx dv < \infty, 
\hspace{5mm}
u_0  \in H^\eta_x \text{ for } \eta \in (3/2,2).
\end{align*}
Then, for all $\beta \in (3/2 , \eta)$, there exists a unique couple $(f,u)$ with $u \in C([0,T] ; H^\beta_x)$ solution of the system \myref{system1}
in the sense that
\begin{equation*}
\left\{
\begin{array}{l}
 f_t = f_t[m+u] \; \; \text{ as defined in \myref{char1}},
\vspace{1mm} \\ 
\displaystyle{
 u_t = S(t) u_0 + \int_0^t S(t-s) \Big[ J(f_s) - \rho_s u_s \Big] ds,
 }
\end{array}
\right.
\end{equation*}
where $S(t) = e^{t \partial^2_x}$ denotes the semigroup associated to the heat equation.
For any $k \ge 0$, assuming $f_0 \in G^k$, we have additionally $f \in C([0,T] ; G^k)$.
\end{proposition}

\begin{remark}
The "numerical constants" used in the following proof may all depend on the quantity $\sup_{t \in [0,T]} \| m_t(\omega) \|_{W_x^{1,\infty}}$, which is in any case assumed to be bounded in Assumption \ref{ass1}.
\end{remark}
Let us  define  the recursive sequences
\begin{align*}
f^n \in C([0,T] ; L^1_{x,v}), \; \; n \ge 0 \\
u^n \in C([0,T] ; H^\eta_x), \; \; n \ge 0
\end{align*}
as follows:  $(f^0_t, u^0_t) = (f_0, u_0)$ and
\begin{align}
& f_t^{n+1} = f_t[m+u^n] \; \;  \text{ as defined in } \myref{char1}, \\
& u_t^{n+1} = S(t) u_0 + \int_0^t S(t-s) \Big[ J(f^{n+1}_s) - \rho^{n+1}_s u^n_s \Big] ds.
\end{align}

\begin{subsubsection}*{Claim 1: these sequences are well-defined}
Since $\eta > 3/2$, and we work in dimension $1$, the embedding $H^\eta_x \subset W_x^{1,\infty}$ guarantees that $f_t[m+u^n]$ can be properly defined as long as $u^n$ takes values in $H^\eta_x$. We have 
\begin{align*}
\| u_t^{n+1} \|_{H^\eta_x} \le \| u_0 \|_{H^\eta_x} + \int_0^t \left\| S(t-s) \Big[ J(f^{n+1}_s) - \rho^{n+1}_s u^n_s \Big] \right\|_{H^\eta_x} ds 
\end{align*}
with the classical estimate
\begin{align*}
\forall w \in L^2, \; \; \; \| S(t) w \|_{H^\eta_x} \lesssim (1+t^{-\eta/2}) \| w \|_{L^2_x}.
\end{align*}
The $L^2$ estimates for $f^{n+1}_t = f_t[m +u^n]$ from Lemma \ref{lemL2} give
\begin{align*}
\| J(f_s^{n+1}) - \rho_s^{n+1} u_s^n \|_{L^2_x} \le C\Big( \sup_{t \in [0,T]} \|u^n_t \|_{L^\infty_x} \Big) \Big(1+  \sup_{t \in [0,T]} \|u^n_t \|_{L^\infty_x}) \Big)
\end{align*}
from which we deduce
\begin{align}
\| u_t^{n+1} \|_{H^\eta_x} \le \| u_0 \|_{H^\eta_x} + C\Big( \sup_{t \in [0,T]} \|u^n_t \|_{L^\infty_x}) \Big) \int_0^t  (1+s^{-\eta/2}) ds .
\label{estim}
\end{align}
Since $\eta < 2$, it follows that, for all $n \ge 0$, $u^n \in C([0,T] ; H^\eta_x)$ and $f^n$ is well defined. 

\begin{remark}
This calculation explains why we have to restrict ourselves to the one-dimensional case $x \in \mathbb{T}$.
\label{well_posedness_remark}
\end{remark}

\end{subsubsection}

\begin{subsubsection}*{Claim 2: $(u^n)_n$ is bounded in $C([0,T]; L^\infty_x)$}
We have 
\begin{align*}
\| u_t^{n+1} \|_{L^\infty_x} \le  \|u_0\|_{L^\infty_x} + \int_0^t \Big \| S(t-s)\Big[ J(f^{n+1}_s) - \rho^{n+1}_s u^n_s \Big]  \Big \|_{L^\infty_x} ds
\end{align*}
and since $H^{1/2+\delta}_x \subset L^\infty_x$ for $\delta > 0$, we get $L^1_x \subset H^{-1/2-\delta}_x$ and the estimate
\begin{align*}
\forall w \in L^1_x, \; \; \; \| S(t) w \|_{L^\infty_x} \le \| S(t) w \|_{H^{1/2 + \delta}_x} \lesssim (1+t^{-1/2 - \delta}) \| w \|_{H^{-1/2-\delta}_x}
\lesssim (1+t^{-1/2-\delta}) \| w \|_{L^1_x} .
\end{align*}
The $L^1$ estimates for $f^{n+1}_t = f_t[m+u^n]$ from Lemma \ref{lemL1} give
\begin{align*}
\| J(f_s^{n+1}) - \rho_s^{n+1} u_s^n \|_{L^1_x} 
\lesssim 1+ \sup_{\sigma \in [0,s]} \| u_\sigma^n \|_{L^\infty_x} +  \|u^n_s\|_{L^\infty_x} \lesssim  1+ \sup_{\sigma \in [0,s]} \| u^n_\sigma \|_{L^\infty_x} .
\end{align*}
It follows that (forgetting the $\delta > 0$ for simplicity)
\begin{align*}
\| u^{n+1}_t \|_{L^\infty_x} \le  \|u_0\|_{L^\infty_x} + C \int_0^t  \Big( 1+(t-s)^{-1/2} \Big) \Big(1 + \sup_{\sigma \in [0,s]} \| u^n_\sigma \|_{L^\infty_x} \Big) ds .
\end{align*}
Introducing, for $\mu > 0$, the norm
\begin{align}
\forall w \in C([0,T], L^\infty_x), \; \; \| w \|_\mu = \sup_{t \in [0,T]} e^{-\mu t} \| w_t \|_{L^\infty_x} ,\label{norm}
\end{align}
is is easy to check that
$
\| u^{n+1} \|_\mu \le a_\mu \| u^n \|_\mu + b
$
with $a_\mu \in [0,1)$ for $\mu$ chosen large enough. It follows that $\sup_n \| u^n \|_\mu < \infty$, hence the result claimed.
\end{subsubsection}

\begin{subsubsection}*{Claim 3: $(u^n)_n$ is bounded in $C([0,T]; H^\eta_x)$}

Since $\sup_{t \in [0,T]} \| u^n \|_{L^\infty_x} \lesssim 1$, this follows directly from \myref{estim}.
\end{subsubsection}

\begin{subsubsection}*{Claim 4: $(u^n)_n$ is compact in $C([0,T]; H^\beta_x)$}

Noting that, with $g^n = J(f^{n}) - \rho^{n} u^{n-1}$, 
\begin{align*}
\forall \xi \in C^\infty_c(\mathbb{T}), \; \; \langle u^{n}_t, \xi \rangle - \langle u^{n}_0, \xi \rangle = \int_0^t \langle u^{n}_s, \partial^2_x \xi \rangle + \langle g^n_s, \xi \rangle ds,
\end{align*}
we are led to
\begin{align*}
\Big| \langle u^{n}_{t_2} - u^{n}_{t_1}, \xi \rangle \Big| & \le \int_{t_1}^{t_2} | \langle u^{n}_s, \partial^2_x \xi \rangle | + | \langle g^n_s, \xi\rangle | ds \\
& \le |t_2 - t_1| \Big( \| u^{n} \|_{C([0,T] ; H^\eta_x)}\| \xi \|_{H_x^{2-\eta}} + \| g^n \|_{C([0,T] ; L^2_{x,v})}  \| \xi \|_{L^2_x} \Big)
\\ & \lesssim  |t_2- t_1| \| \xi \|_{H^{2-\eta}_x},
\end{align*}
that is in fact $u^n \in C^{0,1}([0,T] ; H^{\eta-2}_x)$ with a Lipschitz constant uniform in $n$. Ascoli's theorem therefore guarantees the compactness of $(u^n)_{n} \in C([0,T] ; H^{\eta-2}_x)$.  Recalling the bound from Claim $3$, we get the compactness of $(u^n)_{n} \in C([0,T] ; H^\beta_x)$ by interpolation. 

\end{subsubsection}

\begin{subsubsection}*{Conclusion}

We simply note that the mapping $u_n \mapsto u_{n+1}$ is a contraction on $C([0,T] ; L^\infty_x)$:
\begin{align}
& \Big\| u^{n+1}_t - u_t^n \Big\|_{L^\infty_x} 
= \Big\| \int_0^t S(t-s) \Big[ J(f_s^{n+1}) - J(f_s^{n}) - (\rho^{n+1}_s u^n_s - \rho_s^{n} u_s^{n-1}) \Big] ds \Big\|_{L^\infty_x}  \label{contraction} \\
& \lesssim \int_0^t \Big(1 + (t-s)^{-1/2} \Big) \Big( \| J(f_s^{n+1}) - J(f_s^{n}) \|_{L^1_x} + \| \rho^{n+1}_s u^{n+1}_s - \rho_s^{n} u_s^{n-1} \|_{L^1_x} \Big) ds. \nonumber
\end{align}
Since $f^{n+1} = f[m+u^n]$ and $f^n = f[m + u^{n-1}]$, using Corollary \myref{corstab} and the fact that $(u^n)_n$ is bounded in $C([0,T] ; W_x^{1,\infty})$, we get for all $s \in [0,T]$
\begin{align*}
\| J(f_s^{n+1}) - J(f_s^{n}) \|_{L^1_x} \lesssim \sup_{\sigma \in [0,s] } \| u^{n}_\sigma - u^{n-1}_\sigma \|_{L^\infty_x}  ,
\end{align*}
and
\begin{align*}
\| \rho^{n+1}_s u^n_s - \rho_s^{n} u_s^{n-1} \|_{L^1_x}
&  \le 
\| \rho^{n+1}_s(u^n_s - u_s^{n-1}) \|_{L^1_x} + \|(\rho^{n+1}_s - \rho_s^{n}  )u_s^{n-1} \|_{L^1_x} 
\\ &
\lesssim
\sup_{\sigma \in [0,s]} \| u^n_\sigma - u^{n-1}_\sigma \|_{L^\infty_x}.
\end{align*}
Therefore,
\begin{align*}
\| u_t^{n+1} - u_t^n \|_{L^\infty_x} \lesssim \int_0^t \Big(1+(t-s)^{-1/2} \Big) \sup_{\sigma \in [0,s]} \| u^n_\sigma - u^{n-1}_\sigma \|_{L^\infty_x} ds .
\end{align*}
Using once again the norm $\| . \|_\mu$ defined in \myref{norm}, we are easily led to
\begin{align*}
\| u^{n+1} - u^n \|_\mu \le k_\mu \| u^n - u^{n-1} \|_\mu
\end{align*}
with $k_\mu \in [0, 1)$ for $\mu$ large enough.
As a result, $u^n$ converges to some $u$ in the Banach space $C([0,T] ; L^\infty_x)$. 
Since $(u^n)_n$ is also compact in $C([0,T] ; H^\beta_x)$ we deduce that 
$u^n \to u$ in $C([0,T]; H^\beta_x)$. 
We may now introduce $f = f[m+u]$. Using the same estimates as before (only replacing $u^{n+1}$ by $u$ in \myref{contraction}), we easily derive, as $n$ goes to infinity,
\begin{align*}
u_t = S(t) u_0 + \int_0^t S(t-s) \Big[ J(f_s) - \rho_s u_s \Big] ds.
\end{align*}
Hence, $(f,u)$ is indeed a solution to \myref{S}.
If $(\tld f, \tld u)$ is another solution with $\tld u \in C([0,T] ; H^\beta_x)$ and $\tld f = f[m + \tld u]$, the same estimates lead to
\begin{align*}
\| u - \tld u \|_\mu \le k_\mu \| u - \tld u \|_\mu,
\end{align*}
so that $\tld u = u$ and $\tld f = f$. This concludes the proof of the well-posedness of \myref{S}. 
\vspace{3mm}

Finally, let us assume that $f_0 \in G^k$ and prove that $f \in C([0,T] ; G^k)$. 
It is clearly enough to show that
$\| f_t - f_0 \|_{{G^k}} \to 0$ as $t$ goes to $0$. Given the form \myref{char1} and Lemma \ref{stab_flow}, 
this result is easily obtained by dominated convergence when $f_0 \in C_c(\mathbb{R}^{2d})$. It may then be generalized by density to any $f_0 \in {G^k}$, since, given $\tld f_0 \in C_c(\mathbb{R}^{2d})$
\begin{align*}
\forall t \in [0,1], \; \; \| f_t - \tld f_t \|_{{G^k}} = \int (1 + | V_0^t(z) |^k ) | f_0(z) - \tld f_0(z) | dz \lesssim \| f_0 - \tld f_0 \|_{{G^k}}.
\end{align*}
We have used the estimate $|V_0^t(x,v)|^k \lesssim 1+|v|^k$ easily derived from the sub-linearity of \myref{flow1}, with $\| m + u \|_{L^\infty_{t,x}} < \infty$.

\end{subsubsection}

\end{subsection}

\end{section}

\begin{subsection}*{Acknowledgment}

A. Debussche and A. Rosello are partially supported by the French government thanks to the "Investissements d'Avenir"
program ANR-11-LABX-0020-01. J. Vovelle is partially supported by the ANR project STAB and the "Investissements d'Avenir" program LABEX MILYON ANR-10-LABX-0070.

\end{subsection}


\bibliographystyle{abbrv}
\bibliography{biblio_these}

\end{document}